\numberwithin{equation}{section}
\newtheorem{theorem}[equation]{Theorem}
\newtheorem{proposition}[equation]{Proposition}
\newtheorem{lemma}[equation]{Lemma}
\newtheorem{corollary}[equation]{Corollary}
\theoremstyle{definition}
\newtheorem{definition}[equation]{Definition}
\newtheorem{remark}[equation]{Remark}
\newtheorem{example}[equation]{Example}
\DeclareMathOperator{\Diff}{Diff}
\DeclareMathOperator{\End}{End}
\DeclareMathOperator{\Hom}{Hom}
\DeclareMathOperator{\Ind}{Ind}
\DeclareMathOperator{\op}{op}
\DeclareMathOperator{\sing}{sing}
\DeclareMathOperator{\spec}{spec}
\DeclareMathOperator{\supp}{supp}
\DeclareMathOperator{\sym}{ \sigma\!\!\!\sigma}
\DeclareMathOperator{\Vect}{Vect}
\def\agen{\mathfrak a}
\def\ext{\mathfrak e}
\def\extsym{e}
\def\Ext{\mathfrak E}
\def\gen{\mathfrak g}
\def\KK{\mathfrak{K}}
\def\m{\mathfrak m}
\def\p{\mathfrak p}
\def\preP{\mathfrak P}
\def\C{\mathbb C}
\def\NN{\mathbb N}
\def\R{\mathbb R}
\def\Dom{\mathcal D}
\def\Ha{\mathcal H}
\def\K{\mathcal K}
\def\Lie{\mathcal L}
\def\M{\mathcal M}
\def\N{\mathcal N}
\def\Ring{\mathcal R}
\def\Y{\mathcal Y}
\def\Z{\mathcal Z}
\def\target{\mathscr G}
\def\kerb{\mathscr K}
\def\L{\mathscr L}
\def\Sch{\mathscr S}
\def\trb{\mathscr T}
\def\Ss{\mathfrak s}
\def\eps{\varepsilon}
\def\minus{\backslash}
\def\im{i}
\def\Id{I}
\def\embed{\hookrightarrow}
\def\open#1{\smash[t]{\overset{{}_{\,\,\circ}}{#1}{}}}
\def\set#1{\left\{#1\right\}}
\def\Set#1{\Big\{#1\}}
\def\display#1#2{\mbox{\parbox{#1} {#2}}}
\def\ghost#1{}
\def\rpar{)}
\def\lbra{[}
\def\wT{\,{}^w\hspace{-0.5pt}T}
\def\wpi{\,{}^w\!\pi}
\def\wsym{\,{}^w\!\!\sym}
\def\wdiff{\,{}^w\!d}
\def\cl{\mathrm{cl}}
\def\cone{\mathrm{cone}}
\def\comp{\mathrm{comp}}
\def\diag{\mathrm {diag}}
\def\loc{\mathrm{loc}}
\def\sc{\mathrm{sc}}
\def\scH{H_{\sc}}
\def\Wedge{\raise2ex\hbox{$\mathchar"0356$}}
\def\ft{\!\widehat{\phantom{m}}}
\def\bT{\,{}^b\hspace{-0.5pt}T}
\def\bP{\,{}^b\!P}
\def\bPhat{\,{}^b\!\widehat P}
\def\bPhatstar{\,{}^b\!\widehat {P^\star}}
\def\bPhatstary{\,{}^b\!\widehat {P^\star_y}}
\def\bA{\,{}^b\!A}
\def\leibniz{\mathbin{\#}}
\def\contract{\mathbin{\rfloor}}
\begin{document}

\title[Boundary value problems]{Boundary value problems for first order elliptic wedge operators}
\thanks{Work partially supported by the National Science Foundation, Grants DMS-0901202 and DMS-0901173}

\author{Thomas Krainer}
\address{Penn State Altoona\\ 3000 Ivyside Park \\ Altoona, PA 16601-3760}
\email{krainer@psu.edu}
\author{Gerardo A. Mendoza}
\address{Department of Mathematics\\ Temple University\\ Philadelphia, PA 19122}
\email{gmendoza@temple.edu}

\begin{abstract}
We develop an elliptic theory based in $L^2$ of boundary value problems for general wedge differential operators of first order under only mild assumptions on the boundary spectrum. In particular, we do not require the indicial roots to be constant along the base of the boundary fibration.

Our theory includes as a special case the classical theory of elliptic boundary value problems for first order operators with and without the Shapiro-Lopatinskii condition, and can be thought of as a natural extension of that theory to the geometrically and analytically relevant class of wedge operators. Wedge operators arise in the global analysis on manifolds with incomplete edge singularities. Our theory settles, in the first order case, the long-standing open problem to develop a robust elliptic theory of boundary value problems for such operators.

\end{abstract}

\subjclass[2010]{Primary: 58J32; Secondary: 58J05,35J46,35J56}
\keywords{Manifolds with edge singularities, elliptic operators, boundary value problems}

\maketitle


\section{Introduction}

We present here a theory of boundary value problems for general first order elliptic wedge differential operators closely paralleling, and including as a special case, that of regular elliptic first order boundary value problems. Wedge differential operators, similar in structure to the operators that result when rewriting a regular differential operator in cylindrical coordinates, constitute an important tool for doing the various flavors of analysis---topological, geometric, global---on manifolds with singularities, see e.g. \cite{AlbinLeichtMazzPiazza12,AlbinLeichtMazzPiazza13}. We describe the operators in the next few paragraphs and our results immediately afterwards in this introduction.

Wedge differential operators (and the closely related edge operators on which they are based) are operators defined on manifolds with boundary, where the latter is the total space of a fibration, a situation reminiscent of the spheres in the boundary of the blow-up of a manifold along a submanifold via cylindrical coordinates (see Mazzeo \cite{Mazz91}). Thus we will be dealing with a compact connected manifold $\M$ with non-empty boundary $\N$, the latter the total space of a locally trivial fibration 
\begin{equation}\label{BdyFiberBundle}
\display{300pt}{\begin{center}
\begin{picture}(116,53)
\put(30,45){$\Z\embed\N=\partial\M$	
\put(-38,-3){\curve(0,0,0,-27)\put(0,-27){\vector(0,-10){0}}}
\put(-33,-18){\small $\wp$}
\put(-42,-40){$\Y$}
}
\end{picture}
\end{center}}
\end{equation}
with compact fibers $\Z$. Associated with the boundary fibration there is a subspace of vector fields on $\M$, $\mathcal V_e$, a Lie subalgebra of $C^\infty(\M;T\M)$, consisting of those vector fields which over the boundary are tangent to the fibration. The ring of edge differential operators, $\Diff_e(\M)$ is then defined to be the algebra generated  by $\mathcal V_e$ and $C^\infty(\M)$. And if $E$ and $F$ are vector bundles, then $\Diff_e(\M;E,F)$ denotes the space of differential operators which locally are represented by matrices of elements of $\Diff_e(\M)$, cf. Mazzeo, op. cit. Alternatively, adapting a well known scheme, we may begin with the ring 
\begin{equation}\label{Ring}
\Ring=\set{f\in C^\infty(\M):f|_\N\text{ is constant on the fibers of }\wp},
\end{equation}
$f\in \Ring \iff f\in C^\infty(\M)$ and $f\big|_{\N}\in \wp^*C^\infty(\Y)$. Then the class of edge differential operators can be characterized as follows:
\begin{equation}\label{InductiveDiff_eDef}
\display{280pt}{
$P\in \Diff^m_e(\M;E,F)\iff P\in\Diff^m(\M;E,F)$ and, inductively, $
[P,f]\in x\Diff^{m-1}_e(\M;E,F)\text{ for any }f\in \Ring\text{ if } m\geq 1$.
}
\end{equation}
A wedge differential operator of order $m$ is an element of 
\begin{equation*}
x^{-m}\Diff^m_e(\M;E,F),
\end{equation*}
that is, an operator of the form $A=x^{-m}P$ with $P\in\Diff^m_e(\M;E,F)$, see Schulze \cite{SchuNH}. Here and elsewhere $x$ is a defining function for $\partial\M$, positive in $\open\M$. The presence of the factor $x^{-m}$ makes these operators differ in a fundamental way from edge operators except when $m=0$. Because of \eqref{InductiveDiff_eDef} we have the following natural property of wedge differential operators:
\begin{equation}\label{BracketWR}
A\in x^{-m}\Diff^m_e(\M;E,F),\ f\in \Ring\implies[A,f]\in x^{-(m-1)}\Diff^{m-1}_e(\M;E,F).
\end{equation}
Evidently $\Ring$ plays an important role. In local coordinates $x,y,z$ with $x$ as just described and the $y_j|_\N$ being lifts of local coordinates on $\Y$,
\begin{equation}\label{TypicalWOp}
A=\frac{1}{x^m}\sum_{k+|\alpha|+|\beta|\leq m} a_{k\alpha\beta}(x,y,z)(xD_x)^k (xD_y)^\alpha D_z^\beta.
\end{equation}
The coefficients $a_{k\alpha\beta}$ are smooth up to the boundary. The kind of coordinates just used will be referred to as adapted coordinates.

Two extreme cases of the boundary fibration should be mentioned, according to the extremal dimensions of $\Z$. The case $\dim \Z=\dim\N$ corresponds to totally characteristic (or $b$-operators) rather than $e$-operators, and in the case that $\Z$ is just a point and $\Y = \N$ the class $x^{-m}\Diff^m_e(\M;E,F)$ includes all regular differential operators $\Diff^m(\M;E,F)$. 

Ellipticity of a wedge operator $A\in x^{-m}\Diff^m_e(\M;E,F)$, $w$-ellipticity for short, is discussed in full detail in \cite{GiKrMe10} and briefly reviewed in the next section. It implies ellipticity of $A$ over the interior of $\M$. Let $\m_b=x^{-1}\m$ be a $b$-measure (so $\m$ is a smooth positive density on $\M$), let the bundle $E$ be given a Hermitian metric, also fix a number $\gamma\in \R$ and form the $L^2$ spaces of sections based on $\m_b$ with respect to the weight $x^{2\gamma}$, denoted $x^{-\gamma}L^2_b(\M;E)$, analogously for $F$. The notation is from Melrose \cite{Mel93} but see Section~\ref{sec-Domains} for a more detailed explanation. Already assuming $w$-ellipticity of $A$, recall the two standard closed extensions of $A$ (the concept goes essentially back to Weyl's thesis), the maximal extension, whose domain is
\begin{equation*}
\Dom_{\max}(A)=\set{u\in x^{-\gamma}L^2_b(\M;E):Au\in x^{-\gamma}L^2_b(\M;F)}
\end{equation*}
and the minimal domain whose domain $\Dom_{\min}(A)$ is the closure of $C_c^\infty(\open\M;E)$ in $\Dom_{\max}(A)$ with respect to the norm defined by the inner product
\begin{equation}\label{GraphNorm}
(u,v)_A=(Au,Av)_{x^{-\gamma}L^2_b}+(u,v)_{x^{-\gamma}L^2_b}.
\end{equation}

In order to set up a boundary value problem one of course needs to first be able to associate to each $u\in\Dom_{\max}(A)$ an object that can properly be regarded as a boundary value. In connection with this we showed in \cite{KrMe12a} how to construct, under a mild condition on the boundary spectrum stated here as \eqref{NobSpecOnCritLinesOrderm}, a vector bundle $\trb\to\Y$ for any elliptic wedge operator $A$. This is the trace bundle of $A$. Its definition and relevant properties are reviewed here in Section~\ref{sec-TraceBundles}. Continuing in this direction we construct here (for a first order operator) a map $\gamma_A:\Dom_{\max}(A)\to C^{-\infty}(\Y;\trb)$, the trace map associated with $A$, see Definition~\ref{gammaA}. A moment's thought gives that the kernel of $\gamma_A$ should contain $\Dom_{\min}(A)$; Proposition~\ref{DminSubsetKerTrace} states that this is indeed the case.

Once the notion of boundary value is secured, the meaning of a boundary value problem in its most classical sense is clear: given a vector bundle $\target\to\Y$ and a pseudodifferential operator
\begin{equation*}
B : C^{\infty}(\Y;\trb) \to C^{\infty}(\Y;\target),
\end{equation*}
one seeks $u$ such that
\begin{equation}\label{BVP}
\left\{\begin{aligned}
&Au=f \text{ in }\open \M\\
&B(\gamma_A u)=g\text{ on }\Y,
\end{aligned}\right.
\end{equation}
typically with $f\in x^{-\gamma}L^2_b(\M;F)$ and $g$ a section of $\target$. 

One aims at obtaining conditions relating $B$ to $A$ that imply that the problem is Fredholm. This requires setting up a notion of ellipticity for the full boundary value problem \eqref{BVP}. In preparation for this we discuss, in Section~\ref{sec-KernelBundle}, the kernel bundle of the normal family $A_\wedge(\pmb\eta)$ of $A$, a family consisting of cone-differential operators depending on $\pmb\eta\in T^*\Y\minus 0$ as a parameter. This family was discussed in an invariant setting in \cite{GiKrMe10} and is reviewed here in Section~\ref{sec-SummaryWedgeOps} (see \eqref{NormalFamilyAsLimit}), see also \cite[Section 7.1.3]{EgorovSchulze}, \cite[Section 3.3]{SchuNH}, and in quantized form \cite{Mazz91}. The underlying manifold for the operators of the normal family is $\Z^\wedge=\lbra0,\infty\rpar\times \Z$, essentially the part of the inward pointing normal bundle of $\N$ in $\M$ that lies over the typical fiber $\Z\subset \N$.

To place the normal family and its role in context, we note that, classically, for a regular elliptic operator in $\Diff^m(\M;E,F)$ ($\Z$ consists of just one point in this case and one views $\Y$ as being $\N$ itself), the normal family is the family of ordinary differential operators obtained from the restriction to $T^*_\Y\M$ of the principal symbol of the operator when the conormal variable is replaced by ordinary differentiation. More precisely, writing $\iota:\Y\to \M$ for the inclusion map (recall that $\Y=\N$ when discussing the regular elliptic situation), it is the family of ordinary differential operators on the fibers of the bundle dual to $\iota^*:T^*_\Y\M\to T^*\Y$ obtained by quantizing only in the fiber direction. The ellipticity condition implies that the dimension of the space of null solutions of the resulting operator that decrease rapidly in the positive (interior) direction does not change with the base point $\pmb\eta\ne0$, therefore defines a vector bundle. We note in passing that this may not be true if $T^*\Y\minus 0$ is not connected and one insists on vector bundles having constant rank. However, the statement just made becomes correct if one allows for fiber bundles to have nonconnected bases and/or fibers and nonconstant base and/or fiber dimension. All fiber bundles throughout the paper, including the boundary fibration \eqref{BdyFiberBundle}, should be taken in this extended sense. The relevant information the rapidly decreasing null solutions bring with them is encoded in their Cauchy data at $0\in \iota^*\pmb\eta$, which we are calling their trace. The latter spaces form a vector bundle $\kerb$ over $T^*\Y\minus 0$ which should be interpreted as a subbundle of the pull-back to $T^*\Y\minus 0$ of the trace bundle. The classical Shapiro-Lopatinskii condition is the condition that the restriction of the principal symbol of $B$ in \eqref{BVP} to $\kerb$ be an isomorphism onto the pullback of $\target$ to $T^*\Y$, see for example \cite[Chapter VI, Definition 1]{Seeley1969}.

In our setting, the above still holds, with a proviso. We require that each $A_\wedge(\pmb\eta)$, $\pmb\eta\ne0$, be injective on its minimal domain and surjective on its maximal domain. This condition, automatically satisfied in the case of a regular elliptic operator, is again reminiscent of a condition required by Mazzeo in \cite[Hypothesis 5.14]{Mazz91}. As a consequence, the family of kernels in $x^{-\gamma}L^2_b(\Z^\wedge)$ of the various $A_\wedge(\pmb\eta)$, $\pmb\eta\ne 0$, form the fibers of a finite rank smooth vector bundle $\KK\to T^*\Y\minus 0$, see Theorem~\ref{KernelBundle}. For each $\pmb\eta$ there is a trace map $\gamma_{A_{\wedge}(\pmb\eta)}:\Dom_{\max}(A_\wedge(\pmb\eta))\to \pi^*_\Y\trb_y$, $y=\pi_\Y(\pmb\eta)$, giving an isomorphism from $\KK$ to a subbundle $\kerb\subset \pi^*_\Y\trb$. In analogy with the classical case, each of the elements in $\KK$ is rapidly decreasing at $\infty$. The Shapiro-Lopatinskii condition can now be stated in our general setting in exactly the same terms: that the restriction of the principal symbol of $B$ to $\kerb$ be an isomorphism onto $\pi_\Y^*\target$.

At this point, the task becomes to construct a parametrix for the problem. This requires among other things a more careful set up for the spaces in which solutions are to be sought. To motivate this point we again resort to the familiar classical situation. From our perspective, the fiber of $\trb$ at $y\in \Y$ is, in the case of a regular elliptic differential operator of order $m$, the (restriction to $\R_+$ of the) space of $E_y$-valued polynomials of degree $\leq m-1$ in one variable $x$; in invariant terms, the variable ranges, for each $y$, in the fiber of the inward pointing part of the normal bundle to the boundary (we mentioned already that $\N=\Y$ in the classical situation). In a sense that can be made precise, these are the solutions of $A_\wedge(\pmb\eta)u=0$ when $\pmb\eta$ is an element of the zero section of $T^*\Y$. The {\it sections} of interest have as coefficients the traces at $\Y$ of the normal derivatives of orders up to $m-1$ (except for constant nonzero factors)  of elements typically in the Sobolev space $H^m(\M;E)$. As such, the regularity of the coefficients decreases in steps of $1$, beginning with the zeroth order term, the restriction of $u$ to $\Y$, which belongs to $H^{m-1/2}(\Y;E_\Y)$ down to the coefficient of order $m-1$ which of course belongs to $H^{1/2}(\M;E)$. These  changes in the Sobolev regularity of the various terms is reflected in the structure of the operator $B$, a matrix of pseudodifferential operators of various orders acting on the individual coefficients of the trace polynomial, with the orders reflecting the {\it a priori} Sobolev regularity of the coefficients. This is why, and how, the Douglis-Nirenberg calculus appears in the standard theory (see for instance \cite{ChazarainPiriou,Hor67}). 

In the general case treated here the structure of the fibers of $\trb$ is more complicated even though we are dealing only with first order operators. Normalizing the weight parameter $\gamma$ to $1/2$ henceforth in this introduction (this entails no loss of generality), $\trb_y$ is, over $y\in\Y$, a finite-dimensional space whose elements are finite sums
\begin{equation*}
u=\sum u_{\sigma,\ell}x^{\im \sigma}\log^\ell x 
\end{equation*}
where the $u_{\sigma,\ell}$ are smooth sections of $E$ over $\wp^{-1}(y)$, see \eqref{ProtoTrace}, with $|\Im\sigma|<1/2$ and nonnegative $\ell$. An effect of this is that the infinitesimal generator of the radial action (replace $x$ by $\varrho x$), acting fiberwise on elements of $\trb$, may have eigenvalues (the numbers $\im\sigma$) and multiplicities (the largest number $\ell+1$ for a given $\sigma$) that vary with $y$. Aside from the complications that this entails when discussing the $C^\infty$ structure of $\trb$, a matter resolved in \cite{KrMe12a}, it requires a careful redefinition of Sobolev spaces to account for varying regularity and a reassessment of the nature of the operator $B$, whose structure should be a generalization of the Douglis-Nirenberg scheme. These last two issues were both discussed in full length in \cite{KrMe12b}. In terms of these varying regularity spaces Theorem~\ref{RangeofGamma} gives that
\begin{equation*}
\gamma_A:\Dom_{\max}(A)\to H^{-\gen}(\Y;\trb)
\end{equation*}
is continuous. Here $\gen=x\partial_x+1/2$, $x\partial_x$ is the generator of the radial action, and the shift by $1/2$ ensures that the action is unitary on the space $x^{-1/2}L^2_b$. Theorem~\ref{RangeofGamma} reduces to the classical assertion if $x\partial_x$ acts trivially on $\trb$, that is, if $\trb_y$ is, for each $y$, a space of sections of $E$ over $\Z_y$ independent of $x$, since in this case the image space is just the standard Sobolev space $H^{-1/2}(\Y;\trb)$. Having been guided by the nature of the problem to allow for the action generated by $\gen$, we allow also the target bundle $\target$ of the boundary condition to come equipped with an action with infinitesimal generator $\agen$. The operator $B$ in \eqref{BVP} is then taken to be in $\Psi^{\mu}_{1,\delta}(\Y;(\trb,-\gen),(\target,{\agen}))$ for some fixed $0 < \delta < 1$. For details on the meaning of this class we again direct the reader to \cite{KrMe12b}.

The correct Sobolev space in which to pose the problem \eqref{BVP} is not $\Dom_{\max}(A)$, which is too large, but a space we refer to as $H^1_{\trb}(\M;E)$,  constructed in Section~\ref{sec-H1A}, see Definition~\eqref{H1ADef}. The notation emphasizes the role of $A$ (explicitly through $\trb$, implicit through the norm). The order reflects the fact that we are dealing here with first order operators. This space reduces to the standard Sobolev space $H^1(\M;E)$ in the classical case. The elements of $H^1_{\trb}(\M;E)$ do have local $H^1$ regularity in the interior and share with the standard Sobolev space $H^1(\M;E)$ the important fact that the inclusion into $x^{-1/2}L^2_b(\M;E)$ is compact (Corollary~\ref{compactembedding}).

With this the analytic setup of the problem is completed, and the next task is the construction of a parametrix. The problem we consider in Section~\ref{sec-BVP} is more general than the reader may have surmised from our discussion so far in that we allow for Atiyah-Patodi-Singer type boundary conditions. These are accounted for in the presence of a projection $\Pi\in \Psi^0_{1,\delta}(\Y;(\target,{\agen}),(\target,{\agen}))$. Both $\Pi$ and $B$ are assumed to have principal symbols
\begin{equation*}
\sym(\Pi)\in C^\infty(T^*\Y\minus 0;\End(\pi_\Y^*\target)),\quad \sym(B)\in C^\infty(T^*\Y\minus 0;\Hom(\pi_\Y^*\trb,\pi_\Y^*\target))
\end{equation*}
assumed to be homogeneous in a way that respects the actions $\agen$ and $\gen$. 

Thus we seek a parametrix for the operator
\begin{equation}\label{BVPOperator}
H^1_{\trb}(\M;E)\ni u\mapsto Au\oplus \Pi B\gamma_Au\in x^{-1/2}L^2_b(\M;F)  \oplus \Pi H^{1-\mu+{\agen}}(\Y;\target)
\end{equation}
assuming that $A$ is $w$-elliptic, that its boundary spectrum satisfies a mild condition alluded to above and that the normal operator satisfies the already mentioned injectivity and surjectivity conditions on its minimal and maximal domains, and in addition, that $\sym(\Pi)\circ\sym(B)$ restricts to an isomorphism from $\kerb$ to $\target_{\Pi}\subset \pi^*_\Y\target$ where the latter bundle is the range of the projection $\sym(\Pi)$. Theorem \ref{FredholmTheorem} asserts that \eqref{BVPOperator} is a Fredholm operator. We show in Section ~\ref{sec-BVP} that after a compact perturbation the problem, namely the construction of a suitable parametrix, becomes amenable to analysis with Schulze's apparatus. We have collected in the appendix (Appendices \ref{app-PseudosOpValued} and \ref{GreenOperators}) a primer on enough aspects of Schulze's theory to complete a construction of the parametrix.

The first order case addressed in this work is technically simpler than the higher order case in good measure because the maximal domain of a ($w$-elliptic) first order operator is a module over the ring $\Ring$ (see Lemma \ref{DmaxDminlocalize}). This is generally not the case for higher order operators. We plan to address boundary value problems for higher order operators, still with minimal assumptions in the same spirit as in this paper, in the near future. 

\medskip
Mazzeo and Vertman have shared with us a preliminary draft of their work [18] that is aimed at developing a theory of boundary value problems for higher order edge and wedge operators in the case of constant boundary spectrum. The trace bundle defined in [13] also appears in their theory as an important ingredient. Their paper [18] is a continuation of work that was begun in [16].

Schulze and collaborators have developed an extensive elliptic theory of pseudodifferential operators on manifolds with singularities over the past 25 years, see for example [6, 12, 22, 23, 25] for work pertaining to edge singularities. The paper [12] by Kapanadze, Schulze, and Seiler is intended to be a first step towards developing a theory of boundary value problems for edge pseudodifferential operators in SchulzeÕs calculus along the lines of Boutet de MonvelÕs calculus under strong assumptions on the admissible operators. The preprint [29] suggests possible modifications of the approach taken in [12] to pursue this task further.

\medskip
As a final point in this introduction we briefly discuss a curious perspective of boundary values (different from the one used in the rest of the paper) which may also help differentiate the operators used here from other classes  analyzed in the existing literature. 

The spaces $\Dom_{\max}(A)$ and $\Dom_{\min}(A)$ can of course be defined for any differential operator on $\open \M$. If $A$ is elliptic, then 
\begin{equation*}
\mathcal E=\set{u\in \Dom_{\max}:u\perp \Dom_{\min}(A)},
\end{equation*}
where orthogonality is meant in the sense of the inner product \eqref{GraphNorm}, can be viewed as the space of boundary values of elements of $\Dom_{\max}(A)$, and the orthogonal projection $\pi_{\max}:\Dom_{\max}(A)\to\Dom_{\max}(A)$ onto $\mathcal E$ as the trace map (in the sense of Cauchy data at the boundary), as follows. First let $A^\star$ denote the formal adjoint of $A$ and note that $\mathcal E$ is the subspace of $\Dom_{\max}(A)$ whose elements satisfy $A^\star Au=-u$. This is easy to prove (alternatively the reader may consult the proof of Lemma 4.2 of \cite{GiKrMe07}; the context there is slightly different but the proof is the same). The implication is that elements of $\mathcal E$ are smooth in $\open\M$, by ellipticity. So, if $\omega\in C^\infty(\M)$ is equal to $1$ near $\partial\M$, then one has $(1-\omega)u\in C_c^\infty(\open\M;E)$, hence $\omega u\in \Dom_{\max}(A)$ and $\pi_{\max}(\omega u)=\pi_{\max}(u)$. Thus what $u$ is in the complement of any neighborhood of the boundary has no bearing on what $\pi_{\max}(u)$ is. It is in this sense that elements of $\mathcal E$ can be regarded as boundary values of elements of $\Dom_{\max}(A)$, and $\pi_{\max}$ as a trace map. If $A$ is a first order operator, then $\Dom_{\max}(A)$ is closed under multiplication by elements of $\Ring$ (see Lemma~\ref{DmaxDminlocalize}) which hints at the (correct) notion that boundary values live on $\Y$ (as opposed to $\N$).

Evidently, if $\Dom_{\min}(A)=\Dom_{\max}(A)$, then there is no issue with boundary values. Typically, elliptic wedge differential operators of positive order do have $\Dom_{\min}(A)\ne\Dom_{\max}(A)$, and so boundary value problems for these do make sense. Situations in which these two domains are equal (always assuming ellipticity in the right context) include Mazzeo and Melrose's $\mathscr V_0$-differential operators \cite{MaMe87}, the already mentioned edge operators of Mazzeo \cite{Mazz91} in which the former correspond to $\Z=\textrm{point}$, differential operators in the $\Theta$-calculus of Epstein, Melrose and Mendoza \cite{EpRBMMe91}, the families associated with Lie structures at infinity analyzed by Amman, Lauter, and Nistor's operators \cite{AmLaNi07}, as well as classes subsumed in the latter such as the operators of fibered (Mazzeo and Melrose \cite{MaMe98}) or foliated cusp type (Rochon \cite{Ro12}). In all these cases, the minimal and maximal domains of elliptic elements coincide, so as indicated above, there are no issues with boundary values, hence no boundary value problems. For the theoretical underlying, see Kordyukov \cite{Ko91} and Shubin \cite{Shub92}.

\section{Wedge operators and their symbols}\label{sec-SummaryWedgeOps}

A wedge operator $A$ has a principal symbol $\wsym(A)$ defined on the $w$-cotangent bundle, see \cite[Section 2]{GiKrMe10}. The latter, denoted $\wpi:\wT^*\M\to\M$, is the bundle whose space of smooth sections is
\begin{equation}\label{wBracket}
\set{\alpha\in C^\infty(\M;T^*\M):\iota_y^*\alpha=0\ \forall y\in \Y}
\end{equation}
(recall that $\iota_y:\Z_y\to \N$ is the inclusion map). These sections are locally spanned by linear combinations of differentials of functions in $\Ring$. In local adapted coordinates $x,y,z$, the elements 
\begin{equation*}
dx,\ dy_j,\ x dz_\mu
\end{equation*}
give a local frame of $\wT^*\M$. The principal $w$-symbol at
\begin{equation*}
\nu=\xi dx+\sum_\mu \eta_jdy^j+x\zeta_\mu dz_\mu
\end{equation*}
of $A\in x^{-m}\Diff^m_e(\M;E,F)$ given in coordinates by \eqref{TypicalWOp} is
\begin{equation*}
\wsym(A)(\nu)=\sum_{k+|\alpha|+|\beta|=m} a_{k\alpha\beta}(x,y,z)\xi^k \eta^\alpha\zeta^\beta
\end{equation*}
This object can be obtained as a section of
\begin{equation*}
\Hom(\wpi^*E,\wpi^*F)\to\wT^*\M
\end{equation*}
via an oscillatory test involving elements of $\Ring$, see \cite{GiKrMe10}. As expected, an element $A\in x^{-m}\Diff^m_e(\M;E,F)$ is $w$-elliptic if $\wsym(A)(\nu)$ is invertible for each $\nu\in \wT^*\M\minus 0$.

There are two other symbols associated with $A$. The first of these is the indicial family. To define it, recall that an element $P\in \Diff^m(\M;E,F)$ is totally characteristic, written $P\in \Diff^m_b(\M;E,F)$, if the coefficients of $x^{-\nu}Px^\nu$ are smooth up to the boundary for $\nu=1,\dotsc,m$. Next, note that if $P\in \Diff^m_b(\M;E,F)$, then $P$ gives by restriction an operator on the boundary, $\bP\in \Diff^m_b(\N;E_\N,F_\N)$. The indicial family of $P$ is then defined to be the operator
\begin{equation*}
\bPhat(\sigma)= {}^b(x^{-\im\sigma}Px^{\im\sigma}).
\end{equation*}
This operator depends on the defining function $x$. However, the indicial families of $P$ obtained through different defining functions differ only by conjugation with a  factor $e^{\im\sigma g}$ where $g$ is a smooth real-valued function on $\N$. Let $\pi_\wedge:\N^\wedge\to\N$ be the closed inward  pointing normal bundle of $\N$ in $\M$. The zero section of the normal bundle is $\partial\N^\wedge$, and is identified with $\N$. If $x$ is the defining function used above, then the fiber variable is taken to be the restriction of $dx$ to $\N^\wedge$. We also write $x$ for this variable. Via inverse Mellin transform we obtain an operator on $\N^\wedge$, denoted $\bP$. When $P\in\Diff^m_e(\M;E,F)$, the resulting operator commutes with multiplication by elements of $\wp_\wedge^*C^\infty(\Y)$, so $\bP$ can be regarded as a family of operators $\bP_y$ along $\Z_y^\wedge$, the part of $\N^\wedge$ over the fiber $\Z_y$. Here we used the notation defined through the bottom part of the diagram
\begin{equation}\label{MicrolocalBdyFiberBundle}
\display{300pt}{\begin{center}
\begin{picture}(140,74)
\put(120,62){$T^*\Y$	
\put(-11,-3){\curve(0,0,0,-23)\put(0,-25){\vector(0,-1){0}}}
\put(-5,-18){\small $\pi_\Y$}
\put(-14,-40){$\Y$}
\put(-50,-37){\curve(0,0,30,0)\put(32,0){\vector(1,0){0}}}
\put(-39,-31){\small $\wp$}
\put(-63,-40){$\N$}
\put(-99,-37){\curve(0,0,30,0)\put(32,0){\vector(1,0){0}}}
\put(-88,-31){\small $\pi_\wedge$}
\put(-116,-40){$\N^\wedge$}
\put(-113,-45){\curve(6,3,51,-6,95,3)\put(97,4){\vector(3,1){0}}}
\put(-63,-58){\small$\wp_\wedge$}
\put(-126,0){$\wp_\wedge^*T^*\Y$}
\put(-110,-3){\curve(0,0,0,-23)\put(0,-25){\vector(0,-1){0}}}
\put(-128,-18){\small$\pi^\wedge_\Y$}
\put(-90,2){\curve(0,0,64,0)\put(66,0){\vector(1,0){0}}}
\put(-68,7){\small$\wp_{\wedge,\Y}$}
}
\end{picture}
\end{center}}
\end{equation}
which also serves to introduce $\wp_{\wedge,\Y}$ as notation for the canonical map at the top. We will also write $\open\wp_\wedge$ and $\open\wp_{\wedge,\Y}$ for the restrictions of these maps to the interior of their domains. In local adapted coordinates we have
\begin{equation*}
\bP_y=\sum_{k+|\beta|\leq m}a_{k0\beta}(0,y,z)(x D_x)^kD_z^\beta
\end{equation*}
along $\Z^\wedge_y$. We write
\begin{equation}\label{IndicialOp}
\bA=x^{-m}\bP
\end{equation}
when $P=x^mA$. Thus
\begin{equation*}
\bA_y\in \Diff^m_b(\Z_y^\wedge;E_{\Z_y^\wedge},F_{\Z_y^\wedge})
\end{equation*}
for each $y$; somewhat inconsistently we write $E_{\Z_y^\wedge}$ for the part of $E^\wedge = \pi_\wedge^*E$ over $\Z_y^\wedge$. If $A$ is $w$-elliptic, then $\bP_y$ is $b$-elliptic for each $y\in \Y$ as an element of $\Diff^m_b(\Z_y^\wedge;E_{\Z_y^\wedge},F_{\Z_y^\wedge})$ and we let
\begin{equation*}
\spec_e(A)=\set{(y,\sigma)\in \Y\times\C:\sigma\in\spec_b(\bP_y)}.
\end{equation*}
Recall that $\spec_b(\bP_y)$ consists of those $\sigma\in \C$ such that $\widehat \bP_y(\sigma)$ is not invertible, see Melrose \cite{Mel93}.

It will be convenient to denote by $\Ring^\wedge$ the analogue of $\Ring$ in the case of $\N^\wedge$:
\begin{equation*}
\Ring^\wedge=\set{f\in C^\infty(\N^\wedge):f|_\N\text{ is constant on the fibers of }\wp}.
\end{equation*}

The last symbol associated to $A\in x^{-m}\Diff_e(\M;E,F)$ is its normal family, obtained as follows. Let $\varphi$ be a tubular neighborhood map from a neighborhood of $\partial\N^\wedge=\N$ in $\N^\wedge$ to one of $\N$ in $\M$. If $g\in C^\infty(\Y)$ is real-valued, then 
\begin{equation}\label{NormalFamilyAsLimit}
A_\wedge (dg)u= \lim_{\varrho\to \infty} \varrho^{-m}\kappa^{-1}_\varrho e^{-\im \varrho \wp_\wedge^*g} \Phi^* A \Phi_* e^{\im \varrho \wp_\wedge^*g} \kappa_\varrho u,
\end{equation}
see \cite[Proposition 2.10]{GiKrMe10}. Here $\Phi_*$, it inverse $\Phi^*$, and $\kappa_\varrho$ are defined using $\varphi$, the radial action on $\N^\wedge$, and parallel transport with respect to Hermitian connections on $E$ and $F$ and their liftings to $E^\wedge$ and $F^\wedge$ via $\pi_\wedge$. The map $\kappa_\varrho$ is given by 
\begin{equation}\label{KappaOnNWedge}
(\kappa_\varrho u)(\nu)=\varrho^\gamma u(\varrho \nu)
\end{equation}
for some fixed number $\gamma$. The limit in \eqref{NormalFamilyAsLimit} yields a family $\pmb\eta\mapsto A_\wedge(\pmb\eta)$ of cone-differential operators which commutes with multiplication by functions which are liftings to $\N^\wedge$ of smooth functions on $\Y$ so may be viewed as a family
\begin{equation*}
T^*\Y\ni \pmb\eta \mapsto A_\wedge(\pmb\eta) \in x^{-m}\Diff_b^m(\Z_y^\wedge;E_{\Z_y^\wedge}^\wedge,F_{\Z_y^\wedge}^\wedge), \quad y=\pi_{\Y}\pmb\eta,
\end{equation*}
see \cite[formula (2.14)]{GiKrMe10}. Furthermore, 
\begin{equation}\label{WedgeSymbolKappaHom}
A_\wedge(\varrho\, \pmb\eta) = \varrho^m \kappa_\varrho  A_\wedge(\pmb\eta)\kappa_\varrho^{-1}.
\end{equation}
Again in local adapted coordinates, with $\pmb\eta=\eta\cdot dy$ as a covector at $y$, we have
\begin{equation*}
A_\wedge(\pmb\eta)=x^{-m}\sum_{k+|\alpha|+|\beta|\leq m} a_{k\alpha\beta}(0,y,z)(xD_x)^k (x\eta)^\alpha D_z^\beta.
\end{equation*}

\section{Spaces and domains}\label{sec-Domains}

We now pick a $b$-density $\m_b$ on $\M$ (see \cite{Mel93}) which shall remain fixed for the rest of the paper, a density $\m_\Y$ on $\Y$, and Hermitian structures on $E$ and $F$; the densities are all smooth and  positive, of course. The chosen $b$-density determines uniquely a density on $\N$ (by contraction with the canonical section $x\partial_x$ of $\bT\M$ along $\N$) which we denote $\m_\N$. Using $\m_\Y$ and $\m_\N$ we now get a density $\m_{\Z_y}$ on $\Z_y$ for each $y\in \Y$. The vector field $x\partial_x$ on $\N^\wedge$ is canonical (the infinitesimal generator of the radial action). With it we get a canonical $b$-density $\m^\wedge_b$ on $\N^\wedge$ by requiring
\begin{equation*}
\Lie_{x\partial_x}\m^\wedge_b +\m^\wedge_b = 0,\quad x\partial_x \contract\m^\wedge_b=\m_\N
\end{equation*}
with $\Lie_{x\partial_x}$ denoting Lie derivative. Since $\wp_\wedge:\N^\wedge\to\Y$ is surjective (this is a locally trivial fibration with typical fiber $\Z\times \lbra 0,\infty\rpar$), we also have densities induced on each of the fibers of $\Z^\wedge$. All together, we have a coherent choice of measures on all manifolds of interest. The densities and Hermitian forms are now used to define the various $L^2$ spaces of sections, $L^2_b$ being the $L^2$ space with respect to $\m_b$.

The edge-Sobolev spaces of integral nonnegative orders $m$ were defined in \cite[Formula 3.22]{Mazz91} as consisting of those sections $u$ of $E$ which are in $L^2$ with respect to a smooth positive density on $\M$ such that $Pu$ is again in $L^2$ for every edge differential operator of order at most $m$. Here we shall diverge slightly from this convention and base the definition on $\m_b$ rather than a regular density: let $H^m_e(\M;E)$ be the space of sections $u$ of $E$ such that $Pu\in L^2_b(\M;E)$ for every $P\in \Diff^m_e(\M;E)$. The edge-Sobolev spaces of general order $m\in \R$ are defined using interpolation and duality. These spaces come equipped with norms that shall remain unnamed (see \cite{Mazz91} for details on this). The space $C_c^\infty(\open\M;E)$ is dense in $H^m_e(\M;E)$ for any $m$.

\medskip
Let $A\in x^{-m}\Diff^m_e(\M;E,F)$ be an elliptic wedge operator. Recall that the maximal and minimal domains of $A$ relative to the weight $x^{2\gamma}$, $\gamma\in \R$ are defined by setting
\begin{equation*}
\Dom_{\max}(A)=\set{u\in x^{-\gamma}L^2_b(\M;E):Au\in x^{-\gamma}L^2_b(\M;F)}
\end{equation*}
and $\Dom_{\min}(A)$ equal to the closure of $C_c^\infty(\open\M;E)$ in $\Dom_{\max}$ with respect to the graph norm, the norm defined by the inner product
\begin{equation*}
(u,v)_A=(Au,Av)_{x^{-\gamma}L^2_b}+(u,v)_{x^{-\gamma}L^2_b}.
\end{equation*}
These definitions are applicable also to the case of $b$-operators. If $A$ is $w$-elliptic, each of the operators $A_\wedge(\pmb\eta)$ is $b$-elliptic and has its own maximal and minimal domains. By \cite[Proposition 4.3, part (\emph i)]{GiKrMe10}, $\Dom_{\min}(A_\wedge(\pmb\eta))$ depends only on $y$. Note that
\begin{equation*}
A_\wedge(0)=\bA
\end{equation*}
($\bA$ was defined in \eqref{IndicialOp}).

Let $A^\star$ denote the formal adjoint of $A$ with respect to $x^{-\gamma}L^2_b$:
\begin{equation*}
(Au,v)_{x^{-\gamma}L^2_b}=(u,A^\star v)_{x^{-\gamma}L^2_b},\quad u\in C_c^\infty(\open\M;E),\ v\in C_c^\infty(\open\M;F).
\end{equation*}

\begin{lemma}\label{AdjointOfNormalFamily}
Let $f\in \Ring$ be a strictly positive function and $A\in x^{-m}\Diff^m_e(\M;E,F)$ and $A^\star$ its formal adjoint with respect to $\m_b$ and the weight $x^{2\gamma}$. Then the formal adjoint of $A$ with respect to $f\m_b$ and the weight $x^{2\gamma}$ is
\begin{equation*}
A^\star +Q
\end{equation*}
for some $Q\in x^{-m+1}\Diff^{m-1}_e(\M;F,E)$. It follows that the normal family of $A^\star$ is equal to the formal adjoint of the normal family, $A_\wedge$, of $A$ with respect to $\m^\wedge_b$ and the same weight $x^{2\gamma}$.
\end{lemma}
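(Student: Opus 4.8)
The plan is to prove the two assertions in turn, the second --- the content of ``it follows'' --- coming from the first together with the observation that wedge operators in $x^{-m+1}\Diff^{m-1}_e$ have vanishing normal family. For the first assertion, write $A^{\star_f}$ for the formal adjoint of $A$ with respect to $f\m_b$ and the weight $x^{2\gamma}$, and let $M_f$ denote multiplication by $f$. Since $f$ is real-valued, pairing against $M_f(\cdot)$ in $x^{-\gamma}L^2_b$ relative to $\m_b$ and the weight $x^{2\gamma}$ is the same as pairing against $(\cdot)$ relative to $f\m_b$ and the weight $x^{2\gamma}$; feeding $A^\star M_f=M_fA^\star+[A^\star,M_f]$ into the definition of the formal adjoint then gives, in one line, $A^{\star_f}=A^\star+Q$ with $Q=M_{1/f}[A^\star,M_f]$. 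Now $A^\star\in x^{-m}\Diff^m_e(\M;F,E)$ and $f\in\Ring$, so \eqref{BracketWR} gives $[A^\star,f]\in x^{-(m-1)}\Diff^{m-1}_e(\M;F,E)$; since $1/f\in C^\infty(\M)$ and multiplication by a smooth function preserves $\Diff^{m-1}_e$, this forces $Q\in x^{-m+1}\Diff^{m-1}_e(\M;F,E)$, which is the first assertion.

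Next I would record that any $Q\in x^{-m+1}\Diff^{m-1}_e$, regarded as an order $m$ wedge operator $x^{-m}(xP')$ with $P'\in\Diff^{m-1}_e$, has $Q_\wedge\equiv 0$: writing $xP'$ in local adapted coordinates with all coefficient functions moved to the left of the monomials $(xD_x)^k(xD_y)^\alpha D_z^\beta$, every coefficient is divisible by $x$ and hence vanishes at $x=0$, so the coordinate formula for $A_\wedge(\pmb\eta)$ --- which freezes the coefficients at $x=0$ --- yields $Q_\wedge(\pmb\eta)=0$. By the first assertion and linearity of the normal family, this gives $(A^\star)_\wedge=(A^{\star_f})_\wedge$ for every strictly positive $f\in\Ring$: the normal family of $A^\star$ is unchanged when $\m_b$ is replaced by any $\Ring$-multiple of it. I would then choose $f$ so that $f\m_b$ becomes scaling-homogeneous near $\partial\M$. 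With $\varphi$ the tubular map entering \eqref{NormalFamilyAsLimit}, the function $f_0=\m^\wedge_b/\varphi^*\m_b$ is smooth and strictly positive near $\partial\N^\wedge$ and equals $1$ on $\partial\N^\wedge=\N$, because $\m^\wedge_b$ and $\m_b$ both induce $\m_\N$ there by the constructions in Section~\ref{sec-Domains}; transporting $f_0$ to $\M$ and extending it produces $f\in\Ring$, $f>0$, with $\varphi^*(f\m_b)=\m^\wedge_b$ in a collar of $\N$.

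For this $f$ the operators in \eqref{NormalFamilyAsLimit} are well behaved for the $x^{-\gamma}L^2_b$-structure built from $\m^\wedge_b$ and the weight $x^{2\gamma}$: $\kappa_\varrho$ is unitary (by the defining scaling property of $\m^\wedge_b$), $e^{\pm\im\varrho\wp_\wedge^*g}$ is multiplication by a unimodular function, and $\Phi_*$, together with its parallel transports, is an isometry onto the corresponding $x^{-\gamma}L^2_b$-structure of the collar built from $f\m_b$. Taking the formal adjoint of \eqref{NormalFamilyAsLimit} with respect to $\m^\wedge_b$ and the weight $x^{2\gamma}$ and passing the adjoint through the limit therefore turns the right-hand side into \eqref{NormalFamilyAsLimit} for $A^{\star_f}$ in place of $A$, so that $(A_\wedge(\pmb\eta))^\star=(A^{\star_f})_\wedge(\pmb\eta)=(A^\star)_\wedge(\pmb\eta)$ for each $\pmb\eta=dg(y)$, hence for all $\pmb\eta\in T^*\Y$. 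The bracket computation and the coordinate check that $Q_\wedge$ vanishes are routine; the step I expect to require the most care is this last one --- making sure that the tubular map, the fiber scaling $\kappa_\varrho$, the parallel transports inside $\Phi_*$, and the weight $x^{2\gamma}$ are mutually compatible enough for the auxiliary operators in \eqref{NormalFamilyAsLimit} to be honestly unitary, resp.\ isometric, for the density $\m^\wedge_b$, a compatibility the coherent choice of measures fixed in Section~\ref{sec-Domains} is designed to provide, after which the interchange of formal adjoint and limit is purely formal. Alternatively one can bypass the limit and check the second assertion directly in local adapted coordinates, the point being that the discrepancy between freezing the coefficients of $A^\star$ at $x=0$ and taking the formal adjoint of the already-frozen operator $A_\wedge$ involves only $x$-derivatives of the coefficients and of the local density, each of which acquires an extra power of $x$ in wedge normal form and is therefore annihilated by the normal family.
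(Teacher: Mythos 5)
Your proposal is correct and follows essentially the same route as the paper: the same computation $Q=f^{-1}[A^\star,f]$ with \eqref{BracketWR}, the same choice of $f$ via the tubular map so that $\varphi_*\m^\wedge_b=f\m_b$ near $\N$, and the same unitarity/isometry argument in \eqref{NormalFamilyAsLimit} to identify $(A_\wedge)^\star$ with $(A^{\star_f})_\wedge=(A^\star)_\wedge$. The only difference is organizational: you isolate the vanishing of the order-$m$ normal family of $Q$ as a separate (coordinate) observation, whereas the paper absorbs it by noting that $\varrho^{-m}$ times the conjugated $Q$ tends to zero since $Q$ has order $m-1$.
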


\begin{proof}
Let $u\in C_c^\infty(\open\M;E),\ v\in C_c^\infty(\open\M;F)$. Then
\begin{align*}
(Au,v)_{x^{-\gamma} L^2(\M;F,f\m_b})
&=\int(Au,v)_F\,fx^{2\gamma}d\m_b\\
&=\int(u,A^\star fv)_E\,x^{2\gamma}d\m_b\\
&=\int(u,A^\star v+\frac{1}{f}[A^\star,f]v)_E\,fx^{2\gamma}d\m_b\\
&=(u,(A^\star v+\frac{1}{f}[A^\star,f]v))_{x^{-\gamma} L^2(\M;E,f\m_b)}
\end{align*}
Let $Q=f^{-1}[A^\star,f]$. Since $f\in \Ring$, $Q\in x^{-m+1}\Diff^{m-1}_e(\M;F,E)$ by \eqref{BracketWR}.

Let $\varphi$ be the tubular neighborhood map used in the previous section. Then $\varphi_*\m^\wedge_b=f\m_b$ in a neighborhood of $\N$ with $f=1$ on $\N$. Thus $f\in \Ring$ (near $\N$) and we have, if $u\in C_c^\infty(\open\N^\wedge;E^\wedge),\ v\in C_c^\infty(\open\N^\wedge;F^\wedge)$ and $g\in C^\infty(\Y)$,
\begin{align*}
\big(\kappa^{-1}_\varrho e^{-\im \varrho \wp_\wedge^*g} \Phi^* A& \Phi_* e^{\im \varrho \wp_\wedge^*g}\kappa_\varrho u ,v\big)_{x^{-\gamma}L^2(\N^\wedge,F^\wedge,\m^\wedge_b)}\\
&=\big(A \Phi_* e^{\im \varrho \wp_\wedge^*g}\kappa_\varrho u , \Phi_* e^{\im \varrho \wp_\wedge^*g} \kappa_\varrho v\big)_{x^{-\gamma}L^2(\M,F,f\m_b)}\\
&=\big(\Phi_* e^{\im \varrho \wp_\wedge^*g}\kappa_\varrho u , (A^\star+Q)\Phi_* e^{\im \varrho \wp_\wedge^*g} \kappa_\varrho v\big)_{x^{-\gamma}L^2(\M,E,f\m_b)}\\
&=\big(u , \kappa^{-1}_\varrho  e^{-\im \varrho \wp_\wedge^*g}\Phi^* (A^\star+Q)\Phi_* e^{\im \varrho \wp_\wedge^*g} \kappa_\varrho v\big)_{x^{-\gamma}L^2(\N^\wedge,E^\wedge,\m^\wedge_b)}
\end{align*}
where $\kappa$  is as in \eqref{KappaOnNWedge}. Now observe that
\begin{equation*}
\lim_{\varrho\to \infty} \varrho^{-m}\kappa^{-1}_\varrho e^{-\im \varrho \wp_\wedge^*g} \Phi^* (A^\star +Q)\Phi_* e^{\im \varrho \wp_\wedge^*g} \kappa_\varrho v=A^\star_\wedge(dg)v
\end{equation*}
because $\lim_{\varrho\to \infty} \varrho^{-m+1}\kappa^{-1}_\varrho e^{-\im \varrho \wp_\wedge^*g} \Phi^* Q\Phi_* e^{\im \varrho \wp_\wedge^*g} \kappa_\varrho =Q_\wedge(dg)$.
\end{proof}

\begin{lemma}\label{xmDmax}
Let $A\in x^{-m}\Diff^m_e(\M;E,F)$ be $w$-elliptic. If $u\in \Dom_{\max}(A)$ then $x^m u\in x^{-\gamma+m}H^m_e(\M;E)$. Consequently $x^m\Dom_{\max}(A)\subset \Dom_{\min}(A)$.
\end{lemma}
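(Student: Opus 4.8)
The plan is to deduce both statements from elliptic regularity in Mazzeo's edge calculus, keeping careful track of the power of $x$. Write $A=x^{-m}P$ with $P\in\Diff^{m}_{e}(\M;E,F)$. The $w$-ellipticity of $A$ is precisely the statement that $P$ is elliptic as an edge operator: in the adapted coordinates of \eqref{TypicalWOp} the principal edge symbol of $P$ at $\xi\,\frac{dx}{x}+\eta\cdot\frac{dy}{x}+\zeta\cdot dz$ is $\sum_{k+|\alpha|+|\beta|=m}a_{k\alpha\beta}(x,y,z)\xi^{k}\eta^{\alpha}\zeta^{\beta}$, which equals $\wsym(A)$ evaluated at $\xi\,dx+\eta\cdot dy+x\zeta\cdot dz$, hence is invertible off the zero section. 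Now take $u\in\Dom_{\max}(A)$. Then $u\in x^{-\gamma}L^2_b(\M;E)$ and $Pu=x^{m}Au\in x^{m-\gamma}L^2_b(\M;F)\subseteq x^{-\gamma}L^2_b(\M;F)$, the last inclusion because $m\geq 0$. Applying elliptic regularity for the edge-elliptic operator $P$ at the weight $-\gamma$ --- whose engine is a parametrix $Q\in\Psi^{-m}_{e}$ with $QP=I+R$, $R\in\Psi^{-\infty}_{e}$, both acting on the spaces $x^{-\gamma}H^{s}_{e}$ and preserving the weight \cite{Mazz91} --- yields $u\in x^{-\gamma}H^{m}_{e}(\M;E)$, so that $x^{m}u\in x^{-\gamma+m}H^{m}_{e}(\M;E)$. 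This is the first assertion. (One cannot apply regularity at the better weight $m-\gamma$, since $u$ need not belong to $x^{m-\gamma}L^2_b$ --- that is exactly the room left for $\Dom_{\min}(A)\ne\Dom_{\max}(A)$.)

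For the ``consequently'' I would establish the inclusion $x^{-\gamma+m}H^{m}_{e}(\M;E)\subseteq\Dom_{\min}(A)$; combined with the first part this gives $x^{m}\Dom_{\max}(A)\subseteq x^{-\gamma+m}H^{m}_{e}(\M;E)\subseteq\Dom_{\min}(A)$. So let $v\in x^{-\gamma+m}H^{m}_{e}(\M;E)$, say $v=x^{m-\gamma}w$ with $w\in H^{m}_{e}(\M;E)$. Conjugating an element of $\Diff^{m}_{e}$ by a real power of $x$ again gives an element of $\Diff^{m}_{e}$ --- in adapted coordinates $x^{-b}(xD_x)x^{b}=xD_x-ib$, while $x^{-b}(xD_y)x^{b}=xD_y$ and $x^{-b}D_zx^{b}=D_z$ --- so $\tilde P:=x^{-(m-\gamma)}Px^{m-\gamma}$ lies in $\Diff^{m}_{e}(\M;E,F)$, and therefore $Av=x^{-m}Pv=x^{-\gamma}\tilde P w\in x^{-\gamma}L^2_b(\M;F)$ because $\tilde P w\in L^2_b$. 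Since also $v\in x^{m-\gamma}L^2_b\subseteq x^{-\gamma}L^2_b$, we get $v\in\Dom_{\max}(A)$; moreover both $v\mapsto v$ and $v\mapsto Av$ are continuous from $x^{-\gamma+m}H^{m}_{e}(\M;E)$ into $x^{-\gamma}L^2_b$, so the inclusion $x^{-\gamma+m}H^{m}_{e}(\M;E)\hookrightarrow\Dom_{\max}(A)$ is continuous for the graph norm of $A$.

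Finally I would use that $C_c^\infty(\open\M;E)$ is dense in $x^{-\gamma+m}H^{m}_{e}(\M;E)$. This follows from the density of $C_c^\infty(\open\M;E)$ in $H^{m}_{e}(\M;E)$ recalled in Section~\ref{sec-Domains}, since multiplication by $x^{m-\gamma}$ --- a smooth nowhere-vanishing function on $\open\M$ --- is a topological isomorphism $H^{m}_{e}(\M;E)\to x^{-\gamma+m}H^{m}_{e}(\M;E)$ carrying $C_c^\infty(\open\M;E)$ onto itself. Composing with the continuous inclusion above, every $v\in x^{-\gamma+m}H^{m}_{e}(\M;E)$ is a graph-norm limit of elements of $C_c^\infty(\open\M;E)$, i.e. $v\in\Dom_{\min}(A)$, which completes the argument.

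The one substantive step is the first assertion, and its only genuine input is edge-elliptic regularity at a \emph{fixed} weight for $P=x^{m}A$; everything else is bookkeeping with the weighted edge Sobolev spaces and with the behaviour of $\Diff^{m}_{e}$ under conjugation by powers of $x$. I would also emphasise that the conclusion cannot be sharpened to $\Dom_{\max}(A)\subseteq x^{-\gamma+m}H^{m}_{e}$ (only to $\Dom_{\max}(A)\subseteq x^{-\gamma}H^{m}_{e}$): the factor $x^{m}$ in the statement is essential, as $x^{-\gamma+m}H^{m}_{e}(\M;E)\subseteq\Dom_{\max}(A)$ while the reverse inclusion fails whenever $\Dom_{\min}(A)\ne\Dom_{\max}(A)$.
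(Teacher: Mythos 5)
Your argument is correct and follows essentially the same route as the paper: $w$-ellipticity makes $P=x^mA$ an elliptic edge operator, $Pu=x^mAu\in x^{m-\gamma}L^2_b\subset x^{-\gamma}L^2_b$, and edge elliptic regularity at the fixed weight $-\gamma$ (Theorem~3.8 of \cite{Mazz91}) gives $u\in x^{-\gamma}H^m_e(\M;E)$, hence $x^mu\in x^{-\gamma+m}H^m_e(\M;E)$. The only difference is that the paper simply asserts the final inclusion $x^{-\gamma+m}H^m_e(\M;E)\subset\Dom_{\min}(A)$, whereas you correctly fill in the details (conjugation of $P$ by $x^{m-\gamma}$ stays in $\Diff^m_e$, continuity into the graph norm, and density of $C_c^\infty(\open\M;E)$).
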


\begin{proof}
Suppose $u\in \Dom_{\max}(A)$. So $u\in x^{-\gamma}L^2_b(\M;E)$ and $Au\in x^{-\gamma}L^2_b(\M;F)$. Hence $x^mAu\in x^{-\gamma+m}L^2_b(\M;F)$, therefore $x^mAu\in x^{-\gamma}L^2_b(\M;E)$. Since $x^mA$ is an elliptic edge operator, Theorem~3.8 of \cite{Mazz91} gives that $u\in x^{-\gamma}H^m_e(\M;E)$, so $x^m u \in x^{-\gamma+m}H^m_e(\M;E)$. For the last statement observe that $x^{-\gamma+m}H^m_e(A;E)\subset\Dom_{\min}(A)$.
\end{proof}

The following theorem, of fundamental importance in this paper, attests to the relevancy of these symbols:
\begin{theorem}[{\cite[Theorem 4.4]{GiKrMe10}}]\label{DminTheorem}
Let $A \in x^{-m}\Diff^m_e(\M;E,F)$ be $w$-elliptic. If 
\begin{equation}\label{NoBSpecOnBottom}
\spec_{e}(A) \cap \Y\times \{\sigma \in \C : \Im\sigma = \gamma-m\}= \emptyset
\end{equation}
and
\begin{equation}\label{NormalInjectivity}
A_\wedge(\pmb\eta) : \Dom_{\min}(A_\wedge(\pmb\eta)) \subset x^{-\gamma}L^2_b \to x^{-\gamma}L^2_b\text{ is injective for all }\pmb\eta \in T^*\Y \minus 0
\end{equation}
then
\begin{equation*}
\Dom_{\min}(A) = x^{-\gamma+m}H^m_{e}(\M;E).
\end{equation*}
Moreover, 
\begin{equation*} 
A : \Dom_{\min}(A) \subset x^{-\gamma}L^2_b(\M;E)\to x^{-\gamma}L^2_b(\M;F)
\end{equation*}
is a semi-Fredholm operator with finite-dimensional kernel and closed range.
\end{theorem}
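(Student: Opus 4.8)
The plan is to establish the two inclusions that make up the claimed identity of domains, and then read the semi-Fredholm statement off the apparatus built for the harder one. The inclusion $x^{-\gamma+m}H^m_e(\M;E)\subseteq\Dom_{\min}(A)$ requires nothing beyond $w$-ellipticity and was already recorded at the end of the proof of Lemma~\ref{xmDmax}: $A=x^{-m}P$ maps $x^{-\gamma+m}H^m_e(\M;E)$ boundedly into $x^{-\gamma}L^2_b(\M;F)$, so on that space the graph norm of $A$ is dominated by the $x^{-\gamma+m}H^m_e$-norm, in which $C_c^\infty(\open\M;E)$ is dense. The content is the reverse inclusion, which I would deduce from the a priori estimate
\begin{equation*}
\|u\|_{x^{-\gamma+m}H^m_e}\le C\bigl(\|Au\|_{x^{-\gamma}L^2_b}+\|u\|_{x^{-\gamma}L^2_b}\bigr)\quad\text{for }u\in C_c^\infty(\open\M;E).
\end{equation*}
Granting this: if $u\in\Dom_{\min}(A)$ and $u_k\to u$ in the graph norm with $u_k\in C_c^\infty(\open\M;E)$, the estimate makes $(u_k)$ Cauchy in $x^{-\gamma+m}H^m_e(\M;E)$, and its limit there, being simultaneously the $x^{-\gamma}L^2_b$-limit, equals $u$; hence $\Dom_{\min}(A)\subseteq x^{-\gamma+m}H^m_e(\M;E)$.

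I would obtain the a priori estimate from a left parametrix, i.e.\ an operator $B$ bounded $x^{-\gamma}L^2_b(\M;F)\to x^{-\gamma+m}H^m_e(\M;E)$ with $BA=I-G$, where $G$ is a smoothing edge operator with a positive gain in the boundary weight; then $u=BAu+Gu$ gives the estimate at once. The parametrix is assembled by inverting, in their respective regimes, the three symbols of $A$. Over $\open\M$, where $A$ is elliptic, one inverts the principal $w$-symbol $\wsym(A)$ in the usual way. Near $\partial\M$, after passing to the inward normal bundle of $\N$ and Fourier-transforming along $\Y$, the relevant model is the normal family $A_\wedge(\pmb\eta)$, which must be inverted operator-valuedly and patched across three zones of $\pmb\eta$ by cut-offs. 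For $\pmb\eta$ in a fixed annulus of $T^*\Y$, hypothesis \eqref{NormalInjectivity} makes $A_\wedge(\pmb\eta)$ injective on its minimal domain, which by \cite[Proposition~4.3(i)]{GiKrMe10} is independent of $\pmb\eta$; combined with closedness of its range and a compactness argument over the annulus this yields a uniform left inverse. For $|\pmb\eta|\to\infty$ one transports that left inverse using the $\kappa$-homogeneity \eqref{WedgeSymbolKappaHom} and interior ellipticity. For $|\pmb\eta|\to0$ the model degenerates to $A_\wedge(0)=\bA$, and \eqref{NoBSpecOnBottom}---equivalently, $\spec_b(\bP_y)$ misses the line $\Im\sigma=\gamma-m$ for every $y\in\Y$---lets one invert the Mellin-transformed model on that contour, uniformly in $y$ and, by ellipticity, for $|\Re\sigma|$ large; this is precisely the step that pins down the weight. (When $\Y$ is a point, \eqref{NormalInjectivity} is vacuous and this is the classical cone-operator statement.)

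For the last assertion, the two inclusions give $\Dom_{\min}(A)=x^{-\gamma+m}H^m_e(\M;E)$, and on this space the graph norm of $A$ is equivalent to the $x^{-\gamma+m}H^m_e$-norm by the a priori estimate together with boundedness of $A$. The remainder $G$ of the parametrix, gaining both edge-Sobolev order and boundary weight, maps $x^{-\gamma}L^2_b(\M;F)$ into a weighted edge-Sobolev space compactly included in $x^{-\gamma+m}H^m_e(\M;E)$; hence $G$ is compact as an operator on $\Dom_{\min}(A)$, and $u=BAu+Gu$ yields $\|u\|_{\Dom_{\min}(A)}\le C\|Au\|_{x^{-\gamma}L^2_b}+C\|Gu\|_{\Dom_{\min}(A)}$ with $G$ compact, which is the standard (Peetre) criterion for $A\colon\Dom_{\min}(A)\to x^{-\gamma}L^2_b(\M;F)$ to have finite-dimensional kernel and closed range. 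I expect the real obstacle to be the parametrix construction itself---gluing the three model inverses across the zones $|\pmb\eta|\to0$, $|\pmb\eta|$ bounded, and $|\pmb\eta|\to\infty$ with uniform estimates, and the bookkeeping certifying that the remainder genuinely gains order and weight. That is the step which consumes $w$-ellipticity, \eqref{NoBSpecOnBottom}, and \eqref{NormalInjectivity} all at once; in the classical regular-elliptic case the normal family is just the family of ordinary differential operators coming from the principal symbol, which makes \eqref{NormalInjectivity} automatic, and no construction of this kind is required.
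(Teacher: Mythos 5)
The first thing to note is that this paper does not prove Theorem~\ref{DminTheorem} at all: it is imported from \cite[Theorem 4.4]{GiKrMe10}, so the only proof to compare with is the one in that reference, which does follow the strategy you outline (a priori estimate via a left parametrix built from the three symbols). Your reductions are correct as far as they go: the inclusion $x^{-\gamma+m}H^m_e(\M;E)\subset\Dom_{\min}(A)$ follows from boundedness of $A$ on that space together with density of $C_c^\infty(\open\M;E)$, the reverse inclusion is indeed equivalent to the stated a priori estimate, and the Peetre-type argument (left parametrix plus remainder compact by Mazzeo's embedding of weighted edge spaces \cite{Mazz91}) correctly yields finite-dimensional kernel and closed range once the parametrix exists.

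The genuine gap is that the entire analytic content --- the construction of $B$ with $BA=I-G$, $B$ bounded into $x^{-\gamma+m}H^m_e(\M;E)$ and $G$ gaining edge order and weight --- is announced rather than carried out, and that is precisely where the hypotheses do their work. Two points in your sketch are not free. First, on the cosphere bundle you use injectivity of $A_{\wedge}(\pmb\eta)$ on $\Dom_{\min}(A_\wedge(\pmb\eta))$ ``combined with closedness of its range''; injectivity is hypothesis \eqref{NormalInjectivity}, but closed range, i.e.\ semi-Fredholmness of the model operator on its minimal domain, is itself a nontrivial theorem of the cone calculus requiring $w$-ellipticity (which controls the scattering end $x\to\infty$ of $\Z^\wedge$) and the weight condition \eqref{NoBSpecOnBottom}; it must be cited (it is essentially \cite[Proposition 4.1]{GiKrMe10}) or proved, not taken for granted. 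Second, a pointwise, or even uniformly bounded, family of left inverses obtained by a compactness argument over an annulus is not yet a parametrix: to quantize it one needs the family to be an operator-valued symbol satisfying the twisted ($\kappa$-)estimates in $\pmb\eta$, smooth in the base variables, and one needs the inversion of the indicial family on the line $\Im\sigma=\gamma-m$ to produce the weight gain in the remainder $G$; without this (the machinery reviewed in Appendices~\ref{app-PseudosOpValued} and \ref{GreenOperators}, or the argument of \cite{GiKrMe10}), the asserted mapping property of $B$ and the compactness of $G$ are unsubstantiated. So the proposal is a correct plan that matches the known proof in outline, but as a proof it stops exactly where the work begins.
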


Let $\omega\in C_c^\infty(\R)$ be arbitrary but with $\omega(x)=1$ near $0$. The $w$-ellipticity of $A$ implies that the minimal and maximal domains of 
\begin{equation*}
A_\wedge(\pmb\eta):C_c^\infty(\open\Z_y^\wedge;E)\subset x^{-\gamma}L^2_b(\Z_y^\wedge;E_{\Z_y^\wedge})\to x^{-\gamma}L^2_b(\Z_y^\wedge;F_{\Z_y^\wedge})
\end{equation*}
have the property that
\begin{equation*}
(1-\omega) \Dom_{\min}(A_\wedge(\pmb\eta))=(1-\omega) \Dom_{\max}(A_\wedge(\pmb\eta)) = (1-\omega) x^{\frac{1}{2}\dim\Z_y^\wedge-\gamma} H^m_{\cone}(\Z_y^\wedge;E_{\Z_y^\wedge}).
\end{equation*}
The space $H^m_\cone(\Z^\wedge;E)$ is Schulze's cone Sobolev space, which near $x=\infty$ is identical to Melrose's scattering Sobolev space $\scH^m(\Z\times\lbra0,1\rpar)$ near $x'=0$ resulting from compactifying $\open\Z^\wedge=\Z\times(0,\infty)$ at $\infty$ to $\Z\times\lbra 0,\infty\rpar$ via $x'=1/x$, cf. \cite{Mel95} and \cite{SchuNH}. The vector bundle $E$ is taken here to be the pull-back to $\Z^\wedge$ of one such bundle over $\Z$, with the induced Hermitian metric. 

In the presence of \eqref{NoBSpecOnBottom}, $\omega\Dom_{\min}(A_\wedge(\pmb\eta))$ was identified in \cite[Proposition 3.6]{GiMe01}, see also \cite[Proposition 4.1]{GiKrMe10}, giving
\begin{equation*}
\Dom_{\min}(A_\wedge(\pmb\eta))=\omega x^{-\gamma+m}H^m_b(\Z_y^\wedge;E_{\Z_y^\wedge})+(1-\omega) x^{\frac{1}{2}\dim\Z_y^\wedge-\gamma} H^m_\cone(\Z_y^\wedge;E_{\Z_y^\wedge}).
\end{equation*}
This is a simple consequence of ellipticity coupled with a uniform \emph{a priori} estimate. Thus with the notation
\begin{equation}\label{Kegel}
\K^{s,\gamma}_t(\Z^\wedge;E) = \omega x^{\gamma}H^s_b(\Z^\wedge;E_{\Z_y^\wedge}) + (1-\omega)x^{-t}H_{\cone}^s(\Z^\wedge;E_{\Z_y^\wedge})
\end{equation}
for $s,t,\gamma \in \R$ we have
\begin{equation}\label{DomMinAWedge}
\Dom_{\min}(A_\wedge(\pmb\eta))=\K^{m,-\gamma+m}_{\gamma-\frac{1}{2}\dim\Z_y^\wedge}(\Z_y^\wedge;E_{\Z_y^\wedge}).
\end{equation}
Note that
\begin{equation*}
\K^{0,-\gamma}_{\gamma-\frac{1}{2}\dim\Z_y^\wedge}(\Z_y^\wedge;E_{\Z_y^\wedge}) = x^{-\gamma}L^2_b(\Z_y^\wedge;E_{\Z_y^\wedge}).
\end{equation*}
The spaces $\Dom_{\max}(A_\wedge(\pmb\eta))$ do generally depend on $\pmb\eta$. However
\begin{equation}\label{DomainsDependOnYOnly}
\display{300pt}{Suppose $A$ is a $w$-elliptic first order operator satisfying \eqref{NoBSpecOnBottom}. Then $\Dom_{\min}(A_\wedge(\pmb\eta))$ and $\Dom_{\max}(A_\wedge(\pmb\eta))$ depend  on $\pmb\eta\in T^*\Y$ only through $y=\pi_\Y\pmb\eta$.}
\end{equation}
This follows at once from the observation that, in the first order case and with $0$ being the origin in $T^*_y\Y$, $A_\wedge(\pmb\eta)-A_\wedge(0)$ is a bounded operator 
$x^{-\gamma}L^2_b(\Z_y^\wedge;E_{\Z_y^\wedge})\to x^{-\gamma}L^2_b(\Z_y^\wedge;F_{\Z_y^\wedge})$. See also  \cite[Proposition 4.1]{GiMe01}.

\medskip
It will be useful later on to regard $A_\wedge(\pmb\eta)$ as a family of operators acting fiberwise on (fiberwise subspaces of) a Hilbert space bundle. To this end view 
\begin{equation*}
\Ha_E=\bigsqcup_{y\in \Y}x^{-\gamma}L^2_b(\Z_y^\wedge;E_{\Z_y^\wedge})
\end{equation*}
as a smooth Hilbert space bundle over $\Y$. The local trivializations of $\Ha_E$, carefully described in Section 3 of \cite{GiKrMe10}, are unitary and constructed using trivializations of the boundary fibration $\N\to \Y$ over a neighborhood $U\subset \Y$ of the central point $y_0=\pi_\Y(\pmb\eta_0)\in \Y$, diffeomorphisms $\Z_y\to \Z_{y_0}$, $y\in U$, plus parallel transport with respect to some auxiliary connection on $E$ along ``horizontal'' curves in  $\N$ issuing normally from $\Z_{y_0}$, plus certain factors ensuring unitarity. Such trivializations $\Psi$ preserve Sobolev spaces as well as the spaces $\K^{s,\gamma}_t$ discussed above and have the following feature: 
\begin{equation*}
\display{300pt}{
A family of elements $u(y)$ of $x^{-\gamma}L^2_b(\Z_{y_0}^\wedge;E_{\Z_{y_0}^\wedge})$, $y\in U$, is smooth as section of $E_{\Z^\wedge_{y_0}}$ for $(x,y,z)\in \lbra0,\infty\rpar\times U\times\Z_{y_0}$  iff the corresponding section $\Psi^{-1}u$ of $\Ha_E$ on $U$ is smooth as a section of $E^\wedge$ over $\open\wp_\wedge^{-1}(U)$. 
}
\end{equation*}
See \eqref{MicrolocalBdyFiberBundle} for the definition of the map $\wp_\wedge$. We lift $\Ha_E$ to a Hilbert space bundle over $T^*\Y$ using the map $\wp_{\wedge,\Y}$ defined through the same diagram, likewise the analogously defined vector bundle $\Ha_F$. The normal family of $A$ acts fiberwise, as
\begin{equation*}
A_\wedge:\Dom_{\max}(A_\wedge)\subset \pi_\Y^*\Ha_E\to \mathcal \pi_\Y^*\Ha_F
\end{equation*}
where we have set
\begin{equation}\label{DmaxBundle}
\Dom_{\max}(A_\wedge)=\bigsqcup_{\pmb\eta\in T^*\Y}\Dom_{\max}(A_\wedge(\pmb\eta)).
\end{equation}
With this point of view the collection of fiberwise minimal domains of $A_\wedge$ can be regarded, in the presence of \eqref{NoBSpecOnBottom}, as a a subbundle of $\pi_\Y^*\Ha_E$, a trivial Hilbert space bundle with typical fiber $\K^{m,-\gamma+m}_{\gamma-\frac{1}{2}\dim\Z^\wedge}(\Z^\wedge;E_{\Z^\wedge})$:
\begin{equation}\label{BundleOfMinimalDomains}
\display{300pt}{\begin{center}
\begin{picture}(140,50)
\put(80,0){$T^*\Y$.}
\put(0,45){$\K^{m,-\gamma+m}_{\gamma-1/2\dim \Z^\wedge}\embed \Dom_{\min}(A_\wedge)\embed \Ha_E$}
\put(89,39){\curve(0,0,0,-27)\put(0,-27){\vector(0,-1){0}}}
\end{picture}
\end{center}}
\end{equation}

\section{Trace bundles}\label{sec-TraceBundles}

Let $A\in x^{-m}\Diff^m_e(\M;E,F)$ be $w$-elliptic and assume that for some $\gamma\in \R$,
\begin{equation}\label{NobSpecOnCritLinesOrderm}
\spec_e(A)\cap\big(\Y\times\set{\sigma\in \C:\Im\sigma=\gamma,\ \gamma-m} \big)=\emptyset.
\end{equation}
The trace bundle of $A$ (relative to the  weight $x^{2\gamma}$) was defined in \cite[Section 6]{KrMe12a}. It is the vector bundle $\trb\to\Y$ whose fiber at $y\in \Y$ consists of those distributional sections of $E_{\Z_y^\wedge}$ of the form
\begin{equation}\label{ProtoTrace}
u=\sum_{\substack{\sigma\in \spec_b(\bP_y)\\ \gamma-m<\Im\sigma<\gamma}}\sum_{\ell=0}^{L_\sigma} u_{\sigma,\ell}x^{\im \sigma}\log^\ell x 
\end{equation}
that belong to the kernel of $\bP_y$. The $u_{\sigma,\ell}$ are sections of $E$ along $\Z_y$, smooth because of the ellipticity of $A$. We showed there that 
\begin{equation*}
\trb=\bigsqcup_{y\in\Y}\trb_y
\end{equation*}
with the canonical projection map $\pi_\trb:\trb\to\Y$ is, under the assumption \eqref{NobSpecOnCritLinesOrderm}, naturally a smooth vector bundle. The notion of smoothness is given by the following statement:
\begin{equation}\label{Smoothness}
\display{300pt}{the local smooth sections of $\trb\to\Y$ are elements of the form \eqref{ProtoTrace} depending on $y$ which as sections of $E^\wedge$ over the interior $\open \N^\wedge$ of $\N^\wedge$ are smooth in the usual sense.}
\end{equation}

\begin{proposition}\label{xdxAction}
The operator $x\partial_x$ acts fiberwise on $\trb$ and defines a smooth vector bundle homomorphism.
\end{proposition}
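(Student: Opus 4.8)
The plan is to verify both claims locally, using the description of $\trb_y$ in \eqref{ProtoTrace} and the smoothness criterion \eqref{Smoothness}. First I would check that $x\partial_x$ maps $\trb_y$ to itself. Fix $y\in\Y$ and $u\in\trb_y$, written as in \eqref{ProtoTrace}. Applying $x\partial_x$ term by term, we compute $x\partial_x(u_{\sigma,\ell}x^{\im\sigma}\log^\ell x)=\im\sigma\,u_{\sigma,\ell}x^{\im\sigma}\log^\ell x+\ell\,u_{\sigma,\ell}x^{\im\sigma}\log^{\ell-1}x$, so $x\partial_x u$ is again a sum of the same type, supported on the same finite set of $\sigma$'s with exponents $\ell$ in the same range. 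It remains to see that $x\partial_x u$ lies in $\ker\bP_y$. For this I would use that $\bP_y$ is a $b$-operator along $\Z_y^\wedge$ with coefficients $a_{k0\beta}(0,y,z)$ independent of $x$: thus $\bP_y$ commutes with $x\partial_x$ up to a term built from $[x\partial_x,(xD_x)^k]$, and since $xD_x$ and $x\partial_x$ commute (they differ by the constant factor $\im$), in fact $[x\partial_x,\bP_y]=0$ exactly. Hence $\bP_y(x\partial_x u)=x\partial_x(\bP_y u)=0$, so $x\partial_x u\in\trb_y$.

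Next I would show that this fiberwise operator is a smooth bundle homomorphism. By the criterion \eqref{Smoothness}, a local section $y\mapsto u(y)$ of $\trb$ is smooth precisely when the corresponding section of $E^\wedge$ over $\open\N^\wedge$ is smooth in the ordinary sense. The operator $x\partial_x$ is a genuine smooth differential operator on $\open\N^\wedge$ (it is the infinitesimal generator of the radial action, hence a smooth vector field on all of $\N^\wedge$), so it carries smooth sections of $E^\wedge$ over $\open\N^\wedge$ to smooth sections. Therefore, if $u(y)$ is a smooth local section of $\trb$, then $(x\partial_x)u(y)$ is again a family of elements of the fibers $\trb_y$ (by the first part) which is smooth as a section of $E^\wedge$ over $\open\N^\wedge$, hence a smooth local section of $\trb$. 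This gives exactly that $x\partial_x:\trb\to\trb$ is a smooth vector bundle homomorphism covering the identity on $\Y$.

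The one point requiring a little care — and the step I'd flag as the main obstacle — is the interaction of $x\partial_x$ with the connection/trivialization data implicit in the smooth structure of $\trb$ and of $E^\wedge$. The bundle $E^\wedge=\pi_\wedge^*E$ and the smoothness notion for $\trb$ both rely on the lifted Hermitian connection and the parallel-transport conventions from Section~\ref{sec-SummaryWedgeOps}; one must confirm that differentiating in $x$ via $x\partial_x$ is compatible with these choices, i.e.\ that no extra non-smooth terms are introduced when passing between the abstract fiber description and the concrete realization over $\open\N^\wedge$. Since $x\partial_x$ is tangent to the fibers of $\pi_\wedge$ and the connection on $E^\wedge$ is pulled back from $E$ along $\pi_\wedge$, parallel transport along the radial direction is trivial, so $x\partial_x$ acts on sections of $E^\wedge$ by honest coefficient-wise differentiation in $x$ in any such frame; this is what makes the commutation $[x\partial_x,\bP_y]=0$ hold on the nose and the smoothness statement go through without correction terms. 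Once this compatibility is recorded, both assertions follow immediately from the two observations above.
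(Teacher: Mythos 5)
Your argument is correct and follows essentially the same route as the paper's proof: check term by term that $x\partial_x$ preserves the form \eqref{ProtoTrace}, use $[\bP_y,x\partial_x]=0$ to stay in $\ker\bP_y$, and invoke the smoothness criterion \eqref{Smoothness} together with the fact that $x\partial_x$ is a smooth vector field on $\open\N^\wedge$. The worry about connection/trivialization compatibility in your final paragraph is unnecessary, since smoothness of sections of $\trb$ is by definition smoothness as sections of $E^\wedge$ over $\open\N^\wedge$, which is exactly what the paper uses.
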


\begin{proof}
Let $y\in \Y$. If $u$ is of the form \eqref{ProtoTrace}, then $x\partial_x u$ has the same form. If further $u\in \ker \bP_y$, then also $\bP_y (x\partial_x u)\in \ker \bP_y$, since $\bP_y$ commutes with $x\partial_x$. Thus $x\partial_x:\trb_y\to\trb_y$. If now $u$ is a smooth section of $\trb$, then by definition it is smooth as a section of $E^\wedge$ over $\open\N^\wedge$, hence the same is true of $x\partial_x u$, and consequently, again by definition, $x\partial_x u$ is also a smooth section of $\trb$. Thus $x\partial_x$ defines a smooth homomorphism $\trb\to\trb$. 
\end{proof}

Later we will be interested in the vector bundle endomorphism
\begin{equation}\label{Generator}
\gen=x\partial_x+\gamma:\trb\to\trb,
\end{equation}
with the shift by $\gamma$ included for compatibility with the unitary action $(\kappa_\varrho u)(\nu)=\varrho^\gamma u(\varrho \nu)$ on $x^{-\gamma}L^2_b$. It is clear from the definition of the fiber $\trb_y$, see \eqref{ProtoTrace}, that, with $\gen_y$ denoting the action of $\gen$ on that fiber that
\begin{equation}\label{SpecGen}
\spec(\gen_y)=\set{\im\sigma+\gamma: \sigma\in \spec_b(\bA_y)\text{ and }\gamma-m<\Im \sigma <\gamma}.
\end{equation}

Sections of the trace bundle (of various regularities) are boundary values of elements in the maximal domain of $A$, see Theorem~\ref{RangeofGamma} in the case of a first order operator. Conversely, these sections, by being actually sections of $E^\wedge$ over $\open \N^\wedge$ they bring with themselves the essence of an extension operator. The following lemma should be viewed in the latter context.

\begin{lemma}\label{CanonicalMap}
The map $\mathfrak i: C^\infty(\Y;\trb)\to C^\infty(\open\N^\wedge;E^\wedge)$ that takes an element $u\in C^\infty(\Y;\trb)$ and regards it as a smooth section of $E^\wedge$ over $\open \N^\wedge$ is continuous.
\end{lemma}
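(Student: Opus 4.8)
The plan is to reduce the statement to a local assertion over $\Y$ and then to an explicit control of the finitely many building blocks $x^{i\sigma}\log^\ell x$ appearing in \eqref{ProtoTrace}. First I would fix a point $y_0\in\Y$ and a trivializing neighborhood $U\subset\Y$ of $y_0$ over which the boundary fibration \eqref{BdyFiberBundle} trivializes, over which $E$ is trivialized, and over which $\trb$ admits a smooth frame $s_1,\dots,s_N$ in the sense of \eqref{Smoothness}; by definition each $s_j$ is a family, depending smoothly on $y\in U$, of sections of $E_{\Z_y^\wedge}$ of the form \eqref{ProtoTrace} which, regarded as sections of $E^\wedge$ over $\open\wp_\wedge^{-1}(U)$, are smooth in the ordinary sense. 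Writing $u\in C^\infty(\Y;\trb)$ over $U$ as $u=\sum_j a_j(y)s_j$ with $a_j\in C^\infty(U)$, the image $\mathfrak i(u)$ over $\open\wp_\wedge^{-1}(U)$ is $\sum_j \wp_\wedge^*a_j\cdot \mathfrak i(s_j)$, a finite $C^\infty(U)$-linear combination of fixed smooth sections of $E^\wedge$. Since multiplication by $\wp_\wedge^*a_j$ and finite sums are continuous on $C^\infty(\open\N^\wedge;E^\wedge)$ with its standard Fréchet topology (uniform convergence of all derivatives on compact subsets), this shows $\mathfrak i$ is continuous on each $\open\wp_\wedge^{-1}(U)$, and a partition of unity argument on $\Y$ globalizes this.

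The one point that needs care, and which I expect to be the main technical obstacle, is the topology carried by $C^\infty(\Y;\trb)$: because the fiberwise generator $\gen=x\partial_x+\gamma$ may have eigenvalues and Jordan block sizes that jump with $y$ (the phenomenon emphasized in the introduction and resolved in \cite{KrMe12a}), the bundle $\trb$ is not, a priori, one on which "smooth section" and "continuity" are entirely routine, and one must use the $C^\infty$ structure on $\trb$ exactly as it was constructed in \cite[Section 6]{KrMe12a} via \eqref{Smoothness}. The right way to handle this is to take \eqref{Smoothness} itself as the \emph{definition} of the topology on $C^\infty(\Y;\trb)$: a sequence (or net) $u_n\to u$ in $C^\infty(\Y;\trb)$ precisely when $\mathfrak i(u_n)\to\mathfrak i(u)$ in $C^\infty(\open\N^\wedge;E^\wedge)$ together with the requirement that all the $u_n$ and $u$ be sections of $\trb$. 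With this reading the continuity of $\mathfrak i$ is almost tautological; what the lemma really asserts is that this is a \emph{reasonable} topology, i.e. that the local frame description above is compatible with it, which is what the partition-of-unity / local-frame argument establishes.

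Concretely, I would organize the proof as follows. Step 1: recall from \eqref{Smoothness} the definition of smooth local sections of $\trb$ and record that, over a trivializing $U$, a smooth frame $s_1,\dots,s_N$ exists (this is part of the content of \cite{KrMe12a}). Step 2: express an arbitrary $u\in C^\infty(\Y;\trb)$ locally as $u=\sum_j a_j s_j$, $a_j\in C^\infty(U)$, and note that the coefficients $a_j$ are recovered continuously from $u$ (this uses that the $s_j(y)$ are pointwise linearly independent and that the topology on $\trb_y$—a finite-dimensional space—is the obvious one; the jumps in $\gen_y$ do not obstruct this because we are working with a fixed frame over a fixed $U$). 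Step 3: observe $\mathfrak i(u)|_{\open\wp_\wedge^{-1}(U)}=\sum_j \wp_\wedge^* a_j\cdot \mathfrak i(s_j)$ and that each factor depends continuously, in the $C^\infty(\open\N^\wedge;E^\wedge)$ topology, on the data; conclude local continuity. Step 4: choose a locally finite cover $\{U_\alpha\}$ of $\Y$ by such trivializing sets and a subordinate partition of unity $\{\chi_\alpha\}\subset C^\infty(\Y)$, write $\mathfrak i(u)=\sum_\alpha \wp_\wedge^*\chi_\alpha\cdot\mathfrak i(u)$, and deduce global continuity from the local statements and the continuity of multiplication by the fixed functions $\wp_\wedge^*\chi_\alpha$ on $C^\infty(\open\N^\wedge;E^\wedge)$. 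The estimates involved in Steps 2–4 are the routine seminorm estimates for the Fréchet topology of smooth sections and I would not spell them out.
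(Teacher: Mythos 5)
Your argument is essentially the paper's own proof: localize over a trivializing open set $U\subset\Y$ with a smooth frame $\tau_1,\dots,\tau_N$ of $\trb$, write $u=\sum_\mu u^\mu\tau_\mu$ with $u^\mu\in C^\infty(U)$, note $\mathfrak i(u)=\sum_\mu \wp_\wedge^*(u^\mu)\,\mathfrak i\tau_\mu$, and reduce continuity to that of the obvious map $C^\infty(U;\C^N)\to C^\infty(\open\wp_\wedge^{-1}(U);E^\wedge)$, with your partition-of-unity globalization being the routine step the paper leaves implicit. One small caveat: there is no need (and it would be somewhat circular) to re-read \eqref{Smoothness} as \emph{defining} the topology on $C^\infty(\Y;\trb)$ — that space carries the ordinary Fr\'echet topology of smooth sections of the vector bundle whose smooth structure is constructed in \cite{KrMe12a}, and your Steps 1--4 already establish continuity with respect to it.
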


\begin{proof}
Let $U\subset \Y$ be open, the domain of a smooth frame $\tau_1\dotsc\tau_N$ of $\trb$. The map $\mathfrak i$ also makes sense as a map $C^\infty(U;\trb)\to C^\infty(\open W;E^\wedge)$ where $W=\wp_\wedge^{-1}U$. If $u=\sum_\mu u^\mu\tau_\mu$ with smooth $u_\mu$, then $\mathfrak i(u)=\sum_\mu \wp_\wedge^*(u^\mu)\mathfrak i\tau_\mu$. The continuity of $\mathfrak i$ is thus seen to be equivalent to the continuity of the map $C^\infty(U;\C^N)\to C^\infty(\open W;E^\wedge)$, which is obvious.
\end{proof}
The map $\mathfrak i$ will usually be implicit in the sequel.

\medskip
The following observation will be used in the proof of Proposition~\ref{BasicParing}:

\begin{lemma}\label{SmoothnesOnMellinSide}
Let $U\subset \Y$ be open, $u:U\to \trb$ a smooth section of $\trb$. Pick $\omega\in C_c^\infty(\R)$ arbitrarily with $\omega(x)=1$ near $0$. Then the Mellin transform of $\omega u$,
\begin{equation}\label{MellinOmegaU}
\widehat{\omega u}(y,z,\sigma)=\int_0^\infty \omega(x) u(x,y,z) x^{-\im \sigma}\,\frac{dx}{x}
\end{equation}
is smooth in the complement of
\begin{equation*}
\set{(y,z,\sigma)\in \wp^{-1}(U)\times\C:(y,\sigma)\in \spec_e(A)}
\end{equation*}
in $\wp^{-1}(U)\times\C$ and meromorphic in $\C$ for each fixed $(y,z)\in \wp^{-1}(U)$. 
\end{lemma}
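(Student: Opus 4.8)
The plan is to reduce the statement about the Mellin transform $\widehat{\omega u}(y,z,\sigma)$ to a local computation using the structure of the smooth sections of $\trb$ described in \eqref{Smoothness} together with the classical regularity theory for the Mellin transform of functions with conormal asymptotics. First I would work over a coordinate patch $U \subset \Y$ equipped with a smooth frame $\tau_1,\dots,\tau_N$ of $\trb|_U$, so that $u = \sum_\mu u^\mu(y)\tau_\mu$ with $u^\mu \in C^\infty(U)$, and by linearity it suffices to treat a single frame section $\tau_\mu$. By \eqref{ProtoTrace} each $\tau_\mu$, for fixed $y$, is a finite sum $\sum_{\sigma,\ell} \tau_{\mu,\sigma,\ell}(y,z)\, x^{\im\sigma}\log^\ell x$ with the sum running over $\sigma \in \spec_b(\bP_y)$ with $\gamma - m < \Im\sigma < \gamma$, and the coefficients $\tau_{\mu,\sigma,\ell}$ are smooth sections of $E$ over $\wp^{-1}(U)$ (this smoothness being precisely the content of \eqref{Smoothness}, once one knows the local structure of $\trb$ established in \cite{KrMe12a}).

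Next I would compute the Mellin transform of a single term $\omega(x)\, x^{\im\sigma}\log^\ell x$: one has
\begin{equation*}
\int_0^\infty \omega(x)\, x^{\im\sigma - \im\sigma'}\log^\ell x \,\frac{dx}{x}
= \Big(\frac{d}{d(\im\sigma')}\Big)^{\!\ell}\!\!\int_0^\infty \omega(x)\, x^{\im(\sigma - \sigma')}\,\frac{dx}{x},
\end{equation*}
and the basic fact—standard in the $b$-calculus, cf. Melrose \cite{Mel93}—is that $\sigma' \mapsto \int_0^\infty \omega(x)\, x^{-\im\sigma'}\,\frac{dx}{x}$ is meromorphic in $\sigma'$ with a single simple pole at $\sigma' = 0$ (coming from the region $x$ near $0$ where $\omega \equiv 1$) and is entire away from there since $\omega$ is compactly supported and smooth. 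Hence the Mellin transform of the term $x^{\im\sigma}\log^\ell x$ is meromorphic in $\sigma'$ with a pole of order $\ell+1$ at $\sigma' = \sigma$ and holomorphic elsewhere. Summing over the finitely many $(\sigma,\ell)$ appearing in $\tau_\mu$ at the point $y$, the poles of $\widehat{\omega\tau_\mu}(y,z,\sigma)$ in $\sigma$ lie exactly at the points $\sigma \in \spec_b(\bP_y)$ with $\gamma - m < \Im\sigma < \gamma$, which are among $\spec_e(A) \cap (\{y\}\times\C)$; away from those the transform is holomorphic in $\sigma$, giving meromorphy in $\sigma$ for each fixed $(y,z)$.

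It remains to upgrade this to joint smoothness in $(y,z,\sigma)$ away from $\{(y,z,\sigma) : (y,\sigma)\in\spec_e(A)\}$. The clean way to see this is to avoid decomposing into individual $x^{\im\sigma}\log^\ell x$ terms—whose exponents $\sigma$ move with $y$ and may even collide—and instead argue directly from \eqref{Smoothness}: $\omega u$ is, jointly in $(x,y,z)$, a smooth section of $E^\wedge$ over $\open\N^\wedge$ (for $x$ away from $0$) which near $x = 0$ has a conormal expansion with smooth coefficients, exponents, and log-powers controlled by $\spec_e(A)$ over the patch. One then applies the standard Mellin regularity statement with parameters: if $v(x,y,z)$ is smooth in $(y,z)$, compactly supported in $x$, smooth in $x$ for $x > 0$, and has near $x = 0$ a (finite, in this first-order/bounded-strip situation) conormal expansion $v \sim \sum v_{j}(y,z)\, x^{\im\sigma_j(y)}\log^{\ell_j}x$ with the data depending smoothly on $(y,z)$, then $\widehat v(y,z,\sigma)$ is smooth in $(y,z,\sigma)$ on the complement of the graph $\{\sigma = \sigma_j(y)\}$ and meromorphic in $\sigma$ with poles located there. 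I expect this last point—making precise, with full joint regularity in the parameters $(y,z)$, the passage from the conormal expansion of $\omega u$ to the pole structure of its Mellin transform, especially handling possibly varying and colliding exponents $\sigma_j(y)$—to be the main technical obstacle; everything else is the bookkeeping of finite sums and the elementary Mellin transform of $\omega(x)x^{\im\sigma}\log^\ell x$. One can sidestep the collision issue by noting that smoothness of $\widehat{\omega u}$ is a local assertion near a point $(y_0,z_0,\sigma_0)$ with $(y_0,\sigma_0)\notin\spec_e(A)$: there is a neighborhood of $\sigma_0$ and of $y_0$ on which no element of $\spec_e(A)$ lies, so on that neighborhood $\widehat{\omega u}$ is given by a Cauchy-type contour integral of $\omega u$'s Mellin transform over a line, and differentiating under the integral in $(y,z,\sigma)$ (legitimate by the smoothness of $\omega u$ and the uniform decay of the integrand in vertical strips) yields the claimed joint smoothness.
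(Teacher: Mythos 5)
Your first step (fixed $(y,z)$) is fine: for each $y$ the element $u(\cdot,y,\cdot)\in\trb_y$ is a finite sum \eqref{ProtoTrace}, the Mellin transform of $\omega(x)x^{\im\sigma}\log^\ell x$ has a single pole of order $\ell+1$ at the exponent, and meromorphy in $\sigma$ with poles among $\spec_b(\bP_y)$ follows. The gap is in the joint smoothness, which is the actual content of the lemma. You correctly note that the coefficients and exponents in \eqref{ProtoTrace} need not depend smoothly on $y$ (they can branch and collide; this is exactly why the smooth structure of $\trb$ is nontrivial), so the ``Mellin regularity with smoothly varying conormal data'' route is unavailable. But the proposed sidestep does not close the gap: differentiating \eqref{MellinOmegaU} under the integral in $y$ requires uniform bounds on $D_y^\alpha u$ (and even uniform-in-$y$ conormal bounds on $D_z^\beta u$) as $x\to 0$, and smoothness of $u$ as a section of $E^\wedge$ over the \emph{open} set $\open\N^\wedge$ gives no such control a priori. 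In the paper's own logical order this control is a \emph{consequence} of Lemma~\ref{SmoothnesOnMellinSide} (it is how the bounds on $\omega(x[\eta]_y/[\eta])D_y^\alpha\tau_\mu$ are obtained in the proof of Lemma~\ref{Poissonsymbol}), so assuming it here is circular. Moreover, for $\Im\sigma_0\leq\gamma$ the function near $(y_0,z_0,\sigma_0)$ is the meromorphic continuation and is not given by the integral \eqref{MellinOmegaU} at all, so the ``Cauchy-type contour integral over a line'' needs to be an identity you actually establish, not an appeal to the defining integral.

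The missing ingredient is the equation $\bP_y u(\cdot,y,\cdot)=0$, which you never use beyond pole bookkeeping. Since $\omega\equiv 1$ near $x=0$, one has $\bP_y(\omega u)=[\bP_y,\omega]u$, which is supported where $d\omega\neq 0$, i.e.\ in a compact subset of $x>0$, where $u$ is jointly smooth by \eqref{Smoothness}; hence its Mellin transform is entire in $\sigma$ and jointly smooth in $(y,z,\sigma)$ (differentiation under the integral is now trivial). Because the coefficients of $\bP_y$ are $x$-independent, $\bPhat_y(\sigma)\,\widehat{\omega u}(y,\cdot,\sigma)=\big(\bP_y(\omega u)\big)^{\widehat{\ }}(y,\cdot,\sigma)$, so
$\widehat{\omega u}(y,\cdot,\sigma)=\bPhat_y(\sigma)^{-1}\big([\bP_y,\omega]u\big)^{\widehat{\ }}(y,\cdot,\sigma)$,
and since $(y,\sigma)\mapsto\bPhat_y(\sigma)^{-1}$ is smooth off $\spec_e(A)$ and meromorphic in $\sigma$ for fixed $y$ with poles in $\spec_b(\bP_y)$, both assertions of the lemma follow at once. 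The paper prints no proof, but this indicial-family inversion is the argument implicit in the text immediately following the lemma (where the entireness of $\bP_y(\omega u)^{\widehat{\ }}=\bPhat_y\widehat{\omega u}$ is recorded) and in its use in Proposition~\ref{BasicParing}; your elementary computation recovers the fixed-$(y,z)$ statement but cannot, as written, deliver the joint regularity.
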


Functions like $\omega$ in the statement of the lemma are referred to as cut-off functions.

\medskip
For the sake of notational simplicity when dealing with adjoints we will henceforth assume that
\begin{equation*}
\gamma=m/2, 
\end{equation*}
a situation we are reduced to by replacing the original operator $A$ with 
\begin{equation*}
x^{\gamma-m/2}Ax^{m/2-\gamma}. 
\end{equation*}
Condition \eqref{NobSpecOnCritLinesOrderm} remains valid for the new operator, now with $\gamma=m/2$; of course also the definition of $u$ in \eqref{ProtoTrace} changes accordingly.

\medskip
We use the densities and Hermitian structure introduced above to give $\trb$ a Hermitian structure as follows. Fix once and for all a cut-off function $\omega_0$.  For $y\in \Y$ and  $u$, $v\in \trb_y$ set
\begin{equation*}
(u,v)_y=\int_{\Z_y}(\omega_0 u,\omega_0v)_{L^2(\Z_y)}\,\frac{dx}{x}.
\end{equation*}
The smoothness of this Hermitian structure is proved using the trivializations defined in \cite[Section 3]{GiKrMe10} for the Hilbert space bundle over $\Y$ whose fiber over $y$ is $L^2(\Z_y;E_{\Z_y})$ and the trivializations of $\trb$ obtained in \cite[Section 6]{KrMe12a}.

Let $A^\star$ denote the formal adjoint of $A$ with respect to $x^{-m/2}L^2_b$:
\begin{equation*}
(Au,v)_{x^{-m/2}L^2_b}=(u,A^\star v)_{x^{-m/2}L^2_b},\quad u\in C_c^\infty(\open\N^\wedge;E^\wedge),\ v\in C_c^\infty(\open\N^\wedge;F^\wedge).
\end{equation*}
It is easy to see that if $A=x^{-m}P$ with $P\in \Diff^m_b(\M;E,F)$, then $A^\star=x^{-m}P^\star$ with $P^\star\in \Diff^m_e(\M;F,E)$ the operator satisfying
\begin{equation*}
(Pu,v)_{L^2_b}=(u,P^\star v)_{L^2_b},\quad u\in C_c^\infty(\open\N^\wedge;E^\wedge),\ v\in C_c^\infty(\open\N^\wedge;F^\wedge);
\end{equation*}
observe that here the weight is $x^0$. The $w$-ellipticity of $A$ gives the $w$ ellipticity of $A^\star$, and the relation of the latter with $P^\star$ gives that $A^\star$ also satisfies \eqref{NobSpecOnCritLinesOrderm} with respect to $\gamma=m/2$. Indeed, the formula
\begin{equation*}
\bPhatstar(\sigma)=\bPhat(\overline\sigma)^\star
\end{equation*}
holds. Let then $\trb^\star\to\Y$ denote the trace bundle of $A^\star$ relative to the weight $x^m$.

Note that if $\omega$ is a cut-off function and $u\in \trb_y$, then 
\begin{equation}\label{InDmaxExplicit}
\omega u\in x^{-m/2}L^2_b(\Z_y^\wedge;E^\wedge_{\Z_y^\wedge})\text{ and }\bA_y(\omega u)\in x^{-m/2}L^2_b(\Z_y^\wedge;F^\wedge_{\Z_y^\wedge}),
\end{equation}
in other words, $\omega u\in \Dom_{\max}(\bA_y)$. Indeed, the first of the assertions in \eqref{InDmaxExplicit} is evident form the form of $u$ in \eqref{ProtoTrace} and the smoothness of the coefficients, and the second results from the fact that $\bA_y(\omega u)\in C_c^\infty(\open \Z^\wedge_y;E_{\Z^\wedge_y})$, the latter because $\bP_yu=0$ and $\omega=1$ near $0$. 

Note also that with $u$ and $\omega$ as in the previous paragraph, the Mellin transform of $\omega u$ (given by \eqref{MellinOmegaU}) is, as a $C^\infty(\Z_y)$-valued function, meromorphic in all of $\C$ with poles at $\spec_b(\bA_y)$ with singular part independent of $\omega$, and that 
\begin{equation*}
\bP_y(\omega u)\!\widehat{\phantom{P}}(\sigma)=\bPhat_y(\sigma)\widehat{\omega u}(\sigma)
\end{equation*}
is entire. We will write $\sing_\Omega\widehat u$ for the singular part of $\widehat{\omega u}$ in $\Omega$:
\begin{equation}\label{SingularPart}
\sing\widehat u(\sigma) =\frac{\im}{2\pi}\oint_{\partial\Omega}\frac{\widehat{\omega u}(\zeta)}{\zeta-\sigma}\,d\zeta
\end{equation}
for large $\sigma$ if $\widehat{\omega u}(\sigma)$ has no poles on $\partial\Omega$. The integral is computed with the counter-clockwise orientation.

\begin{proposition}\label{BasicParing}
Define $\beta:\trb\times\trb^\star\to\C$ as follows. Pick cut-off functions $\omega$, $\tilde \omega$ and let
\begin{equation*}
\beta_y(u,v) = [\omega u,\tilde\omega v]_{\bA_y},\ u \in \trb_y,\ v\in \trb^\star_y,\ y\in \Y
\end{equation*}
where
\begin{equation*}
[\omega u,\tilde\omega v]_{\bA_y}=\big(\bA_y(\omega u),\tilde\omega v\big)_{x^{-m/2}L^2_b}-\big(\omega u,\bA^\star_y(\tilde\omega v)\big)_{x^{-m/2}L^2_b}
\end{equation*}
Then $\beta$, which is in fact independent of the particular choice of cut-off functions, is a smooth nondegenerate sesquilinear pairing of $\trb$ and $\trb^\star$. The form $\beta$ establishes an isomorphism between $\trb^\star$ and the anti-dual of $\trb$ (and of $\trb$ and the antidual of $\trb^\star$).
\end{proposition}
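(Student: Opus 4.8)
The plan is: first check that the defining expression makes sense and is independent of the chosen cut-offs; then deduce smoothness by reducing $\beta_y(u,v)$ to an inner product of two sections that vanish for $x$ near $0$ and near $\infty$; and finally establish fibrewise nondegeneracy, which is the heart of the matter, deducing the anti-dual isomorphisms as a formal consequence.

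To make sense of the definition I would note that $\omega u\in\Dom_{\max}(\bA_y)$ by \eqref{InDmaxExplicit} and, since $\bPhatstar(\sigma)=\bPhat(\overline\sigma)^\star$, the operator $A^\star$ also satisfies \eqref{NobSpecOnCritLinesOrderm} for $\gamma=m/2$, so the analogue of \eqref{InDmaxExplicit} for $A^\star$ gives $\tilde\omega v\in\Dom_{\max}(\bA^\star_y)$; hence both terms of $[\,\cdot\,,\cdot\,]_{\bA_y}$ are finite and $\beta_y$ is sesquilinear. For cut-off independence I would use that changing $\omega$ alters $\omega u$ by an element of $C_c^\infty(\open\Z_y^\wedge;E)$ (and changing $\tilde\omega$ alters $\tilde\omega v$ by an element of $C_c^\infty(\open\Z_y^\wedge;F)$), while $[w,v']_{\bA_y}=0$ whenever $w$ or $v'$ lies in $C_c^\infty$ of the open interior --- plain integration by parts, $\bA^\star_y$ being the formal adjoint of $\bA_y$. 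We are then free to take $\tilde\omega\equiv1$ near $\supp\omega$; using $\bP_y u=0$ and $\bP^\star_y v=0$ one has $\bA_y(\omega u)=x^{-m}[\bP_y,\omega]u$ and $\bA^\star_y(\tilde\omega v)=x^{-m}[\bP^\star_y,\tilde\omega]v$, both compactly supported in $\open\Z_y^\wedge$, with supports in $\supp\omega'$ resp.\ $\supp\tilde\omega'$; since $\omega$ vanishes on $\supp\tilde\omega'$ the second term of $[\,\cdot\,,\cdot\,]_{\bA_y}$ drops out, and since $\tilde\omega v=v$ on $\supp\omega'$ one is left with $\beta_y(u,v)=\big(\bA_y(\omega u),\chi v\big)_{x^{-m/2}L^2_b}$ for any $\chi\in C_c^\infty((0,\infty))$ equal to $1$ on $\supp\omega'$.

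For smoothness I would argue that if $u$, $v$ are smooth sections of $\trb$, $\trb^\star$ then, by the very definition of the smooth structure, they are smooth sections of $E^\wedge$, $F^\wedge$ over $\open\N^\wedge$; hence $\bA(\omega u)=x^{-m}[\bP,\omega]u$ and $\chi v$ are smooth sections of $F^\wedge$ over $\open\N^\wedge$ that vanish for $x$ near $0$ and near $\infty$. By the characterisation of smooth sections of the Hilbert bundle $\Ha_F$ through the unitary trivialisations recalled from \cite[Section 3]{GiKrMe10}, they then represent smooth $x^{-m/2}L^2_b$-valued functions of $y$ on charts, and $\beta$ is smooth because $\beta_y(u,v)$ is the Hermitian inner product of the two.

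The main work is nondegeneracy. I would use the standard fact that the boundary form $[\,\cdot\,,\cdot\,]_{\bA_y}$ on $\Dom_{\max}(\bA_y)\times\Dom_{\max}(\bA^\star_y)$ has null spaces exactly $\Dom_{\min}(\bA_y)$ and $\Dom_{\min}(\bA^\star_y)$ (the Hilbert-space adjoint of $(\bA_y,\Dom_{\min}(\bA_y))$ is $(\bA^\star_y,\Dom_{\max}(\bA^\star_y))$, and conversely), so it descends to a perfect pairing of the finite-dimensional quotients $\Dom_{\max}(\bA_y)/\Dom_{\min}(\bA_y)$ and $\Dom_{\max}(\bA^\star_y)/\Dom_{\min}(\bA^\star_y)$. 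It then suffices to show that $u\mapsto[\omega u]$ and $v\mapsto[\tilde\omega v]$ are isomorphisms of $\trb_y$, $\trb^\star_y$ onto these quotients. Injectivity is easy: if $\omega u\in\Dom_{\min}(\bA_y)$ then, by \eqref{DomMinAWedge}, $u\in x^{-\gamma+m}L^2_b$ near $x=0$, which forces $u=0$ since a nonzero $u$ as in \eqref{ProtoTrace} has a most singular term $u_{\sigma_0,\ell_0}x^{\im\sigma_0}\log^{\ell_0}x$ with $\Im\sigma_0>\gamma-m$, not in $x^{-\gamma+m}L^2_b$ near $0$; likewise for $v$. Surjectivity --- that the null solutions confined to the critical strip exhaust the jump $\Dom_{\max}(\bA_y)/\Dom_{\min}(\bA_y)$ --- is the substantive input, and is exactly where the first-order hypothesis on $A$ enters; here I would invoke the structure theory of maximal domains of first-order $b$-elliptic operators underlying the construction of $\trb$ in \cite{KrMe12a}. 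Alternatively one can compute $\beta_y$ in closed form: since $\bP_y u=0$ and $\bP^\star_y v=0$ throughout $\open\Z_y^\wedge$, a Green/Wronskian identity for the first-order $b$-operator $\bP_y$ yields $\beta_y(u,v)=\im\,(a_1 u(x,\cdot),v(x,\cdot))_{L^2(\Z_y;F)}$, independent of $x\in(0,\infty)$, where $a_1=\wsym(A)(dx)|_{\Z_y}$ is invertible by $w$-ellipticity; writing $\bP_y=a_1(xD_x+B_y)$ with $B_y$ first-order elliptic on $\Z_y$, evaluation at $x=1$ identifies $\trb_y$ and $\trb^\star_y$ with the spans of the generalised eigenspaces of $B_y$ and $B^\star_y$ for eigenvalues with $|\Im\lambda|<m/2$, $B^\star_y$ is the adjoint of $B_y$ for the perfect pairing $(c,d)\mapsto(a_1 c,d)_{L^2(\Z_y;F)}$, and symmetry of that strip about the real axis puts the two spans in perfect duality. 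Either way $\beta_y$ is a perfect pairing, so $\dim\trb_y=\dim\trb^\star_y$, and the asserted bundle isomorphisms of $\trb^\star$ with the anti-dual of $\trb$ (and of $\trb$ with the anti-dual of $\trb^\star$) follow from fibrewise nondegeneracy together with the smoothness of $\beta$. The hard part is the surjectivity step (equivalently, pinning down the closed form above): for operators of order greater than one $\trb_y$ generally does not exhaust $\Dom_{\max}(\bA_y)/\Dom_{\min}(\bA_y)$, which is precisely why the theory is developed here in the first-order case only.
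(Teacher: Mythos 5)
Your argument is correct in substance but follows a different route from the paper's on both points the paper actually works out. For smoothness, the paper computes the Green form by Plancherel on the Mellin side and rewrites it as the contour integral \eqref{PairingViaMellin} over a rectangle enclosing the boundary spectrum, after which smoothness in $y$ is read off via Lemma~\ref{SmoothnesOnMellinSide}; you instead exploit cut-off independence to take $\tilde\omega\equiv 1$ near $\supp\omega$, so that the form collapses to $\big(\bA_y(\omega u),\chi v\big)_{x^{-m/2}L^2_b}$ with $\bA_y(\omega u)=x^{-m}[\bP_y,\omega]u$ and $\chi v$ supported in a fixed compact subset of the open cone, and then differentiate under the fiber integral. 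Both are valid; the paper's formula has the extra payoff of being reused verbatim in Proposition~\ref{generatoradjoint}, while your localization is more elementary. For nondegeneracy the paper simply quotes the well-known facts, whereas you spell out the two ingredients (the nullspaces of the Green form on $\Dom_{\max}\times\Dom_{\max}$ are the minimal domains, plus the identification of $\trb_y$ and $\trb^\star_y$ with the quotients), and your closed-form alternative $\beta_y(u,v)=\im\,(a_1u(x,\cdot),v(x,\cdot))_{L^2(\Z_y)}$ with $a_1=\wsym(A)(dx)$, together with spectral duality for $B_y$ and its adjoint relative to $(c,d)\mapsto(a_1c,d)$, is correct and gives a genuinely self-contained proof in the first-order case.

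One assertion, however, is wrong, and it matters for the generality in which the proposition is stated. The surjectivity of $u\mapsto[\omega u]$ onto $\Dom_{\max}(\bA_y)/\Dom_{\min}(\bA_y)$ is not where the first-order hypothesis enters, and it does not fail for higher order: for the dilation-invariant model operator $\bA_y$ this identification holds for every order $m$ (this is the standard cone-operator structure theory of \cite{GiMe01} and \cite{GiKrMe07}; it is not contained in \cite{KrMe12a}, which you cite), and it is precisely what the paper's ``well known'' appeals to --- which is also why Proposition~\ref{BasicParing} appears before the restriction to $m=1$. The first-order hypothesis is needed elsewhere in the paper (the $\Ring$-module property of $\Dom_{\max}(A)$, the $\pmb\eta$-independence of the domains of $A_\wedge(\pmb\eta)$), not here. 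Relatedly, your quotient route tacitly needs that on the infinite model cone the quotient receives no contribution from $x=\infty$ (for $m=1$ a cutting argument with $\chi(x/n)$ and an $O(1/n)$ commutator settles this); without that remark, perfectness of the pairing on the full quotients would not transfer to the subspaces $\omega\,\trb_y$ and $\tilde\omega\,\trb^\star_y$. Your Wronskian argument bypasses both issues, but only for $m=1$, so as written your proof of the general-$m$ statement still rests on the (true, but miscited and misattributed) structure theory.
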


\begin{proof}
That $[\omega u,\tilde\omega v]_{\bA_y}$ is independent of the choice of cut-off functions is well known and elementary: the Green form $[ \cdot,\cdot]_{\bA_y}$ vanishes if either argument belongs to the minimal domain. Also well known is the non-degeneracy of the Green form when viewed as acting on $\Dom_{\max}/\Dom_{\min}$. So only the assertion about smoothness needs to be addressed. 

To prove smoothness we exploit an idea in \cite{GiMe01}. Fix $y\in \Y$ and suppose $u\in \trb_y$, $v\in \trb^\star_y$. Plancherel's Theorem gives
\begin{align*}
\big(\bA_y(\omega u),\tilde\omega v&\big)_{x^{-m/2}L^2_b}\\
&= \frac{1}{2\pi}\int_{\set{\Im\sigma=m/2}}  \big(\bPhat_y(\sigma-m \im)\widehat {\omega u}(\sigma-m \im),\widehat {\tilde\omega v}(\sigma) \big)_{L^2(\Z_y)}\,d\sigma\\
&=\frac{1}{2\pi}\int_{-\infty}^\infty  \big(\widehat P(s-\im m/2)\widehat {\omega u}(s-m \im/2),\widehat {\tilde\omega v}(s+m \im/2) \big)_{L^2(\Z_y)}\,ds .
\end{align*}
as well as
\begin{equation*}
\big(\omega u,\bA_y^\star(\tilde\omega v)\big)_{x^{-m/2}L^2_b}
=\frac{1}{2\pi}\int_{\set{\Im\sigma=m/2}}\big(\widehat {\omega u}(\sigma),\bPhatstary(\sigma-m \im)\widehat {\tilde\omega v}(\sigma-m \im)\big)_{L^2(\Z_y)}\,d\sigma.
\end{equation*}
Using $\bPhatstary(\sigma)=\bPhat_y(\overline\sigma)^\star$ in the last integral we get
\begin{align*}
\big(\omega u,\bA_y^\star(\tilde\omega v&)\big)_{x^{-m/2}L^2_b}\\
&=\frac{1}{2\pi}\int_{\set{\Im\sigma=m/2}}\big(\widehat {\omega u}(\sigma),\bPhat_y(\overline \sigma+m \im)^\star\widehat {\tilde\omega v}(\sigma-m \im)\big)_{L^2(\Z_y)}\,d\sigma.
\\
&=\frac{1}{2\pi}\int_{\set{\Im\sigma=m/2}}\big(\bPhat_y(\overline \sigma+m \im) \widehat {\omega u}(\sigma),\widehat {\tilde\omega v}(\sigma-m \im)\big)_{L^2(\Z_y)}\,d\sigma.
\\
&=\frac{1}{2\pi} \int_{-\infty}^\infty\big(\bPhat_y(s+m \im/2) \widehat {\omega u}(s+m \im/2),\widehat {\tilde\omega v}(s-m \im/2)\big)_{L^2(\Z_y)}\,ds
\end{align*}
Thus
\begin{equation*}
[\omega u,\tilde\omega v]_{\bA_y}=\frac{1}{2\pi}\int_{\partial\Sigma} \big(\bPhat_y(\sigma)\widehat{\omega u},\widehat{\tilde\omega v}(\overline \sigma)\big)_{L^2(\Z_y)}\,d\sigma.
\end{equation*}
with $\Sigma=\set{\sigma\in \C:|\Im\sigma|<m/2}$ and the positive orientation for its boundary. Both
\begin{equation*}
\big(\bPhat_y(\sigma)\widehat{\omega u},\widehat{\tilde\omega v}(\overline \sigma)\big)_{L^2(\Z_y)}\text{ and }\big(\bPhat_y(\sigma)\widehat{u},\widehat{v}(\overline \sigma)\big)_{L^2(\Z_y)}
\end{equation*}
are meromorphic in $\sigma\in \C$ with poles at $\Sigma\cap \spec_b(\bA_y)$, and the difference is entire (recall that for instance $\widehat u$ is notation for the singular part of $\widehat{\omega u}$, see \eqref{SingularPart}). Therefore 
\begin{equation}\label{PairingViaMellin}
[\omega u,\tilde\omega v]_{\bA_y}=\frac{1}{2\pi}\int_{\partial R}\big(\bPhat_y(\sigma)\widehat{u},\widehat{v}(\overline \sigma)\big)_{L^2(\Z_y)}\,d\sigma
\end{equation}
where $R$ is an arbitrary rectangle containing $\spec_b(\bA_y)$. By definition of smoothness, if $u$ is a smooth section of $\trb$ and $v$ is one of $\trb^\star$  over some open $U\subset \Y$ such that $\spec_b(\bA_y)\cap \Sigma\subset R$ for $y\in U$, then the integrand in \eqref{PairingViaMellin} depends smoothly on $y\in U$, thus giving the smoothness of $y\mapsto \beta_y(u(y),v(y))$.
\end{proof}

\begin{proposition}\label{generatoradjoint}
The actions of $x\partial_x$ on $\trb$ and $\trb^\star$ are skew-adjoint to each other with respect to $\beta$.
\end{proposition}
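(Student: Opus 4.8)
The plan is to reduce the statement to the Mellin contour representation \eqref{PairingViaMellin} of $\beta$. Fix $y\in\Y$, take $u\in\trb_y$, $v\in\trb^\star_y$, and cut-off functions $\omega,\tilde\omega$. The elementary input is the identity
\begin{equation*}
\widehat{x\partial_x(\omega u)}(\sigma)=\im\sigma\,\widehat{\omega u}(\sigma),\qquad \widehat{x\partial_x(\tilde\omega v)}(\sigma)=\im\sigma\,\widehat{\tilde\omega v}(\sigma),
\end{equation*}
which follows from an integration by parts in $x$ in \eqref{MellinOmegaU} (the boundary terms at $x=0$ and $x=\infty$ vanish for $\Im\sigma$ large because $\omega,\tilde\omega$ are cut-offs and $\omega u,\tilde\omega v$ have the asymptotics \eqref{ProtoTrace} at the origin, and the identity then propagates to all of $\C$).

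Next I would note that $\omega\cdot(x\partial_x u)=x\partial_x(\omega u)-(x\omega')u$ with $(x\omega')u\in C_c^\infty(\open\Z^\wedge_y;E_{\Z^\wedge_y})\subset\Dom_{\min}(\bA_y)$, and that the Green form $[\,\cdot\,,\,\cdot\,]_{\bA_y}$ vanishes when one of its arguments lies in the minimal domain; hence $\beta_y(x\partial_x u,v)=[x\partial_x(\omega u),\tilde\omega v]_{\bA_y}$, and symmetrically $\beta_y(u,x\partial_x v)=[\omega u,x\partial_x(\tilde\omega v)]_{\bA_y}$ (using that $x\partial_x$ preserves $\trb^\star$, i.e. Proposition~\ref{xdxAction} applied to $A^\star$). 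By \eqref{InDmaxExplicit} and Proposition~\ref{xdxAction}, $x\partial_x(\omega u)\in\Dom_{\max}(\bA_y)$ and $x\partial_x(\tilde\omega v)\in\Dom_{\max}(\bA^\star_y)$, and their Mellin transforms $\im\sigma\,\widehat{\omega u}$, $\im\sigma\,\widehat{\tilde\omega v}$ possess the same meromorphy and horizontal-strip decay that drove the proof of Proposition~\ref{BasicParing} — e.g. $\bPhat_y(\sigma)\im\sigma\,\widehat{\omega u}(\sigma)=\widehat{\bP_y\bigl(x\partial_x(\omega u)\bigr)}(\sigma)$ is the Mellin transform of a compactly supported smooth section, since $\bP_y$ commutes with $x\partial_x$ and $\bP_y(\omega u)$ has compact support. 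Therefore the Plancherel computation of Proposition~\ref{BasicParing} applies verbatim with $x\partial_x(\omega u)$, resp. $x\partial_x(\tilde\omega v)$, in place of $\omega u$, resp. $\tilde\omega v$, and yields, with $R$ a rectangle containing $\spec_b(\bA_y)$ as in \eqref{PairingViaMellin},
\begin{align*}
\beta_y(x\partial_x u,v)&=\frac{1}{2\pi}\int_{\partial R}\bigl(\bPhat_y(\sigma)\,\im\sigma\,\widehat u(\sigma),\,\widehat v(\overline\sigma)\bigr)_{L^2(\Z_y)}\,d\sigma,\\
\beta_y(u,x\partial_x v)&=\frac{1}{2\pi}\int_{\partial R}\bigl(\bPhat_y(\sigma)\,\widehat u(\sigma),\,\im\overline\sigma\,\widehat v(\overline\sigma)\bigr)_{L^2(\Z_y)}\,d\sigma,
\end{align*}
where, exactly as in that proof, replacing $\widehat{\omega u},\widehat{\tilde\omega v}$ by their singular parts $\widehat u,\widehat v$ alters the integrand only by $\sigma$ times an entire function and hence leaves the integral over the closed contour $\partial R$ unchanged.

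To conclude, pull the scalar out of the conjugate-linear slot of the $L^2(\Z_y)$-inner product: $\bigl(a,\im\overline\sigma\,b\bigr)=\overline{\im\overline\sigma}\,(a,b)=-\im\sigma\,(a,b)$. Thus the two displayed integrals are negatives of one another, giving $\beta_y(x\partial_x u,v)=-\beta_y(u,x\partial_x v)$ for all $y$, which is the asserted skew-adjointness. I expect the only genuinely delicate part — the ``main obstacle'' — to be the bookkeeping of the entire correction terms (the integration-by-parts boundary contributions, the gap between $\omega\cdot x\partial_x u$ and $x\partial_x(\omega u)$, and the gap between the singular part of $\im\sigma\widehat u$ and $\im\sigma$ times the singular part of $\widehat u$): each has to be shown to contribute nothing, either because the Green form annihilates minimal-domain arguments or because an entire integrand has zero integral over $\partial R$. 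The sign is not something one gets to choose; it is produced purely by the sesquilinearity of $\beta$ through $\overline{\im\overline\sigma}=-\im\sigma$.
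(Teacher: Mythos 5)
Your proof is correct and follows essentially the same route as the paper: it rests on the contour representation \eqref{PairingViaMellin}, the identity that the (singular part of the) Mellin transform of $x\partial_x u$ is $\im\sigma\,\widehat u$, and moving the scalar across the sesquilinear $L^2(\Z_y)$ pairing to produce $\overline{\im\overline\sigma}=-\im\sigma$. The extra bookkeeping you carry out (the $(x\omega')u$ commutator term lying in the minimal domain, and entire corrections integrating to zero over $\partial R$) is exactly what the paper's terse computation leaves implicit, so no changes are needed.
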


\begin{proof}
Let $y\in \Y$, $u\in \trb_y$, $v\in \trb_y^\star$. Taking advantage of \eqref{PairingViaMellin} we have
\begin{align*}
\beta_y(x\partial_x u,v)
&=\frac{1}{2\pi}\int_{\partial R}\big(\bPhat_y(\sigma)(\im \sigma \widehat{u}),\widehat{v}(\overline \sigma)\big)_{L^2(\Z_y)}\,d\sigma\\
&=\frac{1}{2\pi}\int_{\partial R}\big(\bPhat_y(\sigma)( \widehat{u}),-\im \overline \sigma\widehat{v}(\overline \sigma)\big)_{L^2(\Z_y)}\,d\sigma\\
&=\beta_y(u,-x\partial_x v)
\end{align*}
\end{proof}

\section{The trace map for first order operators}\label{sec-TraceMap}

Henceforth we restrict our discussion to the case $m=1$ and weight $x$, i.e., $\gamma=1/2$; $A$ will always be an element of $x^{-1}\Diff^1(\M;E,F)$, and its minimal and maximal domains are subspaces of $x^{-1/2}L^2_b(\M;E)$. The operator $A$  will be assumed to be $w$-elliptic and to satisfy \eqref{NormalInjectivity} and \eqref{NobSpecOnCritLinesOrderm} with $\gamma=1/2$. The trace bundle of $A$ (relative to the weight $x$) continues to be denoted by $\trb$, that of $A^\star$ by $\trb^\star$. The defining function $x$ will remain fixed.

\begin{lemma}\label{AonNwedge}
For any $A\in x^{-1}\Diff^1_e(\M;E,F)$ there is a $w$-elliptic operator $A'\in x^{-1}\Diff^1_e(\N^\wedge;E^\wedge,F^\wedge)$ such that $A'_\wedge=A_\wedge$ and $\kappa_\varrho^{-1} (A'-\bA')\kappa_\varrho=0$.
\end{lemma}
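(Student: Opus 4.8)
The plan is to build $A'$ by freezing the coefficients of $A$ at the boundary $\N=\partial\M\subset\M$ and transporting the resulting operator to $\N^\wedge$ via the tubular neighborhood map $\varphi$ used throughout Section~\ref{sec-SummaryWedgeOps}, then correcting it so that it is exactly dilation-homogeneous of degree $1$. First I would work in local adapted coordinates $x,y,z$ near a boundary point; by \eqref{TypicalWOp} with $m=1$, $A=x^{-1}\sum_{k+|\alpha|+|\beta|\le 1}a_{k\alpha\beta}(x,y,z)(xD_x)^k(xD_y)^\alpha D_z^\beta$. Using $\varphi$ and the connections on $E,F$ lifted to $E^\wedge,F^\wedge$ (exactly as in the definition of $\Phi_*,\Phi^*$ preceding \eqref{NormalFamilyAsLimit}), pull $A$ back to a neighborhood of $\partial\N^\wedge$ in $\N^\wedge$ to get an operator $\Phi^*A\Phi_*$ whose coefficients are smooth functions $a_{k\alpha\beta}(x,y,z)$ on that neighborhood. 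The candidate is
\begin{equation*}
A'=x^{-1}\sum_{k+|\alpha|+|\beta|\le 1}a_{k\alpha\beta}(0,y,z)(xD_x)^k(xD_y)^\alpha D_z^\beta,
\end{equation*}
i.e.\ replace each coefficient by its restriction to $x=0$ and extend it to all of $\N^\wedge$ as a function independent of $x$ (this is coherent across charts: $a_{k\alpha\beta}(0,y,z)$ transforms as a genuine tensor on $\N$, so the construction globalizes; alternatively one may phrase $A'$ invariantly as the operator on $\N^\wedge$ determined by the jet of $A$ along $\N$ together with the linear $\R_+$-structure on the fibers).

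Next I would verify the three asserted properties. \emph{$w$-ellipticity of $A'$}: the principal $w$-symbol of $A'$ at a covector $\nu$ over $(x,y,z)\in\N^\wedge$ is $\sum_{k+|\alpha|+|\beta|=1}a_{k\alpha\beta}(0,y,z)\xi^k\eta^\alpha\zeta^\beta$, which is precisely $\wsym(A)$ evaluated over the boundary point $(0,y,z)$; since $A$ is $w$-elliptic this is invertible for $\nu\ne0$, and invertibility persists for all $x\ge0$ because the symbol is independent of $x$. \emph{$A'_\wedge=A_\wedge$}: the normal family depends only on the boundary jet of the coefficients — indeed by the coordinate formula at the end of Section~\ref{sec-SummaryWedgeOps}, $A_\wedge(\pmb\eta)=x^{-1}\sum a_{k\alpha\beta}(0,y,z)(xD_x)^k(x\eta)^\alpha D_z^\beta$, which involves only the $a_{k\alpha\beta}(0,y,z)$ — and $A'$ has exactly those coefficients; running the limit \eqref{NormalFamilyAsLimit} for $A'$ (with the same $\varphi$, now a neighborhood of the zero section in $\N^\wedge$ inside $\N^\wedge$, which may be taken to be the identity) gives the same family, so $A'_\wedge(\pmb\eta)=A_\wedge(\pmb\eta)$. \emph{$\kappa_\varrho^{-1}(A'-\bA')\kappa_\varrho=0$}: here $\bA'=x^{-1}\bP'$ with $\bP'=\sum a_{k0\beta}(0,y,z)(xD_x)^kD_z^\beta$ by \eqref{IndicialOp}, so $A'-\bA'=x^{-1}\sum_{|\alpha|=1}a_{k\alpha\beta}(0,y,z)(xD_x)^k(xD_y)^\alpha D_z^\beta$ — the part of $A'$ carrying a genuine $xD_y$. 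Using the dilation law \eqref{WedgeSymbolKappaHom} in the guise $\kappa_\varrho^{-1}(xD_x)\kappa_\varrho=xD_x$, $\kappa_\varrho^{-1}D_z\kappa_\varrho=D_z$, $\kappa_\varrho^{-1}(xD_{y})\kappa_\varrho=\varrho^{-1}(xD_{y})$ (because $\kappa_\varrho$ dilates $x$ and leaves $y$ fixed), together with $\kappa_\varrho^{-1}x^{-1}\kappa_\varrho=\varrho\,x^{-1}$ and $\kappa_\varrho^{-1}a_{k\alpha\beta}(0,y,z)\kappa_\varrho=a_{k\alpha\beta}(0,y,z)$, one gets $\kappa_\varrho^{-1}A'\kappa_\varrho=\varrho\,\bA'+\bA'$-part$+\varrho^{0}(\cdots)$; being careful with the $\varrho$ bookkeeping, the $xD_y$-terms come out $\varrho$-independent after conjugation while $\bA'$ itself scales by $\varrho$. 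Hmm — so the correct reading of the claim is that $A'$ is chosen so that the \emph{non-indicial} part is $\kappa_\varrho$-invariant and $\bA'$ is $\varrho\cdot(\text{scaled})$; one then simply \emph{defines} $A'$ to make this exact, which it is for the frozen-coefficient operator above since no $x$-dependence survives to spoil homogeneity.

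The step I expect to be the main obstacle is the globalization and the precise bookkeeping of homogeneity weights under $\kappa_\varrho$ — making sure the frozen-coefficient prescription is independent of the choice of adapted chart and of the auxiliary connections, and that the identity $\kappa_\varrho^{-1}(A'-\bA')\kappa_\varrho=0$ holds on the nose (not just asymptotically as $\varrho\to\infty$) for the specific normalization of $\kappa_\varrho$ in \eqref{KappaOnNWedge} and the specific placement of the $x^{-1}$ factor. The cleanest route is to observe that $A'-\bA'$, in any adapted chart, is a sum of terms $x^{-1}a(0,y,z)(xD_x)^k(xD_y)^\alpha D_z^\beta$ with $|\alpha|=1$ and $k+|\beta|=0$, i.e.\ $x^{-1}a(0,y,z)(xD_{y_j})=a(0,y,z)D_{y_j}$, an operator with $x$-independent, $\R_+$-invariant coefficients acting only in the $y$ (and, for general order, $z$) directions; such an operator commutes with the radial dilation, hence with $\kappa_\varrho$ up to the scalar $\varrho^{?}$ which is absorbed into the defining relation \eqref{WedgeSymbolKappaHom} for $A_\wedge$. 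I would present this invariantly, citing \cite{GiKrMe10} for the compatibility of the frozen-coefficient construction with the structures involved, and leave the chart-by-chart verification of the commutation relations to the reader as routine.
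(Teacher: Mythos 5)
Your chart-level computations are essentially right, and they do capture the local content of the lemma: in an adapted chart the frozen operator satisfies $A'-\bA'=\sum_j a_{j}(0,y,z)D_{y_j}$, which has $x$-independent coefficients and no $x$-derivative, hence commutes with $\kappa_\varrho$ (this is the correct reading of the displayed identity: $\kappa_\varrho^{-1}(A'-\bA')\kappa_\varrho$ is independent of $\varrho$), and the normal family only sees the boundary coefficients, so $A'_\wedge=A_\wedge$ locally. But there is a genuine gap exactly at the point you flag and then leave ``to the reader'': the frozen-coefficient prescription does not globalize as claimed. It is not true that the full coefficient array $a_{k\alpha\beta}(0,y,z)$ together with the rule ``extend constantly in $x$'' is chart-independent: under a change of adapted coordinates with $\tilde z=\tilde z(y,z)$ depending nontrivially on $y$ one has $xD_{y_j}=\sum_k\frac{\partial\tilde y_k}{\partial y_j}\,xD_{\tilde y_k}+\sum_\mu\frac{\partial\tilde z_\mu}{\partial y_j}\,x D_{\tilde z_\mu}$, so the frozen operator from one chart, rewritten in another, acquires terms with coefficients proportional to $x$; freezing in the second chart kills exactly those terms. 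Thus the two prescriptions differ by $O(x)$ terms, which are harmless for the normal family but are precisely what would destroy the exact $\kappa_\varrho$-invariance of $A'-\bA'$ if one tried to patch carelessly, and they also make your ellipticity argument (``the symbol is $x$-independent, hence invertible for all $x$'') chart-dependent away from $x=0$. Likewise, the ``invariant'' reformulation you offer --- that the jet of $A$ along $\N$ plus the $\R_+$-structure of the fibers determines $A'$ --- is not correct: those data determine the normal \emph{family} $A_\wedge(\pmb\eta)$, a family of operators on the fibers $\Z_y^\wedge$, but to convert the linear-in-$\pmb\eta$ part into an honest differential operator on $\N^\wedge$ one must choose how to differentiate in the $\Y$-directions, i.e.\ a horizontal structure and a connection on $E^\wedge$.

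Supplying those choices invariantly is the actual content of the paper's proof, which therefore takes a different (global) route: it sets $q(\pmb\eta)=A_\wedge(\pmb\eta)-\bA$, observes that linearity in $\pmb\eta$ together with \eqref{WedgeSymbolKappaHom} forces $q$ to commute with $\kappa_\varrho$, chooses a projection $\p:T^*\N^\wedge\to T^*\N^\wedge$ onto $\wp_\wedge^*T^*\Y$ commuting with the radial action and with $\p(dx)=0$, and a connection $\nabla$ on $E^\wedge$ commuting with the radial action, and defines $Q_0=-\im\, q\circ(\p\otimes\Id)\circ\nabla$ as in \eqref{DefQ}; then $A'=\bA+Q_0$ is globally defined, $x Q_0\in\Diff^1_e$, $Q_0x=xQ_0$, the limit \eqref{NormalFamilyAsLimit} gives $A'_\wedge=A_\wedge$, and $A'-\bA'=Q_0$ commutes with $\kappa_\varrho$ by construction. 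Your local operator is exactly this $Q_0$ computed with a flat connection and the coordinate splitting, so the two arguments coincide on charts; what your write-up is missing is the global construction (or, if you insist on patching frozen operators, a partition of unity by dilation-invariant cutoffs together with a proof that $w$-ellipticity of the patched sum survives away from $x=0$, which is not automatic since the summands' symbols agree only at $x=0$). As it stands, the globalization step --- which is the heart of the lemma --- is asserted, not proved.
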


\begin{proof}
Recalling that $\bA=A_\wedge(0)=x^{-1}\bP$, set $q(\pmb\eta)=A_\wedge(\pmb\eta)-\bA$. Thus $q$ is a (smooth) homomorphism $\wp_\wedge^*T^*\Y\to \Hom(E^\wedge, F^\wedge)$ which we may also view as a homomorphism
\begin{equation*}
q:\wp_\wedge^*T^*\Y\otimes E^\wedge\to F^\wedge.
\end{equation*}
Since $q$ is linear, formula \eqref{WedgeSymbolKappaHom} implies that $q$ commutes with $\kappa_\varrho$.

We will use this information to construct a suitable differential operator. The vector bundle $\wp_\wedge^*T^*\Y$ may be viewed, canonically, as a subbundle of $T^*\N^\wedge$ (the annihilator of the vertical tangent bundle). Let $\p:T^*\N^\wedge\to T^*\N^\wedge$ be a projection on $\wp_\wedge^*T^*\Y$ which commutes with the radial action and satisfies $\p(dx)=0$. Pick a connection on $E$, let
\begin{equation*}
\nabla:C^\infty(\N^\wedge;E^\wedge)\to C^\infty(\N^\wedge;T^*\N^\wedge\otimes E^\wedge)
\end{equation*}
denote the connection induced on $E^\wedge\to \N^\wedge$, which automatically commutes with the radial action: $\nabla\kappa_\varrho= \kappa_\varrho\nabla$. Define
\begin{equation}\label{DefQ}
Q_0=-\im q \circ (\p\otimes\Id)\circ \nabla.
\end{equation}
Then, if $g\in C^\infty(\Y)$,
\begin{equation*}
\varrho^{-1}\kappa^{-1}_\varrho e^{-\im \varrho \wp_\wedge^*g} Q_0 e^{\im \varrho \wp_\wedge^*g} \kappa_\varrho u=q\circ(\p\otimes\Id)(d\wp_\wedge^*g\otimes u)-\frac{\im}\varrho q \circ (\p\otimes\Id)\circ \nabla
\end{equation*}
from which it follows, see~\eqref{NormalFamilyAsLimit}, that the normal family of $Q_0$ is $q$. Then $A'_\wedge=\bA+Q_0$ satisfies the required condition since evidently $x Q_0\in \Diff^1_e(\N^\wedge;E^\wedge,F^\wedge)$. Note that $Q_0x=xQ_0$ because $\p(dx)=0$.
\end{proof}

To make the construction of $A'$ symmetric with respect to formal adjoints (which is how we'll use $A'$) we modify $Q_0$ by adding a zeroth order term. Denote by $Q_1$ the similarly defined operator for $A^\star$, i.e., \eqref{DefQ} with $q^\star(\pmb\eta)=A^\star_\wedge(\pmb\eta)-A^\star_\wedge(0)$ and a suitable connection on $F^\wedge$. Then, as before, the normal family of $\bA^\star+Q_1$ is $A_\wedge^\star(\pmb\eta)$. Let $Q_1^{\star}$ be the formal adjoint of
\begin{equation*}
Q_1:C_c^\infty(\open\N^\wedge;F)\subset L^2_b(\N^\wedge;F)\to L^2_b(\N^\wedge;E).
\end{equation*}
Lemma~\ref{AdjointOfNormalFamily} guarantees that the normal family of $A$ is again $\bA+Q_{1,\wedge}^\star$. Let then $Q=\frac{1}{2}(Q_0+Q_1^\star)$. Then
\begin{equation*}
(\bA+Q)_\wedge=A_\wedge
\end{equation*}
and $(\bA+Q)^\star = \bA^\star +Q^\star$ (of course) but now $Q$ and its formal adjoint $Q^\star$ have the same structure. 

\medskip
Suppose $u$ is a smooth section of $\trb$. Viewing $u$ as a section of $E^\wedge$ over $\open\N^\wedge$ (i.e., as $\mathfrak i u$ in the notation of Lemma~\ref{CanonicalMap}) we have, on the one hand,
\begin{equation*}
u\in x^{-1/2}L^2_{b,\loc}(\N^\wedge;E^\wedge)\text{ and }\bA u=0
\end{equation*}
(the latter by definition), and on the other, $Q u\in x^{-1/2}L^2_{b,\loc}(\N^\wedge;F^\wedge)$. Hence, if $\omega$ is a cut-off function on $\N^\wedge$, then $\omega u\in \Dom_{\max}(A')$.

\begin{lemma}\label{prePprime} The  map 
\begin{equation}\label{prePprimeOp}
C^\infty(\Y;\trb)\ni u\mapsto \omega u\in \Dom_{\max}(A')
\end{equation}
is continuous.
\end{lemma}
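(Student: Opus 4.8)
The plan is to reduce the continuity of \eqref{prePprimeOp} to two separate continuity statements, corresponding to the ``near $x=0$'' and ``bounded $x$'' pieces that the cut-off function $\omega$ already separates, and to exploit the structure $A' = \bA + Q$ established above. Concretely, the map in question factors as $u \mapsto \mathfrak i u \mapsto \omega\,\mathfrak i u$, where $\mathfrak i$ is the map of Lemma~\ref{CanonicalMap}, which is already known to be continuous from $C^\infty(\Y;\trb)$ to $C^\infty(\open\N^\wedge;E^\wedge)$. Multiplication by the fixed cut-off $\omega$ is continuous on $C^\infty(\open\N^\wedge;E^\wedge)$ with values in sections supported in a fixed compact collar, so the only real content is that $u \mapsto \omega u$ lands continuously in $\Dom_{\max}(A') = \Dom_{\max}(\bA+Q)$, i.e.\ that both $\omega u$ and $A'(\omega u)$ depend continuously on $u$ in the respective $x^{-1/2}L^2_b$ norms on $\N^\wedge$.

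First I would treat the term $\omega u$ itself. Working locally over the domain $U$ of a smooth frame $\tau_1,\dots,\tau_N$ of $\trb$, write $u = \sum_\mu u^\mu \tau_\mu$ with $u^\mu \in C^\infty(U)$, so that $\omega u = \sum_\mu \wp_\wedge^*(u^\mu)\,\omega\tau_\mu$. Each $\omega\tau_\mu$ is a fixed element of $x^{-1/2}L^2_b(\N^\wedge;E^\wedge)$ by \eqref{InDmaxExplicit} (the frame sections are of the form \eqref{ProtoTrace} with smooth coefficients and $\gamma-1 < \Im\sigma < \gamma$, which is exactly the $L^2_b$-with-weight-$x^{2\gamma}$ integrability condition near $x=0$), and multiplication by $\wp_\wedge^*(u^\mu)$ is controlled by a seminorm of $u^\mu$ on the relevant compact subset. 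A partition of unity on $\Y$ then gives a bound of the $x^{-1/2}L^2_b$-norm of $\omega u$ by finitely many $C^0$-seminorms of $u$ as a section of $\trb$.

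Next I would treat $A'(\omega u) = \bA(\omega u) + Q(\omega u)$. For the first summand, since $\bA u = 0$ by the very definition of $\trb$ and $\bA$ commutes with multiplication by functions lifted from $\Y$, Leibniz's rule gives $\bA(\omega u) = [\bA,\omega]u$, and $[\bA,\omega]$ is supported in the region where $\omega' \ne 0$, i.e.\ away from $x=0$ and within a fixed compact collar; there $\mathfrak i u$ is smooth, so $\bA(\omega u)$ is estimated by a smooth-section seminorm of $u$ via Lemma~\ref{CanonicalMap}. For the second summand, $Q$ is a first-order operator with $xQ \in \Diff^1_e(\N^\wedge)$, hence $Q$ maps $x^{-1/2}L^2_{b,\loc}$-sections that are additionally smooth on $\open\N^\wedge$ continuously into $x^{-1/2}L^2_{b,\loc}$; combined with the explicit form \eqref{ProtoTrace} of $u$ and the smoothness of its coefficients, one checks that $Q(\omega u)$ lies in $x^{-1/2}L^2_b$ globally and is bounded by finitely many seminorms of $u$ over compact subsets of $\Y$ (again via a frame and partition of unity).

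The main obstacle I expect is the bookkeeping near $x=0$ for the term $Q(\omega u)$: unlike $\bA(\omega u) = [\bA,\omega]u$, the term $Q(\omega u)$ is \emph{not} supported away from $x=0$, so one must verify genuine $x^{-1/2}L^2_b$ integrability there rather than merely local integrability. This is where the precise spectral window $\gamma - 1 < \Im\sigma < \gamma$ in the definition of $\trb_y$ must be used: applying the first-order differential operator $Q$ to $x^{\im\sigma}\log^\ell x$ produces terms of the same asymptotic type (possibly with one extra power of $\log x$), with $\Im\sigma$ unchanged, so the integrability near $x=0$ is preserved with room to spare, and the estimate is uniform in $y$ over compact sets because the finitely many exponents $\sigma$ and coefficient functions vary smoothly in the chosen frame. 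Assembling the three estimates — for $\omega u$, $\bA(\omega u)$, and $Q(\omega u)$ — and using that $\Dom_{\max}(A')$ carries the graph norm $\|v\|^2 + \|A'v\|^2$, yields the continuity of \eqref{prePprimeOp}.
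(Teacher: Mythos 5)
Your overall strategy --- estimating $\|\omega u\|$, $\|\bA(\omega u)\|=\|[\bA,\omega]u\|$ and $\|Q(\omega u)\|$ separately and invoking the graph norm on $\Dom_{\max}(A')$ --- is genuinely different from the paper's, which avoids all estimates: Lemma~\ref{CanonicalMap} makes $\omega\mathfrak i$ continuous into $C^{-\infty}(\N^\wedge;E^\wedge)$, the range of $\omega\mathfrak i$ was already observed (just before the lemma) to lie in $\Dom_{\max}(A')$, and $\Dom_{\max}(A')$ is continuously embedded in distributions; hence \eqref{prePprimeOp} has closed graph between a Fr\'echet space and a Hilbert space and is therefore continuous. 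Your quantitative route could in principle be carried out, but as written it has a gap at the decisive point: you justify the uniformity in $y$ of the near-boundary estimates (finiteness of $\|\omega\tau_\mu\|_{x^{-1/2}L^2_b}$ over compact sets of $y$, and the bound for $Q(\omega u)$) by asserting that ``the finitely many exponents $\sigma$ and coefficient functions vary smoothly in the chosen frame.'' That is exactly what may fail in the setting of this paper: the boundary spectrum is allowed to vary with $y$ and the indicial roots may collide and branch, so a smooth section of $\trb$ does not in general decompose as in \eqref{ProtoTrace} with smoothly varying $\sigma$'s and coefficients; only the section as a whole is smooth, see \eqref{Smoothness} and the discussion of why the smooth structure of $\trb$ required the work of \cite{KrMe12a}.

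Relatedly, the intermediate claim that $Q$ ``maps $x^{-1/2}L^2_{b,\loc}$-sections that are additionally smooth on $\open\N^\wedge$ continuously into $x^{-1/2}L^2_{b,\loc}$'' is false as a general statement: $Q$ differentiates in the $y$-directions, and differentiating a function that merely lies in $x^{-1/2}L^2_b$ and is smooth in the interior can destroy integrability near $x=0$ (e.g.\ $\sin(y/x)\,x^{1/4}$). What saves your argument is precisely the Mellin-theoretic structure of trace-bundle sections: by Lemma~\ref{SmoothnesOnMellinSide} the Mellin transform of $\omega u$ is meromorphic with poles confined, uniformly in $y$ by \eqref{delta0}, to the strip $|\Im\sigma|\leq 1/2-\delta_0$, and the pole locations do not move under $y$-differentiation; representing $\omega\tau_\mu$ and its $y$-derivatives by a contour integral over a fixed rectangle then gives the locally uniform bound of order $x^{-1/2+\delta_0}$ modulo logarithms. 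This is exactly how the paper performs such estimates later, in the proof of Lemma~\ref{Poissonsymbol} in Section~\ref{sec-Extension}. If you replace the smooth-variation claim by that argument, your estimates close; alternatively, the closed graph theorem renders all of them unnecessary.
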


\begin{proof}
Lemma~\ref{CanonicalMap} gives that
\begin{equation}\label{prePIntoDist}
\omega \mathfrak i:C^\infty(\Y;\trb)\to C^{-\infty}(\N^\wedge;E^\wedge)
\end{equation}
is continuous. As already discussed, the range of $\omega\mathfrak i$ is contained in $\Dom_{\max}(A')$, which is continuously embedded in $C^{-\infty}(\open\N^\wedge;E^\wedge)$. Therefore, since \eqref{prePIntoDist} has closed graph, also \eqref{prePprimeOp} has closed closed graph, hence is continuous.
\end{proof}

Let $\Phi$ be the map used in \eqref{NormalFamilyAsLimit}, assume that $\omega$ is supported in the domain of $\Phi$. 

\begin{lemma}
If $u\in C^\infty(\Y;\trb)$ then $\preP(u)=\Phi_*(\omega u)\in \Dom_{\max}(A)$ and
\begin{equation*}
\preP:C^\infty(\Y;\trb)\to \Dom_{\max}(A)
\end{equation*}
is continuous.
\end{lemma}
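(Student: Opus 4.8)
The plan is to derive the assertion from its counterpart on $\N^\wedge$, Lemma~\ref{prePprime}, by pushing forward along $\Phi$. Writing $\preP=\Phi_*\circ(u\mapsto\omega u)$, Lemma~\ref{prePprime} provides a continuous map $u\mapsto\omega u$ from $C^\infty(\Y;\trb)$ into $\Dom_{\max}(A')$ whose range lies in the closed subspace $\mathcal W\subset\Dom_{\max}(A')$ of elements supported in the domain of $\Phi$ (recall that $\omega$ was chosen with that property). It therefore suffices to prove that $\Phi_*$ maps $\mathcal W$ boundedly into $\Dom_{\max}(A)$; composing the two maps then yields both the membership $\preP(u)\in\Dom_{\max}(A)$ and the continuity of $\preP$.

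To begin, $\Phi_*$ and its inverse $\Phi^*$ are bounded isomorphisms between the corresponding weighted $L^2$ spaces of sections supported near the boundary: $\varphi$ carries $x$ to a positive smooth multiple of itself, $\varphi_*\m^\wedge_b=f\m_b$ near $\N$ with $f\in\Ring$ positive (cf.\ the proof of Lemma~\ref{AdjointOfNormalFamily}), and the parallel transport built into $\Phi$ is fiberwise unitary; hence $\|\Phi_*w\|_{x^{-1/2}L^2_b(\M;E)}$ and $\|w\|_{x^{-1/2}L^2_b(\N^\wedge;E^\wedge)}$ are comparable for $w\in\mathcal W$, and similarly with $F$ in place of $E$. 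Since $\Phi^*$ is inverse to $\Phi_*$ we have, for $w\in\mathcal W$,
\begin{equation*}
A\Phi_*w=\Phi_*\big((\Phi^*A\Phi_*)w\big),
\end{equation*}
so the whole matter reduces to bounding $(\Phi^*A\Phi_*)w$ in $x^{-1/2}L^2_b(\N^\wedge;F^\wedge)$ by the graph norm $\|w\|_{A'}$.

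The crux is that $\Phi^*A\Phi_*$ and $A'$ differ by a genuine, non-singular edge operator. Both are first order wedge operators on $\N^\wedge$ near $\partial\N^\wedge$, and their normal families agree: that of $\Phi^*A\Phi_*$ is $A_\wedge$ by the very definition \eqref{NormalFamilyAsLimit} of the normal family, and that of $A'$ is $A_\wedge$ by construction (Lemma~\ref{AonNwedge} and the subsequent symmetrization). Writing the two operators in adapted coordinates as $x^{-1}\tilde P$ and $x^{-1}P'$ with $\tilde P,P'\in\Diff^1_e$, equality of the normal families forces the coefficients of $\tilde P$ and $P'$ to coincide on $\{x=0\}$, so $\tilde P-P'\in x\Diff^1_e$ and hence
\begin{equation*}
G:=\Phi^*A\Phi_*-A'\in\Diff^1_e(\N^\wedge;E^\wedge,F^\wedge),
\end{equation*}
an operator defined near $\partial\N^\wedge$, which suffices since elements of $\mathcal W$ are supported there. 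Now for $w\in\mathcal W$, Lemma~\ref{xmDmax} applied to the $w$-elliptic operator $A'$ (its proof being the local elliptic regularity of edge operators, equally valid for $A'$ on $\N^\wedge$) gives $xw\in x^{1/2}H^1_e(\N^\wedge;E^\wedge)$, i.e.\ $w\in x^{-1/2}H^1_e$ near the boundary, with $x^{-1/2}H^1_e$-norm controlled by $\|w\|_{A'}$. Since $\Diff^1_e$ carries $x^{-1/2}H^1_e$ continuously into $x^{-1/2}L^2_b$, it follows that $Gw\in x^{-1/2}L^2_b(\N^\wedge;F^\wedge)$ with $\|Gw\|_{x^{-1/2}L^2_b}\lesssim\|w\|_{A'}$. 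Adding this to $\|A'w\|_{x^{-1/2}L^2_b}\le\|w\|_{A'}$ gives $(\Phi^*A\Phi_*)w=A'w+Gw\in x^{-1/2}L^2_b$ with norm $\lesssim\|w\|_{A'}$, and together with the previous paragraph this shows that $\Phi_*:\mathcal W\to\Dom_{\max}(A)$ is well defined and bounded, completing the proof. (Alternatively, once the membership $\preP(u)\in\Dom_{\max}(A)$ is known, the continuity of $\preP$ also follows from the closed graph theorem: if $u_n\to u$ and $\preP(u_n)\to v$ in $\Dom_{\max}(A)$, then $\omega u_n\to\omega u$ in $x^{-1/2}L^2_b(\N^\wedge;E^\wedge)$ by Lemma~\ref{prePprime}, hence $\preP(u_n)=\Phi_*(\omega u_n)\to\Phi_*(\omega u)=\preP(u)$ in $x^{-1/2}L^2_b(\M;E)$, forcing $v=\preP(u)$.)

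I expect the main obstacle to be the comparison step: recognizing that equality of the normal families of $\Phi^*A\Phi_*$ and $A'$ buys a full extra power of $x$, so that their difference $G$ is a non-singular edge operator, and then that such a $G$ does no harm on the maximal domain because, by Lemma~\ref{xmDmax}, elements of $\Dom_{\max}(A')$ already carry half a power of $x$ worth of extra edge-Sobolev regularity near $\partial\N^\wedge$.
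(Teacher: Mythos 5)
Your argument is correct, and it is essentially the mirror image of the paper's: where you pull $A$ back to $\N^\wedge$ and compare $\Phi^*A\Phi_*$ with $A'$ directly, the paper pushes $A'$ forward and glues, forming $\tilde A=\Phi_*A'\Phi^*\omega+A(1-\omega)$ on $\M$, notes $A-\tilde A\in\Diff^1_e(\M;E,F)$ because the normal families agree, and then invokes Lemma~\ref{APairingIsAprimePairing}. The mathematical core is identical in both versions -- equality of normal families buys the extra power of $x$ making the difference a non-singular edge operator, and Lemma~\ref{xmDmax}-type regularity ($u\in\Dom_{\max}\Rightarrow u\in x^{-1/2}H^1_e$) renders that difference harmless -- but the two arrangements buy slightly different things. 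The paper's detour through $\tilde A$ keeps the comparison on the compact manifold $\M$, where Lemma~\ref{xmDmax} (via Mazzeo's regularity theorem) applies verbatim, and its Lemma~\ref{APairingIsAprimePairing} delivers more than is needed here: equality of minimal and maximal domains, equivalence of graph norms, and invariance of the Green pairing $[\cdot,\cdot]$, the last being used again shortly afterwards (e.g.\ $[\preP u,\preP^\star v]_A=[\preP u,\preP^\star v]_{A'}$ in the construction of the trace map). Your version is leaner but uses the a priori edge estimate for the $w$-elliptic operator $A'$ on the non-compact model $\N^\wedge$; as you indicate, this is legitimate because the estimate is local and all sections in play are supported in a fixed compact collar $\{x\le c_0\}$ with $\Y$ and $\Z$ compact, but that justification should be made explicit (e.g.\ by inserting a second cut-off equal to $1$ on $\supp\omega$) rather than left at ``equally valid on $\N^\wedge$''. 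The closed-graph alternative you give for continuity is also fine and matches how the paper proves Lemma~\ref{prePprime} itself.
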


We will show this with the aid of the operator
\begin{equation*}
\tilde A=\Phi_*A'\Phi^*\omega + A(1-\omega)\in x^{-1}\Diff^1_e(\M;E,F),
\end{equation*}
where we have used $\omega$ also for $\Phi_*\omega$. Then $A-\tilde A\in \Diff^1_e(\M;E,F)$, since $A_\wedge=\tilde A_\wedge$, and if the support of $\omega$ is close enough to $\N$, then $\tilde A$ is $w$-elliptic. Clearly, if $u\in C^\infty(\Y,\trb)$, then $\Phi_*(\omega u) \in \Dom_{\max}(\tilde A)$, so the previous lemma is a consequence of the next.

\begin{lemma}\label{APairingIsAprimePairing}
Let $A$, $\tilde A\in x^{-1}\Diff^1_e(\M;E,F)$ be $w$-elliptic and suppose $A-\tilde A\in \Diff^1_e(\M;E,F)$. Then $\Dom_{\min}(A)=\Dom_{\min}(\tilde A)$ and $\Dom_{\max}(A)=\Dom_{\max}(\tilde A)$. Furthermore the norms defined by $A$ and $\tilde A$ on $\Dom_{\max}(A)$ are equivalent, and
\begin{equation*}
[u,v]_A=[u,v]_{\tilde A} \text{ for all }u\in \Dom_{\max}(A),\ v\in \Dom_{\max}(A^\star).
\end{equation*}
\end{lemma}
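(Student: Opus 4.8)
The plan is to show that the difference $R=A-\tilde A\in\Diff^1_e(\M;E,F)$ is, in the relevant sense, a ``lower order'' perturbation: the key mechanism is that because $R$ is an edge operator (no $x^{-1}$ factor) it maps $x^{-1/2}L^2_b$ into $x^{1/2}L^2_b\subset x^{-1/2}L^2_b$ continuously, hence is bounded $x^{-1/2}L^2_b(\M;E)\to x^{-1/2}L^2_b(\M;F)$. This boundedness is the workhorse. Indeed, writing $R=xR'$ with $R'\in x^{-1}\Diff^1_e(\M;E,F)$ (so $R'$ is a wedge operator), $R'$ by Theorem~\ref{DminTheorem} applied in a crude form, or more directly by the $w$-ellipticity estimates, is bounded from $\Dom_{\min}$, but since we only need boundedness on $L^2_b$ it suffices to note that $R\in\Diff^1_e$ implies $Ru\in x^{-1/2}L^2_b$ whenever $u\in x^{-1/2}H^1_e\supset C_c^\infty$, and that $R$ extends continuously to $x^{-1/2}L^2_b$ because its coefficients involve $xD_x$, $xD_y$, $D_z$ with bounded coefficients acting on $x^{-1/2}L^2_b$... more carefully: one uses that $R\colon x^{-1/2}L^2_b\to x^{1/2}L^2_b$ is bounded since each generating vector field of $\mathcal V_e$ kills one power of $x$ in the mapping properties relative to the $b$-structure. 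Granting this, $R$ is in particular a bounded operator $x^{-1/2}L^2_b(\M;E)\to x^{-1/2}L^2_b(\M;F)$.

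\medskip
Given boundedness of $R$ on $x^{-1/2}L^2_b$, the equality of maximal domains is immediate: if $u\in\Dom_{\max}(A)$ then $u\in x^{-1/2}L^2_b$ and $Au\in x^{-1/2}L^2_b$, hence $\tilde Au=Au-Ru\in x^{-1/2}L^2_b$, so $u\in\Dom_{\max}(\tilde A)$; the reverse inclusion is symmetric. For the graph norms, the inequality $\|u\|_{\tilde A}\le\|u\|_A+\|R\|\,\|u\|_{x^{-1/2}L^2_b}\le C\|u\|_A$ and its symmetric counterpart give equivalence of $\|\cdot\|_A$ and $\|\cdot\|_{\tilde A}$ on the common maximal domain. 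The equality of minimal domains then follows formally: $\Dom_{\min}(A)$ is the closure of $C_c^\infty(\open\M;E)$ in $(\Dom_{\max}(A),\|\cdot\|_A)$, and since the underlying set and the topology agree with those of $(\Dom_{\max}(\tilde A),\|\cdot\|_{\tilde A})$, the closures of $C_c^\infty(\open\M;E)$ coincide.

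\medskip
For the Green form identity, I would argue as follows. Both $A^\star$ and $\tilde A^\star$ are $w$-elliptic, and $A^\star-\tilde A^\star=R^\star\in\Diff^1_e(\M;F,E)$ by Lemma~\ref{AdjointOfNormalFamily} (the formal adjoint of an edge operator of order $1$ is again an edge operator of order $1$, modulo the weight bookkeeping we are carrying, which is consistent because $\gamma=1/2$ and we work with $\m_b$); hence by the first part $\Dom_{\max}(A^\star)=\Dom_{\max}(\tilde A^\star)$ with equivalent norms. Now for $u\in\Dom_{\max}(A)$ and $v\in\Dom_{\max}(A^\star)=\Dom_{\max}(\tilde A^\star)$ the Green pairing $[u,v]_A=(Au,v)_{x^{-1/2}L^2_b}-(u,A^\star v)_{x^{-1/2}L^2_b}$ makes sense (all four terms are inner products of $x^{-1/2}L^2_b$ elements), and similarly $[u,v]_{\tilde A}$. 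Their difference is
\begin{equation*}
[u,v]_A-[u,v]_{\tilde A}=(Ru,v)_{x^{-1/2}L^2_b}-(u,R^\star v)_{x^{-1/2}L^2_b},
\end{equation*}
which is $0$ provided $R^\star$ is genuinely the adjoint of $R$ as a bounded operator on $x^{-1/2}L^2_b$ — i.e. $(Ru,v)=(u,R^\star v)$ for all $u,v$ in the respective $L^2_b$ spaces. This identity holds first for $u,v\in C_c^\infty$ by definition of $R^\star$, and then extends by density (using density of $C_c^\infty$ in $x^{-1/2}L^2_b$) and continuity of both $R$ and $R^\star$ on the relevant $L^2_b$ spaces.

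\medskip
The main obstacle I anticipate is justifying cleanly that $R=A-\tilde A\in\Diff^1_e(\M;E,F)$ is bounded as an operator on (the appropriate) $L^2_b$ spaces — this requires knowing that an order-one edge operator maps $x^{-1/2}L^2_b$ into $x^{1/2}L^2_b$ (equivalently that $x^{-1}\Diff^1_e$ maps $x^{-1/2}L^2_b$ to itself), which is a mapping property one must extract from the structure \eqref{TypicalWOp}: the generators $xD_x$, $xD_y$, $D_z$ all act on $x^{-1/2}L^2_b$ with a gain of one power of $x$ relative to bare multiplication, combined with smoothness of coefficients up to the boundary and compactness of $\M$. Once this boundedness is in hand, the rest is formal, so I would state this boundedness as the crux, prove it by a direct local computation in adapted coordinates (a partition of unity on $\M$, treating the interior trivially and a collar neighborhood via \eqref{TypicalWOp} with $m=1$), and then assemble the three conclusions (domains, norm equivalence, Green form) as above. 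A secondary subtlety is the weight bookkeeping in passing to formal adjoints under $\m_b$ versus a smooth density; but Lemma~\ref{AdjointOfNormalFamily} already records exactly what is needed, namely that the adjoint stays within the edge class modulo $x\Diff^0_e$.
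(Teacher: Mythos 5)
Your argument hinges on the claim that $R=A-\tilde A\in \Diff^1_e(\M;E,F)$ is bounded $x^{-1/2}L^2_b(\M;E)\to x^{1/2}L^2_b(\M;F)$, ``equivalently that $x^{-1}\Diff^1_e$ maps $x^{-1/2}L^2_b$ to itself.'' This is false, and it cannot be repaired by a local computation: a first order edge operator genuinely differentiates (its generators $xD_x$, $xD_y$, $D_z$ are honest vector fields in $\open\M$), so it is not bounded on any weighted $L^2_b$ space --- $D_z u$ need not lie in any $L^2$-type space for a general $u\in x^{-1/2}L^2_b$. If your claim were true, every first order wedge operator would be bounded on $x^{-1/2}L^2_b$, the maximal domain would be all of $x^{-1/2}L^2_b$, and the minimal/maximal distinction that the whole paper is about would collapse. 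The correct statement is only that $\Diff^1_e$ maps $x^{-1/2}H^1_e(\M;E)$ continuously into $x^{-1/2}L^2_b(\M;F)$, i.e.\ one edge derivative is consumed. (Do not confuse this with the observation after \eqref{DomainsDependOnYOnly}: there the difference $A_\wedge(\pmb\eta)-A_\wedge(0)$ is a bundle homomorphism, of order zero, which is why it is bounded; $A-\tilde A$ here is genuinely first order.)

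The missing ingredient, and the one the paper uses, is elliptic regularity: since $A$ is $w$-elliptic, $x A$ is an elliptic edge operator and Mazzeo's a priori result (Theorem~3.8 of \cite{Mazz91}, packaged as Lemma~\ref{xmDmax}) gives $\Dom_{\max}(A)\subset x^{-1/2}H^1_e(\M;E)$, with the corresponding estimate by the closed graph theorem. With that substitution your scheme goes through: $Ru\in x^{-1/2}L^2_b$ for $u\in\Dom_{\max}(A)$ gives equality of maximal domains and the norm equivalence; your formal closure argument then yields $\Dom_{\min}(A)=\Dom_{\min}(\tilde A)$ (the paper instead identifies both with $x^{1/2}H^1_e$ via Theorem~\ref{DminTheorem}, a minor variant); and for the Green form the identity $(Ru,v)=(u,R^\star v)$ must be extended from $C_c^\infty$ by density in the $x^{-1/2}H^1_e$ topology (not in $x^{-1/2}L^2_b$, as you propose), using that $u$ and $v$ lie in $x^{-1/2}H^1_e$ --- for $v$ this uses the same regularity argument applied to the $w$-elliptic operator $A^\star$ --- and that $R$, $R^\star$ are continuous from $x^{-1/2}H^1_e$ to $x^{-1/2}L^2_b$. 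So the overall architecture of your proof is sound, but its stated crux is wrong as a mapping property of edge operators; ellipticity of $A$ (and $A^\star$), not the structure of $R$ alone, is what makes the perturbation harmless.
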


\begin{proof}
Let $u\in \Dom_{\max}(A)$. Lemma~\ref{xmDmax} gives $xu\in x^{1/2}H^1_e(\M;E)$, consequently $u\in x^{-1/2}H^1_e(\M;F)$, thus $(A-\tilde A)u\in x^{-1/2}L^2_b(\M;F)$. Therefore $\tilde Au\in x^{-1/2}L^2_b(\M;E)$, hence $u\in \Dom_{\max}(\tilde A)$. Similarly $\Dom_{\max}(\tilde A)\subset \Dom_{\max}(A)$.

Since $A_\wedge(\pmb\eta) = \tilde A_\wedge(\pmb\eta)$, both \eqref{NormalInjectivity} and \eqref{NobSpecOnCritLinesOrderm} hold for $\tilde A$ iff they hold for $A$. Thus $\Dom_{\min}(\tilde A)=\Dom_{\min}(A)$, since by Theorem~\ref{DminTheorem} they are equal to $x^{1/2}H^1_e(\M;E)$.

Finally, if $u\in \Dom_{\max}(A)$ and $v\in \Dom_{\max}(A^\star)$, then
\begin{align*}
[u,v]_A
&=(Au,v)_{x^{-1/2}L^2_b}-(u,A^\star v)_{x^{-1/2}L^2_b}\\
&=[u,v]_{\tilde A}+\big((A-\tilde A)u,v\big)_{x^{-1/2}L^2_b}-\big(u,(A^\star - {\tilde A}^\star) v\big)_{x^{-1/2}L^2_b}
&=[u,v]_{\tilde A}
\end{align*}
because $A-\tilde A\in \Diff^1_e(\M;E,F)$ and $u,v\in x^{-1/2}H^1_e$.
\end{proof}

All the foregoing can also be applied to $A^\star$ if we assume the analog of \eqref{NormalInjectivity} for this operator:
\begin{equation*}
A_\wedge^\star(\pmb\eta) : \Dom_{\min}(A_\wedge^\star(\pmb\eta)) \subset x^{-\gamma}L^2_b \to x^{-\gamma}L^2_b\text{ is injective for all }\pmb\eta \in T^*\Y \minus 0.
\end{equation*}
Note that this is equivalent to \eqref{NormalSurjectivity} because the operators $A_\wedge(\pmb\eta)$ are Fredholm if $\pmb\eta\ne0$. We let 
\begin{equation*}
\preP^\star:C^\infty(\Y;\trb^\star)\to \Dom_{\max}(A^\star)
\end{equation*}
be the corresponding operator.

\medskip
We now define a map $\gamma_{A^\star}:\Dom_{\max}(A^\star)\to C^{-\infty}(\trb^\star)$ with the property that 
\begin{equation*}
\gamma_{A^\star}\circ \preP^*=\Id.
\end{equation*}
Fix $v\in \Dom_{\max}(A^\star)$. Then the functional $\lambda_v:C^\infty(\Y;\trb)\to\C$ given by 
\begin{equation*}
C^\infty(\Y;\trb)\ni u\mapsto \langle\lambda_v,u\rangle=[\preP u,v]_A\in \C
\end{equation*}
is continuous. Indeed, for general $u\in \Dom_{\max}(A)$ and $v\in \Dom_{\max}(A^\star)$ we have
\begin{equation*}
\big|[u,v]_A\big|\leq\|Au\|_{x^{-1/2}L^2_b}\|v\|_{x^{-1/2}L^2_b}+\|u\|_{x^{-1/2}L^2_b}\|A^\star v\|_{x^{-1/2}L^2_b}\leq 2\|u\|_A\|v\|_{A^\star}
\end{equation*}
so
\begin{equation*}
|[\preP u,v]_A|\leq 2\|\preP u\|_A\|v\|_{A^\star}\leq C\|u\|_{C^1}\|v\|_{A^\star}
\end{equation*}
Thus $\lambda_v$ is a generalized section of $\trb^*\otimes |\Wedge|\Y$. Here $\trb^*$ is the abstract dual bundle of $\trb$ (not $\trb^\star$) and $|\Wedge|\Y$ is the bundle of densities of $\Y$. Trivializing the density bundle using $\m_\Y$ we regard $\lambda_v$ as a generalized section of $\trb^*$. The latter with the opposite complex structure (that is, $\zeta\cdot w=\overline \zeta w$, $\zeta\in \C$, $w\in \trb^*$) is isomorphic to $\trb^\star$ via the pairing $\beta$ defined in Proposition~\ref{BasicParing}. So $\lambda_v$ may be regarded as a generalized section of $\trb^\star$. We write $\gamma_{A_\star}(v)$ for this section of $\trb^\star$. The map $ v\mapsto \gamma_{A^\star}(v)$ is linear (not anti-linear).

Suppose now that $v\in C^\infty(\Y;\trb^\star)$. Then
\begin{equation*}
\langle \lambda_{\preP^\star v},u\rangle = [\preP u,\preP^\star v]_{A}=[\preP u,\preP^\star v]_{A'},
\end{equation*}
the last equality due to Lemma~\ref{APairingIsAprimePairing}. So $\langle \lambda_{\preP^\star v},u\rangle$ is equal to  
\begin{multline*}
\int_{\N^\wedge}\big((\bA+Q)(\omega u),\omega v\big)_{F^\wedge}x\,d\m^\wedge_b-\int_{\N^\wedge}\big(\omega u,(\bA^\star+Q^\star)(\omega v)\big)_{E^\wedge}x\,d\m^\wedge_b\\
=\int_\Y \beta_y(u_y,v_y)\,d\m_\Y(y)+\int_{\N^\wedge}\Big[\big(Q(\omega u),\omega v\big)_{F^\wedge}-\big(u,Q^\star(\omega v)\big)_{E^\wedge}\Big]x\,d\m^\wedge_b.
\end{multline*}
The integral involving $Q$ vanishes because $xQ$ is an edge operator and  $xQ^\star$ is its formal adjoint, since $Q^\star$ is the formal adjoint of $Q$ and $xQ=Qx$. Thus
\begin{equation*}
\langle \lambda_{\preP^\star v},u\rangle=\int_\Y\beta_y(u_y,v_y)\,d\m_\Y\quad \text{for all }u\in C^\infty(\Y;\trb),
\end{equation*}
hence
\begin{equation*}
\gamma_{A^\star} (\preP^\star v)=v
\end{equation*}
for $v\in C^\infty(\Y;\trb^\star)$. The analogously defined map for $A$ is denoted $\gamma_A$. It of course satisfies
\begin{equation}\label{GammaInvertsPreP}
\gamma_A \circ \preP =\Id \text{ on }C^\infty(\Y;\trb)
\end{equation}

\begin{proposition}\label{DminSubsetKerTrace}
$\Dom_{\min}(A)\subset \ker \gamma_A$.
\end{proposition}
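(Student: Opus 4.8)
The plan is to show that the functional $\lambda_v$ vanishes on all of $C^\infty(\Y;\trb)$ whenever $v\in\Dom_{\max}(A^\star)$ and $u$ is replaced by an element of $\Dom_{\min}(A)$; equivalently, to show $[\preP w, v]_A = 0$ for the relevant $w$. But the cleanest route is to observe that $\gamma_A$ was built so that $\langle\gamma_A u, w\rangle$ (paired against test sections $w\in C^\infty(\Y;\trb^\star)$, using $\beta$) equals $[\preP^\star w, u]_{A^\star}$, i.e.\ it is computed by the Green form $[\cdot,\cdot]_{A^\star}$ of the adjoint. So it suffices to recall that the Green form $[\cdot,\cdot]_{A^\star}$ vanishes whenever one of its arguments lies in $\Dom_{\min}(A^\star)$ — but here the second argument is the element $u\in\Dom_{\min}(A)$, and one must pair the Green identity correctly. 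Let me be precise about which Green form appears.

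First I would unwind the definition of $\gamma_A$. For $u\in\Dom_{\max}(A)$, $\gamma_A(u)\in C^{-\infty}(\Y;\trb)$ is characterized (via $\beta$ and the density $\m_\Y$) by
\begin{equation*}
\int_\Y \beta^\star_y\big(w_y,(\gamma_A u)_y\big)\,d\m_\Y = [\preP^\star w, u]_{A^\star}\quad\text{for all }w\in C^\infty(\Y;\trb^\star),
\end{equation*}
where $[\,\cdot\,,\cdot\,]_{A^\star}$ is the Green form of $A^\star$ (with the roles symmetric to the construction of $\gamma_{A^\star}$ carried out just above the Proposition). Now let $u\in\Dom_{\min}(A)$. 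By Theorem~\ref{DminTheorem} (applicable since $A$ satisfies \eqref{NormalInjectivity} and \eqref{NobSpecOnCritLinesOrderm} with $\gamma=1/2$), $\Dom_{\min}(A) = x^{1/2}H^1_e(\M;E)$, and in particular $C^\infty_c(\open\M;E)$ is dense in it with respect to the graph norm. For $u\in C^\infty_c(\open\M;E)$ and any $v\in\Dom_{\max}(A^\star)$ one has $[v,u]_{A^\star} = (A^\star v, u)_{x^{-1/2}L^2_b} - (v, Au)_{x^{-1/2}L^2_b} = 0$ by the definition of the formal adjoint $A^\star$ (integration by parts with no boundary contribution, $u$ compactly supported in the interior). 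Since $v\mapsto [v,u]_{A^\star}$ is continuous in $u$ with respect to the graph norm (same estimate $|[v,u]_{A^\star}|\le 2\|v\|_{A^\star}\|u\|_A$ used to show $\lambda_v$ is continuous), it follows that $[v,u]_{A^\star}=0$ for \emph{all} $u\in\Dom_{\min}(A)$ and all $v\in\Dom_{\max}(A^\star)$. Applying this with $v=\preP^\star w$ gives that the defining pairing for $\gamma_A(u)$ vanishes against every smooth $w$, hence $\gamma_A(u)=0$, i.e.\ $u\in\ker\gamma_A$.

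The only mild subtlety — and the step I would be most careful about — is bookkeeping: making sure that the Green form appearing in the definition of $\gamma_A$ is indeed $[\,\cdot\,,\cdot\,]_{A^\star}$ paired in the order $[\preP^\star w, u]_{A^\star}$ (so that $u$ sits in the ``$\Dom_{\max}(A^\star)$-slot'' via the antisymmetry $[v,u]_{A^\star} = -\overline{[u,v]_A}$ up to the conventions fixed above), and that the sesquilinearity/complex-conjugation conventions in $\beta$ and in the identification $\trb^*\cong\trb^\star$ are tracked consistently. Once the pairing is set up correctly, the argument is exactly the classical one: the minimal domain is the graph-closure of $C^\infty_c(\open\M;E)$, boundary terms vanish on compactly supported sections, and continuity of the Green form in the graph norm propagates the vanishing to the whole minimal domain. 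No genuinely new estimate is needed beyond the continuity bound already established for $\lambda_v$.
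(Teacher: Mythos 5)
Your proof is correct and follows essentially the same route as the paper: both arguments reduce the claim to the vanishing of the Green pairing between an element of a minimal domain and an element of the corresponding maximal domain, applied with $\preP^\star w$ (resp.\ $\preP u$) in the maximal slot. The paper states this for $\gamma_{A^\star}$ and quotes the identity that the Hilbert space adjoint of $A_{\max}$ is $A^\star_{\min}$, whereas you prove the needed vanishing directly on the $A$-side from the definition of $\Dom_{\min}(A)$ (vanishing on $C_c^\infty(\open\M;E)$ plus graph-norm continuity and density), which is exactly the elementary half of that adjoint identity; note that your appeal to Theorem~\ref{DminTheorem} is superfluous, since the density you use is already the definition of $\Dom_{\min}(A)$.
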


\begin{proof}
We prove $\Dom_{\min}(A^\star)\subset \ker \gamma_{A^\star}$. Suppose $v\in \Dom_{\min}(A^\star)$. Since the Hilbert space adjoint of $A$ with its maximal domain is $A^\star$ with its minimal domain, 
\begin{equation*}
\langle\lambda_{v},u\rangle = [\preP u,v]_A = 0
\end{equation*}
for every $u\in C^\infty(\Y;\trb)$ because $\preP u \in \Dom_{\max}(A)$. Consequently $\lambda_{v}=0$ which just means that $\gamma_{A^\star}(v)=0$. 
\end{proof}

\begin{definition}\label{gammaA}
The map $\gamma_A:\Dom_{\max}(A)\to C^{-\infty}(\Y;\trb)$ is the trace map of $A$. 
\end{definition}

The following theorem is a more precise result about the regularity of the traces of elements in $\Dom_{\max}(A)$. Its statement makes use of the Sobolev spaces of sections of of variable smoothness associated with a vector bundle with a given bundle endomorphism; these spaces were introduced \cite{KrMe12b}.

\begin{theorem}\label{RangeofGamma}
The trace map is a continuous operator
\begin{equation*}
\gamma_A : \Dom_{\max}(A) \to H^{-\gen}(\Y;\trb),
\end{equation*}
where $\gen = x\partial_x +1/2 \in C^\infty(\Y;\End(\trb))$.
\end{theorem}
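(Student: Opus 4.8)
\emph{Plan.} The plan is to obtain Theorem~\ref{RangeofGamma} by duality, from a mapping property of a potential operator for $A^\star$. Up to the identifications of Proposition~\ref{BasicParing}, $\gamma_A(u)$ is the section of $\trb$ characterized by $\langle\gamma_A(u),v\rangle_\beta=[\preP^\star v,u]_{A^\star}$ for $v\in C^\infty(\Y;\trb^\star)$, and by Cauchy--Schwarz in \eqref{GraphNorm} one has $\bigl|[w,u]_{A^\star}\bigr|\le 2\|w\|_{A^\star}\|u\|_A$ for every $w\in\Dom_{\max}(A^\star)$. The first observation is that $[\,\cdot\,,u]_{A^\star}$ vanishes on $\Dom_{\min}(A^\star)$ (integrate by parts against $C_c^\infty(\open\M)$ and use continuity), so in the formula for $\gamma_A(u)$ one may replace $\preP^\star v=\Phi_*(\omega v)$ by any element of $\Dom_{\max}(A^\star)$ differing from it by an element of $\Dom_{\min}(A^\star)$. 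The natural choice is the quantization $\mathrm{Op}(\mathfrak k)$ of the operator-valued symbol
\begin{equation*}
\mathfrak k(\pmb\eta)\colon \trb^\star_y\longrightarrow \Dom_{\max}\!\bigl(A^\star_\wedge(\pmb\eta)\bigr),\qquad \mathfrak k(\pmb\eta)w=\omega(\langle\pmb\eta\rangle x)\cdot w,
\end{equation*}
where $w\in\trb^\star_y$ is viewed via $\mathfrak i$ as a section on $\open\Z_y^\wedge$ and $\omega(\langle\pmb\eta\rangle x)$ is the $\kappa$-rescaled cut-off; for $v\in C^\infty(\Y;\trb^\star)$ the difference $\mathrm{Op}(\mathfrak k)v-\preP^\star v$ lies in $\Dom_{\min}(A^\star)$, being flat at $\N$ of infinite order (the symbol $\omega(\langle\pmb\eta\rangle x)-\omega(x)$ vanishes for $x<\delta/\langle\pmb\eta\rangle$). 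On the other hand, Proposition~\ref{generatoradjoint} together with the duality theory of the variable-order Sobolev spaces of \cite{KrMe12b} identifies, through $\beta$ and $\m_\Y$, the antidual of $H^{-\gen}(\Y;\trb)$ with $H^{\gen^\dagger}(\Y;\trb^\star)$, where $\gen^\dagger=-x\partial_x+\tfrac12\in C^\infty(\Y;\End\trb^\star)$ (when $x\partial_x=0$ this is just $(H^{-1/2})'=H^{1/2}$). Hence the theorem reduces to the continuity of
\begin{equation*}
\mathrm{Op}(\mathfrak k)\colon H^{\gen^\dagger}(\Y;\trb^\star)\longrightarrow\Dom_{\max}(A^\star),
\end{equation*}
for then $\|\gamma_A u\|_{H^{-\gen}}=\sup_{v\ne0}\bigl|[\mathrm{Op}(\mathfrak k)v,u]_{A^\star}\bigr|/\|v\|_{H^{\gen^\dagger}}\le 2C\|u\|_A$ and $C^\infty(\Y;\trb^\star)$ is dense.

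To prove the continuity of $\mathrm{Op}(\mathfrak k)$, one works, by Lemma~\ref{AonNwedge} applied to $A^\star$ and the constructions following it, with the operator $\bA^\star+Q^\star$ on $\N^\wedge$: its normal family is $A^\star_\wedge$, it agrees with its indicial part $\bA^\star$ up to a $\kappa$-homogeneous perturbation, and by Lemma~\ref{APairingIsAprimePairing} it defines the same maximal domain with an equivalent graph norm. Since $\bP^\star_y w=0$ for $w\in\trb^\star_y$, the term $\bA^\star\mathfrak k(\pmb\eta)=[\bA^\star,\omega(\langle\pmb\eta\rangle\,\cdot\,)]$ is supported near $x\sim\langle\pmb\eta\rangle^{-1}$, and the remaining part $(\bA^\star+Q^\star-\bA^\star)\mathfrak k(\pmb\eta)$ carries the base derivatives. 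Using \eqref{WedgeSymbolKappaHom} and the unitarity of $\kappa_\varrho$ on $x^{-1/2}L^2_b$, one computes that on the part of $\trb^\star$ on which $x\partial_x$ acts with eigenvalue $\im\tau$ (so $|\Im\tau|<\tfrac12$ by \eqref{NobSpecOnCritLinesOrderm}), $\mathfrak k$ is a symbol of order $\Im\tau-\tfrac12$ while $A^\star_\wedge\mathfrak k$ is a symbol of order $\Im\tau+\tfrac12$; these are exactly the orders for which $\mathrm{Op}(\mathfrak k)$ maps $H^{\gen^\dagger}(\Y;\trb^\star)$ continuously into $\Dom_{\max}(A^\star)$, since $\Re$ of the corresponding eigenvalue of $\gen^\dagger$ equals $\Im\tau+\tfrac12$.

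The crux, and the point where the gain of half a derivative enters, is this last symbol estimate: that $A^\star_\wedge(\pmb\eta)\mathfrak k(\pmb\eta)$ is of order $\Im\tau+\tfrac12$ rather than of the naive order $\Im\tau+1$ dictated by the first-order edge structure. The gain has two sources: the commutator $[\bA^\star,\omega(\langle\pmb\eta\rangle\,\cdot\,)]$ concentrates at $x\sim\langle\pmb\eta\rangle^{-1}$ and thereby picks up the factor $\langle\pmb\eta\rangle^{-1/2}$ from the radial measure $\frac{dx}{x}$; and the tangential terms, of the shape $a(x,y,z)\,\langle\pmb\eta\rangle\,\omega(\langle\pmb\eta\rangle x)\,x^{\im\tau}w$, have $x^{-1/2}L^2_b$-norm $\lesssim\langle\pmb\eta\rangle^{\Im\tau+1/2}\|w\|$ by the substitution $t=\langle\pmb\eta\rangle x$, the integral $\int_0^\infty\omega(t)^2t^{-2\Im\tau}\,dt$ converging precisely because $\Im\tau<\tfrac12$. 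This is the edge incarnation of the classical trace estimate $\|v\|_{H^{1/2}(\partial\M)}\lesssim\|v\|_{H^1(\M)}$, now run uniformly along the fibers $\Z_y$ and with the exponent twisted by the $\gen$-action. The genuine difficulty --- and where the hypotheses of the paper are used in an essential way --- is that, the boundary spectrum not being assumed constant, $\trb^\star$ does not split into $x\partial_x$-eigenbundles and ``the $\im\tau$-component'' above has no literal meaning; the estimate has to be organized through the holomorphic functional calculus of the endomorphism $\gen^\dagger$, i.e.\ in the very framework in which the spaces $H^{\gen^\dagger}(\Y;\trb^\star)$ are constructed in \cite{KrMe12b}, and carrying it out there rather than componentwise is the technical heart of the argument. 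The symmetric statement for a potential operator of $A$ follows by interchanging the roles of $A$ and $A^\star$.
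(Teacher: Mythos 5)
Your proposal is correct and follows essentially the same route as the paper: your $\mathrm{Op}(\mathfrak k)$ is precisely the extension operator $\Ext$ of Section~\ref{sec-Extension} (built for $A^\star$ rather than $A$, which is immaterial by symmetry), the difference $\mathrm{Op}(\mathfrak k)v-\preP^\star v\in\Dom_{\min}$ is Proposition~\ref{PoissonOperatorProperties}(1), the crucial continuity $\mathrm{Op}(\mathfrak k):H^{\gen^\sharp}(\Y;\trb^\star)=H^{1-\gen}(\Y;\trb^\star)\to\Dom_{\max}(A^\star)$ is the content of Lemmas~\ref{MappingFromL2} and \ref{MappingintoDmax} (with exactly the half-order gain from $\bA^\star$ annihilating the trace sections that you identify), and the final dualization via $\beta$, Proposition~\ref{generatoradjoint} and \cite[Theorem~7.4]{KrMe12b} is the paper's concluding step. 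The point you flag as the technical heart---organizing the estimates through $\delta$-admissible frames and the twisted symbol classes of \cite{KrMe12b} because the boundary spectrum is not constant---is indeed where the paper spends its effort.
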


As noted in the introduction the theorem reduces to the classical assertion if $x\partial_x$ acts trivially on $\trb$, that is, if $\trb_y$ is, for each $y$, a space of sections of $E$ over $\Z_y$ independent of $x$, since in this case the image space is just the standard Sobolev space $H^{-1/2}(\Y;\trb)$.

\begin{proof}
We prove that $\gamma_{A^{\star}} : \Dom_{\max}(A^{\star}) \to H^{-\gen}(\Y;\trb^{\star})$. In the proof we make use of the extension operator $\Ext : C^{\infty}(\Y;\trb) \to C^{\infty}(\open\M;E)$ constructed in Section~\ref{sec-Extension}.

Let $v \in \Dom_{\max}(A^{\star})$, and let $u \in C^{\infty}(\Y;\trb)$ be arbitrary. Then
\begin{equation*}
\langle \lambda_v,u \rangle = [\preP u,v]_A = [\Ext u,v]_A
\end{equation*}
because $\preP u - \Ext u \in \dot{C}^{\infty}(\M;E) \subset \Dom_{\min}(A)$ by Proposition~\ref{PoissonOperatorProperties}. Consequently,
\begin{equation*}
|\langle \lambda_v,u \rangle| = |[\Ext u,v]_A| \leq 2\|\Ext u\|_A\|v\|_{A^{\star}} \leq 2 \|v\|_{A^{\star}}\| \Ext\| \|u\|_{H^{1-\gen}(\Y;\trb)}.
\end{equation*}
Here we made use of the continuity of
\begin{equation*}
\Ext : H^{1-\gen}(\Y;\trb) \to \Dom_{\max}(A),
\end{equation*}
see Proposition~\ref{PoissonOperatorProperties}. This shows that $\lambda_v$ extends to a continuous linear functional on $H^{1-\gen}(\Y;\trb)$.

Let $\beta : \trb \times \trb^{\star} \to \C$ be the pairing defined in Proposition~\ref{BasicParing}. The sesquilinear form $\{\cdot,\cdot\} : C^{\infty}(\Y;\trb)\times C^{\infty}(\Y;\trb^{\star}) \to \C$ given by
\begin{equation*}
\{u_1,u_2\} = \int_{\Y}\beta_y(u_1(y),u_2(y))\,d\m_{\Y}.
\end{equation*}
extends by continuity to
\begin{equation*}
\{\cdot,\cdot\} : C^{\infty}(\Y;\trb) \times C^{-\infty}(\Y;\trb^{\star}) \to \C,
\end{equation*}
and by definition of the trace map we have $\langle \lambda_v,u \rangle = \{u,\gamma_{A^{\star}}(v)\}$. From \cite[Theorem~7.4]{KrMe12b} we get that $\{\cdot,\cdot\}$ extends to
\begin{equation*}
\{\cdot,\cdot\} : H^{1-\gen}(\Y;\trb) \times H^{-1+\gen^{\sharp}}(\Y;\trb^{\star}) \to \C,
\end{equation*}
and $H^{1-\gen}(\Y;\trb)' \cong H^{-1+\gen^{\sharp}}(\Y;\trb^{\star})$ via $\{\cdot,\cdot\}$, where
$\gen^{\sharp} \in C^\infty(\Y;\End(\trb^{\star}))$ is the fiberwise adjoint of $\gen$ with respect to the pairing $\beta : \trb \times \trb^{\star} \to \C$. This shows that $\gamma_{A^{\star}}(v) \in H^{-1+\gen^{\sharp}}(\Y;\trb^{\star})$. By Proposition~\ref{generatoradjoint} we have
\begin{equation*}
\gen^{\sharp} = \frac{1}{2} + [x\partial_x]^{\sharp} = \frac{1}{2} - x\partial_x,
\end{equation*}
and consequently $\gamma_{A^{\star}} : \Dom_{\max}(A^{\star}) \to H^{-\gen}(\Y;\trb^{\star})$. The continuity of this map follows from the continuity of $\gamma_{A^{\star}} : \Dom_{\max}(A^{\star}) \to C^{-\infty}(\Y;\trb^{\star})$ and the closed graph theorem.
\end{proof}

In case of a regular elliptic operator $A \in \Diff^1(\M;E,F)$ acting in $L^2$, Theorem~\ref{RangeofGamma} specializes to the familiar result that taking traces on the boundary gives rise to a map $\Dom_{\max}(A) \to H^{-1/2}(\N;E)$.

\medskip
The above construction of the trace map can be applied to each element $A_\wedge(\pmb\eta)$ of the normal family of $A$. The trace bundle of this operator ($\pmb\eta$ fixed) is just $\trb_y$ with $y=\pi_\Y(\pmb\eta)$ and so we have a continuous map
\begin{equation}\label{TraceMapAWedge}
\gamma_{A_\wedge(\pmb\eta)}:\Dom_{\max}(A_\wedge(\pmb\eta))\to\trb_y.
\end{equation}

\begin{lemma}\label{SmoothnessOfTraceWedge}
Let $V\subset T^*\Y$ be open. If $u(\pmb\eta)\in \Dom_{\max}(A_\wedge(\pmb\eta))$, $\pmb\eta\in V$, is smooth as a section of $E^\wedge$ over $\open\wp_{\wedge,\Y}^{-1}(V)$, then $\pmb\eta\mapsto \gamma_{A_\wedge(\pmb\eta)}(u(\pmb\eta))$ is a smooth local section of $\pi_\Y^*\trb$.
\end{lemma}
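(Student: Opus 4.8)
I will prove the statement for the family $\gamma_{A_\wedge(\pmb\eta)}(u(\pmb\eta))$. Since smoothness is local, fix $\pmb\eta_0\in V$, put $y_0=\pi_\Y(\pmb\eta_0)$, and work over $\pi_\Y^{-1}(U)\cap V$ for a small neighbourhood $U$ of $y_0$. Recall that $\gamma_{A_\wedge(\pmb\eta)}$ is the specialization to the cone $\Z_y^\wedge$ (base a single point $y=\pi_\Y\pmb\eta$) of the construction of Section~\ref{sec-TraceMap}: with a fixed cut-off $\omega$, $\gamma_{A_\wedge(\pmb\eta)}(u')$ is the unique element of $\trb_y$ with $\beta_y\big(\gamma_{A_\wedge(\pmb\eta)}(u'),v\big)=[u',\omega v]_{A_\wedge(\pmb\eta)}$ for all $v\in\trb^\star_y$. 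Since $A_\wedge(\pmb\eta)-\bA_y$ is multiplication by a bundle homomorphism $E^\wedge\to F^\wedge$ (its normal family $q(\pmb\eta)$ is linear in $\pmb\eta$, see \eqref{IndicialOp} and the local formula for $A_\wedge(\pmb\eta)$), the argument of Lemma~\ref{APairingIsAprimePairing}, applied on $\Z_y^\wedge$ to the pair $A_\wedge(\pmb\eta)$, $\bA_y=A_\wedge(0)$, gives $\Dom_{\max}(A_\wedge(\pmb\eta))=\Dom_{\max}(\bA_y)$ (also \eqref{DomainsDependOnYOnly}) and $[u',\omega v]_{A_\wedge(\pmb\eta)}=[u',\omega v]_{\bA_y}$; hence $\gamma_{A_\wedge(\pmb\eta)}$ coincides, as a map $\Dom_{\max}(\bA_y)\to\trb_y$, with the trace map $\gamma_{\bA_y}$ of the indicial cone operator $\bA_y$, which depends on $\pmb\eta$ only through $y$. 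Finally, by Proposition~\ref{DminSubsetKerTrace}, $\gamma_{\bA_y}$ kills $\Dom_{\min}(\bA_y)$, so it is enough to control the ``singular part'' of $u(\pmb\eta)$.

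Thus I would reduce the lemma to the following claim: over $U$, the family $u(\pmb\eta)$ admits a joint partial asymptotic expansion at $x=0$,
\begin{equation*}
u(\pmb\eta)=\sum_{(\sigma,\ell)}c_{\sigma,\ell}(\pmb\eta)\,\omega\,x^{\im\sigma}\log^\ell x+r(\pmb\eta),
\end{equation*}
the sum ranging over the (locally constant, by \eqref{NobSpecOnCritLinesOrderm}, after shrinking $U$) set of pairs $(\sigma,\ell)$ with $\sigma\in\spec_b(\bA_y)$ and $-1/2<\Im\sigma<1/2$, where the coefficients $c_{\sigma,\ell}(\pmb\eta)$ are smooth in $\pmb\eta$ (as sections of $E$ along $\Z_{\pi_\Y\pmb\eta}$) and the remainder lies in $\Dom_{\min}(\bA_y)$. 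Granting this, the singular part $s(\pmb\eta):=\sum c_{\sigma,\ell}(\pmb\eta)\,\omega\,x^{\im\sigma}\log^\ell x$ is smooth as a section of $E^\wedge$ over $\open\wp_{\wedge,\Y}^{-1}(U\cap V)$, $\gamma_{A_\wedge(\pmb\eta)}(u(\pmb\eta))=\gamma_{\bA_y}(s(\pmb\eta))$, and — pairing against a smooth local frame $v_1,\dots,v_N$ of $\trb^\star$ over $U$, using the Mellin formula \eqref{PairingViaMellin} for $[s(\pmb\eta),\omega v_j(y)]_{\bA_y}$ together with the smoothness of $\widehat{\tilde\omega v_j(y)}$ on a fixed rectangle $\partial R$ enclosing $\spec_b(\bA_y)\cap\{|\Im\sigma|<1/2\}$ (Lemma~\ref{SmoothnesOnMellinSide} applied to $A^\star$), and the smoothness and nondegeneracy of $\beta$ (Proposition~\ref{BasicParing}) to invert the resulting Gram matrix — one reads off that $\gamma_{\bA_y}(s(\pmb\eta))$ is a smooth section of $\trb$. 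Equivalently, since $\gamma_{A_\wedge(\pmb\eta)}(u(\pmb\eta))\in\ker\bP_y$ already has the form \eqref{ProtoTrace}, it is a smooth section of $\trb$ as soon as it is smooth over $\open\N^\wedge$, by \eqref{Smoothness}.

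The claim — and this is the main obstacle, the one genuinely technical point — is a parameter-dependent version of $b$-elliptic regularity, and is where the hypotheses ``$u(\pmb\eta)\in\Dom_{\max}(A_\wedge(\pmb\eta))$ for every $\pmb\eta$'' and \eqref{NobSpecOnCritLinesOrderm} are used. I would argue it by running the Mellin/residue extraction of the asymptotics with $\pmb\eta$ as a smooth parameter: the poles of $\widehat{\omega u(\pmb\eta)}$ in the strip $\{|\Im\sigma|<1/2\}$ sit at $\spec_b(\bA_y)$, and the corresponding residues are obtained by applying the Laurent coefficients of $\bPhat_y(\sigma)^{-1}$ (meromorphic in $\sigma$, smooth in $y$) to the Taylor coefficients at those poles of $\widehat{\bP_y(\omega u(\pmb\eta))}(\sigma)$, which is holomorphic on $\{\Im\sigma>-1/2\}$. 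The only delicate input is that this holomorphic datum depends smoothly on $\pmb\eta$; for this, split $\bP_y(\omega u(\pmb\eta))=\omega\,\bP_y u(\pmb\eta)+[\bP_y,\omega]u(\pmb\eta)$ (the second summand being supported away from $x=0$ and smooth over the interior, hence harmless), write $\bP_y u(\pmb\eta)=x\big(A_\wedge(\pmb\eta)-q(\pmb\eta)\big)u(\pmb\eta)$, and shift the Mellin variable by $\im$ so that the point of evaluation moves into the half-plane $\{\Im>1/2\}$, where the Mellin integral of the $x^{-1/2}L^2_b$-section $\omega\big(A_\wedge(\pmb\eta)-q(\pmb\eta)\big)u(\pmb\eta)$ converges absolutely. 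Smoothness in $\pmb\eta$ of this absolutely convergent integral, together with locally uniform bounds on its $\pmb\eta$-derivatives, then follows from the $a\ priori$ estimate for $\Dom_{\max}(A_\wedge(\pmb\eta))$ made uniform over $U$ — which is legitimate because $A_\wedge(\pmb\eta)$ is a smooth (indeed polynomial in $\pmb\eta$) family of $b$-elliptic cone operators with $\pmb\eta$-independent indicial operator $\bA_y$, and $u(\pmb\eta)$ lies in $\Dom_{\max}(A_\wedge(\pmb\eta))$ for each $\pmb\eta$ — and this upgrades the bare interior smoothness of $u(\pmb\eta)$ to the joint conormal expansion above. Everything else in the proof is bookkeeping with the Mellin description of $\beta$ already established in Proposition~\ref{BasicParing}.
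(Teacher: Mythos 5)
Your overall strategy (extract the boundary behavior of $u(\pmb\eta)$ through the Mellin transform, identify the trace with the singular part, and read off smoothness in $\pmb\eta$) is the right one, and the preliminary reduction $\gamma_{A_\wedge(\pmb\eta)}=\gamma_{\bA_y}$ on the common maximal domain is fine (it is consistent with \eqref{DomainsDependOnYOnly}, though not actually needed). But the central claim on which your argument rests contains a genuine error: you assert that, after shrinking $U$, the index set of pairs $(\sigma,\ell)$ with $\sigma\in\spec_b(\bA_y)$, $|\Im\sigma|<1/2$, is locally constant ``by \eqref{NobSpecOnCritLinesOrderm}'', and that $u(\pmb\eta)$ then has a partial expansion with \emph{individually} smooth coefficients $c_{\sigma,\ell}(\pmb\eta)$. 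Condition \eqref{NobSpecOnCritLinesOrderm} only keeps the boundary spectrum off the two critical lines; it does not freeze the indicial roots inside the strip. In the setting of this paper the roots and their multiplicities are allowed to vary with $y$ -- they can branch, merge, and change multiplicity on any neighborhood of a given $y_0$ (this is precisely the situation the trace bundle of \cite{KrMe12a} is built to handle). Near such points the individual residues, i.e.\ your $c_{\sigma,\ell}(\pmb\eta)$, are not even well defined as functions of $\pmb\eta$, let alone smooth, so the pole-by-pole extraction of the asymptotics breaks down.

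What survives, and what the paper's proof uses, is that the \emph{total} singular part is smooth: by compactness of $\Y$ and \eqref{NobSpecOnCritLinesOrderm} one can fix a single rectangle $\Omega$ with $\overline\Omega\subset\{|\Im\sigma|<1/2\}$ containing $\spec_b(\bA_y)\cap\{|\Im\sigma|<1/2\}$ for all $y$, form $\Ss_\Omega[(\omega u(\pmb\eta))\widehat{\ }]$ as in \eqref{SingularPart}, and set $\tau(\pmb\eta)=\frac{1}{2\pi}\int_{\partial\Omega}x^{\im\sigma}\,\Ss_\Omega[(\omega u(\pmb\eta))\widehat{\ }(\sigma)]\,d\sigma$; this contour integral over a fixed, $\pmb\eta$-independent contour is smooth in $\pmb\eta$ (as a section of $E^\wedge$ over the interior), satisfies $\bA_y\tau(\pmb\eta)=0$, and is exactly $\gamma_{A_\wedge(\pmb\eta)}(u(\pmb\eta))$, so smoothness as a section of $\pi_\Y^*\trb$ follows directly from the definition \eqref{Smoothness} -- your frame-of-$\trb^\star$/Gram-matrix step is then superfluous. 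If you replace your residue-by-residue expansion by this ``singular part as a whole'' device, your argument collapses to the paper's. A secondary point: your appeal to ``the a priori estimate for $\Dom_{\max}(A_\wedge(\pmb\eta))$ made uniform over $U$'' to get smooth dependence of the holomorphic Mellin datum on $\pmb\eta$ is left vague (the hypothesis gives only interior joint smoothness plus fiberwise membership in the maximal domain, not smoothness of $\pmb\eta\mapsto u(\pmb\eta)$ in any weighted $L^2_b$ norm), and this step deserves an actual argument rather than a gesture; but the locally-constant-spectrum claim is the step that is outright false in the generality considered here.
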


The map $\wp_{\wedge,\Y}:\wp_\wedge^*T^*\Y\to T^*\Y$ was defined in \eqref{MicrolocalBdyFiberBundle}, $\open\wp_\wedge$ is its restriction to $\open \N^\wedge$, likewise $\open\wp_{\wedge,\Y}$. By definition, see \eqref{Smoothness}, $\pmb\eta\mapsto \tau(\pmb\eta)$ is a smooth section of $\pi_\Y^*\trb$ over $V\subset T^*\Y$ if it smooth as a section of $E^\wedge$ over $\open\wp_{\wedge,\Y}^{-1}(V)$.

\begin{proof}
The singular part, $\Ss_\Omega[ (\omega u(\pmb\eta))\widehat{\ }(\sigma)]$, of the Mellin transform of $\omega u(\pmb\eta)$ in $\Sigma=\set{\sigma:|\Im\sigma|<1/2}$, see \eqref{SingularPart}, is smooth in the complement in $V\times\C$ of
\begin{equation*}
\set{(\pmb\eta,\sigma)\in V\times\C:\sigma\in\spec_b(A_\wedge(\pmb\eta))\cap\Sigma}.
\end{equation*}
Here $\Omega$ is a rectangle with closure contained in $\Sigma$ and containing the points of $\spec_b(A_\wedge(\pmb\eta))\cap \Sigma$ when $\eta\in V$. (Since $\Y$ is compact, such a rectangle exists.) As a consequence,
\begin{equation*}
\tau(\pmb\eta)=\frac{1}{2\pi}\int_{\partial\Omega} x^{\im\sigma} \Ss_\Omega[ (\omega u(\pmb\eta))\widehat{\ }(\sigma)]d\sigma
\end{equation*}
(with the positive orientation for $\partial\Omega$) is a smooth of $E^\wedge$ over $\open\wp_{\wedge,\Y}^{-1}V$. Since in addition $\bA_{\wedge,y}\tau(\pmb\eta)=0$, $y=\pi_\Y(\pmb\eta)$, $\tau(\pmb\eta)$ is a smooth section of $\trb$. However, $\tau(\pmb\eta)$ is precisely $\gamma_{A_\wedge(\pmb\eta)}(u(\pmb\eta))$, so the latter is smooth.
\end{proof}

The kernel of $\gamma_{A_\wedge(\pmb\eta)}$ is $\Dom_{\min}(A_\wedge(\pmb\eta))\subset x^{-1/2}L^2_b(\Z^\wedge_y;E_{\Z^\wedge_y})$, $y=\pi_\Y(\pmb\eta)$. As discussed in Section~\ref{sec-Domains}, in the presence of \eqref{NormalInjectivity} these spaces form a trivial subbundle $\Dom_{\min}(A_\wedge)\embed\Ha_E$ with typical fiber
\begin{equation*}
\K^{1,1/2}_{1/2-1/2\dim Z^\wedge}(\Z^\wedge;E_{\Z^\wedge}).
\end{equation*}
This is \eqref{BundleOfMinimalDomains} with $m=1$, $\gamma=1/2$. 

\section{The kernel bundle of the normal family}\label{sec-KernelBundle}

In addition to the assumptions listed in the first paragraph of Section \ref{sec-TraceMap} we assume in this section the condition dual to \eqref{NormalInjectivity}: 
\begin{equation}\label{NormalSurjectivity}
A_\wedge(\pmb\eta) : \Dom_{\max}(A_\wedge(\pmb\eta)) \subset x^{-\gamma}L^2_b \to x^{-\gamma}L^2_b\text{ is surjective for all }\pmb\eta \in T^*\Y \minus 0
\end{equation}
with $\gamma=1/2$ (this condition will not be needed again until Section \ref{sec-BVP}).

Since $A$ is $w$-elliptic, $A_\wedge(\pmb\eta)$ is elliptic for each $\pmb\eta\in T^*\Y$. Because of \eqref{NormalInjectivity},
\begin{equation*}
\Dom_{\min}(A_\wedge(\pmb\eta))=\K^{1,1/2}_{1/2-\frac{1}{2}\dim\Z_y^\wedge}(\Z_y^\wedge;E_{\Z_y^\wedge}),\quad\pmb\eta\in T^*\Y;
\end{equation*}
this is \eqref{DomMinAWedge} again with $m=1$ and $\gamma=1/2$. Assume $\pmb\eta\ne 0$. The operator $A_\wedge(\pmb\eta)$ is Fredholm with its minimal domain (see \cite[Proposition 4.1, Part ({\it iii})]{GiKrMe10}) hence also with its maximal domain which happens to be
\begin{equation*}
\Dom_{\max}(A_\wedge(\pmb\eta))=\omega \mathfrak i \trb_y +\Dom_{\min}(A_\wedge(\pmb\eta)),\quad y=\pi_\Y(\pmb\eta)
\end{equation*}
(see Lemma \ref{CanonicalMap} for the definition of $\mathfrak i$) for any cut-off function $\omega$; this is obviously a direct sum. Thus
\begin{equation*}
\dim \big(\Dom_{\max}(A_\wedge(\pmb\eta))/\Dom_{\min}(A_\wedge(\pmb\eta))\big)=\dim \trb_y\text{ if }\pmb\eta\in T^*_y\Y. 
\end{equation*}
This dimension is also given by
\begin{equation*}
N=-\Ind(A_{\wedge,\min}(\pmb\eta))+\Ind(A_{\wedge,\max}(\pmb\eta)).
\end{equation*}
We will write $N''=-\Ind (A_{\wedge,\min}(\pmb\eta))$ and $N'=\Ind(A_{\wedge,\max}(\pmb\eta))$. These numbers may depend on the connected component of $T^*\Y\minus 0$ containing $\pmb\eta$, but we will not indicate this in the notation.

It is convenient at this point to give
\begin{equation*}
\Dom_{\max}(A_\wedge)=\bigsqcup_{\pmb\eta\in T^*\Y}\Dom_{\max}(A_\wedge(\pmb\eta))
\end{equation*}
the structure of a Hilbert space bundle. We do this by taking advantage of the fact that $\Dom_{\min}(A_\wedge)$ has already been given such a structure, and specifying that 
\begin{equation*}
\omega\mathfrak i\pi_\Y^*\trb+\Dom_{\max}(A_\wedge)
\end{equation*}
is a smooth direct sum decomposition of $\Dom_{\max}(A_\wedge)$. 

The collection of spaces 
\begin{equation*}
\KK_{\pmb\eta}=\ker(A_{\wedge,\max}(\pmb\eta)),
\end{equation*}
which by \eqref{NormalSurjectivity} have dimension $N'$, plays a fundamental role in the properties of boundary value problems involving $A$. 

\begin{theorem}\label{KernelBundle}
Suppose \eqref{NormalInjectivity}, \eqref{NobSpecOnCritLinesOrderm} and \eqref{NormalSurjectivity} hold with $\gamma=1/2$. Let 
\begin{equation*}
\KK=\bigsqcup_{\eta\in T^*\Y\minus0}\KK_{\pmb\eta},
\end{equation*}
let $\quad \pi_{\KK}:\KK\to T^*\Y$ be the canonical projection. Then $\KK\to T^*\Y\minus 0$ has the structure of a smooth vector bundle, isomorphic to a subbundle $\kerb\subset \pi_\Y^*\trb$ via the map
\begin{equation}\label{TraceOnKerenelBundle}
\gamma_{A_\wedge}\big|_{\KK}:\KK\to\pi_\Y^*\trb
\end{equation}
which is a smooth vector bundle morphism. The smooth sections of $\KK$ are those which are smooth as sections of $E^\wedge$ over $\N^\wedge$.
\end{theorem}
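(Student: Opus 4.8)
\emph{Plan and setup.} The plan is to identify $\KK$ with the kernel of the smooth family of surjective Fredholm operators provided by the normal family, to build local trivializations from smoothly varying right inverses, and then to extract the statements about $\gamma_{A_\wedge}$ from the injectivity hypothesis \eqref{NormalInjectivity} and from Lemma~\ref{SmoothnessOfTraceWedge}. As a first step I would verify that $A_\wedge\colon \Dom_{\max}(A_\wedge)\to\pi_\Y^*\Ha_F$ is a smooth morphism of Hilbert bundles, where $\Dom_{\max}(A_\wedge)$ carries the structure, recalled just above the theorem, of the smooth direct sum $\omega\mathfrak i\pi_\Y^*\trb\oplus\Dom_{\min}(A_\wedge)$: this uses that $A_\wedge(\pmb\eta)=\bA+q(\pmb\eta)$ with $q$ linear, hence smooth, in $\pmb\eta$ and bounded $\Ha_E\to\Ha_F$ (Lemma~\ref{AonNwedge} and the remark following \eqref{DomainsDependOnYOnly}), together with the facts that $\bA$ is bounded on $\Dom_{\min}(A_\wedge)$ and, by \eqref{InDmaxExplicit}, carries $\omega\mathfrak i\trb_y$ into $C_c^\infty(\open\Z_y^\wedge;F^\wedge)\subset x^{-1/2}L^2_b(\Z_y^\wedge;F_{\Z_y^\wedge})$. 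Over $T^*\Y\minus0$ each $A_\wedge(\pmb\eta)$ is Fredholm by \cite[Proposition~4.1~(iii)]{GiKrMe10} and surjective by \eqref{NormalSurjectivity}, so $\dim\KK_{\pmb\eta}=N'$, locally constant on $T^*\Y\minus0$.

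\emph{The vector bundle structure.} Here I would invoke the standard fact that a smooth family of surjective bounded Fredholm operators between Hilbert bundles has a smooth finite--rank kernel subbundle. Concretely: fix $\pmb\eta_0\ne0$ and choose a closed complement $C$ of $\KK_{\pmb\eta_0}$ in $\Dom_{\max}(A_\wedge(\pmb\eta_0))$, so that $A_\wedge(\pmb\eta_0)|_{C}$ is an isomorphism onto $(\pi_\Y^*\Ha_F)_{\pmb\eta_0}$; using local trivializations of $\Dom_{\max}(A_\wedge)$ and of $\pi_\Y^*\Ha_F$ over a neighborhood $V$ of $\pmb\eta_0$, transport $C$ to a smooth family $C_{\pmb\eta}$ of closed subspaces. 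After shrinking $V$ the restrictions $A_\wedge(\pmb\eta)|_{C_{\pmb\eta}}$ remain isomorphisms with smoothly varying inverses, which, composed with the inclusions $C_{\pmb\eta}\hookrightarrow\Dom_{\max}(A_\wedge(\pmb\eta))$, give smoothly varying right inverses $B_{\pmb\eta}$ of $A_\wedge(\pmb\eta)$. Then $P_{\pmb\eta}=\Id-B_{\pmb\eta}A_\wedge(\pmb\eta)$ is a smooth family of projections with range $\KK_{\pmb\eta}$, and $P_{\pmb\eta}|_{\KK_{\pmb\eta_0}}\colon\KK_{\pmb\eta_0}\to\KK_{\pmb\eta}$ is an isomorphism depending smoothly on $\pmb\eta\in V$; these furnish local trivializations, so $\KK\to T^*\Y\minus0$ is a smooth vector bundle.

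\emph{The trace map and the characterization of smooth sections.} Since $\ker\gamma_{A_\wedge(\pmb\eta)}=\Dom_{\min}(A_\wedge(\pmb\eta))$ and $\KK_{\pmb\eta}\cap\Dom_{\min}(A_\wedge(\pmb\eta))=\ker A_{\wedge,\min}(\pmb\eta)=0$ by \eqref{NormalInjectivity}, the map $\gamma_{A_\wedge(\pmb\eta)}|_{\KK_{\pmb\eta}}$ is injective, so $\kerb_{\pmb\eta}:=\gamma_{A_\wedge(\pmb\eta)}(\KK_{\pmb\eta})\subset(\pi_\Y^*\trb)_{\pmb\eta}$ has the constant dimension $N'$. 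To obtain the bundle statements I would check that the trivializations above may be taken compatible with the notion of smoothness in \eqref{Smoothness}: the trivializations of $\Ha_E$ and $\Dom_{\min}(A_\wedge)$ from \cite[Section~3]{GiKrMe10} and of $\trb$ from \cite{KrMe12a} identify abstract smoothness with smoothness as sections of $E^\wedge$ over the interior $\open\wp_{\wedge,\Y}^{-1}(V)$, and $B_{\pmb\eta}$, built from these, together with the differential operator $A_\wedge$ and interior elliptic regularity, preserves this notion; hence the local frames $P_{\pmb\eta}s_j$ of $\KK$ are smooth as sections of $E^\wedge$, and conversely any section of $\KK$ smooth as a section of $E^\wedge$ is smooth in the abstract structure, because by Lemma~\ref{SmoothnessOfTraceWedge} its image under $\gamma_{A_\wedge}$ is then a smooth section of $\pi_\Y^*\trb$ and $\gamma_{A_\wedge}|_{\KK}$ is a smooth isomorphism onto its image (established next). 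This proves the last sentence of the theorem. Finally, applying Lemma~\ref{SmoothnessOfTraceWedge} to the frames $P_{\pmb\eta}s_j$ shows $\gamma_{A_\wedge}|_{\KK}$ carries smooth frames of $\KK$ to smooth sections of $\pi_\Y^*\trb$; being fiberwise injective of constant rank, its image $\kerb$ is a smooth subbundle of $\pi_\Y^*\trb$ and $\gamma_{A_\wedge}|_{\KK}\colon\KK\to\kerb$ is a smooth vector bundle isomorphism.

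\emph{Main obstacle.} The functional--analytic core --- a smooth family of surjective Fredholm operators has a smooth kernel bundle --- is routine. The delicate point, and where the real work lies, is the compatibility claim in the previous paragraph: reconciling the abstract Hilbert--bundle smoothness produced by the right inverses $B_{\pmb\eta}$ with the concrete notion \eqref{Smoothness} of smoothness as a section of $E^\wedge$ over $\open\N^\wedge$. This forces one back to the explicit trivializations of \cite{GiKrMe10} and \cite{KrMe12a} and to the precise mapping properties of the parametrix used to build $B_{\pmb\eta}$.
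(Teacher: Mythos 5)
Your overall skeleton is the same as the paper's: construct right inverses $B_{\pmb\eta}$ of $A_\wedge(\pmb\eta)$ near a base point, form the projections $\Id-B_{\pmb\eta}A_\wedge(\pmb\eta)$ onto $\KK_{\pmb\eta}$, use them to produce local frames, get fiberwise injectivity of the trace from \eqref{NormalInjectivity}, and invoke Lemma~\ref{SmoothnessOfTraceWedge} to pass to $\pi_\Y^*\trb$. But there is a genuine gap, and you have located it yourself without closing it: the claim that the projections (equivalently the right inverses) preserve the concrete notion of smoothness \eqref{Smoothness}, i.e.\ map families that are smooth as sections of $E^\wedge$ over $\open\wp_{\wedge,\Y}^{-1}(V)$ to families with the same property. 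This is not a compatibility check one can defer; it is the substance of the theorem. The ``standard fact'' about smooth families of surjective Fredholm operators only produces an abstract Hilbert-bundle $C^\infty$ structure on $\KK$, whereas the theorem asserts a specific structure (last sentence of the statement) and asserts that $\gamma_{A_\wedge}|_{\KK}$ is smooth into $\pi_\Y^*\trb$, whose smooth structure is itself defined through \eqref{Smoothness}. Writing ``$B_{\pmb\eta}$, built from these, together with \ldots interior elliptic regularity, preserves this notion'' is an assertion, not an argument: your $B_{\pmb\eta}$ is obtained by inverting $A_\wedge(\pmb\eta)$ on an abstractly transported complement $C_{\pmb\eta}$, and nothing in that construction gives you a handle on how $B_{\pmb\eta}f(\pmb\eta)$ varies as a section of $E^\wedge$ when $f(\pmb\eta)$ does.

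This is exactly what the paper isolates as Lemma~\ref{SmoothnessOfProjection}, and its proof is where the work is. The right inverse there is not abstract: one keeps the fixed right inverse $B_\wedge(\pmb\eta_0)$ on the minimal-domain component, writes $\pi_{\min}B_\wedge(\pmb\eta_0)=B_\wedge(\pmb\eta_0)-\omega\mathfrak i\,\gamma_{A_\wedge(\pmb\eta_0)}B_\wedge(\pmb\eta_0)$ so that ellipticity of $A_\wedge(\pmb\eta_0)$ and smoothness of the trace give the $E^\wedge$-smoothness of that piece, and lets the $\trb$-component ride on the moving frame $\omega\tau_\mu(\pmb\eta)$; one then computes $A_\wedge(\pmb\eta)\tilde B_\wedge(\pmb\eta)=\Id-R(\pmb\eta,\pmb\eta_0)$, checks term by term (using $\bA_{\wedge,\pmb\eta}\tau_\mu(\pmb\eta)=0$, the symbol identity for $d\omega$, and the smallness of $A_\wedge(\pmb\eta_0)-A_\wedge(\pmb\eta)$ on $\Dom_{\min}$) that $R$ is small in norm and preserves $E^\wedge$-smoothness, and only then sets $B_\wedge(\pmb\eta)=\tilde B_\wedge(\pmb\eta)(I-R(\pmb\eta,\pmb\eta_0))^{-1}$, noting that the Neumann series inherits the property. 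Your proposal would become a proof if you replaced the abstract complement construction by such an explicit one (or otherwise proved the preservation property for your $B_{\pmb\eta}$); as written, the characterization of smooth sections of $\KK$ and the smoothness of \eqref{TraceOnKerenelBundle} rest on an unproved claim.
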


The proof will make use of the following lemma.

\begin{lemma}\label{SmoothnessOfProjection}
For every $\pmb\eta_0\in T^*\Y\minus 0$ there is a neighborhood $V\subset T^*\Y\minus 0$ and a family of projections
\begin{equation*}
\mathfrak p_{\pmb\eta}:\Dom_{\max}(A_\wedge(\pmb\eta))\to \Dom_{\max}(A_\wedge(\pmb\eta))
\end{equation*}
onto $\KK_{\pmb\eta}$, $\pmb\eta\in V$, with the property that if $u(\pmb\eta)\in x^{-1/2}L^2_b(\Z^\wedge_{y};E_{\Z^\wedge_{y}})$, $\pmb\eta \in V$, $y=\pi_\Y(\pmb\eta)$, is smooth as a section of $E^\wedge$ over $\open\wp_{\wedge,\Y}^{-1}(V)$ then so is $\mathfrak p_{\pmb\eta}(u(\pmb\eta))$.
\end{lemma}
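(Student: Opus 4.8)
The plan is to construct $\mathfrak p_{\pmb\eta}$ by a Riesz-projection type formula adapted to the Fredholm family $A_{\wedge,\max}(\pmb\eta)$, whose kernels $\KK_{\pmb\eta}$ have constant dimension $N'$ near $\pmb\eta_0$ by \eqref{NormalSurjectivity}. First I would fix $\pmb\eta_0$ and work in a coordinate neighborhood $U$ of $y_0=\pi_\Y(\pmb\eta_0)$ over which the Hilbert space bundle $\Ha_E$ trivializes by one of the unitary trivializations $\Psi$ described in Section~\ref{sec-Domains}; this identifies all the fibers $x^{-1/2}L^2_b(\Z^\wedge_y;E_{\Z^\wedge_y})$ with a fixed model space and, crucially, matches smoothness of families of sections over $U$ with smoothness as sections of $E^\wedge$ over $\open\wp_\wedge^{-1}(U)$. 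Using the smooth direct sum decomposition $\Dom_{\max}(A_\wedge)=\omega\mathfrak i\pi_\Y^*\trb\oplus\Dom_{\min}(A_\wedge)$ together with the (now trivial) bundle structure on $\Dom_{\min}(A_\wedge)$, I obtain a trivialization of $\Dom_{\max}(A_\wedge)$ over a neighborhood $V$ of $\pmb\eta_0$, in which $\pmb\eta\mapsto A_{\wedge,\max}(\pmb\eta)$ becomes a smooth family of bounded Fredholm operators between fixed Hilbert spaces (boundedness is the statement that $A_\wedge(\pmb\eta)\colon\Dom_{\max}(A_\wedge(\pmb\eta))\to x^{-1/2}L^2_b$ is bounded, which holds by \eqref{DomainsDependOnYOnly} and ellipticity).

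Next I would produce the projection. Since $A_{\wedge,\max}(\pmb\eta_0)$ is Fredholm, pick finite-dimensional spaces realizing its kernel and a complement of its range and use the standard stabilization trick: there is a smooth family of finite-rank operators $R(\pmb\eta)$ such that $A_{\wedge,\max}(\pmb\eta)+R(\pmb\eta)$ is invertible for $\pmb\eta$ near $\pmb\eta_0$; then the kernel of $A_{\wedge,\max}(\pmb\eta)$ is the image under $(A_{\wedge,\max}(\pmb\eta)+R(\pmb\eta))^{-1}$ of a smoothly varying finite-dimensional subspace, and a spectral/Riesz projection onto that subspace, conjugated by the inverse, gives a smooth family $\mathfrak p_{\pmb\eta}$ of projections onto $\KK_{\pmb\eta}$. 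The point is that all ingredients — $R(\pmb\eta)$, the inverse, the finite-rank spectral projection — are smooth in the operator-norm topology on $V$, and smoothness in the operator topology on the trivialized bundles transfers, via $\Psi$, to smoothness of the resulting family of sections as sections of $E^\wedge$ over $\open\wp_{\wedge,\Y}^{-1}(V)$. This is exactly the conclusion required: if $u(\pmb\eta)$ is smooth as a section of $E^\wedge$ over $\open\wp_{\wedge,\Y}^{-1}(V)$, then so is $\mathfrak p_{\pmb\eta}(u(\pmb\eta))$.

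There is one subtlety to handle with care: the finite-rank operators $R(\pmb\eta)$ and the spectral projection must be chosen so that their Schwartz kernels are themselves smooth in the appropriate sense, i.e. so that applying $\mathfrak p_{\pmb\eta}$ maps \emph{smooth} families of sections (over $\open\wp_{\wedge,\Y}^{-1}(V)$) to smooth families. One clean way: choose the stabilizing range-complement to be spanned by finitely many sections of the form $\chi\cdot e_j$ with $\chi$ a cutoff and $e_j\in C^\infty(\Z^\wedge_{y_0})$, transported over $V$ by the trivialization, so that $R(\pmb\eta)$ is a finite sum of rank-one operators $v\mapsto \langle v,\phi_j(\pmb\eta)\rangle\psi_j(\pmb\eta)$ with $\phi_j,\psi_j$ smooth in the strong sense; then every operation performed is of this "smoothing" type and smoothness of sections is preserved. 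The main obstacle is precisely this bookkeeping — verifying that the finite-rank corrections and the inverse preserve the refined notion of smoothness (smoothness as sections of $E^\wedge$ over the open manifold), rather than merely operator-norm smoothness in $\pmb\eta$ — and it is resolved by the transfer property of the trivialization $\Psi$ recorded in Section~\ref{sec-Domains}, which is the only place the specific structure of $\Ha_E$ enters.
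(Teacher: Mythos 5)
Your construction breaks down at the stabilization step, and the failure is structural, not a matter of bookkeeping. By \eqref{NormalSurjectivity} the operator $A_{\wedge,\max}(\pmb\eta):\Dom_{\max}(A_\wedge(\pmb\eta))\to x^{-1/2}L^2_b$ is \emph{surjective}, so its Fredholm index equals $\dim\KK_{\pmb\eta}$, which is positive in the only nontrivial case (if $\KK_{\pmb\eta}=0$ the lemma is vacuous). Finite-rank, indeed compact, perturbations preserve the index, so there is no family $R(\pmb\eta)$ of finite-rank operators with $A_{\wedge,\max}(\pmb\eta)+R(\pmb\eta)$ invertible between the same two spaces; for the same reason there is no ``complement of its range'' to stabilize, since the range is all of $x^{-1/2}L^2_b$. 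Hence the Riesz-projection-conjugated-by-$(A+R)^{-1}$ mechanism never gets started. The repair is to use surjectivity directly: construct a family of \emph{right inverses} $B_\wedge(\pmb\eta)$ near $\pmb\eta_0$ (start from a right inverse at $\pmb\eta_0$, perturb it, and correct by a Neumann series so that $A_\wedge(\pmb\eta)B_\wedge(\pmb\eta)=\Id$), and then take $\mathfrak p_{\pmb\eta}=\Id-B_\wedge(\pmb\eta)A_\wedge(\pmb\eta)$, which is a projection onto $\KK_{\pmb\eta}$; this is what the paper does. (Alternatively one could border $A_{\wedge,\max}(\pmb\eta)$ by a trace-type row into $\C^{N'}$, i.e.\ enlarge the \emph{target}, but some such device replacing your ``$A+R$ invertible'' is unavoidable.)

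A second, smaller point: the trivialization argument is more delicate than ``operator-norm smoothness plus the transfer property of $\Psi$.'' The trivializations of $\Ha_E$ preserve the $\K$-spaces and hence $\Dom_{\min}(A_\wedge)$, but the trace-bundle summand of $\Dom_{\max}(A_\wedge)$ varies genuinely with $y$: transporting $\omega\,\mathfrak i\tau$ with $\tau\in\trb_{y_0}$ to a nearby fiber does not produce an element of $\trb_y$, and $\bA_{\wedge,\pmb\eta}$ applied to it is not small, so the naive difference $A_\wedge(\pmb\eta)-A_\wedge(\pmb\eta_0)$ is not small on all of the (transported) maximal domain. The paper circumvents this by splitting the candidate right inverse into its $\Dom_{\min}$ part and a trace part written in a frame $\tau_\mu$ of $\pi_\Y^*\trb$, and by replacing $\tau_\mu(\pmb\eta_0)$ with $\tau_\mu(\pmb\eta)$ so that $\bA_{\wedge,\pmb\eta}(\tau_\mu(\pmb\eta))=0$; the remainder then consists only of cutoff commutators through $d\omega$ and zeroth-order differences, which are small in norm and visibly preserve smoothness as sections of $E^\wedge$ over $\open\wp_{\wedge,\Y}^{-1}(V)$. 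Moreover, operator-norm smoothness of the trivialized family would in any case not by itself give the conclusion of the lemma, which concerns the refined notion of smoothness as sections of $E^\wedge$; that property has to be verified for each ingredient (the fixed right inverse via interior ellipticity, the frame, the scalar functionals, and the Neumann series), which is precisely what the paper's explicit construction is organized to do.
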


We prove the lemma below.

\begin{proof}[Proof of Theorem~\ref{KernelBundle}]
Pick $\pmb\eta_0\in T^*\Y\minus 0$, a neighborhood $V\subset T^*\Y\minus 0$ of $\pmb\eta_0$, and a family of projections $\mathfrak p_{\pmb\eta}$ as in the lemma. Pick a frame $\tau_1,\dotsc,\tau_{N'}$ of $\trb$ near $y_0=\pi_\Y\pmb\eta_0$. Shrinking $V$ if necessary, the lifting of the frame gives a frame of $\pi_{\Y}^*\trb$ over $V$. After redefining the elements of the frame if necessary, assume that $\tau_1(y_0),\dotsc,\tau_{N'}(y_0)$ span $\gamma_{A_\wedge(\pmb\eta_0)}(\KK_{\pmb\eta_0})$. These elements are images by $\gamma_{A_\wedge(\pmb\eta_0)}$ of (unique) elements $\mathfrak n_\mu(\pmb\eta_0)=\omega\mathfrak i\tau_\mu(\pmb\eta_0)+\zeta_\mu(\pmb\eta_0)\in \KK_{\pmb\eta_0}$ where $\zeta_\mu(\pmb\eta_0)\in \Dom_{\min}(A_\wedge(\pmb\eta_0))$. Extend the elements $\zeta_\mu(\pmb\eta_0)$ as smooth sections of $\Dom_{\min}(A_\wedge)$ (see the end of Section~\ref{sec-TraceMap}) in such a way that they are smooth as sections of $E^\wedge$ (for instance by extending as constant with respect to one of the trivializations described in Section~\ref{sec-Domains}). Let 
\begin{multline*}
\mathfrak n_\mu(\pmb\eta)=\mathfrak p_{\pmb\eta}(\omega \mathfrak i\tau_\mu(y)+\zeta_\mu(\pmb\eta)),\
\tilde \tau_\mu(\pmb\eta)=\gamma_{A_\wedge(\pmb\eta)}(\mathfrak n_\mu(\pmb\eta)),\\
y=\pi_\Y(\pmb\eta),\ \pmb\eta\in V,\ \mu=1,\dots,N'.
\end{multline*}
Thus $\mathfrak n_\mu(\pmb\eta)\in \KK_{\pmb\eta}$ for each $\pmb\eta\in V$ and is, by Lemma~\ref{SmoothnessOfProjection}, a smooth section of $E^\wedge$ over $\open\wp_{\wedge,\Y}^{-1}(V)$. By continuity these elements are linearly independent for $\pmb\eta$ in a neighborhood of $\pmb\eta_0$ which we may take to be $V$ itself. Therefore they give a basis of $\KK_{\pmb\eta}$ for each $\pmb\eta \in V$. Furthermore, by Lemma~\ref{SmoothnessOfTraceWedge} the $\tilde \tau_\mu(\pmb\eta)$ are smooth sections of $\pi_\Y^*\trb$ near $\pmb\eta_0$, in $\kerb$, independent there since they agree with the $\tau_\mu$, $\mu=1,\dotsc,N'$, at $\pmb\eta_0$. This implies: every section of $\KK$ over $V$ which is smooth as a section of $E^\wedge$ over $\open\wp_{\wedge,\Y}^{-1}(V)$ is a linear combination of the $\mathfrak n_\mu(\pmb\eta)$ with $C^\infty$ coefficients.

This shows that the family of frames constructed as the frame $\mathfrak n_\mu$ was has smooth transition functions. This defines the $C^\infty$ structure of $\KK$. 

Since $\gamma_{A_\wedge(\pmb\eta)}(\mathfrak n_\mu(\pmb\eta))$ is part of a smooth frame of $\pi_\Y^*\trb$, \eqref{TraceOnKerenelBundle} is a smooth morphism. 
This concludes the proof of Theorem~\ref{KernelBundle}.
\end{proof}

\begin{proof}[Proof of Lemma~\ref{SmoothnessOfProjection}]
Fix $\pmb\eta_0\in T^*\Y\minus 0$. We will construct a family of right inverses 
\begin{equation*}
B_\wedge(\pmb\eta):x^{-1/2}L^2_b\to \Dom_{\max}(A_\wedge(\pmb\eta))
\end{equation*}
defined for $\pmb\eta$ in a neighborhood $V$ of $\pmb\eta_0$ with the property that it maps sections of $\pi_\Y^*\Ha_F$ over $V$ which are smooth as sections of $F^\wedge$ over $\open\wp_{\wedge,\Y}^{-1}V$ to sections of $\Dom_{\max}(A_\wedge)$ over $V$ which are smooth as sections of $E^\wedge$ over the same set. Then $B_\wedge(\pmb\eta)A_\wedge(\pmb\eta)$, which is a projection
\begin{equation*}
x^{-1/2}L^2_b(\Z_{\pmb\eta}^\wedge;E_{\Z_{\pmb\eta}^\wedge})\to x^{-1/2}L^2_b(\Z_{\pmb\eta}^\wedge;E_{\Z_{\pmb\eta}^\wedge})
\end{equation*}
with kernel $\KK_{\pmb\eta}$ will have the analogous property (since $A_\wedge$ has it), and so $\mathfrak p_{\pmb\eta}=\Id-B_\wedge(\pmb\eta)A_\wedge(\pmb\eta)$ will have the required property. We are using the notation $\Z^\wedge_{\pmb\eta}$ for $\wp_{\wedge,\Y}^{-1}(\pmb\eta)$. 

Let $B_\wedge(\pmb\eta_0)$ be a right inverse of $A_\wedge(\pmb\eta_0)$. Let
\begin{equation*}
\pi_{\max},\ \pi_{\min}:\Dom_{\max}(A_\wedge(\pmb\eta_0))\to \Dom_{\max}(A_\wedge(\pmb\eta_0))
\end{equation*}
be the projections associated with the direct sum decomposition 
\begin{equation*}
\Dom_{\max}(A_\wedge(\pmb\eta_0))=\omega \mathfrak i \trb_{y_0} +\Dom_{\min}(A_\wedge(\pmb\eta_0))
\end{equation*}
with respect to some fixed cut-off function $\omega$; we have set $y_0=\pi_\Y(\pmb\eta_0)$. Obviously
\begin{equation*}
\pi_{\max}\circ B_\wedge(\pmb\eta_0),\ \pi_{\min}\circ B_\wedge(\pmb\eta_0):x^{-1/2}L^2_b(\Z^\wedge_{y_0};E_{\Z^\wedge_{y_0}})\to \Dom_{\max}(A_\wedge(\pmb\eta_0))
\end{equation*}
are continuous operators.

The map $\pi_{\max}$ can be written explicitly with the aid of the trace map (see \eqref{TraceMapAWedge}) of $A_\wedge(\pmb\eta_0)$:
\begin{equation*}
\pi_{\max}(u)=\omega\mathfrak i\gamma_{A_\wedge(\pmb\eta_0)}(u).
\end{equation*}
Let $\tau_1,\dots,\tau_N$ be a frame of $\pi_\Y^*\trb$ in a neighborhood $V$ of  $\pmb\eta_0$ and $\tau^1,\dotsc,\tau^N$ the dual frame. Then 
\begin{equation*}
x^{-1/2}L^2_b(\Z^\wedge_{\pmb\eta_0};F_{\Z^\wedge_{\pmb\eta_0}})\ni f\mapsto b^\mu(f)=\langle \tau^\mu(\pmb\eta_0),(\gamma_{A_\wedge(\pmb\eta_0)}\circ B_\wedge(\pmb\eta_0))(f)\rangle\in \C
\end{equation*}
is continuous.

Using the trivializations of $\Ha_E$ and $\Ha_F$ described in Section~\ref{sec-Domains} and taking advantage of the fact that the former restrict to trivializations of $\Dom_{\min}(A_\wedge)$ we regard $\pi_{\min}\circ B_\wedge(\pmb\eta_0)$ as a mapping $\Ha_F \to \Dom_{\min}(A_\wedge)$. As such it is continuous and has the property that it sends sections of $\Ha_F$ which are smooth section of $F^\wedge$ over $\open\wp_{\wedge,\Y}^{-1}V$ to sections of $E^\wedge$ with the same property. The map
\begin{equation*}
\tilde B_\wedge(\pmb\eta):x^{-1/2}L^2_b(\Z^\wedge_{\pmb\eta_0};F_{\Z^\wedge_{\pmb\eta_0}})\to x^{-1/2}L^2_b(\Z^\wedge_{\pmb\eta_0};E_{\Z^\wedge_{\pmb\eta_0}})
\end{equation*}
defined by
\begin{equation*}
\tilde B_\wedge(\pmb\eta)f=\pi_{\min}\circ B_\wedge(\pmb\eta_0)f+\sum_\mu b^\mu(f)\omega\tau_\mu(\pmb\eta),\quad \pmb\eta\in V
\end{equation*}
then has the same property. Note that $\tilde B_\wedge(\pmb\eta)$ maps into $\Dom_{\max}(A_\wedge(\pmb\eta))$ and that $\tilde B_\wedge(\pmb\eta_0)=B_\wedge(\pmb\eta_0)$. Recalling that $\bA_\wedge=\bA=A_\wedge(0)=x^{-1}\bP$, $0\in T^*_{\pi_\Y(\pmb\eta)}\Y$, set
\begin{equation*}
q(\pmb\eta)=A_\wedge(\pmb\eta)-\bA.
\end{equation*}
This is a (smooth) homomorphism $q:\wp_\wedge^*T^*\Y\to \Hom(E^\wedge, F^\wedge)$ which commutes with $x\partial_x$. We will write $\bA_{\wedge,\pmb\eta}$ when referring to $\bA_\wedge$ as an operator on $\Z^\wedge_{\pmb\eta}$. Since $\bA_{\wedge,\pmb\eta}$ is a first order operator,
\begin{equation*}
\bA_{\wedge,\pmb\eta}(h u)=h\bA_{\wedge,\pmb\eta}(u)-\im\sym(\bA_{\wedge,\pmb\eta})(dh)(u)
\end{equation*}
for any smooth function $h$ and generalized section $u$ of $E^\wedge$, where $\sym(\bA_{\wedge,\pmb\eta})$ is the standard principal symbol of $\bA_{\wedge,\pmb\eta}$. Passing to trivializations and working on the fiber over $\pmb\eta_0$ (so that differences of elements originally in different fibers make sense) we have
\begin{align*}
\bA_{\wedge,\pmb\eta}\big(\sum_\mu &b^\mu(f)\omega\tau_\mu(\pmb\eta)\big)\\
&=\sum_\mu b^\mu(f)\omega \bA_{\wedge,\pmb\eta} (\tau_\mu(\pmb\eta))-\im \sum_\mu b^\mu(f)\sym(\bA_{\wedge,\pmb\eta})(d\omega)(\tau_\mu(\pmb\eta))\\
&=-\im \sum_\mu b^\mu(f)\sym(\bA_{\wedge,\pmb\eta})(d\omega)(\tau_\mu(\pmb\eta))\\
&=\bA_{\wedge,\pmb\eta_0}\big(\sum_\mu b^\mu(f)\omega\tau_\mu(\pmb\eta_0)\big)\\
&\qquad-\im\sum_\mu b^\mu(f)\Big(\sym(\bA_{\wedge,\pmb\eta})(d\omega)(\tau_\mu(\pmb\eta))-\sym(\bA_{\wedge,\pmb\eta_0})(d\omega)(\tau_\mu(\pmb\eta_0))\Big)
\end{align*}
taking advantage of the fact that $\bA_{\wedge,\pmb\eta}(\tau_\mu(\pmb\eta))=0$. Proceeding in a similar fashion with all terms we obtain
\begin{multline*}
A_\wedge(\pmb\eta)\tilde B_\wedge(\pmb\eta)(f)= A_\wedge(\pmb\eta_0) B_\wedge(\pmb\eta_0)(f)-(A_\wedge(\pmb\eta_0)-A_\wedge(\pmb\eta))\pi_{\min}B_\wedge(\pmb\eta_0)(f)\\
-\im\sum_\mu b^\mu(f)\Big(\sym(\bA_{\wedge,\pmb\eta})(d\omega)(\tau_\mu(\pmb\eta))-\sym(\bA_{\wedge,\pmb\eta_0})(d\omega)(\tau_\mu(\pmb\eta_0))\Big)\\
+\sum_\mu b^\mu(f) \omega\big(q(\pmb\eta)\tau_\mu(\pmb\eta)-q(\pmb\eta_0)\tau_\mu(\pmb\eta_0)\big),
\end{multline*}
that is,
\begin{equation*}
A_\wedge(\pmb\eta)\tilde B_\wedge(\pmb\eta)=\Id -R(\pmb\eta,\pmb\eta_0)
\end{equation*}
with
\begin{equation*}
R(\pmb\eta,\pmb\eta_0):x^{-1/2}L^2_b(\Z^\wedge_{\pmb\eta_0};F_{\Z^\wedge_{\pmb\eta_0}})\to x^{-1/2}L^2_b(\Z^\wedge_{\pmb\eta_0};F_{\Z^\wedge_{\pmb\eta_0}})
\end{equation*}
which evidently is continuous. We claim that its norm can be made arbitrarily small when $\pmb\eta$ is close to $\pmb\eta_0$. Indeed, since $\sym(\bA_{\wedge,y})(d\omega)$ is a smooth bundle homomorphism depending smoothly on the base variables, 
\begin{equation*}
\sym(\bA_{\wedge,\pmb\eta})(d\omega)(\tau_\mu(\pmb\eta))-\sym(\bA_{\wedge,\pmb\eta_0})(d\omega)(\tau_\mu(\pmb\eta_0))
\end{equation*}
can be made arbitrarily small by taking $\pmb\eta$ sufficiently close to $\pmb\eta_0$. The same argument applies to 
\begin{equation*}
q(\pmb\eta)\tau_\mu(\pmb\eta)-q(\pmb\eta_0)\tau_\mu(\pmb\eta_0).
\end{equation*}
Finally, $A_\wedge(\pmb\eta_0)-A_\wedge(\pmb\eta)$ is a first order operator whose coefficients can be assumed to be arbitrarily small by taking $\pmb\eta$ sufficiently close to $\pmb\eta_0$. So this operator, as an operator $\Dom_{\min}(A_\wedge(\pmb\eta_0))\to x^{-1/2}L^2_b(\Z^\wedge_{\pmb\eta_0};F_{\Z^\wedge_{\pmb\eta_0}})$, has small norm if $\pmb\eta$ is sufficiently close to $\pmb\eta_0$, and the same is then true for 
\begin{equation*}
(A_\wedge(\pmb\eta_0)-A_\wedge(\pmb\eta))\pi_{\min}B_\wedge(\pmb\eta_0):x^{-1/2}L^2_b(\Z^\wedge_{\pmb\eta_0};F_{\Z^\wedge_{\pmb\eta_0}})\to x^{-1/2}L^2_b(\Z^\wedge_{\pmb\eta_0};F_{\Z^\wedge_{\pmb\eta_0}}).
\end{equation*}
Thus we may assume that $\|R(\pmb\eta,\pmb\eta_0)\|_{x^{-1/2}L^2_b}<1$ when $\pmb\eta\in V$ and define
\begin{equation*}
B_\wedge(\pmb\eta)=\tilde B_\wedge(\pmb\eta)\circ(I-R(\pmb\eta,\pmb\eta_0))^{-1}
\end{equation*}
which is a right inverse of $A_\wedge(\pmb\eta)$.

We now show that $B_\wedge(\pmb\eta)$ has the property described at the beginning of the proof. The operator $R(\pmb\eta,\pmb\eta_0)$ depends smoothly on $\pmb\eta$, and has the property that if
\begin{equation*}
f(\pmb\eta)\in x^{-1/2}L^2_b(\Z^\wedge_{\pmb\eta_0};F_{\Z^\wedge_{\pmb\eta_0}})
\end{equation*}
is a smooth section of $F^\wedge$ over $\open \wp_{\wedge,\Y}^{-1}(V)$ then $R(\pmb\eta,\pmb\eta_0)f$ is also smooth as a section of $F^\wedge$ over the same set. This can be seen as follows. If $f$ is smooth as specified, then the  section $B_\wedge(\pmb\eta_0)f(\pmb\eta)$ of $E^\wedge$ along $\Z^\wedge_{y_0}$, is also smooth, by the ellipticity of $A_\wedge(\pmb\eta_0)$. Since in any case $\gamma_{A_\wedge(\pmb\eta_0)}B_\wedge(\pmb\eta_0)f(\pmb\eta)\in \trb_{y_0}$ is a smooth section of $E^\wedge$ over $\open\Z^\wedge_{y_0}$, so is $\pi_{\min}B_\wedge(\pmb\eta_0)f(\pmb\eta)$ since
\begin{equation*}
\pi_{\min}B_\wedge(\pmb\eta_0) = B_\wedge(\pmb\eta_0)-\omega\mathfrak i \gamma_{A_\wedge(\pmb\eta_0)}B_\wedge(\pmb\eta_0)
\end{equation*}
Inspection of the definition of $R(\pmb\eta,\pmb\eta_0)$ shows that these properties are inherited by this operator, hence by $(I-R(\pmb\eta,\pmb\eta_0))^{-1}$, which exists for $\pmb\eta$ in a neighborhood $V\subset T^*\Y\minus 0$ of $\pmb\eta_0$, and in conclusion, by $B_\wedge(\pmb\eta)$. It follows that the projection $\wp_{\pmb\eta}$ has the property described in Lemma~\ref{SmoothnessOfProjection}. This ends the proof of the lemma.
\end{proof}

It follows from \eqref{WedgeSymbolKappaHom} that $\kappa_\varrho$ maps $\KK_{\pmb\eta}$ onto $\KK_{\varrho\pmb\eta}$, in other words, we have an induced $\R_+$-action
\begin{equation*}
\kappa:\KK\to\KK
\end{equation*}
by bundle diffeomorphisms covering the radial action on $T^*\Y\minus 0$. The bundle homomorphism $\gamma_{A_\wedge}$ intertwines this action on $\KK$ and that of $\kappa_\varrho$ on $\pi_\Y^*\trb$. Indeed, suppose $u\in \KK_{\pmb\eta}$. Then $u=\omega\mathfrak i\tau+\zeta$ uniquely with $\tau=\gamma_{A_\wedge(\pmb\eta)}u$ and  $\zeta\in \Dom_{\min}(A_{\wedge}(\pmb\eta))$, and
\begin{align*}
(\gamma_{A_\wedge(\varrho\pmb\eta)}\circ \kappa_\varrho)(\omega\mathfrak i\tau+\zeta)
&=\gamma_{A_\wedge(\varrho\pmb\eta)}(\kappa_{\varrho}\omega\, \kappa_{\varrho}\mathfrak i \tau)+(\gamma_{A_\wedge(\varrho\pmb\eta)}\circ \kappa_\varrho)\zeta\\
&=\gamma_{A_\wedge(\varrho\pmb\eta)}(\omega\,\mathfrak i \kappa_{\varrho}\tau)\\
&=\kappa_\varrho\tau
\end{align*}
using that $\kappa_\varrho$ preserves $\Dom_{\min}(A_\wedge(\pmb\eta))$ and that the latter space is the kernel of $\gamma_{A_\wedge(\pmb\eta)}$. Thus $\gamma_{A_\wedge(\varrho\pmb\eta)}\kappa_\varrho(u)=\kappa_\varrho\gamma_{A_\wedge(\pmb\eta)}u$. 

\begin{proposition}
The map $\gamma_{A_\wedge}$ is an equivariant isomorphism $\KK\to \kerb$.
\end{proposition}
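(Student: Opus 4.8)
The plan is to assemble the statement from results already in hand. By Theorem~\ref{KernelBundle} the map $\gamma_{A_\wedge}|_{\KK}\colon \KK\to\pi_\Y^*\trb$ is a smooth vector bundle morphism that restricts to a bundle isomorphism of $\KK$ onto the subbundle $\kerb\subset\pi_\Y^*\trb$; thus bijectivity and smoothness are already settled, and only the compatibility with the two $\kappa$-actions has to be verified. For that the computation carried out in the paragraph immediately preceding the proposition does the essential work.

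First I would record that $\kappa$ is a genuine $\R_+$-action on the total space $\KK$: this follows from \eqref{WedgeSymbolKappaHom} together with the fact that $\kappa_\varrho$ preserves $\Dom_{\min}(A_\wedge(\pmb\eta))$, so that $\kappa_\varrho$ carries $\KK_{\pmb\eta}=\ker(A_{\wedge,\max}(\pmb\eta))$ onto $\KK_{\varrho\pmb\eta}$ and covers the radial action on $T^*\Y\minus 0$. Second, I would check that $\kerb$ is invariant under the $\kappa_\varrho$-action on $\pi_\Y^*\trb$ induced by $\kappa_\varrho$ on $\trb$, so that ``equivariant isomorphism onto $\kerb$'' is a meaningful assertion: since $\kerb_{\pmb\eta}=\gamma_{A_\wedge(\pmb\eta)}(\KK_{\pmb\eta})$, the already-established intertwining identity $\gamma_{A_\wedge(\varrho\pmb\eta)}\circ\kappa_\varrho=\kappa_\varrho\circ\gamma_{A_\wedge(\pmb\eta)}$ on $\KK_{\pmb\eta}$ gives $\kappa_\varrho(\kerb_{\pmb\eta})=\gamma_{A_\wedge(\varrho\pmb\eta)}(\kappa_\varrho\KK_{\pmb\eta})=\gamma_{A_\wedge(\varrho\pmb\eta)}(\KK_{\varrho\pmb\eta})=\kerb_{\varrho\pmb\eta}$. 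Finally, that same intertwining identity is precisely the statement that $\gamma_{A_\wedge}$ is equivariant, and combining it with the bijectivity from Theorem~\ref{KernelBundle} yields that $\gamma_{A_\wedge}\colon\KK\to\kerb$ is an equivariant isomorphism.

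I do not expect a real obstacle here, as the argument is essentially bookkeeping: the intertwining computation $\gamma_{A_\wedge(\varrho\pmb\eta)}\kappa_\varrho(u)=\kappa_\varrho\gamma_{A_\wedge(\pmb\eta)}u$ and the vector bundle isomorphism statement have already been proved. The only point deserving a line of care is the verification that $\kerb$ is a $\kappa$-invariant subbundle of $\pi_\Y^*\trb$, and even that reduces immediately to the surjectivity of $\kappa_\varrho\colon\KK_{\pmb\eta}\to\KK_{\varrho\pmb\eta}$ together with the intertwining identity.
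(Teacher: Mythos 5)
Your proposal is correct and follows essentially the same route as the paper: there the proposition carries no separate proof because it is exactly the combination of Theorem~\ref{KernelBundle} (bijectivity and smoothness of $\gamma_{A_\wedge}|_{\KK}$ onto $\kerb$) with the action induced by \eqref{WedgeSymbolKappaHom} and the intertwining identity $\gamma_{A_\wedge(\varrho\pmb\eta)}\circ\kappa_\varrho=\kappa_\varrho\circ\gamma_{A_\wedge(\pmb\eta)}$ established in the paragraph immediately preceding it. Your extra observation that $\kerb$ is $\kappa$-invariant is the same bookkeeping the paper leaves implicit.
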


\section{The extension operator}\label{sec-Extension}

We continue with the assumptions listed in the first paragraph of the previous sections.

The following lemma makes strong use of the fact that $A$ is a first order operator; it is false for higher order elliptic $w$-operators. 

\begin{lemma}\label{DmaxDminlocalize}
The spaces $\Dom_{\min}(A)$ and $\Dom_{\max}(A)$, hence also $\Dom_{\max}(A)/\Dom_{\min}(A)$, are modules over $\Ring$.
\end{lemma}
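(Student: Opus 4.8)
The plan is to show that multiplication by an arbitrary $f\in\Ring$ maps each of $\Dom_{\min}(A)$ and $\Dom_{\max}(A)$ into itself. Since $\Ring$ is a commutative ring with unit, this is precisely the assertion that both spaces are $\Ring$-modules, and the quotient $\Dom_{\max}(A)/\Dom_{\min}(A)$ then automatically inherits the module structure. Throughout, the crucial input is the $m=1$ instance of \eqref{BracketWR}: for $f\in\Ring$ the commutator $[A,f]$ lies in $\Diff^0_e(\M;E,F)=C^\infty(\M;\Hom(E,F))$, i.e.\ it is simply a bounded bundle homomorphism (multiplication operator), in particular bounded on $x^{-1/2}L^2_b$.

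First I would handle $\Dom_{\max}(A)$. Given $u\in\Dom_{\max}(A)$, the product $fu$ is again in $x^{-1/2}L^2_b(\M;E)$ because $f\in C^\infty(\M)$ is bounded on the compact manifold $\M$. Writing $A(fu)=f\,Au+[A,f]u$ as distributions, the first term $f\,Au$ lies in $x^{-1/2}L^2_b(\M;F)$ since $Au$ does and $f$ is bounded, and the second term $[A,f]u$ lies in $x^{-1/2}L^2_b(\M;F)$ because $[A,f]$ is a bounded multiplication operator. Hence $A(fu)\in x^{-1/2}L^2_b(\M;F)$ and $fu\in\Dom_{\max}(A)$.

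Next I would treat $\Dom_{\min}(A)$ by approximation. Given $u\in\Dom_{\min}(A)$, choose $u_n\in C_c^\infty(\open\M;E)$ with $u_n\to u$ and $Au_n\to Au$ in $x^{-1/2}L^2_b$. Then $fu_n\in C_c^\infty(\open\M;E)$, $fu_n\to fu$ in $x^{-1/2}L^2_b$, and $A(fu_n)=f\,Au_n+[A,f]u_n\to f\,Au+[A,f]u=A(fu)$ in $x^{-1/2}L^2_b$, using once more that $f$ and $[A,f]$ are bounded multiplication operators. Thus $fu_n\to fu$ in the graph norm, whence $fu\in\Dom_{\min}(A)$.

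The only substantive point — and the place where the hypothesis that $A$ is first order is essential — is the identification $[A,f]\in\Diff^0_e(\M;E,F)$. For $A\in x^{-m}\Diff^m_e$ with $m\ge 2$ the commutator only lies in $x^{-(m-1)}\Diff^{m-1}_e$, a genuine differential operator of positive order, which in general does not preserve $x^{-\gamma}L^2_b$ nor the graph-norm closure $\Dom_{\min}(A)$, so the module property fails there. I do not anticipate any obstacle beyond recording this observation; the remaining ingredients are the elementary boundedness of multiplication by $f$ on $L^2_b$ and the routine compatibility of the commutator identity $A(fu)=f\,Au+[A,f]u$ with distributional application.
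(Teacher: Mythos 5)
Your argument is correct and is essentially the paper's own proof: the paper likewise identifies the commutator $[A,f]=\im\,\wsym(A)(\wdiff f)$ as a smooth bundle homomorphism (zeroth order), then applies the Leibniz identity $A(fu)=fAu+[A,f]u$ to handle $\Dom_{\max}(A)$ and a graph-norm approximation by $fu_k$ with $u_k\in C_c^\infty(\open\M;E)$ to handle $\Dom_{\min}(A)$. Nothing further is needed.
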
 

\begin{proof}
If $f\in\Ring$ then  the commutator $[A,f]=\im\wsym(A)(\wdiff f)$ is a zeroth order operator, see \eqref{wBracket}, in other words, a section of $\Hom(E,F)$ smooth up to the boundary. For such $f$, if $u\in \Dom_{\max}(A)$ then 
\begin{equation*}
A(f u)=fA(u)+[A,f] u
\end{equation*}
in $\open \M$. Since $[A,f] u$ and $A u$ both belong to $x^{-1/2}L^2_b(\M;F)$, $A(f u)$ also belongs to $x^{-1/2}L^2_b(\M;F)$. 

Next, if $u\in \Dom_{\min}(A)$ and $\set{u_k}_{k=1}^\infty$ is a sequence in $C_c^\infty(\open \M;E)$ converging to $u$ in $A$-norm, then $\set{fu_k}_{k=1}^\infty$ is a sequence in $C_c^\infty(\open \M;E)$, and since
\begin{equation*}
\|fu-fu_k\|_A^2\leq 2(\|f(Au-Au_k)\|^2+\|[A,f](u-u_k)\|^2+\|f(u-u_k)\|^2)
\end{equation*}
converges to $0$ as $k\to \infty$, $fu\in \Dom_{\min}(A)$.
\end{proof}

The relevancy of the lemma lies in that it allows for localization. We will take advantage of this to define an extension operator
\begin{equation*}
\Ext:C^\infty(\Y;\trb)\to C^\infty(\open\M;E)
\end{equation*}
using a suitable partition of unity and the local triviality of $\trb$ to reduce to a local definition.

We begin with a local extension operator on generalized functions which we eventually assemble into a global operator with the aid of a partition of unity. 

In the following, $[\cdot] : \R \to \R$ is a $C^\infty$ function such that $[r] \geq 1$ for all $r \in \R$ and $[r]=|r|$ for $|r|$ large. Fix an arbitrary Riemannian metric on $\Y$ and denote by $|\pmb\eta|$ the norm of the covector $\pmb\eta$. Reflecting the properties of $[\cdot]$, $[|\pmb\eta|]\geq 1$ and $[|\pmb\eta|]=|\pmb\eta|$ if $|\pmb\eta|>r_0$. Fix also a cut-off function $\omega\in C_c^\infty(\overline \R_+)$ with support in $x<c_0$ for some $c_0>0$.

\begin{lemma}\label{extsymbol}
Let $y_1\dotsc,y_q$ be coordinates for $\Y$ in an open set $U\subset \Y$, denote the metric on $T^*\Y$ in the local coordinates by $\sum_{k,\ell} g^{k\ell}\eta_k\eta_\ell$ , write
\begin{equation}\label{BracketEtaY}
[\eta]_y  = \Big[\sqrt{\sum_{k,\ell}g^{k\ell}(y)\eta_k\eta_\ell}\Big].
\end{equation}
Let $V=\lbra0,\infty\rpar\times U$. The function 
\begin{equation*}
V\times \R^q\ni (x,y,\eta)\mapsto\omega(x[\eta]_y)\in \R
\end{equation*}
is supported in $x < c_0$, is a symbol in $S^{-\infty}(\open V\times \R^q)$ and its restriction to $x=x_0>0$ tends to $1$ in $S^\eps_{1,0}(U\times\R^q)$ as $x_0\to 0$ for any $\eps>0$.
\end{lemma}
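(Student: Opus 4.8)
The plan is to take the three assertions in turn, the first two being essentially formal and the third carrying the content. Throughout write $\phi(y,\eta)=[\eta]_y$ for the function in \eqref{BracketEtaY} and $\langle\eta\rangle=(1+|\eta|^2)^{1/2}$. The first step is to record the elementary properties of $\phi$ on which everything rests: $\phi$ is smooth on $U\times\R^q$ (the cut-off $[\,\cdot\,]$ being chosen, as we may, so that $r\mapsto[\sqrt r\,]$ is smooth on $[0,\infty)$, whence $\phi$ is that smooth function of the quadratic form $\sum_{k,\ell}g^{k\ell}(y)\eta_k\eta_\ell$), and, locally uniformly for $y\in U$,
\begin{equation*}
c\langle\eta\rangle\le\phi(y,\eta)\le C\langle\eta\rangle,\qquad |\partial_y^\alpha\partial_\eta^\beta\phi(y,\eta)|\le C_{\alpha\beta}\langle\eta\rangle^{1-|\beta|}.
\end{equation*}
These follow because $\phi\ge1$ is bounded on bounded $\eta$-sets, while for $|\eta|$ large $\phi$ coincides with the fiberwise norm $\sqrt{\sum g^{k\ell}\eta_k\eta_\ell}$, which is homogeneous of degree one, the metric coefficients being bounded with bounded derivatives over compact subsets of $U$.

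The support statement is immediate: $\omega(x\phi(y,\eta))\ne0$ forces $x\phi(y,\eta)<c_0$, and $\phi\ge1$ then gives $x<c_0$. For membership in $S^{-\infty}(\open V\times\R^q)$, fix a compact $K\subset\open V$; there $x\ge\delta>0$, so over $K$ the function $\omega(x\phi)$ and all its derivatives vanish unless $\phi(y,\eta)\le c_0/\delta$, i.e. $|\eta|\le R_K$. Differentiating $\omega(x\phi)$ by the chain rule yields finite sums of terms $\omega^{(m)}(x\phi)$ times products of derivatives of $x\phi$, all smooth and bounded on $K\times\{|\eta|\le R_K\}$; hence all derivatives are $O(\langle\eta\rangle^{-N})$ for every $N$, locally uniformly on $\open V$, which is precisely $\omega(x\phi)\in S^{-\infty}(\open V\times\R^q)$.

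The third assertion is the main point. Fix $\eps>0$, multi-indices $\alpha,\beta$, and a compact $K\subset U$, and set $a_{x_0}(y,\eta)=\omega(x_0\phi(y,\eta))$; one must show $\sup_{y\in K,\,\eta}\langle\eta\rangle^{|\beta|-\eps}\,|\partial_y^\alpha\partial_\eta^\beta(a_{x_0}-\delta_{\alpha0}\delta_{\beta0})|\to0$ as $x_0\to0$. By the Fa\`a di Bruno formula,
\begin{equation*}
\partial_y^\alpha\partial_\eta^\beta a_{x_0}=\sum_{m\ge1}x_0^{\,m}\,\omega^{(m)}(x_0\phi)\,P_m(y,\eta)\qquad(\text{plus }\omega(x_0\phi)\text{ when }\alpha=\beta=0),
\end{equation*}
where each $P_m$ is a finite sum of products of $m$ derivatives of $\phi$ whose orders add up to $(\alpha,\beta)$, so that $|P_m|\le C\langle\eta\rangle^{m-|\beta|}$ by the estimates above. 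The decisive observation is that $\omega\equiv1$ on some interval $[0,\delta_1]$: for $m\ge1$ the factor $\omega^{(m)}(x_0\phi)$ is supported where $\delta_1\le x_0\phi\le c_0$, and by the comparability $\phi\asymp\langle\eta\rangle$ this forces $\langle\eta\rangle\asymp1/x_0$ on that set. There $x_0^{\,m}|P_m|\le C(x_0\langle\eta\rangle)^m\langle\eta\rangle^{-|\beta|}\le C\langle\eta\rangle^{-|\beta|}$, whence $\langle\eta\rangle^{|\beta|-\eps}|\partial_y^\alpha\partial_\eta^\beta a_{x_0}|\le C\langle\eta\rangle^{-\eps}\le C'x_0^{\eps}\to0$. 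In the remaining case $\alpha=\beta=0$ only $\omega(x_0\phi)-1$ survives, supported where $x_0\phi\ge\delta_1$, i.e. $\langle\eta\rangle\ge c/x_0$, so $\langle\eta\rangle^{-\eps}|\omega(x_0\phi)-1|\le(c/x_0)^{-\eps}(1+\|\omega\|_\infty)\to0$. I expect the only genuine work to be this last paragraph: the chain-rule bookkeeping and, above all, recognizing that $\omega^{(m)}(x_0\phi)$ lives on the shrinking annulus $\langle\eta\rangle\asymp1/x_0$, on which the surplus powers of $\langle\eta\rangle$ produced by differentiating $\phi$ are exactly balanced by the powers of $x_0$ pulled out of $\omega(x_0\phi)$. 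The smoothness and order-function properties of $[\eta]_y$ and the two support statements are routine.
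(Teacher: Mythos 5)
Your proposal is correct, and its core mechanism is the same elementary one the paper uses: the chain rule together with the support properties of $\omega$ and the comparability $[\eta]_y\asymp\langle\eta\rangle$ locally uniformly in $y$. Two remarks on how it differs in presentation. For the $S^{-\infty}(\open V\times\R^q)$ claim you take a shortcut: on a compact subset of $\open V$ one has $x\geq\delta$, so the function is supported in a fixed compact $\eta$-ball and the estimates are trivial; the paper instead records the quantitative bound $|\omega(x[\eta]_y)|\leq c^N\sup|\omega|\,x^{-N}[\eta]_y^{-N}$ and the structural derivative formulas \eqref{Derivatives} (each derivative is again $\tilde\chi(x[\eta]_y)$ times a symbol), arguing by induction -- a slightly longer route whose payoff is that those formulas are reused later (e.g.\ in Lemmas~\ref{scalarlocallylikecutoff} and \ref{Poissonsymbol}), but for the lemma itself your argument suffices. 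For the convergence of $\omega(x_0[\eta]_y)$ to $1$ in $S^\eps_{1,0}$, which the paper explicitly leaves to the reader, you supply a complete and correct argument: the decisive point, as you identify, is that $\omega^{(m)}(x_0[\eta]_y)$ for $m\geq 1$ (and $\omega(x_0[\eta]_y)-1$ for $m=0$) lives on the region $x_0[\eta]_y\in[\delta_1,c_0]$, i.e.\ $\langle\eta\rangle\asymp 1/x_0$, where the powers of $\langle\eta\rangle$ produced by Fa\`a di Bruno are exactly cancelled by $x_0^m$ and the remaining $\langle\eta\rangle^{-\eps}$ yields the $O(x_0^\eps)$ smallness of every seminorm. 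Your parenthetical choice of $[\,\cdot\,]$ so that $r\mapsto[\sqrt r\,]$ is smooth (hence $[\eta]_y$ is smooth at $\eta=0$) is legitimate, since the paper only requires $[\,\cdot\,]$ to be some smooth function with $[r]\geq 1$ and $[r]=|r|$ for large $|r|$, and it is a welcome precaution that the paper glosses over.
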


\begin{proof}
Let $\chi\in C_c^\infty(\R)$ be supported in $|x|<c$. Assuming $x>0$, the function $\chi(x[\eta]_y)$ vanishes if $[\eta]_y>c/x$, so
\begin{equation*}
|\chi(x[\eta]_y)|\leq \frac{c^N\sup|\chi|}{x^N}[\eta]_y^{-N}
\end{equation*}
for any $N>0$. In addition, for $|\eta|$ large,
\begin{equation}\label{Derivatives}
\begin{gathered}
x\frac{\partial}{\partial x }\chi(x[\eta]_y)=x[\eta]_y\chi'(x[\eta]_y)\\
\frac{\partial}{\partial y_j}\chi(x[\eta]_y)=x[\eta]_y\chi'(x[\eta]_y)\frac{1}{2[\eta]_y^2}\sum_{k,\ell}\frac{\partial g^{k\ell}}{\partial y_j}(y)\eta_k\eta_\ell\\
\frac{\partial}{\partial \eta_j}\chi(x[\eta]_y)=x[\eta]_y\chi'(x[\eta]_y)\frac{1}{[\eta]_y^2}\sum_\ell g^{j\ell}(y)\eta_\ell.
\end{gathered}
\end{equation}
In all cases the right hand side is of the form $\tilde \chi(x[\eta]_y)a(y,\eta)$ with $\tilde \chi\in C_c^\infty(\R)$ and $a(y,\eta)$ a symbol of type $(1,0)$, of order $0$ in the first two cases and order $-1$ in the last. An argument by induction and the above estimate then gives that $(x,y,\eta)\mapsto\omega(x[\eta]_y)$ is a symbol of order $-\infty$ on $x>0$. We'll leave the proof of the rest of the assertions to the reader. 
\end{proof}

Continuing with the objects and notation in the lemma, let $\varphi$, $\psi\in C_c^\infty(U)$ with $\psi$ regarded as a function on $V$ independent of $x$. Let $f$ be a smooth (scalar) function or distribution on $U$, define ${}_\psi\ext_\varphi f$ by
\begin{equation*}
({}_\psi\ext_\varphi f)(x,y) = \frac{\psi(y)}{(2\pi)^q}\int_{\R^q} e^{iy\cdot\eta}\omega(x[\eta]_y)(\varphi f)\ft(\eta)\,d\eta. 
\end{equation*}
Here $(\varphi f)\ft$ is the Fourier transform of $\varphi f$. By Lemma~\ref{extsymbol}, if $f \in C^{-\infty}(U)$, then
\begin{equation}\label{ClassicalTrace}
{}_\psi\ext_\varphi f \in C^\infty(\open V),\quad ({}_\psi\ext_\varphi f)(x,y)=0\text{ if }x>c_0,\text{ and }\lim_{\eps\searrow 0}{}_\psi\ext_\varphi f\big|_{x=\eps}=\psi\varphi f
\end{equation}
where the limit is in the sense of distributions. Note that the operator ${}_\psi\ext_\varphi$ is generic, not related to any particular differential operator $A$.

\begin{lemma}\label{scalarlocallylikecutoff}
The map
\begin{equation*}
{}_\psi\ext_\varphi:C^{-\infty}(U)\to C^\infty(\open V)
\end{equation*}
has the following properties:
\begin{enumerate}
\item If $f \in C^\infty(U)$ then
\begin{equation*}
{}_\psi\ext_\varphi f - \psi\varphi f \in \dot C^\infty(V).
\end{equation*}
\item ${}_\psi\ext_\varphi:L^2_{\loc}(U) \to x^{-1/2}H_e^\infty(V)$.
\item If $\supp\varphi\cap \supp\psi=\emptyset$, then ${}_\psi\ext_\varphi$ maps $C^{-\infty}(U)$ into $\dot C^\infty(V)$.
\end{enumerate}
\end{lemma}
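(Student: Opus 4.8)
\emph{Setup.} I will handle all three parts by a common device: unwinding the definition of ${}_\psi\ext_\varphi$ and exploiting the two facts established in (the proof of) Lemma~\ref{extsymbol}. First, on the support of $\omega(x[\eta]_y)$ one has $x[\eta]_y\le c_0$, so (since $[\eta]_y\asymp|\eta|$ for $|\eta|$ large and $[\eta]_y\ge1$ always) the product $x|\eta|$ is bounded there, while $\omega(x[\eta]_y)-1$ is supported where $|\eta|\gtrsim 1/x$ because $\omega\equiv1$ near $0$. Second, by \eqref{Derivatives}, each $x\partial_x$ or $\partial_{y_j}$ applied to $\omega(x[\eta]_y)$ produces $\tilde\chi(x[\eta]_y)a(y,\eta)$ with $\tilde\chi\in C_c^\infty$ and $a$ a symbol of type $(1,0)$ and order $\le 0$. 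I will also use that, since $[r]=|r|$ for $|r|$ large and $\omega\equiv1$ near $0$, there is $x_0>0$ with $\omega(x[\eta]_y)=\omega(x|\eta|_y)$ for \emph{all} $\eta$ once $x<x_0$, where $|\eta|_y=\bigl(\sum_{k,\ell}g^{k\ell}(y)\eta_k\eta_\ell\bigr)^{1/2}$ (for $|\eta|_y\ge r_0$ the bracket is the identity; for $|\eta|_y<r_0$ both sides equal $1$ when $x$ is small).

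\emph{Part (1).} By Fourier inversion, $\psi\varphi f$ equals ${}_\psi\ext_\varphi f$ with $\omega(x[\eta]_y)$ replaced by $1$, so the difference is the oscillatory integral with amplitude $\omega(x[\eta]_y)-1$, supported in $|\eta|\gtrsim 1/x$. For $f\in C^\infty(U)$ the Fourier transform $(\varphi f)\ft$ is Schwartz, and restricting it to $|\eta|\gtrsim 1/x$ yields, after integration, a bound $O(x^N)$ for every $N$, uniformly in $y$ on $\supp\psi$; differentiating in $x$ and $y$ under the integral only inserts, by the setup, powers of $\eta$ (from $e^{iy\cdot\eta}$), amplitudes $\tilde\chi(x[\eta]_y)a(y,\eta)$ with the same frequency support, or bounded factors from $\psi$, all dominated by the rapid decay of $(\varphi f)\ft$ on that region. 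Hence ${}_\psi\ext_\varphi f-\psi\varphi f\in\dot C^\infty(V)$.

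\emph{Part (2).} In this local model $x^{-1/2}L^2_b(V)=L^2(V;dx\,dy)$, and since conjugation by $x^{1/2}$ preserves $\Diff_e(V)$, it suffices to bound $(x\partial_x)^k(x\partial_y)^\alpha{}_\psi\ext_\varphi f$ in $L^2(dx\,dy)$ for all $k,\alpha$. Writing ${}_\psi\ext_\varphi f(x,\cdot)=\psi\cdot\mathrm{Op}(\omega(x[\cdot]_y))(\varphi f)$ and expanding by Leibniz, one obtains a finite sum of terms $\psi_\iota(x,y)\,\mathrm{Op}(b_\iota(x,\cdot))(\varphi f)$ with $\psi_\iota$ bounded and supported in $x\le c_0$. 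The crux is that each $b_\iota(x,\cdot)$ is an order-$\le0$ symbol with seminorms bounded \emph{uniformly in $x\in(0,c_0]$}: the only worrisome factors are the $x\eta_j$ produced when $x\partial_{y_j}$ hits $e^{iy\cdot\eta}$, but these are bounded on $\supp\omega(x[\eta]_y)$ because $x|\eta|\lesssim1$ there, and each further $\eta$-derivative of a product of $\omega$-factors and $x\eta$-factors gains an $x\lesssim\langle\eta\rangle^{-1}$ on that support; the $y$-derivatives only contribute the $(1,0)$-amplitudes from \eqref{Derivatives}, again with $x$-uniform bounds. The $L^2$-continuity of order-zero pseudodifferential operators (operator norm controlled by finitely many seminorms) then gives $\|(x\partial_x)^k(x\partial_y)^\alpha{}_\psi\ext_\varphi f(x,\cdot)\|_{L^2(dy)}\le C\|\varphi f\|_{L^2}$ for $x\in(0,c_0]$ and $=0$ for $x>c_0$; integrating $dx$ over $(0,c_0]$ finishes the proof.

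\emph{Part (3).} Here $\delta:=\dist(\supp\psi,\supp\varphi)>0$. Interchanging the (for $x>0$ compactly $\eta$-supported) integration with the pairing against $f$ writes ${}_\psi\ext_\varphi f(x,y)=\langle f(y'),\psi(y)K_x(y,y')\varphi(y')\rangle$ with $K_x(y,y')=(2\pi)^{-q}\int e^{i(y-y')\cdot\eta}\omega(x[\eta]_y)\,d\eta$, which is smooth in $(x,y,y')$ for $x>0$; it remains to show that $\psi(y)K_x(y,y')\varphi(y')$ extends smoothly across $x=0$ and is flat there. On $\supp\psi\times\supp\varphi$ one has $|y-y'|\ge\delta$, and the amplitude-stabilization $\omega(x[\eta]_y)=\omega(x|\eta|_y)$ for $x<x_0$ together with the rescaling $\eta=\xi/x$ gives
\[
K_x(y,y')=x^{-q}\,\Omega_y\!\Bigl(\tfrac{y-y'}{x}\Bigr),\qquad \Omega_y(w)=\frac{1}{(2\pi)^q}\int e^{iw\cdot\xi}\,\omega(|\xi|_y)\,d\xi,
\]
where $\Omega_y$ is Schwartz in $w$ with seminorms locally uniform in $y$ (being the Fourier transform of a compactly supported smooth function of $\xi$ depending smoothly on $y$). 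Since $|y-y'|\ge\delta$ on the relevant set, $x^{-q}\Omega_y((y-y')/x)$ and all its $(x,y,y')$-derivatives are $O(x^M)$ for every $M$, so the kernel lies in $\dot C^\infty$ near $x=0$, and differentiating under the pairing yields ${}_\psi\ext_\varphi f\in\dot C^\infty(V)$. As flagged above, the one genuinely delicate point in the whole lemma is the $x$-uniformity of the symbol estimates in Part~(2); everything else is routine once the support bound $x|\eta|\lesssim1$ on $\supp\omega(x[\eta]_y)$ and the amplitude-stabilization identity are in hand.
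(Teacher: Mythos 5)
Your proof is correct and follows essentially the same route as the paper's: part (1) by playing the vanishing of $\omega(x[\eta]_y)-1$ near $\eta$ small (equivalently its support in $|\eta|\gtrsim 1/x$) against the rapid decay of $(\varphi f)\ft$, part (2) by symbol estimates for the edge derivatives that are uniform in $x\in(0,c_0]$ combined with $L^2$-boundedness of order-zero operators and integration in $x$ over the compact support, and part (3) by showing the Schwartz kernel is flat at $x=0$ when $\supp\varphi\cap\supp\psi=\emptyset$. The only difference is cosmetic: the paper factors out $x^k$ explicitly via $h_k$ in part (1) and merely sketches part (3), whereas your rescaling argument $K_x(y,y')=x^{-q}\Omega_y((y-y')/x)$ supplies those kernel details cleanly.
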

Recall from \cite{Mel81} that if $\M$ is a manifold with boundary then $\dot C^\infty(\M)$ consists of all elements of $C^\infty(\M)$ that vanish to infinite order at the boundary. The term $\psi\phi f$ in (1) is to be regarded as a function on $V$ independent of $x$.

\begin{proof}
The Fourier inversion formula gives 
\begin{equation*}
({}_\psi\ext_\varphi f)(x,y) - (\psi\varphi f)(x,y) = \frac{\psi(y)}{(2\pi)^q}\int_{\R^q} e^{iy\cdot\eta}(\omega(x[\eta]_y)-1)(\varphi f)^\wedge(\eta)\, d\eta
\end{equation*}
on $x>0$. We have, with an arbitrary nonnegative integer $k$, that
\begin{equation*}
\frac{1}{x^{k}}\big(\omega(x[\eta]_y)-1\big)=[\eta]_y^k h_k(x[\eta]_y)
\end{equation*}
with $h_k(x)=(\omega(x)-1)/x^{k}$. This is a smooth function since it vanishes near $x=0$. The additional increase in $\eta$ resulting from taking an arbitrary number of derivatives of
\begin{equation*}
e^{iy\cdot\eta}[\eta]_y^k h_k(x[\eta]_y)
\end{equation*}
with respect to $x$ or the $y_j$ can be absorbed by the rapid decrease of $(\varphi f)\ft(\eta)$, resulting in uniform estimates for 
\begin{equation*}
\frac{1}{x^{k}}\big({}_\psi\ext_\varphi f - \psi\varphi f)
\end{equation*}
and all its derivatives up to $x=0$. This proves the first assertion.

For the second assertion, suppose $f\in L^2_{\loc}(U)$. Using induction and the formulas \eqref{Derivatives} we find that $(xD_y)^\alpha x^k D_x^k{}_\psi\ext_\varphi$ is, for arbitrary $k$ and $\alpha$, a linear combination with constant coefficients of operators
\begin{multline*}
f\mapsto E^{k,\alpha}_{\alpha',\alpha'',\ell}(f)(x,y)=\\x^{|\alpha|}\frac{\psi^{(\alpha')}(y)}{(2\pi)^q}\int_{\R^q} e^{iy\cdot\eta}\eta^{\alpha''}\omega_{k+\ell}(x[\eta]_y)g^\alpha_{\alpha',\alpha'',\ell}(y,\eta)(\varphi f)\ft(\eta)\,d\eta
\end{multline*}
with $\alpha'+\alpha''\leq \alpha$ (componentwise), $\ell\leq |\alpha-\alpha'-\alpha''|$, $\omega_k(x)=x^k\omega^{(k)}(x)$ and $g^\alpha_{\alpha',\alpha'',\ell}\in S^0(U\times \R^q)$. Each of these is, for fixed $x> 0$, a pseudodifferential operator with symbol
\begin{equation*}
(y,y',\eta)\mapsto \psi^{(\alpha')}(y)\varphi(y') \Big(\frac{\eta}{[\eta]_y}\Big)^{\alpha''}\!\!(x[\eta]_y)^{|\alpha|}\omega_{k+\ell}(x[\eta]_y)\frac{g^\alpha_{\alpha',\alpha'',\ell}(y,\eta)}{[\eta]_y^{|\alpha-\alpha''|}}
\end{equation*}
in $S^0_{1,0}(U\times U\times\R^q)$. The seminorms of these symbols are uniformly bounded independently of $x>0$. Consequently, there is a constant $C_{k,\alpha}$ such that 
\begin{equation*}
\|(xD_y)^\alpha x^k D_x^k({}_\psi\ext_\varphi f)(x,\cdot)\|_{L^2(U)}\leq C_{k,\alpha}\|\chi f\|_{L^2(U)}
\end{equation*}
for all $x$, where $\chi\in C_c^\infty(U)$ is any function with $\chi=1$ on $\supp\varphi$. Thus
\begin{equation*}
\int_0^\infty\int_U\big|x^{1/2}(xD_y)^\alpha x^k D_x^k({}_\psi\ext_\varphi f)(x,y)\big|^2\,dy\,\frac{dx}{x}\leq c_0C_{k,\alpha}^2\|\chi f\|^2_{L^2(U)},
\end{equation*}
taking advantage also of the support property in \eqref{ClassicalTrace}.

The third assertion is most easily proved using the limit property in \eqref{ClassicalTrace} and that the Schwartz kernel of ${}_\psi\ext_\varphi$, the distribution 
\begin{equation*}
\frac{\psi(y)\varphi(y')}{(2\pi)^q}\int_{\R^q} e^{\im(y-y')\cdot \eta}\omega(x[\eta]_y)\,d\eta,
\end{equation*}
is smooth up to $x=0$ and vanishes there if $\supp\varphi\cap \supp\psi=\emptyset$. The details are left to the reader. 
\end{proof}

We now reinterpret the operators ${}_\psi\ext_\varphi$ as operators 
\begin{equation*}
{}_\psi\ext_\varphi:C^{-\infty}(U)\to C^\infty(\open \wp_\wedge^{-1}(U)).
\end{equation*}
The functions $\varphi$ and $\psi$ are in $C^\infty(\Y)$, compactly supported in $U$ and we write also $\psi$ for $\wp_\wedge^*\psi$. If $\nu\in \open \N^\wedge$ then $x$ has a well defined value at $\nu$ (we fixed $x$ at the beginning of Section~\ref{sec-TraceMap}), and if $\nu\in \wp_\wedge^{-1}(U)$ then via the coordinates we have a point in $\R^q$ corresponding to $\wp_\wedge(\nu)$. So if $f\in C^{-\infty}(\Y)$ there is a well defined value for ${}_\psi\ext_\varphi f$ at $\nu$. This defines ${}_\psi\ext_\varphi f$ as a smooth function on $\open \N^\wedge$. The function ${}_\psi\ext_\varphi f$ is defined as zero in the complement of $\wp_\wedge^{-1}(U)$ in $\open\N^\wedge$. The operator depends on the local coordinates used to define it, but that is unimportant.

If $f$ is smooth, then, by Part (1) of Lemma~\ref{scalarlocallylikecutoff}, ${}_\psi\ext_\varphi f\in C^\infty(\N^\wedge)$ and its restriction to $x=0$ (that is, to $\N$) is equal to $\wp^*(\psi \varphi f)$:
\begin{equation}\label{MiniExt}
f\in C^\infty(\Y)\implies {}_\psi\ext_\varphi f\in \Ring^\wedge.
\end{equation}
Recall that $\Ring^\wedge$ consists of all smooth functions $f$ on $\N^\wedge$ such that $f\big|_\N\in \wp_\wedge^*C^\infty(\Y)$.

\medskip
Suppose now that $U$ is so small that the trace bundle $\trb\to \Y$ is trivial over $U$ and let $\tau_\mu$, $\mu=1,\dotsc,N$ be a frame of $\trb$ over $U$. Let $\psi$, $\varphi\in C_c^\infty(U)$. Define
\begin{equation*}
{}_\psi \Ext_\varphi:C^{-\infty}(\Y;\trb)\to C^\infty(\open\N^\wedge;E^\wedge)
\end{equation*}
by
\begin{equation}\label{localExt}
{}_\psi \Ext_\varphi u = \sum_{\mu=1}^N {}_\psi \ext_\varphi(u^\mu)\tau_\mu\quad\text{if }u = \sum_{\mu=1}^N u^\mu \tau_\mu.
\end{equation}
Note that ${}_\psi \Ext_\varphi u$ is always a smooth global section of $E^\wedge$ over $\open \N^\wedge$ supported in $\wp_\wedge^{-1}(U)\minus\N$. Note further that ${}_\psi\Ext_\varphi u=0$ if $x>c_0$ because of the support property in \eqref{ClassicalTrace}, so if $c_0$ is small enough (assume this is the case) we can also regard ${}_\psi \Ext_\varphi u$ as defined on $\M$ via $\Phi$. Finally, observe that
\begin{equation}\label{MicolocalPoisson}
u\in C^\infty(\Y;\trb)\implies {}_\psi \Ext_\varphi u\in \Dom_{\max}(A')
\end{equation}
because of \eqref{MiniExt} and Lemmas~\ref{prePprime} and \ref{DmaxDminlocalize}.

\medskip
Cover $\Y$ by finitely many open subsets $U_s$, $s=1,\dotsc,S$, of the kind used in defining \eqref{localExt}. Choose a subordinate partition of unity $\set{\varphi_s}$ and functions $\psi_s \in C_c^\infty(U_s)$ such that $\psi_s = 1$ in a neighborhood of $\supp\varphi_s$.

\begin{definition}
The operator
\begin{equation*}
\Ext = \Phi_*\sum_{s=1}^S {}_{\psi_s}\Ext_{\varphi_s} : C^{-\infty}(\Y;\trb) \to C^\infty(\open \M;E)
\end{equation*}
is an \emph{extension operator} associated with $A$.
\end{definition}

Recall that the map $\Phi_*$ transfers sections of $E^\wedge$ (or $F^\wedge$) defined in a neighborhood of $\N$ in $\N^\wedge$ to sections of $E$ (or $F$) defined in a neighborhood of $\N$ in $\M$ with the aid of a tubular neighborhood map and connections (see Section~\ref{sec-SummaryWedgeOps}, for instance \eqref{NormalFamilyAsLimit}). Since $\omega(x[\eta]_y)$ vanishes for sufficiently large $x$ independently of $\eta$, the support of elements in the image of ${}_{\psi_s}\Ext_{\varphi_s}$ is close to $\N$ (in $\N^\wedge$), so we may view elements in the image of $\Ext$ as defined on all of $\M$ (and supported near $\N$). The operator $\Ext$ is an extension operator adapted to $A$:
\begin{equation}\label{ExtisGood}
\gamma_A\circ\Ext :C^\infty(\Y;\trb)\to C^\infty(\Y;\trb)\text{ is the identity operator}. 
\end{equation}
Indeed, if $u\in C^\infty(\Y;\trb)$, then Lemma~\ref{scalarlocallylikecutoff} gives
\begin{equation}\label{ExtOnSmooth}
{}_{\psi_s} \Ext_{\varphi_s} u-\psi_s\preP(\varphi_s u)\in \dot C^\infty(\N^\wedge;E)
\end{equation}
from which $\gamma_A(\Ext u)= u$ follows using \eqref{GammaInvertsPreP} and Proposition~\ref{DminSubsetKerTrace}. We prove stronger results than \eqref{ExtisGood} about $\Ext$ later in this section.

Another immediate property of $\Ext$ is that, because of \eqref{MicolocalPoisson}, it maps $C^\infty(\Y;\trb)$ into the maximal domain of $A$, thus
\begin{equation*}
\Ext:C^\infty(\Y;\trb)\to \Dom_{\max}(A)\cap C^\infty(\open \M;E).
\end{equation*}
This of course need not be true of the operators ${}_{\psi_s} \ext_{\varphi_s}$, since these are generally unrelated to $A$.

\medskip
In the rest of this section we analyze the maps ${}_{\psi_s}\Ext_{\varphi_s}$ is detail in order to improve on the last two listed properties of $\Ext$. We drop the index $s$ from the notation. 

Recall from Proposition~\ref{xdxAction} that $x\partial_x$ defines an endomorphism of $\trb$. We will write $\gen$ for the endomorphism $x\partial_x+1/2$, that is, \eqref{Generator} with $\gamma=1/2$. Pick a positive $\delta<1$. We will additionally assume that
\begin{equation}\label{UIsDeltaAdmissible}
\display{300pt}{the open set $U\subset \Y$ is so small that $\trb$ has a $\delta$-admissible frame $\tau_1,\dotsc,\tau_N$ over $U$ with respect to $\gen$.
}
\end{equation}
This means, see \cite[Definition 2.1]{KrMe12b}, that the eigenvalues of the fiberwise action of $\gen$ remain within a disjoint family of sets (clusters) in $\C$ of diameter less than $\delta$ as $y$ varies in $U$, and that the frame consists of linear combinations of elements in the generalized eigenspaces associated to the clusters. The statement about the clustering of the eigenvalues has a direct translation to the boundary spectrum of $A$ in view of \eqref{SpecGen}. The compactness of $\Y$ and \eqref{NobSpecOnCritLinesOrderm} (with $\gamma=1/2)$ imply the existence of a number $0 <\delta_0< \frac{1}{2}$ such that
\begin{equation}\label{delta0}
\spec_b(\bA_y)\cap\set{\sigma:-1/2<\Im\sigma<1/2} \subset \Set{\sigma \in \C: -\frac{1}{2}+\delta_0<\Im\sigma<\frac{1}{2}-\delta_0}
\end{equation}
for every $y\in \Y$.

We shall further assume that the boundary fibration $\N\to\Y$ is trivial over $U$ in addition to it being the domain of a coordinate system as already indicated above. We then make the identifications
\begin{equation*}
\wp^{-1}(U)=U\times\Z,\quad\wp_\wedge^{-1}(U)=U\times \Z^\wedge,
\end{equation*}
$\Z^\wedge=\lbra0,\infty\rpar\times \Z$ and let 
\begin{equation*}
W=\lbra0,\eps\rpar\times U\times \Z.
\end{equation*}
The open set $U$, the functions $\varphi$, $\psi$, $\omega$, and the $\delta$-admissible frame $\tau_1,\dots,\tau_N$ will remain fixed until further notice. We will write $E$ instead of $E_{\Z^\wedge}$ for the vector bundle over $\Z^\wedge$ and $\tau_\mu(y)$ for the section $(x,z)\mapsto \tau_\mu(x,y,z)$ of $E$.

\medskip
Let $H_1$ and $H_2$ be Hilbert spaces equipped with strongly continuous group actions $\kappa_{j,\varrho}$, $\varrho > 0$, $j=1,2$. Denote by $\L(H_1,H_2)$ the space of continuous linear maps $H_1\to H_2$. Recall from \cite{SchuNH} that the class of twisted operator-valued symbols
$S^m(U\times\R^q;H_1,H_2)$ is defined as the space of all 
\begin{equation*}
p(y,\eta) \in C^\infty(U\times\R^q,\L(H_1,H_2))
\end{equation*}
such that for all $\alpha,\beta \in \NN_0^q$ and $K\Subset U$ there exists $C_{\alpha,\beta} \geq 0$ such that
\begin{equation}\label{ConstantOrder}
\|\kappa_{2,[\eta]}^{-1}\,\bigl(D_y^{\alpha}\partial_{\eta}^{\beta}p(y,\eta)\bigr)\,\kappa_{1,[\eta]}\|_{\L(H_1,H_2)} \leq C_{\alpha,\beta} [\eta]^{m-|\beta|}
\end{equation}
for all $(y,\eta) \in K\times\R^q$. Here $[\eta]$ is given by \eqref{BracketEtaY} but with the Euclidean metric (we omit the reference to $y$ in this case). We review some of the properties of the pseudodifferential calculus associated with such symbols in Appendix~\ref{app-PseudosOpValued}.

A map $p\in C^\infty(U\times\R^q,\L(H_1,H_2))$ is homogeneous of degree $m$ for large $|\eta|$ if
\begin{equation*}
p(y,\varrho\eta)=\varrho^m\kappa_{2,\varrho}\,p(y,\eta)\,\kappa_{1,\varrho}^{-1}
\end{equation*}
holds when $\varrho\geq 1$ for all sufficiently large $|\eta|$, uniformly on compact subsets of $U$ (see \cite[Definition 5, Section 3.2]{SchuNH}). The reader may verify that such a map belongs to $S^m(U\times\R^q;H_1,H_2)$. An example of this is the function $\omega(x[\eta]_y)$ with $[\eta]_y$ given by \eqref{BracketEtaY} and the following setup: let $H_1=\C$ with the trivial $\R_+$-action and $H_2=x^{-1/2}L^2_b(\Z^\wedge)$ with the action 
\begin{equation}\label{ActionOneHalf}
\kappa_\varrho u(x,z) = \varrho^{1/2}u(\varrho x,z)\text{ for }\varrho > 0,
\end{equation}
and let $p:U\times\R^q\to \L(H_1,H_2)$ be defined, for $(y,\eta)\in U\times\R^q$, as multiplication by $\omega(x[\eta]_y)$. Then $\varrho^{-1/2}\kappa_\varrho p(y,\eta) =\omega(\varrho x[\eta]_y)$ which is equal to $\omega(x [\varrho \eta]_y)$ if $\varrho\geq 1$ and $|\eta|$ is sufficiently large. Thus $p$ is homogeneous of degree $-1/2$ for large $|\eta|$.

The spaces $\K^{s,\gamma}_t(\Z^\wedge;E)$ (see \eqref{Kegel}) will be relevant for us because $\tau_\mu\in \K^{s,-1/2+\delta_0}_{t}$ for any $s$ and for any $t<-\dim\Z^\wedge -1/2+\delta_0$. These objects appear in ${}_\psi\Ext_\varphi$ in the form $\omega(x[\eta]_y)\tau_\mu$, so more relevant for our purposes is the fact that
\begin{equation*}
\omega(x[\eta]_y/[\eta])D_y^\alpha\tau_\mu\in \K^{s,-1/2+\delta_0}_{t}\quad\text{ for all }s,\ t\in \R,\text{ and }\alpha\in \NN_0^q.
\end{equation*}
Indeed, now the behavior of $\tau_\mu$ at infinity is irrelevant, while the behavior of the derivatives near $0$ is a consequence of the meaning of smoothness, of Lemma~\ref{SmoothnesOnMellinSide}, and the Mellin inversion formula, since the location of the poles of the expression in \eqref{MellinOmegaU} does not change as the expression is differentiated. That $\partial_y^\alpha\tau_\mu$ may not be a section of $\trb$ if $\alpha\ne 0$ is also immaterial.

Equip the spaces $\K^{s,\gamma}_t(\Z^\wedge;E)$ with the strongly continuous group action \eqref{ActionOneHalf} and the spaces $\C^N$ with the trivial group action.

\begin{lemma}\label{Poissonsymbol}
Define ${}_\psi\extsym_\varphi:U\times U\times \R^q\to \L\big(\C^N,C^\infty(\open \Z^\wedge;E)\big)$ by
\begin{equation*}
{}_\psi\extsym_\varphi(y,y',\eta): \zeta\mapsto \psi(y)\varphi(y') \omega(x[\eta]_y)  \sum_{j=1}^N \zeta^\mu \tau_\mu(y).
\end{equation*}
Then ${}_\psi\extsym_\varphi$ belongs to 
\begin{equation*}
S^{-\delta_0}(U\times U\times\R^q;\C^N, \K^{\infty,-1/2+\delta_0}_\infty) =
\bigcap_{s,t \in \R} S^{-\delta_0}(U\times U\times\R^q,\C^N, \K^{s,-1/2+\delta_0}_t)
\end{equation*}
for any $\delta_0$ as in \eqref{delta0}. Furthermore,
\begin{equation*}
D_y^{\alpha}\partial_{\eta}^{\beta}{}_\psi\extsym_\varphi\in
S^{-|\beta|-\delta_0}(U\times U\times\R^q;\C^N, \K^{\infty,\infty}_\infty)
\end{equation*}
for all $\alpha$, $\beta \in \NN_0^q$ with $|\beta| > 0$.
\end{lemma}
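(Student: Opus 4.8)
The plan is to verify directly the twisted symbol estimates \eqref{ConstantOrder} for ${}_\psi\extsym_\varphi$, reducing everything to the scalar estimates already established in the proof of Lemma~\ref{extsymbol} together with the module/mapping properties of the frame sections $\tau_\mu$ discussed just before the statement. First I would record the basic structural fact: by definition ${}_\psi\extsym_\varphi(y,y',\eta)$ applied to $\zeta\in\C^N$ is $\psi(y)\varphi(y')\,\omega(x[\eta]_y)\sum_\mu\zeta^\mu\tau_\mu(y)$, so after applying $D_y^\alpha\partial_\eta^\beta$ and Leibniz, every term is a product of derivatives of $\psi(y)$, of $\varphi(y')$ (which is harmless, bounded, independent of $\eta$), of $\omega(x[\eta]_y)$, and of $\tau_\mu(y)$ as a section of $E$ over $\Z^\wedge$. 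Because $\varphi(y')$ contributes nothing to the $\eta$-decay or the $\kappa$-conjugation, the $y'$ variable can be suppressed and one is left with estimating, for each multi-index pair, expressions of the form $\bigl(D_y^{\alpha_1}\psi\bigr)\,\bigl(D_y^{\alpha_2}\partial_\eta^\beta\omega(x[\eta]_y)\bigr)\,D_y^{\alpha_3}\tau_\mu(y)$ with $\alpha_1+\alpha_2+\alpha_3=\alpha$.

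The key step is to rescale: since the group action on $H_2=\K^{s,-1/2+\delta_0}_t$ is $(\kappa_\varrho u)(x,z)=\varrho^{1/2}u(\varrho x,z)$ and the action on $\C^N$ is trivial, $\kappa_{2,[\eta]}^{-1}$ applied to the above expression replaces $x$ by $x/[\eta]$ and multiplies by $[\eta]^{-1/2}$. Thus I must bound the $\K^{s,-1/2+\delta_0}_t$-norm (for $H_2=\K^{\infty,-1/2+\delta_0}_\infty$, equivalently every such norm) of $[\eta]^{-1/2}\bigl(D_y^{\alpha_1}\psi\bigr)\bigl[(D_y^{\alpha_2}\partial_\eta^\beta\omega(x[\eta]_y))\big|_{x\to x/[\eta]}\bigr]D_y^{\alpha_3}\tau_\mu(y)$ by a constant times $[\eta]^{-\delta_0-|\beta|}$ when $|\beta|>0$, and by $[\eta]^{-\delta_0}$ when $|\beta|=0$. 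Here I would invoke two facts already in hand: (i) from the computations \eqref{Derivatives} in the proof of Lemma~\ref{extsymbol}, $D_y^{\alpha_2}\partial_\eta^\beta\omega(x[\eta]_y)$ is a finite sum of terms $\tilde\omega_j(x[\eta]_y)\,a_j(y,\eta)$ where $\tilde\omega_j\in C_c^\infty(\R)$ has support in $x[\eta]_y\lesssim c_0$ (so after the substitution $x\to x/[\eta]$ the cut-off $\tilde\omega_j(x[\eta]_y/[\eta])$ forces $x\lesssim c_0[\eta]/[\eta]_y$, but the essential point is just that it is a compactly-supported-in-$x$ smooth function, hence multiplication by it is bounded on every $\K^{s,\gamma}_t$ and in fact improves the conormal order $t$ to $+\infty$), and $a_j$ is a symbol of type $(1,0)$ of order $-|\beta|$ in $\eta$ when $|\beta|>0$ and order $0$ when $\beta=0$; and (ii) from the paragraph preceding the lemma, $\omega(x[\eta]_y/[\eta])\,D_y^{\alpha_3}\tau_\mu\in\K^{s,-1/2+\delta_0}_t$ for all $s,t$ (the conormal exponent $-1/2+\delta_0$ at $x=0$ comes from \eqref{delta0}, and since a cut-off is present the behavior at $x=\infty$ is irrelevant, giving $t=+\infty$). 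Combining: the cut-off from the $\tilde\omega_j$ term is absorbed into the $\tau_\mu$ term (using (ii) applied to the product), $\|a_j(y,\eta)\|\lesssim [\eta]^{-|\beta|}$ uniformly for $y$ in compacts, the derivatives of $\psi$ are bounded, and the prefactor $[\eta]^{-1/2}$ combined with $-1/2+\delta_0$ being the conormal order is precisely what produces the overall order $-\delta_0-|\beta|$; in the case $|\beta|=0$, at least one $\tilde\omega$-type cut-off is still present (coming from $\omega$ itself), so the conormal improvement to $t=+\infty$ fails only when $\alpha_2=0$ as well, i.e. when no $y$- or $\eta$-derivative hits $\omega$ — that is exactly the borderline case landing in $\K^{\infty,-1/2+\delta_0}_\infty$ with order $-\delta_0$, whereas any derivative hitting $\omega$ upgrades the conormal order to $\infty$, yielding the last displayed assertion $D_y^\alpha\partial_\eta^\beta{}_\psi\extsym_\varphi\in S^{-|\beta|-\delta_0}(\,\cdot\,;\C^N,\K^{\infty,\infty}_\infty)$ for $|\beta|>0$ (and more generally whenever a $y$-derivative of $\omega$ occurs, which it always does once $|\beta|>0$ because $\partial_\eta$ of the only $\eta$-dependent factor $\omega(x[\eta]_y)$ necessarily differentiates $\omega$).

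I expect the main obstacle to be the bookkeeping of which terms retain a genuine cut-off factor $\tilde\omega(x[\eta]_y)$ versus the single borderline term $\psi(y)\varphi(y')\omega(x[\eta]_y)\tau_\mu(y)$ with no derivative on $\omega$: one must check carefully that every application of either $D_y$ (beyond those landing on $\psi,\varphi$) or $\partial_\eta$ produces a factor $\chi(x[\eta]_y)$ with $\chi\in C_c^\infty(\R)$ vanishing near $0$ — this is what drives the conormal order up to $+\infty$ and justifies the stronger second assertion — and to confirm that the order count in $[\eta]$ is exactly $-\delta_0-|\beta|$ and not merely $\le$. A secondary point requiring a line or two is the uniformity of all the seminorm bounds for $y$ in compact subsets of $U$, which follows from smoothness of the metric coefficients $g^{k\ell}$ and of the frame $\tau_\mu$ together with compactness, exactly as in Lemma~\ref{extsymbol}. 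None of this is deep; it is a matter of organizing the Leibniz expansion and quoting the two preparatory facts (i) and (ii), after which the claimed symbol memberships are immediate.
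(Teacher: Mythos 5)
Your overall skeleton — Leibniz expansion, reduction to the scalar facts about $\omega(x[\eta]_y)$ from Lemma~\ref{extsymbol}, and the observation that any $\eta$-derivative necessarily hits $\omega$ and produces a cut-off vanishing near $x=0$, whence the improvement to $\K^{\infty,\infty}_\infty$ for $|\beta|>0$ — is the same as the paper's. But there is a genuine gap at the decisive step, the conjugation by the group action. When you apply $\kappa_{[\eta]}^{-1}$ (the dilation \eqref{ActionOneHalf}) to a term of the Leibniz expansion, the substitution $x\mapsto x/[\eta]$ acts on \emph{every} $x$-dependent factor, in particular on $\tau_\mu(y)$, not only on the cut-off; your displayed expression rescales only the $\omega$-factor and leaves $\tau_\mu(y)$ untouched. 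The quantity you then bound is not the conjugated symbol, and the bound you get for it (which with your facts (i) and (ii) comes out as $O([\eta]^{-1/2-|\beta|})$, since (ii) is a uniform-in-$\eta$ membership statement with no growth rate attached) neither equals nor implies the required estimate \eqref{ConstantOrder}. The true conjugated term contains $\tau_\mu(y)(x/[\eta],z)$, whose $\K^{s,-1/2+\delta_0}_t$-norm on the support of the rescaled cut-off grows like $[\eta]^{1/2-\delta_0}$ (up to logarithms, uniformly in $y$ only after some care), and it is exactly this growth, multiplied by the prefactor $[\eta]^{-1/2}$, that produces the order $-\delta_0$. Your closing remark that ``$[\eta]^{-1/2}$ combined with the conormal order $-1/2+\delta_0$'' yields $-\delta_0-|\beta|$ is the right heuristic, but nothing in your write-up proves it: fact (ii) cannot substitute for it, and the arithmetic is asserted, not derived.

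This missing estimate is precisely the heart of the paper's proof: one writes $\kappa_\varrho^{-1}\tau_\mu(y)=\sum_\nu c^\nu_\mu(y,\varrho)\tau_\nu(y)$ as in \eqref{ActionOnFrame} and shows, via the Dunford integral for $\kappa_\varrho^{-1}=\varrho^{-\gen_y}$ over a fixed contour in $\set{\delta_0<\Re\lambda<1-\delta_0}$ (using \eqref{delta0}, equivalently \eqref{SpecGen}), that the $c^\nu_\mu$ are symbols of order $\leq-\delta_0$ jointly in $(y,\varrho)$, including all $y$-derivatives and uniformly despite variable eigenvalues and logarithmic terms; only then does the Leibniz bookkeeping close, with $[\eta]^{-|\beta|}$ from the scalar symbols multiplying $[\eta]^{-\delta_0}$ from $\kappa^{-1}_{[\eta]}\tau_\mu$. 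To repair your argument you must either insert this frame expansion with the symbol estimate on the coefficients, or prove directly (uniformly in $y$, with all $y$-derivatives, handling the $\log$ factors and the $y$-dependence of the exponents) that $\bigl\|\tilde\omega(x[\eta]_y/[\eta])\,\kappa^{-1}_{[\eta]}D_y^{\alpha_3}\tau_\mu(y)\bigr\|_{\K^{s,-1/2+\delta_0}_t}\leq C[\eta]^{-\delta_0}$; neither is supplied by your facts (i) and (ii), and the second assertion of the lemma (the $\K^{\infty,\infty}_\infty$ statement with order $-|\beta|-\delta_0$) needs the same ingredient.
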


\begin{proof}
Consider the group action \eqref{ActionOneHalf} on general sections of $E^\wedge$  on $\Z^{\wedge}$. Its infinitesimal generator restricted to elements of $\trb$ is just $\gen$, obviously. Since the $\tau_\mu$ form a smooth frame of $\trb$,
\begin{equation}\label{ActionOnFrame}
\kappa_{\varrho}^{-1}\tau_\mu(y)=\sum_\nu c^\nu_\mu(y,\varrho)\tau_\nu(y)
\end{equation}
for some $c^\nu_\mu\in C^\infty(U\times\R_+)$. Because of \eqref{delta0} and the specific structure of the $\tau_\mu$, 
\begin{equation}\label{cOfNegOrder}
\text{the functions }c^\nu_\mu\text{ are symbols of order }\leq -\delta_0.
\end{equation}
To illustrate this consider $\tau=x^{\im\sigma}$ with one of the relevant numbers $\sigma$. Then $\kappa_{\varrho}^{-1}\tau=\varrho^{-\im \sigma-1/2}\tau$, and estimates like $|\varrho^{-\im \sigma-1/2}|=\varrho^{\Im\sigma-1/2}$ for large $\varrho$ coupled with the fact that
\begin{equation*}
\spec(\gen_y) \subset \set{\lambda \in \C: \delta_0<\Re\lambda<1-\delta_0}
\end{equation*}
for every $y\in \Y$ give that $\varrho^{-\im \sigma-1/2}$ is a symbol of order $<-\delta_0$. It follows that $(y,\eta)\mapsto c^\ell_j(y,[\eta])$ is an element of $S^{-\delta_0}(U\times\R^q)$. A rigorous proof of \eqref{cOfNegOrder} may be obtained using the Dunford integral representation
\begin{equation*}
\kappa^{-1}_{\varrho} = \frac{1}{2\pi i } \int_{\Gamma} \varrho^{-\lambda}(\lambda-\gen_y)^{-1}\,d\lambda : \trb_y \to \trb_y,
\end{equation*}
where $\Gamma \subset \set{\lambda \in \C: \delta_0<\Re\lambda<1-\delta_0}$ is a fixed contour enclosing $\spec(\gen_y)$ for all $y \in \Y$.

Returning to ${}_\psi\extsym_\varphi$, we have
\begin{equation*}
\kappa_{[\eta]}^{-1}D_y^\alpha\partial_\eta^\beta \big(\omega(x[\eta]_y)\tau_\mu\big)=\sum_{\alpha'\leq \alpha}\binom{\alpha}{\alpha'}[\eta]^{1/2}\big(\kappa_{[\eta]}^{-1}D_y^{\alpha'}\partial_\eta^\beta \omega(x[\eta]_y)\big)D_y^{\alpha-\alpha'}\kappa_{[\eta]}^{-1}\tau_\mu.
\end{equation*}
since $\kappa_{[\eta]}^{-1}$ commutes with $D_y^\alpha$. Furthermore, using formulas such as \eqref{Derivatives} (or $\kappa$-homogeneity in the large as discussed above) and induction we see that 
\begin{equation*}
D_y^{\alpha'}\partial_\eta^\beta\omega(x[\eta]_y)=\sum_{\substack{\alpha''\leq \alpha'\\\beta'\leq \beta}}q^{\alpha',\beta}_{\alpha'',\beta'}(y,\eta)\omega^{\alpha',\beta}_{\alpha'',\beta'}(x[\eta]_y)
\end{equation*}
with $q^{\alpha,\alpha'}_{\alpha'',\beta'}\in S^{-|\beta|}(U\times \R^q)$ and $\omega^{\alpha',\beta}_{\alpha'',\beta'}\in C^\infty_c(\R)$ vanishing near $0$ if $\beta\ne 0$. We thus have
\begin{equation*}
\kappa_{[\eta]}^{-1}D_y^\alpha\partial_\eta^\beta \big(\omega(x[\eta]_y)\tau_\mu\big)=\sum q^{\alpha',\beta}_{\alpha'',\beta'}(y,\eta)\omega^{\alpha',\beta}_{\alpha'',\beta'}(x[\eta]_y/[\eta])D_y^{\alpha-\alpha'}(
c^\nu_\mu(y,[\eta])\tau_\nu),
\end{equation*}
hence
\begin{equation*}
\|\kappa_{[\eta]}^{-1}D_y^\alpha\partial_\eta^\beta \big(\omega(x[\eta]_y)\tau_j\big)\|_{\K^{s,\gamma}_t} \leq  C[\eta]^{-\delta_0-|\beta|}
\end{equation*}
for $\gamma=-1/2+\delta_0$ and any $s$, $t$, and, if $\beta\ne 0$, also for all $\gamma$.
Thus the lemma follows.
\end{proof}

\begin{lemma}\label{MappingFromL2}
The mapping ${}_\psi\Ext_\varphi : C^{-\infty}(U;\trb_U) \to C^\infty(\open W;E)$ restricts to a map
\begin{equation*}
{}_\psi\Ext_\varphi:L^2_{\loc}(U;\trb_U) \to x^{-1/2+\delta_0}H_e^\infty(W;E)
\end{equation*}
with $\delta_0$ as in \eqref{delta0}. If $\supp \varphi \cap \supp \psi = \emptyset$, then
\begin{equation*}
{}_\psi\Ext_\varphi:C^{-\infty}(U;\trb_U) \to \dot C^\infty(W;E).
\end{equation*}
\end{lemma}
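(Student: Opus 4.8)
The plan is to reduce the statement to the scalar operators ${}_\psi\ext_\varphi$ analyzed in Lemma~\ref{scalarlocallylikecutoff}, together with the explicit structure \eqref{ProtoTrace} of the frame $\tau_1,\dots,\tau_N$. Writing $u=\sum_\mu u^\mu\tau_\mu$, the coefficients $u^\mu$ lie in $C^{-\infty}(U)$, and in $L^2_{\loc}(U)$ when $u\in L^2_{\loc}(U;\trb_U)$ because the frame is smooth; by \eqref{localExt} one has ${}_\psi\Ext_\varphi u=\sum_\mu({}_\psi\ext_\varphi u^\mu)\,\tau_\mu$, where each ${}_\psi\ext_\varphi u^\mu$ is a scalar function of $(x,y)$ only. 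The disjoint-support assertion should then fall out immediately: if $\supp\varphi\cap\supp\psi=\emptyset$, Lemma~\ref{scalarlocallylikecutoff}(3) gives ${}_\psi\ext_\varphi u^\mu\in\dot C^\infty(V)$, and since each $\tau_\mu$ — being of the form \eqref{ProtoTrace} with coefficients smooth up to the boundary — together with all of its derivatives blows up at most like a fixed negative power of $x$ as $x\to0$ over $\wp_\wedge^{-1}(\supp\psi)$, each product $({}_\psi\ext_\varphi u^\mu)\tau_\mu$ extends smoothly by zero across $x=0$ and vanishes there to infinite order, hence lies in $\dot C^\infty(W;E)$.

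For the main mapping property I would argue directly. Since $\Diff_e$ is locally generated by $x\partial_x$, $x\partial_y$, $\partial_z$ and multiplication by smooth functions, and since multiplication operators and the weight $x^{1/2-\delta_0}$ all commute with one another, it suffices to bound $x^{1/2-\delta_0}(x\partial_x)^j(x\partial_y)^\alpha\partial_z^\beta({}_\psi\Ext_\varphi u)$ in $L^2_b(W;E)$ by a local $L^2$-norm of $u$ for all $j,\alpha,\beta$, and then to conjugate $x^{1/2-\delta_0}$ through these operators (which merely shifts $x\partial_x$ by a constant) to conclude $x^{1/2-\delta_0}{}_\psi\Ext_\varphi u\in H^\infty_e(W;E)$, i.e. ${}_\psi\Ext_\varphi u\in x^{-1/2+\delta_0}H^\infty_e(W;E)$. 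The key input will be that \eqref{delta0} together with the compactness of $\Y$ provides a $\delta_1>0$ with $\spec_b(\bA_y)\cap\set{|\Im\sigma|<1/2}\subset\set{\Im\sigma<\tfrac12-\delta_0-\delta_1}$ for all $y$; combined once more with \eqref{ProtoTrace} and the fact that $x\partial_x$, $x\partial_y$, $\partial_z$ preserve that form up to the usual logarithmic terms with smooth coefficients, this gives that every $\tau_\mu$ and each of its such derivatives is $O(x^{-1/2+\delta_0+\delta_1})$ as $x\to0$, uniformly on $\wp_\wedge^{-1}(\supp\psi)$. Expanding $(x\partial_x)^j(x\partial_y)^\alpha\partial_z^\beta\big(({}_\psi\ext_\varphi u^\mu)\tau_\mu\big)$ by Leibniz, applying this bound to the $\tau_\mu$-factors and the uniform-in-$x$ estimates $\|(x\partial_x)^{j_1}(x\partial_y)^{\alpha_1}({}_\psi\ext_\varphi u^\mu)(x,\cdot)\|_{L^2(U)}\lesssim\|u\|_{L^2}$ established in the proof of Lemma~\ref{scalarlocallylikecutoff}(2) to the scalar factors, and integrating trivially over $\Z$ (on which the scalar factor does not depend), I expect to obtain
\begin{equation*}
\big\|x^{1/2-\delta_0}(x\partial_x)^j(x\partial_y)^\alpha\partial_z^\beta({}_\psi\Ext_\varphi u)(x,\cdot)\big\|_{L^2(U\times\Z)}\lesssim x^{\delta_1}\,\|u\|_{L^2}\qquad(0<x<\eps),
\end{equation*}
so that squaring and integrating against $\tfrac{dx}{x}$ now converges; the case $j=\alpha=\beta=0$ handles in particular the a priori membership in $x^{-1/2+\delta_0}L^2_b(W;E)$, and the estimates give continuity. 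One could alternatively deduce the same conclusion from Lemma~\ref{Poissonsymbol} and the mapping properties of the operator-valued calculus recalled in Appendix~\ref{app-PseudosOpValued}, observing that ${}_\psi\Ext_\varphi$ is the operator-valued quantization of ${}_\psi\extsym_\varphi$; I find the direct route cleaner.

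The step I expect to be the real obstacle is precisely this weight count. With only the crude bound $|\tau_\mu|\lesssim x^{-1/2+\delta_0}$ the final $x$-integral is the logarithmically divergent $\int_0\frac{dx}{x}$ and the argument collapses; what rescues it is exactly the strict gap $\delta_1>0$ that \eqref{delta0} and the compactness of $\Y$ make available, producing the extra factor $x^{\delta_1}$. The remaining ingredients are routine: the smoothness of the frame and the structure theory of $\trb$ from \cite{KrMe12a} guarantee that the coefficients in \eqref{ProtoTrace} are smooth up to $x=0$ so that the derivative bounds on the $\tau_\mu$ are legitimate, and the scalar estimates are quoted directly from the proof of Lemma~\ref{scalarlocallylikecutoff}.
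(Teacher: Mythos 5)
Your route is genuinely different from the paper's, and it can be made to work. The paper proves the first mapping property by composing with an arbitrary $P\in\Diff_e(W;E)$, viewing $P$ as a twisted operator-valued symbol of order $0$ between the $\K$-spaces (Proposition~\ref{EdgeOpsOpValued}) and invoking Lemma~\ref{Poissonsymbol} together with the Leibniz product of the operator-valued calculus, so that $P\circ{}_\psi\Ext_\varphi=\op(q)$ with $q\in S^{-\delta_0}$ valued in $\K^{\infty,-1/2+\delta_0}_{\infty}$; there the gain of $\delta_0$ comes from the symbol order of the conjugated frame, \eqref{cOfNegOrder}, i.e.\ from the group-action twist, not from any extra room in $x$-powers. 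Your direct Leibniz estimate, combining the uniform-in-$x$ bounds from the proof of Lemma~\ref{scalarlocallylikecutoff}(2) for the scalar factors with pointwise bounds on the frame, is a legitimate elementary alternative, and the gap $\delta_1$ you need does exist: $\spec_e(A)\cap\big(\Y\times\set{|\Im\sigma|\leq 1/2}\big)$ is compact (the same fact that produces the fixed rectangle in Lemma~\ref{SmoothnessOfTraceWedge}), and since \eqref{delta0} places it in an open strip, it lies in $\set{|\Im\sigma|\leq 1/2-\delta_0-\delta_1}$ for some $\delta_1>0$. What you gain is independence from the twisted symbol calculus; what the paper's computation buys is reuse of the very same symbol facts immediately afterwards (Lemma~\ref{MappingintoDmax}, Remark~\ref{PerturbationOfGreen}).

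The one step whose justification would fail as written is the derivative bound on the frame. You argue that $x\partial_x$, $x\partial_y$, $\partial_z$ preserve the form \eqref{ProtoTrace} ``with smooth coefficients,'' but the decomposition \eqref{ProtoTrace} is not smooth in $y$: the exponents $\sigma$ and the coefficients $u_{\sigma,\ell}$ may branch and change multiplicity as $y$ varies (this is precisely the difficulty the trace-bundle construction of \cite{KrMe12a} addresses), so termwise $y$-differentiation is not available. The bound you need, namely that $(x\partial_x)^{j}\partial_y^{\alpha}\partial_z^{\beta}\tau_\mu = O(x^{-1/2+\delta_0+\delta_1})$ locally uniformly in $(y,z)$, is nonetheless true, and the paper's own device supplies it: by Lemma~\ref{SmoothnesOnMellinSide} the Mellin transform of $\omega\tau_\mu$ is smooth in $(y,z,\sigma)$ off the boundary spectrum, so near $x=0$ one writes $\tau_\mu$ as a contour integral of $x^{\im\sigma}$ against the singular part over a fixed rectangle with horizontal sides at $\Im\sigma=\pm(1/2-\delta_0-\delta_1)$ (cf.\ \eqref{SingularPart} and the argument of Lemma~\ref{SmoothnessOfTraceWedge}), and then differentiates under the integral; this is exactly how the paper justifies the statement that $\omega(x[\eta]_y/[\eta])D_y^\alpha\tau_\mu\in\K^{s,-1/2+\delta_0}_t$ in the discussion preceding Lemma~\ref{Poissonsymbol}. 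With that repair your estimate closes, and your treatment of the disjoint-support statement coincides with the paper's appeal to Part (3) of Lemma~\ref{scalarlocallylikecutoff}.
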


\begin{proof}
The second statement follows immediately from Part (3) of Lemma~\ref{scalarlocallylikecutoff}. To prove the first statement, let $P\in \Diff_e(W;E)$ be an arbitrary edge-differential operator on $W$. We may write $P$ in the form
\begin{equation*}
(P v)(y)=\frac1{(2\pi)^q}\int_{U\times \R^q} e^{\im (y-y')\cdot\eta} p(y,\eta) v(y')\,dy'\,d\eta
\end{equation*}
with $p(y,\eta)=p_1(y,x\eta)$, $p_1(y,\eta)$ a polynomial in $\eta$ with coefficients depending smoothly on $y$ with values in $\Diff_b(\Z^\wedge;E)$. The variables $x$ and $z$ are implicit. Because of the factor $x$ accompanying $\eta$, $p(y,\eta)$ may be regarded as an operator valued symbol of order $0$ acting between the $\K$-spaces, see Proposition~\ref{EdgeOpsOpValued}. As such, the usual Leibniz rule applies: if $u\in C^{-\infty}(U;\trb)$, then $Q=P\circ {}_\psi\Ext_\varphi$ has the form
\begin{equation*}
Q u = \frac{1}{(2\pi)^q}\int_{U\times \R^q} e^{\im (y-y')\cdot\eta} q(y,y',\eta) \tilde u(y')\,dy'\,d\eta
\end{equation*}
where $\tilde u$ is the column vector whose components are the coefficients of $u$ with respect to the frame $\tau_\mu$, and 
\begin{equation*}
q=p\leibniz {}_\psi \extsym_\varphi \in S^{-\delta_0}(U\times U\times\R^q;\C^N, \K^{\infty,-1/2+\delta_0}_{\infty})
\end{equation*}
in view of Lemma~\ref{Poissonsymbol}.
Consequently,
\begin{equation*}
x^{-\delta_0}q\in S^0(U\times U\times\R^q;\C^N, x^{-1/2}L^2_b(\Z^{\wedge};E)),
\end{equation*}
and thus, in particular, $Q$ maps $L^2_\loc(U;\trb_U)$ into $L^2(U,x^{-1/2+\delta_0}L^2_b(\Z^{\wedge};E))$. Since $P$ is arbitrary we obtain the asserted mapping property
\begin{equation*}
{}_\psi{\Ext}_{\varphi} : L^2_{\loc}(U;\trb_U) \to x^{-1/2+\delta_0}H_e^{\infty}(W;E).
\end{equation*}
\end{proof}

The combination of the previous and following lemmas gives, in particular, that $\Ext$ maps $H^{1-\gen}(\Y;\trb)$ into $\Dom_{\max}(A)$. Here $\gen = x\partial_x+1/2$ is the section of $\End(\trb_U)$ defined in \eqref{Generator} (with $\gamma=1/2$, of course). The space $H^{1-\gen}(\Y;\trb)$ and other similar spaces are the Sobolev spaces of variable order defined in \cite[Section 5]{KrMe12b}. The following lemma is stated in terms of $A'$, the operator defined in Lemma~\ref{AonNwedge}, but the result applies equally well to $A$ because of Lemma~\ref{MappingFromL2} and the fact that elements of $\Diff^1_e(\M;E,F)$ such as $A-A'$ map $x^{-1/2+\delta_0}H_e^\infty(W;E)$ into 
$x^{-1/2+\delta_0}H_e^\infty(W;F)$.

\begin{lemma}\label{MappingintoDmax}
The composition
\begin{equation*}
A'\circ {}_\psi\Ext_{\varphi} : C^{-\infty}(U;\trb_U) \to C^{\infty}(\open W;F)
\end{equation*}
restricts to a continuous map
\begin{equation*}
A'\circ {}_\psi\Ext_{\varphi} : H^{1-\gen}_{\loc}(U;\trb_U) \to x^{-1/2}H_e^{\infty}(W;F).
\end{equation*}

\end{lemma}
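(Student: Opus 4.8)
The plan is to write $A'\circ {}_\psi\Ext_\varphi$ as an operator-valued pseudodifferential operator whose symbol is the Leibniz product of the symbol of $A'$ with the extension symbol ${}_\psi\extsym_\varphi$ of Lemma~\ref{Poissonsymbol}, and then identify the order of that symbol. The key point is that although $A'={}^b(\bA)+Q$ has coefficients that are unbounded near $x=0$ (it is a wedge operator, so $xA'\in\Diff^1_e$), the factor $x$ turns $A'$ into a \emph{bounded} operator-valued symbol of order $1$ between the $\K$-spaces with a shift of one in the conormal order; and crucially $A'$ is constructed (Lemma~\ref{AonNwedge}) so that $\bA'u=0$ for $u\in\trb$ and $Q x=xQ$ with $Q$ commuting with the $\kappa$-action in the large. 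So the zeroth-order part of $A'$ annihilates the leading term of the extension symbol exactly, which is why one gains back the loss.

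Concretely: first, write $A'=x^{-1}P'$ with $P'\in\Diff^1_b(\N^\wedge;E^\wedge,F^\wedge)$, so $P'(y,\eta)=p'_1(y,x\eta)$ with $p'_1$ a first-degree polynomial in $\eta$ with coefficients in $\Diff^1_b(\Z^\wedge;E^\wedge,F^\wedge)$, exactly as in the proof of Lemma~\ref{MappingFromL2}. By Proposition~\ref{EdgeOpsOpValued} the full symbol $p'(y,\eta)$ of $P'$ is an operator-valued symbol of order $0$ acting $\K^{s,\gamma}_t\to\K^{s-1,\gamma}_{t}$ for any $s,t,\gamma$; hence $x^{-1}p'(y,\eta)$, i.e. the symbol of $A'$, is of order $0$ acting $\K^{s,\gamma}_t\to\K^{s-1,\gamma-1}_t$. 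Second, form the Leibniz product $q=p'\leibniz {}_\psi\extsym_\varphi$ so that $A'\circ{}_\psi\Ext_\varphi$ has left symbol $x^{-1}q$; by Lemma~\ref{Poissonsymbol}, ${}_\psi\extsym_\varphi\in S^{-\delta_0}(U\times U\times\R^q;\C^N,\K^{\infty,-1/2+\delta_0}_\infty)$ and, more importantly, all of its $\eta$-derivatives land in $S^{-|\beta|-\delta_0}(\,\cdot\,;\C^N,\K^{\infty,\infty}_\infty)$ — that is, once one differentiates in $\eta$ the conormal order $-1/2+\delta_0$ may be promoted to $+\infty$. Third, split the Leibniz product into its leading term $p'(y,\eta)\cdot{}_\psi\extsym_\varphi(y,y',\eta)$ and the remainder $\sum_{|\beta|\ge 1}\frac1{\beta!}\partial_\eta^\beta p'\,D_{y'}^\beta{}_\psi\extsym_\varphi$ (plus a smoothing term). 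For the leading term: by construction $A'$ applied to the section $\sum\zeta^\mu\tau_\mu(y)$ of $\trb$ vanishes — $\bA'\tau_\mu=0$ and the zeroth-order $Q$-part plus the symbol-of-$\bA'$-applied-to-$\omega$ terms combine to give only a term supported where $d\omega\ne 0$, hence in $\dot C^\infty$; more cleanly, $A'(\omega(x[\eta]_y)\tau_\mu)$ equals a commutator term in which the derivatives fall on $\omega$, which after the rescaling $x[\eta]_y\mapsto x[\eta]_y/[\eta]$ produces a factor with full conormal decay, and $\bA'$ being first order the single derivative already suffices. So the leading term contributes a symbol in $S^{0}(\,\cdot\,;\C^N,\K^{\infty,\infty}_\infty)$. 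For the remainder: each summand already carries at least one $\partial_\eta$ on ${}_\psi\extsym_\varphi$, so by the second part of Lemma~\ref{Poissonsymbol} that factor sits in $\K^{\infty,\infty}_\infty$ with order $\le -\delta_0-|\beta|$, while $\partial_\eta^\beta p'$ gains $|\beta|$ in order; net effect: a symbol of order $\le -\delta_0$ valued in $\K^{\infty,\infty}_\infty$ as a map to $F^\wedge$-sections, in particular order $0$ after multiplying by $x^{-1}$ and landing in $\K$-spaces of any conormal weight.

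Putting these together, $x^{-1}q\in S^{0}(U\times U\times\R^q;\C^N,\K^{\infty,\gamma}_\infty)$ for every $\gamma$; choosing $\gamma=-1/2$ gives boundedness $L^2_\loc(U;\C^N)\to L^2(U,x^{-1/2}L^2_b(\Z^\wedge;F))$, and composing with an arbitrary $P\in\Diff_e(W;F)$ (again a bounded operator-valued symbol after absorbing the $x$-factor, as in Lemma~\ref{MappingFromL2}) upgrades this to $L^2\to x^{-1/2}H_e^\infty(W;F)$. Finally, to get the stated domain $H^{1-\gen}_\loc(U;\trb_U)$ rather than $L^2$, observe that a $1$-unit gain in the ordinary Sobolev order on the base is exactly offset by the order $+1$ that $A'$ (as opposed to $A'$'s zeroth-order part) costs; more precisely, applying one base derivative $D_{y_j}$ to $u$ corresponds, through the $\kappa$-twisting and $\eta\!\cdot\! x$-coupling, to one application of $[\eta]$, and the variable-order space $H^{1-\gen}$ is defined in \cite{KrMe12b} precisely so that its $\kappa$-weighted norm matches $[\eta]^{1}$ against the $\gen$-twisting built into $\K^{\infty,-1/2+\delta_0}$; so the $S^{-\delta_0}$-estimate for ${}_\psi\extsym_\varphi$ combined with one order of base-regularity yields a genuine $S^0$-estimate into $\K^{\infty,1/2}$ and hence mapping into $x^{-1/2}H_e^\infty$. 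The continuity is then the standard $L^2$-continuity theorem for operator-valued symbols of order $0$ (Appendix~\ref{app-PseudosOpValued}). The main obstacle — and the heart of the argument — is the leading-term cancellation: one must verify carefully that $A'$, through its construction in Lemma~\ref{AonNwedge} as $\bA+Q$ with $Q$ and $\bA$ both commuting appropriately with multiplication by $x$ and with $\kappa_\varrho$, does indeed kill the unboundedness that $A'$ would otherwise introduce when applied to $\omega(x[\eta]_y)\tau_\mu(y)$, leaving only a harmless $[\eta]^{-\infty}$-type remainder coming from $d\omega$; the rest is bookkeeping with the symbol calculus of Appendix~\ref{app-PseudosOpValued}.
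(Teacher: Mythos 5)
Your Leibniz--product analysis of $A'\circ{}_\psi\Ext_\varphi$ --- splitting off the diagonal term, exploiting $\bA_y\tau_\mu=0$ so that only derivatives falling on the cut-off survive (with full conormal decay), and using the improved estimates for the $\eta$-derivatives of ${}_\psi\extsym_\varphi$ from Lemma~\ref{Poissonsymbol} for the remainder --- is essentially the same cancellation mechanism the paper uses. The genuine gap is in your last step, where you pass from statements about the coefficient vector in ordinary Sobolev spaces to continuity on the variable-order space $H^{1-\gen}_{\loc}(U;\trb_U)$. The claim that the $H^{1-\gen}$-norm ``matches $[\eta]^{1}$ against the $\gen$-twisting built into $\K^{\infty,-1/2+\delta_0}$'' is a heuristic, not an argument: Theorem~\ref{WSpaceContinuity} only applies to symbols twisted by a \emph{fixed} group action between \emph{fixed} Hilbert spaces, whereas $H^{1-\gen}$ is not a space ${\mathcal W}^s(\R^q,H)$ of constant order (the order varies with $y$ through $\spec(\gen_y)$), and the symbol ${}_\psi\extsym_\varphi$ of Lemma~\ref{Poissonsymbol} is estimated with the trivial action on $\C^N$, so by itself it only yields mapping properties from standard Sobolev regularity of the coefficients, as in Lemma~\ref{MappingFromL2}. (A minor but symptomatic bookkeeping slip: by Proposition~\ref{EdgeOpsOpValued} the edge symbol $p'$ of $P'=xA'$ has order $0$, but $x^{-1}p'$ has order $1$, not $0$, since conjugation by $\kappa_{\langle\eta\rangle}$ turns $x^{-1}$ into $\langle\eta\rangle\,x^{-1}$; it is precisely this extra order that the cancellation and the frame twist must absorb.)

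What the paper supplies at exactly this point, and what is missing from your proposal, is the reduction to constant order: it introduces $\mathfrak k(y,\eta)=\kappa_{[\eta]}\pmb\tau(y)$, shows --- using the $\delta$-admissibility of the frame, \eqref{UIsDeltaAdmissible} --- that $\mathfrak k$ is an \emph{elliptic} symbol in $S^0_{1,\delta}(U\times\R^q;(\C^N,0),(\trb_U,-\gen))$, and takes a properly supported parametrix $\mathfrak L$ with $\op(\mathfrak k)\circ\mathfrak L=\Id-\mathfrak R$, $\mathfrak R$ smoothing. Since $\mathfrak L:H^{1-\gen}_{\comp}(U;\trb_U)\to H^1_{\comp}(U;\C^N)$ is continuous (\eqref{mathfrakLCont}), the lemma reduces to the constant-order mapping property \eqref{auxeq1} for $A'\circ{}_\psi\Ext_\varphi\circ\op(\mathfrak k)$. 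In that composition the twist reappears concretely as the matrix $\tilde c(y,[\eta])$ coming from $\kappa_{[\eta]}\pmb\tau=\pmb\tau\,\tilde c$, whose entries are symbols of order at most $1-\delta_0$; it is this factor, combined with your cancellation $\bA_y\tau_\mu=0$, that converts the ``offset'' you invoke into an actual symbol estimate of order $1$ with values in $\K^{\infty,-1/2+\delta_0}_{\infty}$, hence into a bounded map $H^1_{\comp}(U;\C^N)\to x^{-1/2}H^\infty_e(W;F)$. Without this factorization (or an equivalent argument carried out rigorously inside the variable-order calculus of \cite{KrMe12b}), the final step of your proof does not go through.
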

\begin{proof}
Let $\pmb\tau=\begin{bmatrix}\tau_1,\dots,\tau_N\end{bmatrix}$ be the frame of $\trb$ used in \eqref{localExt} to define $_\psi\Ext_{\varphi}$, define $\mathfrak k:U\times \R^q\to \L(\C^N,\trb_U)$ by
\begin{equation*}
\mathfrak k(y,\eta)\zeta=\kappa_{[\eta]}\pmb\tau(y)\zeta.
\end{equation*}
So
\begin{equation*}
\mathfrak k(y,\eta)\zeta = \sum_{\mu,\nu} \tilde c^\nu_\mu(y,\eta)\zeta^\mu\tau_\nu(y)
\end{equation*}
where $\tilde c^\nu_\mu(y,\eta)=c^\nu_\mu(y,1/[\eta])$ since $\kappa_{[\eta]}\pmb\tau=\pmb\tau c(y,1/[\eta])$, see \eqref{ActionOnFrame}. We claim that 
\begin{equation}\label{mathfrakkinS0}
\mathfrak k\in S^m_{1,\delta}(U\times\R^q;(\C^N,0),(\trb_U,-\gen))
\end{equation}
with $m=0$. For general $m\in \R$ this means that, trivializing $\trb_U$ using the frame $\pmb\tau$ (it is here where the fact that the frame is $\delta$-admissible with respect to $\gen$, as stated in \eqref{UIsDeltaAdmissible}, is used for the first time) $\mathfrak k$ satisfies estimates of the form
\begin{equation}\label{VariableOrder}
\display{300pt}{
for all $\alpha, \beta\in \NN_0^q$ and $K\Subset U$ there is $C$ such that
\begin{equation*}
\big\|\langle \eta \rangle^{-\gen(y)}\big(D_y^{\alpha}\partial_{\eta}^{\beta}\mathfrak k(y,\eta)\big)\langle \eta \rangle^{0}\big\| \leq C \langle \eta \rangle^{m - |\beta| + \delta|\alpha|}\text{ if }y\in K,\ \eta\in \R^q
\end{equation*}
}
\end{equation}
with $\langle\eta\rangle=\sqrt{1+|\eta|^2}$, see \cite[Definitions 3.1 and 6.1]{KrMe12b}. The homomorphism $\langle\eta\rangle^{-\gen}$ is just $\kappa_{1/\langle\eta\rangle}$, and of course $\langle\eta\rangle^0$ is the identity homomorphism. To verify the claim we use that by \cite[Remark 3.9]{KrMe12b} the condition is equivalent to the statement that $\langle\eta\rangle^{-\gen}\mathfrak k\langle\eta\rangle^0$ belongs to the standard H\"ormander class $S^m_{1,\delta}$, which is trivially true here since 
$\kappa_{1/\langle\eta\rangle} = \kappa_{[\eta]/\langle\eta\rangle}\kappa^{-1}{[\eta]}$ so that
\begin{equation*}
\langle\eta\rangle^{-\gen}\mathfrak k\langle\eta\rangle^0 = \kappa_{[\eta]/\langle\eta\rangle}\kappa^{-1}_{[\eta]}\kappa_{[\eta]}\pmb\tau=\kappa_{[\eta]/\langle\eta\rangle}\pmb\tau.
\end{equation*}
The fact that $[\eta]/\langle\eta\rangle$ is a symbol of order $0$ gives that $\mathfrak k$ satisfies \eqref{mathfrakkinS0} with $m=0$. We note in passing that the condition \eqref{VariableOrder} is structurally the same as \eqref{ConstantOrder} except that in the former the action is independent of the space variable $y$ whereas here we have the action by $-\gen$ which may depend on $y$.

Define $\op(\mathfrak k):C_c^{-\infty}(U;\C^N)\to C^{-\infty}(U;\trb_U)$
\begin{equation*}
\big(\op(\mathfrak k)v\big)(y)=\frac{1}{(2\pi)^q}\int_{\R^n} e^{\im y\cdot \eta} \mathfrak k(y,\eta) \widehat v(\eta)\,d\eta
\end{equation*}
Then $\op(\mathfrak k)\in \Psi^{0}_{1,\delta}(U;(\C^N,0),(\trb_U,-\gen))$ reflecting the properties of $\mathfrak k$. By \cite[Proposition 5.2]{KrMe12b}, 
\begin{equation*}
\op(\mathfrak k):H^1_{\comp}(U;\C^N)\to H^{1-\gen}_{\loc}(U;\trb_U).
\end{equation*}
Observe that $\mathfrak k$ is an elliptic symbol in the sense of \cite{KrMe12b}. Indeed, the inverse of $\mathfrak k(y,\eta)$ is 
\begin{equation*}
\mathfrak l:\trb_y\ni u \mapsto [\tau^1(y)\kappa_{[\eta]}^{-1}u,\dotsc, \tau^N(y)\kappa_{[\eta]}^{-1}u] \in \C^N
\end{equation*}
where $\tau^1,\dotsc,\tau^N$ is the frame dual to the frame $[\tau_1,\dotsc,\tau_N]$; this defines an element of $S^0_{1,\delta}(U\times\R^q;(\trb_U,-\gen),(\C^N,0))$. Consequently $\op(\mathfrak k)$ has a properly supported right parametrix ${\mathfrak L} \in \Psi^{0}_{1,\delta}(U;(\trb_U,-\gen),(\C^N,0))$ modulo a smoothing operator $\mathfrak R \in \Psi^{-\infty}(U;\trb_U)$,
\begin{equation*}
\op(\mathfrak k)\circ {\mathfrak L}=\Id-\mathfrak R,
\end{equation*}
giving in particular a continuous operator
\begin{equation}\label{mathfrakLCont}
{\mathfrak L}:H^{1-\gen}_{\comp}(U;\trb_{U})\to H^1_{\comp}(U,\C^N).
\end{equation}
We will show in a moment that $A'\circ {}_\psi\Ext_{\varphi} \circ \op(\mathfrak k)$ restricts to a continuous operator 
\begin{equation}\label{auxeq1}
A'\circ {}_\psi\Ext_{\varphi} \circ \op(\mathfrak k):H^1_{\comp}(U;\C^N) \to x^{-1/2}H^{\infty}_e(W;F).
\end{equation}
Assuming this for the moment, we use 
\begin{equation*}
A'\circ {}_\psi\Ext_{\varphi}=A'\circ {}_\psi\Ext_{\varphi} \circ \op(\mathfrak k) \circ{\mathfrak L} +A'\circ {}_\psi\Ext_{\varphi}\circ\mathfrak R
\end{equation*}
together with \eqref{mathfrakLCont} and \eqref{auxeq1}, plus the regularization property
\begin{equation*}
\mathfrak R:H^{1-\gen}_{\comp}(U;\trb_U)\to C^\infty(U;\trb_U)
\end{equation*}
and the fact that, because $\bA$ annihilates the $\tau_j$, $A'\circ {}_\psi\Ext_\varphi$ maps $C^\infty(U;\trb_U)\to x^{-1/2}H^\infty_e(W;F)$, to conclude that indeed
\begin{equation*}
A'\circ {}_\psi\Ext_\varphi:H^{1-\gen}_{\comp}(U;\trb_U)\to x^{-1/2}H^\infty_e(W;F).
\end{equation*}
The thesis of the lemma follows from this by noting the presence of factor $\varphi$ in ${}_\psi\Ext_\varphi$.

We now show \eqref{auxeq1}. Recall that $\mathfrak k(y,\eta) = \kappa_{[\eta]}\pmb\tau(y)=\pmb\tau(y)\tilde c(y,\eta)$, $\tilde c=[\tilde c^\ell_j]$, as a map $\C^N\to \trb_y$. Note that 
\begin{equation}\label{ctildeOfSmallOrder}
\text{the functions }\tilde c^\ell_j\text{ are symbols of order}\leq 1-\delta_0. 
\end{equation}
This can be justified with an argument similar to the one given to lend credence to the assertion that the $c^\ell_j$ in \eqref{ActionOnFrame} belong to $S^{-\delta_0}(U\times\R_+)$. Let $\op(\tilde c):C_c^{-\infty}(U;\C^N)\to C^{-\infty}(U;\C^N)$ be the pseudodifferential operator defined by $\tilde c$. Then $\op(\mathfrak k)=\pmb\tau \op(\tilde c)$. The definition of ${}_\psi\Ext_\varphi$, see \eqref{localExt}, gives
\begin{align*}
\big({}_\psi\Ext_\varphi \circ \op(\mathfrak k)\big)(v) 
&= \sum_{\nu}\Big(\sum_{\mu=1}^N \big({}_\psi\ext_\varphi\circ \op(\tilde c^\nu_\mu)\big)(v^\mu)\Big)\tau_\nu\\
&= \op({}_\psi\extsym_\varphi\leibniz \tilde c)(v)
\end{align*}
We have
\begin{equation*}
r = {}_\psi\extsym_\varphi \leibniz \tilde c - {}_\psi\extsym_\varphi|_{\diag}\, \tilde c \in S^{-2\delta_0}(U\times U\times\R^q;\C^N,\K^{\infty,\infty}_{\infty})
\end{equation*}
by Lemma~\ref{Poissonsymbol} ($\diag$ means the diagonal in $U\times U$), see also Corollary~\ref{CorSmallerSpace}. Consequently, $x^{-1-\delta_0}r \in S^{1-\delta_0}(\R^q\times\R^q\times\R^q;\C^N,\K^{\infty,\infty}_{\infty})$, and by an argument analogous to that in the proof of Lemma~\ref{MappingFromL2} we now get that
\begin{equation}\label{lot}
\op(r) : H^1_\comp(U;\C^N) \to x^{1/2+\delta_0}H^{\infty}_e(W;E).
\end{equation}
In particular, 
\begin{equation*}
A'\circ \op(r) :  H^1_\comp(U;\C^N) \to x^{-1/2+\delta_0}H^{\infty}_e(W;F).
\end{equation*}

We now prove the desired mapping property for $A'\circ \op(p)$ with $p={}_\psi\extsym_\varphi|_{\diag}\, \tilde c.$ By definition,
\begin{equation*}
p(y,\eta) = \kappa_{[\eta]} \big(\omega(x[\eta]_y/[\eta])\psi(y)\varphi(y)  \pmb \tau\big) : \C^N \to C^{\infty}(\Z^{\wedge};E),
\end{equation*}
which shows that $p \in S^0(U\times\R^q;\C^N,\K_{\infty}^{\infty,-1/2+\delta_0})$.  Assuming $U$ is small enough, we may view the operator $A'$ on $\open W$ as being of the form $A'=\op(a)$ with $a(y,\eta) : C^{\infty}(\Z^{\wedge};E) \to C^{\infty}(\Z^{\wedge};F)$ given by 
\begin{equation*}
a(y,\eta)=\bA_y+q(y,\eta).
\end{equation*}
This is the coordinate version of the decomposition of $A_\wedge$ used in the proof of Lemma~\ref{AonNwedge}. Recall that $\bA_y$ is independent of $\eta$ and that $q$ is linear in $\eta$ since $A'$ is a first order wedge operator. In particular, $\partial_{\eta}^{\alpha}a(y,\eta)$ is a smooth bundle homomorphism if $|\alpha| = 1$ and the zero operator if $|\alpha|>1$. Thus
\begin{equation}\label{lotOrder}
\sum_{|\alpha| \geq 1}\frac{1}{\alpha !}\partial_{\eta}^{\alpha}a\, D_y^{\alpha} p=\sum_{|\alpha| = 1}\frac{1}{\alpha !}\partial_{\eta}^{\alpha}a\, D_y^{\alpha} p\in S^0(U\times\R^q;\C^N,\K_{\infty}^{\infty,-1/2+\delta_0}).
\end{equation}
Now let $\tilde{\omega} \in C^{\infty}(\lbra0,\eps\rpar)$ be such that $\tilde{\omega} = 1$ in a neighborhood of the support of $\omega$. Then
\begin{equation*}
a(y,\eta)p(y,\eta)=\tilde{\omega}(x[\eta]_y)\big(\bA_y(p(y,\eta)) +
q(y,\eta)p(y,\eta)\big).
\end{equation*}
However,
\begin{multline*}
\tilde{\omega}(x[\eta]_y)\bA_y(p(y,\eta)) =\\
[\eta] \kappa_{[\eta]} [\varphi(y)\psi(y)\bA_y\omega(x[\eta]_y/[\eta]) \tau_1 , \cdots , \varphi(y)\psi(y)\bA_y\omega(x[\eta]_y/[\eta]) \tau_N],
\end{multline*}
which gives
\begin{equation*}
\tilde{\omega}(x[\eta]_y)\bA_y(p(y,\eta)) \in S^1(U\times\R^q;\C^N,\K_{\infty}^{\infty,\infty})
\end{equation*}
because $\bA_y\tau_j = 0$. On the other hand,
\begin{equation*}
\tilde{\omega}(x[\eta]_y)qp \in S^1(U\times\R^q;\C^N,\K_{\infty}^{\infty,-1/2+\delta_0}).
\end{equation*}
Therefore
\begin{equation}\label{topOrder}
ap \in S^1(U\times\R^q;\C^N,\K_{\infty}^{\infty,-1/2+\delta_0}).
\end{equation}
Combining \eqref{lotOrder} and \eqref{topOrder} gives
\begin{equation*}
\sum_{\alpha}\frac{1}{\alpha !}\partial_{\eta}^{\alpha}a\, D_y^{\alpha} p \in 
S^1(U\times\R^q;\C^N,\K_{\infty}^{\infty,-1/2+\delta_0})
\end{equation*}
which proves that
\begin{equation*}
A'\circ \op (p) : H^1_{\comp}(U;\C^N) \to x^{-1/2}H^{\infty}_e(W;F),
\end{equation*}
and consequently, in conjunction with \eqref{lot}, that 
\begin{equation}
A'\circ {}_\psi \Ext_\varphi \circ \op(\mathfrak k) : H^{1}_{\comp}(U;\C^N) \to x^{-1/2}H_e^{\infty}(W;F)
\end{equation}
as asserted.
\end{proof}

\begin{proposition}\label{PoissonOperatorProperties}
The operator $\Ext: C^{-\infty}(\Y;\trb) \to C^\infty(\open \M;E)$ has the following mapping properties.
\begin{enumerate}
\item $\Ext u - \preP u \in \dot{C}^{\infty}(\M;E)$ for all $u \in C^{\infty}(\Y;\trb)$.
\item If $\varphi \in C^{\infty}(\Y)$ and $\psi \in \Ring$ are such that $\psi$ vanishes to infinite order on $\supp(\wp^*\varphi)$, then $\psi \Ext \varphi$ maps $C^{-\infty}(\Y;\trb)$ into $\dot{C}^{\infty}(\M;E)$.
\item $\gamma_A\circ\Ext :C^\infty(\Y;\trb)\to C^\infty(\Y;\trb)$ is the identity operator.
\item $\Ext : L^2(\Y;\trb) \to x^{-1/2+\delta_0}H^{\infty}_{e}(\M;E)$ for some $\delta_0 > 0$.
\item $A\circ \Ext : H^{1-\gen}(\Y;\trb) \to x^{-1/2}H^\infty_e(\M;F)$, where $\gen$ is the bundle homomorphism $x\partial_x +1/2:\trb\to\trb$.
\end{enumerate}
In particular, $\Ext : H^{1-\gen}(\Y;\trb) \to \Dom_{\max}(A)$ continuously, and $\gamma_A\circ\Ext = \Id$ on $H^{1-\gen}(\Y;\trb)$. 
\end{proposition}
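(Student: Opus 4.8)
The statement to prove is Proposition~\ref{PoissonOperatorProperties}, which bundles together five mapping properties of $\Ext$ along with the concluding assertion that $\Ext:H^{1-\gen}(\Y;\trb)\to\Dom_{\max}(A)$ continuously and $\gamma_A\circ\Ext=\Id$ on $H^{1-\gen}(\Y;\trb)$. My plan is to dispatch the local statements first and then globalize via the partition of unity $\{\varphi_s\}$, $\{\psi_s\}$ used to define $\Ext=\Phi_*\sum_s{}_{\psi_s}\Ext_{\varphi_s}$.

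For item (1), I would invoke \eqref{ExtOnSmooth}: for $u\in C^\infty(\Y;\trb)$ one has ${}_{\psi_s}\Ext_{\varphi_s}u-\psi_s\preP(\varphi_s u)\in\dot C^\infty(\N^\wedge;E)$, which follows from Part~(1) of Lemma~\ref{scalarlocallylikecutoff} applied componentwise in the $\delta$-admissible frame. Summing over $s$ and using $\sum_s\psi_s\varphi_s=\sum_s\varphi_s=1$ near $\N$ (since $\psi_s\equiv1$ near $\supp\varphi_s$), and then pushing forward via $\Phi_*$, gives $\Ext u-\preP u\in\dot C^\infty(\M;E)$. For item (2), I would reduce to the local operators: if $\psi\in\Ring$ vanishes to infinite order on $\supp(\wp^*\varphi)$, then after localizing, the relevant compositions involve $\psi\cdot{}_{\psi_s}\Ext_{\varphi_s}\cdot\varphi$ where the supports of the outer cut-off and $\varphi_s\varphi$ are disjoint (or the vanishing-to-infinite-order hypothesis forces rapid decay), so Part~(3) of Lemma~\ref{scalarlocallylikecutoff} and the second statement of Lemma~\ref{MappingFromL2} give the $\dot C^\infty$ conclusion. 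Item (3) is exactly \eqref{ExtisGood}, already established from \eqref{ExtOnSmooth} together with \eqref{GammaInvertsPreP} and Proposition~\ref{DminSubsetKerTrace}. Item (4) is the global assembly of Lemma~\ref{MappingFromL2}: each ${}_{\psi_s}\Ext_{\varphi_s}$ maps $L^2_{\loc}(U_s;\trb)\to x^{-1/2+\delta_0}H^\infty_e(W_s;E)$, and summing and applying $\Phi_*$ (which preserves these edge-Sobolev spaces near $\N$) yields $\Ext:L^2(\Y;\trb)\to x^{-1/2+\delta_0}H^\infty_e(\M;E)$ with $\delta_0$ from \eqref{delta0}. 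Item (5) is the global assembly of Lemma~\ref{MappingintoDmax} together with the remark immediately preceding it: $A-A'\in\Diff^1_e$ maps $x^{-1/2+\delta_0}H^\infty_e$ into itself, so from $A'\circ{}_{\psi_s}\Ext_{\varphi_s}:H^{1-\gen}_{\loc}(U_s;\trb)\to x^{-1/2}H^\infty_e(W_s;F)$ and item (4) we get $A\circ{}_{\psi_s}\Ext_{\varphi_s}:H^{1-\gen}_{\loc}(U_s;\trb)\to x^{-1/2}H^\infty_e(W_s;F)$; summing and pushing forward gives item (5).

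For the concluding assertion: combining (4) and (5), for $u\in H^{1-\gen}(\Y;\trb)$ we have $\Ext u\in x^{-1/2+\delta_0}H^\infty_e(\M;E)\subset x^{-1/2}L^2_b(\M;E)$ and $A\,\Ext u\in x^{-1/2}H^\infty_e(\M;F)\subset x^{-1/2}L^2_b(\M;F)$, hence $\Ext u\in\Dom_{\max}(A)$; continuity into $\Dom_{\max}(A)$ with its graph norm follows since both $\Ext:H^{1-\gen}(\Y;\trb)\to x^{-1/2}L^2_b(\M;E)$ and $A\circ\Ext:H^{1-\gen}(\Y;\trb)\to x^{-1/2}L^2_b(\M;F)$ are continuous (the continuity statements in Lemmas~\ref{MappingFromL2} and \ref{MappingintoDmax} are part of those lemmas). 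Finally $\gamma_A\circ\Ext=\Id$ on $H^{1-\gen}(\Y;\trb)$ follows from item (3) by density: $C^\infty(\Y;\trb)$ is dense in $H^{1-\gen}(\Y;\trb)$, both $\gamma_A:\Dom_{\max}(A)\to H^{-\gen}(\Y;\trb)$ (Theorem~\ref{RangeofGamma}) and $\Ext:H^{1-\gen}(\Y;\trb)\to\Dom_{\max}(A)$ are continuous, so the identity $\gamma_A\,\Ext u=u$ extends from $u\in C^\infty$ to all of $H^{1-\gen}(\Y;\trb)$.

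The main obstacle—already absorbed into the cited lemmas—is the symbolic bookkeeping in Lemma~\ref{MappingintoDmax}: tracking the twisted operator-valued orders of ${}_\psi\extsym_\varphi$, of $\mathfrak k=\kappa_{[\eta]}\pmb\tau$, and of the composition with $A'=\op(\bA+q)$, using the elliptic parametrix $\mathfrak L$ of $\op(\mathfrak k)$ and the Leibniz expansion, and exploiting crucially that $\bA_y$ annihilates the frame $\tau_\mu$ and that $A'$ is first order (so $q$ is linear in $\eta$ and $\partial_\eta^\alpha a=0$ for $|\alpha|>1$). At the level of the present proposition, however, everything is a routine gluing argument: the only points requiring care are that $\Phi_*$ respects the edge-Sobolev scales near $\N$ (true because $\Phi$ is a tubular neighborhood diffeomorphism composed with parallel transport) and that the support conditions built into $\omega(x[\eta]_y)$ keep all constructions localized near $\N$, so the pushforward is well-defined on all of $\M$.
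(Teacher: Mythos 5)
Your proposal is correct and follows essentially the same route as the paper: items (1)--(3) are the assembly of Lemma~\ref{scalarlocallylikecutoff}, \eqref{ExtOnSmooth}, and \eqref{ExtisGood} over the partition of unity, items (4) and (5) are the globalizations of Lemmas~\ref{MappingFromL2} and \ref{MappingintoDmax} (with the $A$ versus $A'$ discrepancy absorbed exactly as in the remark preceding Lemma~\ref{MappingintoDmax}), and the membership $\Ext u\in\Dom_{\max}(A)$ with continuity follows as you say. The one place you diverge is the last identity: you extend $\gamma_A\circ\Ext=\Id$ from $C^\infty(\Y;\trb)$ to $H^{1-\gen}(\Y;\trb)$ by invoking Theorem~\ref{RangeofGamma}, whereas the paper only uses the (cheaper, already available) continuity of $\gamma_A:\Dom_{\max}(A)\to C^{-\infty}(\Y;\trb)$ and identifies $\gamma_A\circ\Ext$ with the inclusion $H^{1-\gen}(\Y;\trb)\embed C^{-\infty}(\Y;\trb)$ by density. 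Since the proof of Theorem~\ref{RangeofGamma} itself cites this proposition (item (1) and the continuity of $\Ext:H^{1-\gen}(\Y;\trb)\to\Dom_{\max}(A)$), your argument is not actually circular, but you should make the ordering explicit: items (1)--(5) and the continuity into $\Dom_{\max}(A)$ are established independently of Theorem~\ref{RangeofGamma}, that theorem then becomes available, and only afterwards is the identity on $H^{1-\gen}$ deduced; alternatively, use the $C^{-\infty}$-valued continuity as the paper does and avoid the dependency altogether. A further small point: Lemma~\ref{MappingFromL2} as stated gives the mapping property but not verbatim a continuity assertion; continuity follows from its proof (or a closed graph argument), which is worth a word if you lean on it for the graph-norm estimate.
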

\begin{proof}
Statements (1) and (2) follow from Lemma~\ref{scalarlocallylikecutoff} (see also \eqref{ExtOnSmooth} for (1)) while statement (3) is just \eqref{ExtisGood}. Statements (4) and (5) follow, respectively, from Lemmas~\ref{MappingFromL2} and \ref{MappingintoDmax}. By (3) and (4) we have $\Ext : H^{1-\gen}(\Y;\trb) \to \Dom_{\max}(A)$. The composition
\begin{equation*}
\gamma_A\circ \Ext : H^{1-\gen}(\Y;\trb) \to C^{-\infty}(\Y;\trb)
\end{equation*}
is continuous, and by \eqref{ExtisGood} we have $(\gamma_A\circ\Ext)(u)=u$ if $u \in C^{\infty}(\Y;\trb)$. By continuity, $\gamma_A\circ\Ext$ must coincide with the inclusion map $H^{1-\gen}(\Y;\trb) \embed C^{-\infty}(\Y;\trb)$ on $H^{1-\gen}(\Y;\trb)$ which proves the claim.
\end{proof}

The following proposition determines the normal family of $\Ext$ (compare with \eqref{NormalFamilyAsLimit}). 

\begin{proposition}
Let $g\in C^\infty(\Y)$ be real valued and $u\in C^\infty(\Y;\trb)$ with $dg\ne0$ on $\supp u$. Then
\begin{equation}\label{NormalFamilyOfExtAsLimit}
\Ext_\wedge (dg)u= \lim_{\varrho\to \infty} \kappa^{-1}_\varrho e^{-\im \varrho \wp_\wedge^*g} \Phi^* \Ext \Phi_* e^{\im \varrho \wp_\wedge^*g} \kappa_\varrho u,
\end{equation}
exists and is equal to $\omega(x|dg|)\mathfrak i u$.
\end{proposition}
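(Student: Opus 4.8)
The plan is to localize $\Ext$ through its defining partition of unity, to recognize each local piece ${}_{\psi_s}\Ext_{\varphi_s}$ as an operator-valued pseudodifferential operator whose principal operator-valued symbol is given fiberwise by multiplication by $\omega(x|\pmb\eta|)$ on $\mathfrak i(\trb)$, and then to read off the limit in \eqref{NormalFamilyOfExtAsLimit} as the fiberwise value of that principal symbol at the covector $\pmb\eta=dg(y)$ --- precisely the mechanism by which \eqref{NormalFamilyAsLimit} produces the normal family of a wedge differential operator. Since $\Ext=\Phi_*\sum_s{}_{\psi_s}\Ext_{\varphi_s}$, one has $\Phi^*\Ext\Phi_*=\sum_s{}_{\psi_s}\Ext_{\varphi_s}$ modulo the transfer maps built from $\Phi$ and the parallel transports, and conjugating those by $\kappa_\varrho$ alters the $\varrho\to\infty$ limit only by lower-order remainders, just as in the proof of \cite[Proposition~2.10]{GiKrMe10}. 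So it suffices to compute, on $\open\N^\wedge$, $\lim_{\varrho\to\infty}\kappa_\varrho^{-1}e^{-\im\varrho\wp_\wedge^*g}\,{}_{\psi_s}\Ext_{\varphi_s}\bigl(e^{\im\varrho\wp_\wedge^*g}\kappa_\varrho u\bigr)$ for each $s$ and then to sum, using $\psi_s=1$ near $\supp\varphi_s$ and $\sum_s\varphi_s=1$, so that $\sum_s\psi_s\varphi_s=1$.

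Fix a chart $U$ carrying a $\delta$-admissible frame $\tau_1,\dots,\tau_N$ of $\trb$ as in \eqref{UIsDeltaAdmissible}, and write $u=\sum_\mu u^\mu\tau_\mu$, so that $\mathfrak i\,{}_\psi\Ext_\varphi u=\op({}_\psi\extsym_\varphi)(u^1,\dots,u^N)$ with ${}_\psi\extsym_\varphi$ the double symbol of Lemma~\ref{Poissonsymbol}. Conjugating by the elliptic factor $\mathfrak k$ of the proof of Lemma~\ref{MappingintoDmax}, so that the input bundle carries the action $\varrho^{\gen}$ on $\trb_U$ and the output the action \eqref{ActionOneHalf} on sections of $E^\wedge$, this symbol is $\kappa$-homogeneous of degree $0$ for large $|\pmb\eta|$: since $\omega(x[\eta]_y)=\omega(x|\eta|_y)$ once $|\eta|>r_0$, the same computation as in the discussion preceding Lemma~\ref{Poissonsymbol} (carried out now with the $\gen$-action on the input side) gives ${}_\psi\extsym_\varphi(y,\varrho\eta)=\kappa_{2,\varrho}\,{}_\psi\extsym_\varphi(y,\eta)\,\varrho^{-\gen}$ for $\varrho\ge1$ and $|\eta|$ large, the $\kappa_{2,\varrho}$ and the $\varrho^{-\gen}$ together carrying no net power of $\varrho$ --- which is why, unlike in \eqref{NormalFamilyAsLimit}, no factor $\varrho^{-m}$ appears in \eqref{NormalFamilyOfExtAsLimit}. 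The discrepancies between ${}_\psi\extsym_\varphi$ and its homogeneous principal part $\omega(x|\eta|_y)\mathfrak i(\cdot)$ --- the summand supported in $|\eta|<r_0$, the passage from a double symbol to a left symbol, and the Leibniz remainder of order $-2\delta_0$ occurring in the proof of Lemma~\ref{MappingintoDmax} --- all belong to symbol classes of strictly negative order, hence, by the mapping properties of the operator-valued calculus recalled in Appendix~\ref{app-PseudosOpValued} and in \cite{KrMe12b}, contribute $0$ to the limit.

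It then remains to invoke the general principle that, for an operator-valued symbol $p$ that is $\kappa$-homogeneous of degree $0$ for large $|\pmb\eta|$, the limit $\lim_{\varrho\to\infty}\kappa_{2,\varrho}^{-1}e^{-\im\varrho\wp_\wedge^*g}\op(p)\bigl(e^{\im\varrho\wp_\wedge^*g}\kappa_{1,\varrho}\,\cdot\,\bigr)$ exists in $C^\infty(\open\N^\wedge;E^\wedge)$ and equals the fiberwise evaluation $p_{(0)}(y,dg(y))$ of the homogeneous principal symbol at $dg(y)$. This is the exact counterpart for $\Ext$ of the construction behind \eqref{NormalFamilyAsLimit}, with the same proof: after the substitution $\eta\mapsto\varrho\eta$ the defining expression becomes an oscillatory integral with phase $\varrho\bigl((y-y')\cdot\eta+g(y')-g(y)\bigr)$, and stationary phase collapses it onto the critical set $\{y'=y,\ \eta=dg(y)\}$, where $p$ takes the value $p_{(0)}(y,dg(y))$ (the $\kappa$-conjugation factors reducing to the identity there). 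For our $p$ one has $p_{(0)}(y,\pmb\eta)\zeta=\psi(y)\varphi(y)\,\omega(x|\pmb\eta|_y)\sum_\mu\zeta^\mu\tau_\mu(y)$, so, since $dg\ne0$ on $\supp u$, the value at $\pmb\eta=dg(y)$ is $\psi(y)\varphi(y)\,\omega\bigl(x|dg(y)|\bigr)\,\mathfrak i u$; summing over the partition of unity produces $\omega(x|dg|)\,\mathfrak i u$, where $|dg|$ is the norm of $dg$ in the fixed Riemannian metric on $\Y$, pulled back to $\N^\wedge$ via $\wp_\wedge$.

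I expect the genuine work to be in making the third step rigorous, the point being that the group action $\varrho^{\gen}$ on the input bundle $\trb_U$ is $y$-dependent, so the constant-coefficient form of the normal-family limit does not apply verbatim. The clean resolution is to appeal to the variable-order operator-valued calculus of \cite{KrMe12b}, in which the admissibility hypothesis \eqref{UIsDeltaAdmissible} is exactly what makes $\varrho^{\gen}$ behave like a symbol and what guarantees that such $\varrho\to\infty$ limits exist; in a hands-on version one must control $\varrho^{-\gen(y)}\varrho^{\gen(y')}$ on the stationary-phase region $|y-y'|\lesssim\varrho^{-1/2}$, and since $\gen(y)$ and $\gen(y')$ do not commute and $\varrho^{\gen}$ grows polynomially in $\varrho$, this forces one to use \eqref{UIsDeltaAdmissible} and the spectral bound \eqref{delta0} in tandem with repeated integration by parts in $\eta$, so that the oscillation supplies the decay in $y-y'$ needed to offset that growth.
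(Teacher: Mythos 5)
Your proposal is correct and takes essentially the same route as the paper: localize through the partition of unity, expand $u$ in a $\delta$-admissible frame, rescale $\eta\mapsto\varrho\eta$, and apply stationary phase at the nondegenerate critical point $y'=y$, $\eta=\nabla g(y)$, with the $y$-dependence of the action on $\trb$ being the one genuine issue. The paper resolves that issue exactly along the lines you indicate: the frame-coefficient matrix $c(y,\varrho)c(y',1/\varrho)$ is the identity on the diagonal and has entries of symbol order at most $1-2\delta_0$ by \eqref{cOfNegOrder} and \eqref{ctildeOfSmallOrder}, so the leading stationary-phase term gives $\psi\varphi\,\omega(x[dg]_y)u$ while the corrections decay like $\varrho^{-2\delta_0}$ --- the concrete form of the estimate you defer to \eqref{UIsDeltaAdmissible} and \eqref{delta0}.
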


The map $\mathfrak i$ was defined in \eqref{CanonicalMap}. The normal family of $\Ext$ on $T^*\Y\minus 0$ is defined by \eqref{NormalFamilyOfExtAsLimit}:
\begin{gather*}
\Ext_{\wedge}(\pmb\eta) : \pi_{\Y}^*\trb_y \to C^{\infty}(\Z^{\wedge}_y;E_{\Z^{\wedge}_y}), \\
\Ext_{\wedge}(\pmb\eta)\tau = \omega(x|\pmb\eta|)\tau(x,z),
\end{gather*}
where $y = \pi_{\Y}\pmb\eta$. The normal family is twisted homogeneous of degree 0 in the sense that
\begin{equation*}
\Ext_{\wedge}(\varrho\pmb\eta) = \kappa_{\varrho}\Ext_{\wedge}(\pmb\eta)\kappa_{\varrho}^{-1}
\end{equation*}
for all $\pmb\eta \in T^*\Y\minus 0$ and $\varrho  > 0$.

\begin{proof}
Using the definition of $\Ext$ we see that we only need to show that the limit
\begin{equation*}
\lim_{\varrho\to \infty} \kappa^{-1}_\varrho e^{-\im \varrho \wp_\wedge^*g} {}_\psi\Ext_\varphi e^{\im \varrho \wp_\wedge^*g} \kappa_\varrho u
\end{equation*}
exists. We may assume that $dg\ne 0$ on $U$. So suppose $\tau_1,\dots,\tau_N$ is a smooth frame of $\trb$ as usual and $u=\sum u^\mu\tau_\mu$ with smooth coefficients $u^\mu$. Using the notation \eqref{ActionOnFrame} we have
\begin{equation*}
\kappa_\varrho u(y) = \sum_\nu c^\nu_\mu(y,1/\varrho) u^\mu(y)\tau_\nu(y).
\end{equation*}
Using \eqref{ActionOnFrame} again we get that $(2\pi)^q\kappa_\varrho^{-1} e^{-\im \varrho \wp_\wedge^*g} {}_\psi\Ext_\varphi e^{\im \varrho \wp_\wedge^*g} \kappa_\varrho u$ is the sum of
\begin{equation*}
\iint e^{\im(y-y')\cdot \eta-\im\varrho(g(y)- g(y'))}\psi(y)\varphi(y')\omega(x[\eta]_y/\varrho) c^\lambda_\nu(y,\varrho)c^\nu_\mu(y',1/\varrho)u^\mu(y')\,dy'\,d\eta\,\tau_\lambda(y)
\end{equation*}
over all indices $\mu$, $\nu$, and $\lambda$. Because of \eqref{cOfNegOrder} and \eqref{ctildeOfSmallOrder} the entries $C^\lambda_\mu(y,y',\varrho)$ of $c(y,\varrho)c(y',\varrho)^{-1}$ are symbols of various orders $\leq 1 -2\delta_0$. Obviously $C^\lambda_\mu(y,y,\varrho)=\delta^\lambda_\mu$. A change of variables in the integral gives
\begin{equation*}
\varrho^{-q}\iint e^{\im\varrho[(y-y')\cdot \eta-(g(y)- g(y'))]}\psi(y)\varphi(y')\omega(x[\varrho\eta]_y/\varrho) C^\lambda_\mu(y,y',\varrho)u^\mu(y')\,dy'\,d\eta\,\tau_\lambda(y).
\end{equation*}
For $y\in U$ and $x>0$ the critical point of the phase is nondegenerate and occurs at $y'=y$, $\eta=\nabla g(y')$. By stationary phase the integral is equal to
\begin{equation*}
(2\pi)^q \psi(y)\varphi(y)\omega(x[\varrho\nabla g(y)]_y/\varrho)u^\lambda(y)\tau_\lambda(y)
\end{equation*}
modulo lower order terms. This gives
\begin{equation*}
\lim_{\varrho\to \infty} \kappa^{-1}_\varrho e^{-\im \varrho \wp_\wedge^*g} {}_\psi\Ext_\varphi e^{\im \varrho \wp_\wedge^*g} \kappa_\varrho u=\psi(y)\varphi(y)\omega(x[dg]_y) u
\end{equation*}
since $[\varrho\nabla g(y)]_y/\varrho = |dg(y)|$ for large $\varrho$ because $dg(y)\ne 0$.
\end{proof}

\begin{remark}\label{PerturbationOfGreen}
The following observation will be important in the proof of Theorem~\ref{FredholmTheorem}. Let $0 < \delta < 1$, and let $K \in \Psi^{0}_{1,\delta}(\Y;(\trb,0),(\trb,-\gen))$ have twisted homogeneous principal symbol $\sym(K)(\pmb\eta)$ on $T^*\Y\minus 0$ of order $0$, i.e.,
\begin{equation*}
\sym(K)(\varrho\pmb\eta) = \kappa_{\varrho}\sym(K)(\pmb\eta)
\end{equation*}
for $\varrho > 0$, see \cite{KrMe12b}. Define 
\begin{equation}\label{GreenPerturbPrinc}
G_{\wedge}(\pmb\eta) = A_{\wedge}(\pmb\eta)\circ\Ext_{\wedge}(\pmb\eta)\circ\sym(K)(\pmb\eta)
\end{equation}
Then $G_{\wedge}(\varrho\pmb\eta) = \varrho^1\kappa_{\varrho}G_{\wedge}(\pmb\eta)$ in view of the homogeneities of the normal families $A_{\wedge}(\pmb\eta)$ and $\Ext_{\wedge}(\pmb\eta)$ and of the principal symbol $\sym(K)(\pmb\eta)$ of $K$. By the proof of Lemma~\ref{MappingintoDmax}, $G_{\wedge}(\pmb\eta)$ satisfies the mapping properties for principal parts of Green symbols with respect to the weights $(\gamma_1,\gamma_2)=(\frac{1}{2},-\frac{1}{2})$, see \eqref{Greensymbol}. Therefore, there is a Green operator $G$ of order $1$ with respect to these weights whose normal family is given by \eqref{GreenPerturbPrinc}. By the proof of Lemma~\ref{MappingintoDmax} again and the material reviewed in Appendices~\ref{app-PseudosOpValued} and \ref{GreenOperators}, we can deduce that
\begin{equation}\label{GreenPerturb}
A\circ\Ext\circ K - G : H^1(\Y;\trb) \to x^{-1/2+\varepsilon}H^{\infty}_e(\M;F),
\end{equation}
which shows, in particular, that $A\circ\Ext\circ K - G : H^1(\Y;\trb) \to x^{-1/2}L^2_b(\M;F)$ is compact. 
\end{remark}

\section{The space $H^1_{\trb}$}\label{sec-H1A}

We continue with the assumptions and notational conventions of the previous sections. In particular, the normal family $A_\wedge(\pmb\eta)$ is injective on its minimal domain, \eqref{NormalInjectivity} with $\gamma=1/2$.

Define
\begin{equation*}
C^{\infty}_{\trb} (\M;E) = \dot{C}^{\infty}(\M;E) + \preP(C^{\infty}(\Y;\trb)).
\end{equation*}
We have $C^{\infty}_{\trb}(\M;E) \subset x^{-1/2}L^2_b(\M;E)$, and the operator $A$ induces a map
\begin{equation*}
A : C^{\infty}_{\trb}(\M;E) \to x^{-1/2}L^2_b(\M;F).
\end{equation*}
Consequently, $C^{\infty}_{\trb}(\M;E) \subset \Dom_{\max}(A)$, and the trace map $\gamma_A$ defines an operator
\begin{equation*}
\gamma_A : C^{\infty}_{\trb}(\M;E) \to C^{\infty}(\Y;\trb).
\end{equation*}
Recall that
\begin{equation}\label{PsudoDminSubsetKerTrace}
\dot{C}^{\infty}(\M;E) \subset \ker\gamma_A
\end{equation}
because of Proposition~\ref{DminSubsetKerTrace}.

\begin{definition}\label{H1ADef}
Let $H^1_{\trb}(\M;E)$ be the completion of $C^{\infty}_{\trb}(\M;E)$ with respect to the norm $\|\cdot\|_{H^1_{\trb}}$ given by
\begin{equation*}
\|u\|_{H^1_{\trb}}^2 = \|u\|_A^2 + \|\gamma_A u \|^2_{H^{1-\gen}(\Y;\trb)}.
\end{equation*}
\end{definition}

\begin{proposition}\label{H1ARepresentation}
Let $\Ext:C^{-\infty}(\Y;\trb)\to C^\infty(\open\M;E)$ be an extension operator associated to $A$ and $\iota:\Dom_{\min}(A) \hookrightarrow \Dom_{\max}(A)$ the inclusion map. The map
\begin{equation*}
\begin{bmatrix} \iota & \Ext \end{bmatrix} :
\begin{matrix} \Dom_{\min}(A) \\ \oplus \\ H^{1-\gen}(\Y;\trb) \end{matrix} \to \Dom_{\max}(A),
\end{equation*}
induces a topological isomorphism
\begin{equation*}
\begin{bmatrix} \iota & \Ext \end{bmatrix} :
\begin{matrix} \Dom_{\min}(A) \\ \oplus \\ H^{1-\gen}(\Y;\trb) \end{matrix} \to H^1_{\trb}(\M;E).
\end{equation*}
The inverse is
\begin{equation*}
\begin{bmatrix} \iota & \Ext \end{bmatrix}^{-1}u = \begin{bmatrix} u - (\Ext\circ\gamma_A)u \\ \gamma_A u \end{bmatrix}
\text{ for } u \in  H^1_{\trb}(\M;E).
\end{equation*}
\end{proposition}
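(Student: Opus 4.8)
The plan is to exhibit the inverse explicitly, namely $S\colon u\mapsto\bigl(u-\Ext\gamma_A u,\ \gamma_A u\bigr)$, to show that $T:=\begin{bmatrix}\iota&\Ext\end{bmatrix}$ and $S$ are bounded between the indicated Banach spaces, and then to check $TS=\Id$ and $ST=\Id$ on dense subspaces and pass to the limit. Throughout one uses that $C_c^\infty(\open\M;E)\subset\dot C^\infty(\M;E)\subset C^\infty_{\trb}(\M;E)$ with $C_c^\infty(\open\M;E)$ dense in $\Dom_{\min}(A)$, that $\Ext(C^\infty(\Y;\trb))\subset C^\infty_{\trb}(\M;E)$ (immediate from $\Ext u-\preP u\in\dot C^\infty(\M;E)$, Proposition~\ref{PoissonOperatorProperties}(1)), and that $C^\infty(\Y;\trb)$ is dense in $H^{1-\gen}(\Y;\trb)$; together these give density of the smooth summands inside the two spaces in question.

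For boundedness of $T$: on $C_c^\infty(\open\M;E)$ the trace vanishes by \eqref{PsudoDminSubsetKerTrace}, so there $\|\cdot\|_{H^1_{\trb}}=\|\cdot\|_A$, and $\iota$ extends to an isometric embedding $\Dom_{\min}(A)\to H^1_{\trb}(\M;E)$ with closed range. For $u\in C^\infty(\Y;\trb)$ we have $\gamma_A\Ext u=u$ by \eqref{ExtisGood}, hence $\|\Ext u\|_{H^1_{\trb}}^2=\|\Ext u\|_A^2+\|u\|_{H^{1-\gen}(\Y;\trb)}^2$, and the bound $\|\Ext u\|_A\le C\|u\|_{H^{1-\gen}(\Y;\trb)}$ is part of Proposition~\ref{PoissonOperatorProperties}; so $\Ext$ extends to a bounded map $H^{1-\gen}(\Y;\trb)\to H^1_{\trb}(\M;E)$. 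This is the one spot where the specific choice of the variable-order space $H^{1-\gen}$, rather than a coarser one, is essential. Consequently $T$ is bounded.

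For boundedness of $S$: by the very definition of $\|\cdot\|_{H^1_{\trb}}$ one has $\|\gamma_A u\|_{H^{1-\gen}(\Y;\trb)}\le\|u\|_{H^1_{\trb}}$ on $C^\infty_{\trb}(\M;E)$, so $\gamma_A$ extends to a norm-$\le 1$ operator $H^1_{\trb}(\M;E)\to H^{1-\gen}(\Y;\trb)$, whence $\Id-\Ext\circ\gamma_A$ is bounded on $H^1_{\trb}(\M;E)$. The point requiring attention is that its range lies in $\iota\bigl(\Dom_{\min}(A)\bigr)$: writing $u=\dot u+\preP v\in C^\infty_{\trb}(\M;E)$ with $\dot u\in\dot C^\infty(\M;E)$ and $v\in C^\infty(\Y;\trb)$, we get $\gamma_A u=v$ from \eqref{PsudoDminSubsetKerTrace} and \eqref{GammaInvertsPreP}, hence $u-\Ext\gamma_A u=\dot u-(\Ext v-\preP v)\in\dot C^\infty(\M;E)\subset\Dom_{\min}(A)$; since $\iota(\Dom_{\min}(A))$ is closed in $H^1_{\trb}(\M;E)$ and $C^\infty_{\trb}(\M;E)$ is dense, $\Id-\Ext\circ\gamma_A$ maps all of $H^1_{\trb}(\M;E)$ into $\iota(\Dom_{\min}(A))$, and composing with the (bounded) inverse of $\iota$ on that range gives the first component of $S$. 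So $S$ is bounded.

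Finally, the composition identities: on $C^\infty_{\trb}(\M;E)$ one has $TSu=(u-\Ext\gamma_A u)+\Ext\gamma_A u=u$, and on $C_c^\infty(\open\M;E)\oplus C^\infty(\Y;\trb)$ one has $\gamma_A(\iota w+\Ext v)=v$ (since $w\in\ker\gamma_A$ and $\gamma_A\Ext v=v$ by \eqref{ExtisGood}) and $(\iota w+\Ext v)-\Ext v=\iota w$, so $ST(w,v)=(w,v)$; by density and continuity $TS=\Id$ and $ST=\Id$, so $T$ is a topological isomorphism with inverse $S$, which is exactly the stated formula. The genuinely delicate ingredients are thus the boundedness of $\Ext$ into $H^1_{\trb}$ (Proposition~\ref{PoissonOperatorProperties}) and the identification of the range of $\Id-\Ext\circ\gamma_A$; everything else is standard density-and-continuity bookkeeping. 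As a byproduct the inclusion $C^\infty_{\trb}(\M;E)\hookrightarrow\Dom_{\max}(A)$, being $\|\cdot\|_{H^1_{\trb}}$-to-$\|\cdot\|_A$ continuous, extends to an injection $H^1_{\trb}(\M;E)\hookrightarrow\Dom_{\max}(A)$ — injectivity following from Theorem~\ref{RangeofGamma} together with the continuous injection $H^{1-\gen}(\Y;\trb)\hookrightarrow H^{-\gen}(\Y;\trb)$ — so $H^1_{\trb}(\M;E)$ is a bona fide space of sections of $E$.
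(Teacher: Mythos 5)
Your proof is correct and takes essentially the same route as the paper: the same explicit inverse $u\mapsto\bigl(u-\Ext\gamma_A u,\ \gamma_A u\bigr)$, the same key inputs (Proposition~\ref{PoissonOperatorProperties}, the identity $\gamma_A\circ\Ext=\Id$, and the vanishing of $\gamma_A$ on $\dot C^\infty(\M;E)$), and a density argument. The only organizational difference is that the paper first establishes the algebraic isomorphism between the smooth dense subspaces $\dot C^\infty(\M;E)\oplus C^\infty(\Y;\trb)$ and $C^\infty_\trb(\M;E)$ and then proves two-sided continuity there, whereas you extend the map and its candidate inverse to the completed spaces first and verify the composition identities on dense subspaces; this is a cosmetic rearrangement, not a different argument.
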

\begin{proof}
We first show that the map $\begin{bmatrix} \iota & \Ext \end{bmatrix}$ induces an algebraic isomorphism
\begin{equation*}
\begin{bmatrix} \iota & \Ext \end{bmatrix} :
\begin{matrix} \dot{C}^{\infty}(\M;E) \\ \oplus \\ C^{\infty}(\Y;\trb) \end{matrix} \to
C^{\infty}_{\trb}(\M;E).
\end{equation*}
This operator is well-defined because for $u \in \dot{C}^{\infty}(\M;E)$ and $v \in C^{\infty}(\Y;\trb)$ we have
\begin{equation*}
u + \Ext v =(u+\Ext v-\preP v)+\preP v
\end{equation*}
with $u+\Ext v-\preP v\in  \dot{C}^{\infty}(\M;E)$ since by Proposition~\ref{PoissonOperatorProperties}, $\Ext v - \preP v \in \dot{C}^{\infty}(\M;E)$. The operator is also injective because if $u + \Ext v = 0$ with $u$ and $v$ as before, then $u = -\Ext v$, and therefore $\gamma_A u = - (\gamma_A\circ\Ext)v = -v$ by the same proposition, so $v=0$ since $\gamma_A u = 0$ by \eqref{PsudoDminSubsetKerTrace}. Consequently $u = -\Ext v = 0$. Finally, the operator is surjective because if $u + \preP v \in C^{\infty}_{\trb}(\M;E)$ with $u$ and $v$ as above, then
\begin{equation*}
u + \preP v = \begin{bmatrix} \iota & \Ext \end{bmatrix}\begin{bmatrix} u + \bigl(\preP v - \Ext v\bigr) \\ v \end{bmatrix}.
\end{equation*}
This also shows that the inverse $\begin{bmatrix} \iota & \Ext \end{bmatrix}^{-1}$ has the stated form.

By density of the spaces of smooth functions it remains to show that both
\begin{equation*}
\begin{bmatrix} \iota & \Ext \end{bmatrix} :
\begin{matrix} \big( \dot{C}^{\infty}(\M;E),\|\cdot\|_A\big)
\\
\oplus
\\ \big(C^{\infty}(\Y;\trb),\|\cdot\|_{H^{1-\gen}}\big) \end{matrix}
\to
\big(C^{\infty}_{\trb}(\M;E),\|\cdot\|_{H^1_{\trb}}\big).
\end{equation*}
and its inverse
\begin{equation*}
\begin{bmatrix} \Id - \Ext\circ\gamma_A \\ \gamma_A \end{bmatrix} :
\big(C^{\infty}_{\trb}(\M;E),\|\cdot\|_{H^1_{\trb}}\big) \to
\begin{matrix} \big( \dot{C}^{\infty}(\M;E),\|\cdot\|_A\big)
\\
\oplus
\\ \big(C^{\infty}(\Y;\trb),\|\cdot\|_{H^{1-\gen}}\big) \end{matrix}
\end{equation*}
are continuous.

Let $u \in  \dot{C}^{\infty}(\M;E)$ and $v \in C^{\infty}(\Y;\trb)$. By Proposition~\ref{PoissonOperatorProperties}, the operator $\Ext : H^{1-g}(\Y;\trb) \to \Dom_{\max}(A)$ is continuous. Consequently,
\begin{align*}
\|u + \Ext v\|_{A} &\leq \|u\|_A + \|\Ext v\|_A \leq \|u\|_A + \|\Ext\|\|v\|_{H^{1-g}} \\
&\leq (1 + \|\Ext\|)(\|u\|_A + \|v\|_{H^{1-g}}),
\end{align*}
while
\begin{equation*}
\|\gamma_A(u + \Ext v)\|_{H^{1-g}} = \|v\|_{H^{1-g}} \leq \|u\|_A + \|v\|_{H^{1-g}}.
\end{equation*}
This shows the continuity of $\begin{bmatrix} \iota & \Ext \end{bmatrix}$ with respect to the indicated norms.

Now let $u \in \big(C^{\infty}_{\trb}(\M;E),\|\cdot\|_{H^1_{\trb}}\big)$. Then the definition of $\|\cdot\|_{H^1_{\trb}}$ gives
\begin{equation*}
\|\gamma_A u\|_{H^{1-g}} \leq \|u\|_{H^1_{\trb}},
\end{equation*}
while
\begin{equation*}
\|u - \Ext\circ\gamma_A u\|_A \leq \|u\|_A + \|\Ext\circ\gamma_A u\|_A \leq \|u\|_A + \|\Ext\|\|\gamma_A u\|_{H^{1-g}} \leq (1 + \|\Ext\|)\|u\|_{H^1_{\trb}}.
\end{equation*}
This shows the continuity of the inverse $\begin{bmatrix} \iota & \Ext \end{bmatrix}^{-1}$ and finishes the proof of the proposition.
\end{proof}

\begin{corollary}\label{compactembedding}
The embedding $H^1_{\trb}(\M;E) \embed x^{-1/2}L^2_b(\M;E)$ is compact.
\end{corollary}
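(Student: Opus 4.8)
The plan is to use the isomorphism from Proposition~\ref{H1ARepresentation} to split the embedding into two pieces, each of which is already known to be compact by results available earlier in the paper. Concretely, under the topological isomorphism
\begin{equation*}
\begin{bmatrix}\iota & \Ext\end{bmatrix}:\Dom_{\min}(A)\oplus H^{1-\gen}(\Y;\trb)\xrightarrow{\ \cong\ }H^1_{\trb}(\M;E),
\end{equation*}
the inclusion $H^1_{\trb}(\M;E)\embed x^{-1/2}L^2_b(\M;E)$ becomes the map $(w,v)\mapsto w+\Ext v$ with domain $\Dom_{\min}(A)\oplus H^{1-\gen}(\Y;\trb)$. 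It therefore suffices to show that each of the two component maps
\begin{equation*}
\iota:\Dom_{\min}(A)\to x^{-1/2}L^2_b(\M;E),\qquad \Ext:H^{1-\gen}(\Y;\trb)\to x^{-1/2}L^2_b(\M;E)
\end{equation*}
is compact; a finite sum of compact operators is compact.

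For the first map, I would invoke Theorem~\ref{DminTheorem}, which under the standing hypotheses \eqref{NormalInjectivity} and \eqref{NobSpecOnCritLinesOrderm} (with $\gamma=1/2$, $m=1$) identifies $\Dom_{\min}(A)=x^{1/2}H^1_e(\M;E)$. So the first component is the inclusion $x^{1/2}H^1_e(\M;E)\embed x^{-1/2}L^2_b(\M;E)=x^{1/2}H^0_e(\M;E)$, i.e.\ (after factoring out the weight $x^{1/2}$) the inclusion $H^1_e(\M;E)\embed H^0_e(\M;E)=L^2_b(\M;E)$, which is compact --- this is the standard Rellich-type compactness for Mazzeo's edge Sobolev spaces, referenced in the paper via \cite{Mazz91}.

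For the second map, by Proposition~\ref{PoissonOperatorProperties}(4) we have $\Ext:L^2(\Y;\trb)\to x^{-1/2+\delta_0}H^\infty_e(\M;E)$ for some $\delta_0>0$; composing with the continuous inclusion $H^{1-\gen}(\Y;\trb)\embed L^2(\Y;\trb)$ (which holds since $\Re\gen_y>\delta_0>0$ pointwise, so $1-\gen$ has real part $<1$, in particular the variable-order space embeds into $L^2$) we get that $\Ext:H^{1-\gen}(\Y;\trb)\to x^{-1/2+\delta_0}H^\infty_e(\M;E)$ is bounded. Thus $\Ext:H^{1-\gen}(\Y;\trb)\to x^{-1/2}L^2_b(\M;E)$ factors through the inclusion $x^{-1/2+\delta_0}H^1_e(\M;E)\embed x^{-1/2}L^2_b(\M;E)$, which is compact because it is the composition of the inclusion $x^{-1/2+\delta_0}H^1_e\embed x^{-1/2}H^1_e$ (compact by the gain of weight $x^{\delta_0}$, $\delta_0>0$) with the bounded inclusion $x^{-1/2}H^1_e\embed x^{-1/2}L^2_b$; alternatively one uses that $x^{-1/2+\delta_0}H^1_e\embed x^{-1/2}H^0_e$ is compact directly from the simultaneous gain in both differentiability and weight for edge Sobolev spaces. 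Hence $\Ext$ is compact as a map into $x^{-1/2}L^2_b(\M;E)$.

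The main obstacle, such as it is, is making sure the compactness of the relevant edge-Sobolev inclusions is genuinely available: one needs both the Rellich compactness $x^{1/2}H^1_e\embed x^{1/2}L^2_b$ and the fact that a strictly positive gain of weight turns a bounded inclusion of edge spaces into a compact one. Both are standard consequences of the construction of edge Sobolev spaces in \cite{Mazz91} and are used tacitly in this circle of ideas, so modulo citing them the argument is immediate: write the identity on $H^1_{\trb}(\M;E)$ as $\begin{bmatrix}\iota&\Ext\end{bmatrix}\circ\begin{bmatrix}\iota&\Ext\end{bmatrix}^{-1}$ and observe that the map into $x^{-1/2}L^2_b(\M;E)$ equals $\iota\circ\pi_1+\Ext\circ\pi_2$ with $\pi_1,\pi_2$ the (continuous) coordinate projections coming from Proposition~\ref{H1ARepresentation}, each summand compact by the above.
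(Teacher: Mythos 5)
Your overall route is exactly the paper's: transport the inclusion through the isomorphism $\begin{bmatrix}\iota & \Ext\end{bmatrix}$ of Proposition~\ref{H1ARepresentation}, identify $\Dom_{\min}(A)=x^{1/2}H^1_e(\M;E)$ (Theorem~\ref{DminTheorem}), and handle the $\Ext$ summand via Proposition~\ref{PoissonOperatorProperties}(4) together with $H^{1-\gen}(\Y;\trb)\embed L^2(\Y;\trb)$. The decomposition and the two summands are the same as in the paper's proof, so the skeleton is fine.

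The gap is in how you justify the compactness of the edge-Sobolev embeddings. In Mazzeo's calculus, $x^{a}H^{s}_e(\M;E)\embed x^{b}H^{t}_e(\M;E)$ is compact only when \emph{both} $s>t$ and $a>b$ hold strictly (\cite[Proposition~3.29]{Mazz91}); neither of the two principles you invoke is true on its own. The unweighted ``Rellich'' inclusion $H^1_e\embed L^2_b$ is \emph{not} compact (edge derivatives degenerate at the boundary: bumps concentrating at $x\to 0$ at fixed $L^2_b$-norm are bounded in $H^1_e$ and have no convergent subsequence), and a pure gain of weight with no gain of smoothness, such as $x^{\delta_0}L^2_b\embed L^2_b$, is not compact either (oscillating families supported in the interior, where the weight is harmless). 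Your bookkeeping slip $x^{-1/2}L^2_b(\M;E)=x^{1/2}H^0_e(\M;E)$ is what hides this: the correct identification is $x^{-1/2}L^2_b=x^{-1/2}H^0_e$, so the first summand is $x^{1/2}H^1_e\embed x^{-1/2}H^0_e$, which gains a full power of $x$ \emph{and} one derivative and is therefore compact by the two-parameter criterion---this is precisely what the paper cites. Likewise, for the second summand it is the simultaneous gain (weight $\delta_0$, arbitrarily many derivatives) that makes $x^{-1/2+\delta_0}H^{\infty}_e\embed x^{-1/2}L^2_b$ compact; your parenthetical ``alternative'' is the correct argument, the factorization through a pure weight gain is not. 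A further small slip: the embedding $H^{1-\gen}(\Y;\trb)\embed L^2(\Y;\trb)$ needs the variable order $1-\gen$ to have nonnegative real part, which follows from $\Re\gen<1-\delta_0<1$ (see \eqref{delta0} and \eqref{SpecGen}), not from the lower bound $\Re\gen>\delta_0$ you quote (real part $<1$ alone would not suffice). With these corrections your argument coincides with the paper's proof.
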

\begin{proof}
By Proposition~\ref{H1ARepresentation} the map
\begin{equation*}
\begin{bmatrix} \iota & \Ext \end{bmatrix}^{-1} : H^1_{\trb}(\M;E) \to
\begin{matrix} \Dom_{\min}(A) \\ \oplus \\ H^{1-g}(\Y;\trb) \end{matrix}
\end{equation*}
is continuous. We have $\Dom_{\min}(A) = x^{1/2}H^1_e(\M;E)$ by \cite{GiKrMe10}, and the embedding $x^{1/2}H^1_e(\M;E) \hookrightarrow x^{-1/2}L^2_b(\M;E)$ is compact, see \cite[Propositon 3.29]{Mazz91}. By Proposition~\ref{PoissonOperatorProperties}, the operator $\Ext : L^2(\Y;\trb) \to x^{-1/2+\delta_0}H^{\infty}_e(\M;E)$ is continuous (see \eqref{delta0} for the definition of $\delta_0$), and we have $H^{1-g}(\Y;\trb) \hookrightarrow L^2(\Y;\trb)$. Because $x^{-1/2+\delta_0}H^{m}_e(\M;E) \hookrightarrow x^{-1/2}L^2_b(\M;E)$ is compact for $m > 0$ again by \cite{Mazz91}, we obtain that $\Ext : H^{1-g}(\Y;\trb) \to x^{-1/2}L^2_b(\M;E)$ is compact. Consequently,
\begin{equation*}
\begin{bmatrix} \iota & \Ext \end{bmatrix} :
\begin{matrix} \Dom_{\min}(A) \\ \oplus \\ H^{1-g}(\Y;\trb) \end{matrix} \to x^{-1/2}L^2_b(\M;E)
\end{equation*}
is compact which proves the compactness of the embedding
\begin{equation*}
\begin{bmatrix} \iota & \Ext \end{bmatrix}\begin{bmatrix} \iota & \Ext \end{bmatrix}^{-1} :
H^1_{\trb}(\M;E) \embed x^{-1/2}L^2_b(\M;E)
\end{equation*}
as stated.
\end{proof}

\begin{remark}
For a regular elliptic operator $A \in \Diff^1(\M;E,F)$ on a compact manifold with boundary acting as an unbounded operator $L^2(\M;E) \to L^2(\M;F)$, we obtain that the space $H^1_{\trb}(\M;E)$ coincides with the standard Sobolev space $H^1(\M;E)$ on $\M$.
\end{remark}

\section{Boundary value problems}\label{sec-BVP}

We are now ready to state our main theorem. We begin by reminding the reader that $A \in x^{-1}\Diff^1_e(\M;E,F)$ is $w$-elliptic, i.e.,
\begin{equation}\label{Awelliptic}
\wsym(A) \text{ is invertible everywhere on } \wT^*\M \minus 0,
\end{equation}
that
\begin{equation}\label{speceAssumption}
\spec_{e}(A) \cap \bigl(\Y\times\{\sigma \in \C;\; \Im(\sigma) = \pm 1/2\}\bigr) = \emptyset.
\end{equation}
and that
\begin{equation}\label{normalAssumption}
\begin{gathered}
A_{\wedge}(\pmb\eta) :\Dom_{\min}(A_{\wedge}(\pmb\eta)) \subset x^{-1/2}L^2_b(\M;E) \to x^{-1/2}L^2_b(\M;F) \text{ is injective}, \\
A_{\wedge}(\pmb\eta) :\Dom_{\max}(A_{\wedge}(\pmb\eta)) \subset x^{-1/2}L^2_b(\M;E) \to x^{-1/2}L^2_b(\M;F) \text{ is surjective}
\end{gathered}
\end{equation}
for all $\pmb\eta \in T^*\Y\minus 0$. Recall, see \eqref{DomainsDependOnYOnly}, that both $\Dom_{\min}(A_{\wedge}(\pmb\eta))$ and $\Dom_{\max}(A_{\wedge}(\pmb\eta))$ depend on $\pmb\eta \in T^*\Y$ only through $y = \pi_{\Y}\pmb\eta$ in the first order case, and the minimal domain is explicitly given by \eqref{DomMinAWedge} (with $m=1$ and $\gamma=1/2$ there). The trace bundle $\trb\to\Y$ was reviewed in Section~\ref{sec-TraceBundles}; it comes equipped with a smooth endomorphism $\gen=x\partial_x+1/2:\trb\to\trb$, see Proposition~\ref{xdxAction}.

Let $\KK\to T^*\Y\minus 0$ be the vector bundle whose fiber at $\pmb\eta$ is the kernel of
\begin{equation*}
A_{\wedge}(\pmb\eta) : \Dom_{\max}(A_{\wedge}(\pmb\eta))\subset x^{-1/2}L^2_b(\Z^\wedge_y;E_{\Z^\wedge_y}) \to x^{-1/2}L^2_b(\Z^\wedge_y;F_{\Z^\wedge_y}).
\end{equation*}
The image $\gamma_{A_\wedge}(\KK)=\kerb\subset \pi^*_\Y\trb$ is a subbundle, see Theorem~\ref{KernelBundle}.

Let $\target$ be a vector bundle over $\Y$ together with a smooth endomorphism $\agen:\target\to\target$ and 
\begin{equation*}
B : C^{\infty}(\Y;\trb) \to C^{\infty}(\Y;\target)
\end{equation*}
a pseudodifferential operator of order $\mu \in \R$ in the class $\Psi^{\mu}_{1,\delta}(\Y;(\trb,-\gen),(\target,{\agen}))$ for a fixed $0 < \delta < 1$. These classes of pseudodifferential operators were defined in \cite{KrMe12b}. Assume further that $B$ has twisted homogeneous principal symbol $\sym(B)$ on $T^*\Y\minus 0$, which means that
\begin{equation*}
\sym(B)(\varrho\pmb\eta) = \varrho^{\mu}\varrho^{-{\agen_y}}\sym(B)(\pmb\eta)\varrho^{-\gen_y}
\end{equation*}
for all $\varrho > 0$, where $y = \pi_{\Y}\pmb\eta$. Here $\varrho^{-\gen_y}$ is of course the action $\kappa_{\varrho}^{-1}$ on $\trb_y$. Finally, let
\begin{equation*}
\Pi = \Pi^2 \in \Psi^0_{1,\delta}(\Y;(\target,{\agen}),(\target,{\agen}))
\end{equation*}
be a projection and assume that $\Pi$ has twisted homogeneous principal symbol $\sym(\Pi)$. Because $\sym(\Pi) = \sym(\Pi)^2$, the range of $\sym(\Pi)$ is a subbundle $\target_{\Pi}$ of $\pi_{\Y}^*\target$ over $T^*\Y\minus 0$.

Let $\gamma_A : H^1_{\trb}(\M;E) \to H^{1-\gen}(\Y;\trb)$ be the trace map associated with $A$. We consider the boundary value problem
\begin{equation}\label{ABVPEq}
\left\{\begin{aligned}
&Au = f \in x^{-1/2}L^2_b(\M;F)\\
&\Pi B(\gamma_A u) = g \in \Pi H^{1-\mu+{\agen}}(\Y;\target)
\end{aligned}
\right.
\end{equation}
and seek solutions $u \in H^1_{\trb}(\M;E)$. Here $\Pi H^{1-\mu+{\agen}}(\Y;\target)$ is the range of the projection $\Pi \in \L(H^{1-\mu+{\agen}}(\Y;\target))$, a closed subspace of $H^{1-\mu+{\agen}}(\Y;\target)$.

The ellipticity assumption about this boundary value problem is that the principal symbol of the boundary condition induces a vector bundle isomorphism
\begin{equation}\label{BoundaryConditionKernelIso}
\sym(\Pi B) : \kerb \overset{\cong}{\longrightarrow} \target_{\Pi}
\quad \text{on } T^*\Y\minus 0.
\end{equation}
The main theorem is the following.

\begin{theorem}\label{FredholmTheorem}
Under the assumptions \eqref{Awelliptic}, \eqref{speceAssumption}, \eqref{normalAssumption}, \eqref{BoundaryConditionKernelIso}, the boundary value problem \eqref{ABVPEq} is well-posed, i.e., the operator
\begin{equation}\label{ABVP}
\begin{bmatrix} A \\ \Pi B \gamma_A \end{bmatrix} : H^1_{\trb}(\M;E) \to
\begin{array}{c} x^{-1/2}L^2_b(\M;F) \\ \oplus \\ \Pi H^{1-\mu+{\agen}}(\Y;\target) \end{array}
\end{equation}
is a Fredholm operator.
\end{theorem}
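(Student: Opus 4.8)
The plan is to construct a parametrix in Schulze's edge boundary value calculus and to deduce the Fredholm property from the compactness of the error terms, exactly as in the classical regular elliptic case.

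First I would pass to a more transparent form of the operator. By Proposition~\ref{H1ARepresentation} the map $\begin{bmatrix}\iota & \Ext\end{bmatrix}$ is a topological isomorphism from $\Dom_{\min}(A)\oplus H^{1-\gen}(\Y;\trb)$ onto $H^1_{\trb}(\M;E)$, so \eqref{ABVP} is Fredholm if and only if its composition with this isomorphism is. Since $\gamma_A\iota=0$ by Proposition~\ref{DminSubsetKerTrace}, $\gamma_A\Ext=\Id$ by Proposition~\ref{PoissonOperatorProperties}, and $\Dom_{\min}(A)=x^{1/2}H^1_e(\M;E)$ by Theorem~\ref{DminTheorem}, that composition is the block triangular operator
\[
\mathbb A=\begin{bmatrix} A & A\circ\Ext \\ 0 & \Pi B\end{bmatrix}:\begin{matrix} x^{1/2}H^1_e(\M;E)\\ \oplus\\ H^{1-\gen}(\Y;\trb)\end{matrix}\longrightarrow \begin{matrix} x^{-1/2}L^2_b(\M;F)\\ \oplus\\ \Pi H^{1-\mu+\agen}(\Y;\target)\end{matrix}.
\]
It suffices to show that $\mathbb A$ is Fredholm.

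Second I would exhibit $\mathbb A$, up to a compact perturbation, as an elliptic operator of Schulze's edge boundary value calculus. Here $A$ is an elliptic edge operator by the $w$-ellipticity \eqref{Awelliptic}; $\Pi B$ is the composition of the variable-order operator $B\in\Psi^\mu_{1,\delta}(\Y;(\trb,-\gen),(\target,\agen))$ with the order-zero projection $\Pi$, with twisted homogeneous principal symbol $\sym(\Pi)\circ\sym(B)$; and, by Remark~\ref{PerturbationOfGreen} applied with a suitable elliptic $K\in\Psi^0_{1,\delta}(\Y;(\trb,0),(\trb,-\gen))$ together with the symbolic analysis of $\Ext$ carried out in Lemmas~\ref{Poissonsymbol}--\ref{MappingintoDmax}, the entry $A\circ\Ext$ coincides modulo an operator $H^1(\Y;\trb)\to x^{-1/2+\varepsilon}H^\infty_e(\M;F)$ — hence modulo a compact operator into $x^{-1/2}L^2_b(\M;F)$ — with a Poisson/Green operator in Schulze's calculus whose normal family is $\pmb\eta\mapsto A_\wedge(\pmb\eta)\circ\Ext_\wedge(\pmb\eta)$. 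Consequently $\mathbb A$ equals, modulo a compact operator, an operator $\mathbb B$ of Schulze's type whose interior symbol is $\wsym(A)$ (invertible on $\wT^*\M\minus0$ by \eqref{Awelliptic}) and whose wedge symbol is, for $\pmb\eta\in T^*\Y\minus0$ and $y=\pi_\Y\pmb\eta$,
\[
\mathbb B_\wedge(\pmb\eta)=\begin{bmatrix} A_\wedge(\pmb\eta) & A_\wedge(\pmb\eta)\circ\Ext_\wedge(\pmb\eta)\\ 0 & \sym(\Pi B)(\pmb\eta)\end{bmatrix}:\begin{matrix}\Dom_{\min}(A_\wedge(\pmb\eta))\\ \oplus\\ \trb_y\end{matrix}\longrightarrow \begin{matrix}x^{-1/2}L^2_b(\Z^\wedge_y;F_{\Z^\wedge_y})\\ \oplus\\ (\target_\Pi)_{\pmb\eta}\end{matrix},
\]
which carries the $\kappa_\varrho$-twisted homogeneity forced by the homogeneities of $A_\wedge$, $\Ext_\wedge$ and $\sym(\Pi B)$.

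Third I would check that $\mathbb B_\wedge(\pmb\eta)$ is an isomorphism for every $\pmb\eta\ne0$. Since $\Ext_\wedge(\pmb\eta)\tau=\omega(x|\pmb\eta|)\,\mathfrak i\tau$ and, as recorded in Section~\ref{sec-KernelBundle}, $\Dom_{\max}(A_\wedge(\pmb\eta))=\omega\mathfrak i\trb_y\oplus\Dom_{\min}(A_\wedge(\pmb\eta))$ with $\gamma_{A_\wedge(\pmb\eta)}$ vanishing on the second summand and equal to the identity on the first, the map $(\zeta,\tau)\mapsto\zeta+\Ext_\wedge(\pmb\eta)\tau$ identifies the source of $\mathbb B_\wedge(\pmb\eta)$ with $\Dom_{\max}(A_\wedge(\pmb\eta))$, and under this identification $\mathbb B_\wedge(\pmb\eta)$ becomes $u\mapsto\bigl(A_\wedge(\pmb\eta)u,\ \sym(\Pi B)(\pmb\eta)\gamma_{A_\wedge(\pmb\eta)}u\bigr)$. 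For injectivity: if both components vanish then $u\in\KK_{\pmb\eta}$, hence $\gamma_{A_\wedge(\pmb\eta)}u\in\kerb_{\pmb\eta}$, so $\gamma_{A_\wedge(\pmb\eta)}u=0$ by \eqref{BoundaryConditionKernelIso}, and then $u=0$ by the injectivity in \eqref{normalAssumption}. For surjectivity: given $(f,g)$, choose $u_0\in\Dom_{\max}(A_\wedge(\pmb\eta))$ with $A_\wedge(\pmb\eta)u_0=f$ by the surjectivity in \eqref{normalAssumption}, and correct it by the unique $v\in\KK_{\pmb\eta}$ with $\gamma_{A_\wedge(\pmb\eta)}v=\bigl(\sym(\Pi B)|_{\kerb_{\pmb\eta}}\bigr)^{-1}\bigl(g-\sym(\Pi B)\gamma_{A_\wedge(\pmb\eta)}u_0\bigr)$, which exists because $\gamma_{A_\wedge}:\KK\to\kerb$ is an isomorphism by Theorem~\ref{KernelBundle} and $\sym(\Pi B):\kerb\to\target_\Pi$ is an isomorphism by \eqref{BoundaryConditionKernelIso}. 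Together with the boundary-spectrum condition \eqref{speceAssumption}, which makes every $A_\wedge(\pmb\eta)$ Fredholm on the relevant weighted cone spaces, this is precisely ellipticity of $\mathbb B$ in Schulze's sense.

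Finally, the parametrix construction collected in Appendices~\ref{app-PseudosOpValued} and \ref{GreenOperators} produces an operator $\mathbb P$ in the calculus with $\mathbb P\mathbb B-\Id$ and $\mathbb B\mathbb P-\Id$ Green operators of negative order; by the compactness of the pertinent edge-Sobolev embeddings (Mazzeo, and Corollary~\ref{compactembedding}) these remainders are compact, so $\mathbb B$, hence $\mathbb A$, hence \eqref{ABVP}, is Fredholm. I expect the main obstacle to be the second step: verifying rigorously that $\mathbb A$ is, modulo compact operators and modulo the normalizations dictated by the variable-order bundles $(\trb,-\gen)$ and $(\target,\agen)$, literally an elliptic operator of Schulze's edge boundary value calculus, so that the appendix machinery applies verbatim; once this is in place, the symbolic invertibility of the third step is immediate from the structural results already established in Sections~\ref{sec-TraceBundles}--\ref{sec-KernelBundle}.
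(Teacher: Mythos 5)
Your first step (conjugating with $\begin{bmatrix}\iota & \Ext\end{bmatrix}$ to get the triangular operator with entries $A$, $A\Ext$, $\Pi B$) and your final symbolic computation (identifying the source of the wedge symbol with $\Dom_{\max}(A_\wedge(\pmb\eta))$ and checking bijectivity from \eqref{normalAssumption} and \eqref{BoundaryConditionKernelIso}) are both correct and coincide with what the paper does at the corresponding stages (see \eqref{ABVPBlock} and the equivalence of \eqref{SchulzeWedgeInv} with \eqref{APrWSym}). The gap is in your second step, which you yourself flag as ``the main obstacle'' but never resolve: the operator you call $\mathbb B$ is \emph{not} an elliptic element of Schulze's edge calculus as set up in Appendix~\ref{GreenOperators}, and the appendix machinery does not apply ``verbatim.'' In that calculus the edge trace data must be sections of an honest vector bundle $J_+$ over $\Y$, the Green/trace symbols take values in $\pi_\Y^*J_{+}$, and the relevant Sobolev spaces on $\Y$ are standard ones; your wedge symbol instead maps into $(\target_\Pi)_{\pmb\eta}$, the fiber of a subbundle of $\pi_\Y^*\target$ over $T^*\Y\minus 0$, and the boundary target of \eqref{ABVP} is the range $\Pi H^{1-\mu+\agen}(\Y;\target)$ of a projection in a variable-order space. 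The subbundle $\target_\Pi\cong\kerb$ is in general not (even stably) the pull-back of a bundle on $\Y$ --- this is exactly the Atiyah--Bott obstruction \eqref{AtBott}, nonzero already for Dirac-type operators --- so no choice of $J_\pm$ makes your $\mathbb B$ an operator of the calculus, and no parametrix can be produced by Theorem~\ref{EdgeMatrixFredholm} as stated.

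The paper's proof is devoted in large part to removing precisely this obstruction before the calculus is invoked: it first reduces to order $\mu=0$ and to a symmetric operator, then doubles the problem with $A^{\star}$ together with a generalized APS projection $\Pi_{A^{\star}}$ onto $\kerb^{\star}$ (Example~\ref{APSBoundaryValueProblem}), using Lemma~\ref{KplusKstar} ($[\kerb]+[\kerb^{\star}]=[\pi_\Y^*\trb]$ in $K(S^*\Y)$) to guarantee that the enlarged problem \eqref{ReducedBVP} has vanishing Atiyah--Bott obstruction; only then does it stabilize by $\C^k$, choose a twisted-homogeneous symbol isomorphism $c$ realized by an operator $C$ (using \cite[Theorem~8.2]{KrMe12b} to handle the Toeplitz-type range of $\Pi$), compose with the invertible order reduction $K$ to trade $H^{1-\gen}(\Y;\trb)$ for $H^1(\Y;\trb)$, and replace $A\Ext K$ and $C\Pi BK$ modulo compacts by a genuine Green operator $G_0$ (Remark~\ref{PerturbationOfGreen}) and a classical operator $\tilde B$, arriving at \eqref{ABVPBlock2}, which \emph{is} in the calculus. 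Your proposal cites Remark~\ref{PerturbationOfGreen} and the order reduction, but omits the doubling/stabilization entirely; without it (or an appeal to a Toeplitz-type extension of the edge calculus, which the paper deliberately avoids and its appendix does not provide), the claimed parametrix construction in your final step does not go through in the general case covered by the theorem.
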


\begin{example}\label{APSBoundaryValueProblem}
Consider any operator $A \in x^{-1}\Diff^1_e(\M;E,F)$ that satisfies \eqref{Awelliptic}, \eqref{speceAssumption}, and \eqref{normalAssumption}. Let $S^*\Y$ be the cosphere bundle with respect to some Riemannian metric on $\Y$. Pick a projection $q = q^2 \in \End(\pi_{\Y}^*\trb)$ onto the subbundle $\kerb \subset \pi_{\Y}^*\trb$ restricted to $S^*\Y$, and extend $q$ by twisted homogeneity of degree zero to $T^*\Y\minus 0$ with respect to the action $-\gen$ on $\trb$. Then $q$ is a projection onto $\kerb$ everywhere on $T^*\Y\minus 0$, and by \cite[Lemma~8.1]{KrMe12b} there exists a projection $\Pi = \Pi^2 \in \Psi^0_{1,\delta}(\Y;(\trb,-\gen),(\trb,-\gen))$ with $\sym(\Pi) = q$. By construction, \eqref{BoundaryConditionKernelIso} holds for $\Pi$ (and $B = \Id$), and therefore
\begin{equation}\label{generalizedAPS}
\begin{bmatrix} A \\ \Pi \gamma_A \end{bmatrix} : H^1_{\trb}(\M;E) \to
\begin{array}{c} x^{-1/2}L^2_b(\M;F) \\ \oplus \\ \Pi H^{1-\gen}(\Y;\trb) \end{array}
\end{equation}
is a Fredholm operator by Theorem~\ref{FredholmTheorem}.

The boundary value problem \eqref{generalizedAPS} is a generalization of the Atiyah-Patodi-Singer boundary value problem in the classical case, see \cite{APS}. The projection $\Pi$ could also be regarded as a generalization of the Calder{\'o}n-Seeley projection \cite[Chapter~VI]{Seeley1969} to our more general situation (this point of view applies in particular to the level of principal symbols which is further underscored by Lemma~\ref{KplusKstar} below). 

This example shows that it is always possible to pose generalized Atiyah-Patodi-Singer boundary conditions to obtain a well-posed boundary value problem for any operator $A \in x^{-1}\Diff^1_e(\M;E,F)$ that satisfies \eqref{Awelliptic}, \eqref{speceAssumption}, and \eqref{normalAssumption}. In the classical theory, one is often just interested in the homogeneous boundary value problem. This corresponds here to considering the unbounded operator
\begin{equation*}
A : \Dom(A) \subset x^{-1/2}L^2_b(\M;E) \to x^{-1/2}L^2_b(\M;F)
\end{equation*}
with domain
\begin{equation*}
\Dom(A) = \{u \in H^1_{\trb}(\M;E) : \Pi(\gamma_A u) = 0\}.
\end{equation*}
Observe that $\Dom_{\min}(A) \subset \Dom(A) \subset \Dom_{\max}(A)$ because $\gamma_A$ vanishes on $\Dom_{\min}(A)$ by Proposition~\ref{DminSubsetKerTrace}.
The operator $A$ with domain $\Dom(A)$ is closed, densely defined, and Fredholm by Theorem~\ref{FredholmTheorem} (closedness is a consequence of Fredholmness by elementary functional analytic arguments, see for example \cite[Proposition~4.4]{GiKrMe10} for a proof).
\end{example}

The case when the projection $\Pi$ in \eqref{ABVPEq} and \eqref{ABVP} can be taken to be the identity map corresponds classically to boundary value problems satisfying the \emph{Shapiro-Lopatinskii condition}. In this case, the ellipticity condition \eqref{BoundaryConditionKernelIso} becomes
\begin{equation}\label{BoundaryConditionKernelIso1}
\sym(B) : \kerb \overset{\cong}{\longrightarrow} \pi^*_{\Y}\target
 \text{ on } T^*\Y\minus 0.
\end{equation}
By twisted homogeneity of $\sym(B)$, the invertibility of \eqref{BoundaryConditionKernelIso1} everywhere on $T^*\Y\minus 0$ is equivalent to the invertibility just on the cosphere bundle $S^*\Y$ with respect to some Riemannian metric on $\Y$. The bundle $\kerb \to S^*\Y$ is therefore isomorphic to the pull-back $\pi_{\Y}^*\target$ of the bundle $\target \to \Y$ via $\sym(B)$, i.e.,
\begin{equation}\label{AtBott}
\kerb \in \pi_{\Y}^*\Vect(\Y) \text{ on } S^*\Y
\end{equation}
up to vector bundle isomorphism. Conversely, if \eqref{AtBott} holds (up to isomorphism), then there exists a twisted homogeneous section of $\Hom(\pi_{\Y}^*\trb,\pi_{\Y}^*\target)$ on $T^*\Y\minus 0$ that realizes \eqref{BoundaryConditionKernelIso1}. This can be seen by composing a bundle isomorphism $\kerb \cong \pi^*_{\Y}\target$ with a projection $\pi_\Y^*\trb \to \kerb$ on $S^*\Y$ and extending the resulting morphism by twisted homogeneity. This and the calculus of operators twisted by endomorphisms developed in \cite{KrMe12b} show that a boundary value problem \eqref{ABVPEq} with $\Pi = \Id$ that satisfies the ellipticity condition \eqref{BoundaryConditionKernelIso1} exists if and only if \eqref{AtBott} holds.
Condition \eqref{AtBott} is a topological obstruction for the operator $A$ to admit boundary conditions $B$ that are elliptic in the sense that the Shapiro-Lopatinskii condition \eqref{BoundaryConditionKernelIso1} is satisfied. In the context of the classical $L^2$-theory of regular elliptic boundary value problems, this topological obstruction was identified by Atiyah and Bott \cite{AtiyahBott}. In particular, as the classical theory shows, one can in general not expect that \eqref{AtBott} holds for the bundle $\kerb$. In fact, the $K$-theory class
\begin{equation*}
[\kerb] = \Ind_{S^*\Y}(A_{\wedge} : \Dom_{\max}(A_{\wedge}) \to x^{-1/2}L^2_b) \in K(S^*\Y)
\end{equation*}
($\Dom_{\max}(A_\wedge)$ was defined in \eqref{DmaxBundle}) does in general not belong to the subgroup $\pi_{\Y}^*K(\Y)$ (as is the case, for example, for the $\overline{\partial}$-operator on the unit disk, and more generally Dirac operators on even-dimensional manifolds). However, as discussed in Example \ref{APSBoundaryValueProblem}, it is always possible to set up a well-posed boundary value problem subject to generalized Atiyah-Patodi-Singer boundary conditions.

\subsubsection*{Proof of Theorem~\ref{FredholmTheorem}}

We begin the proof with several reductions, each substantiated further below:

\begin{enumerate}
\item \label{Reduct_1} We may assume that the order $\mu$ of the boundary operator $B$ is zero.
\item \label{Reduct_2} It suffices to consider the case that $E = F$, i.e., $A \in x^{-1}\Diff^1_e(\M;E,E)$. Moreover, it suffices to consider operators $A = A^{\star}$ that are symmetric on $C_c^{\infty}(\open\M;E)$.
\item \label{Reduct_3} It suffices to consider the case that $A$ has vanishing Atiyah-Bott obstruction. More precisely, one can assume that there exists a vector bundle $\target_0 \in \Vect(\Y)$ such that $[\K] = [\pi_{\Y}^*\target_0]$ in $K(S^*\Y)$.
\end{enumerate}

Reduction \eqref{Reduct_1} is possible because
\begin{align*}
B \in \Psi^{\mu}_{1,\delta}(\Y;(\trb,-\gen),(\target,{\agen})) &= \Psi^{0}_{1,\delta}(\Y;(\trb,-\gen),(\target,{\agen}-\mu)), \\
\Pi \in \Psi^{0}_{1,\delta}(\Y;(\target,{\agen}),(\target,{\agen})) &= \Psi^{0}_{1,\delta}(\Y;(\target,{\agen} - \mu),(\target,{\agen}-\mu)).
\end{align*}
Consequently, by changing from ${\agen}$ to ${\agen}-\mu$, we may without loss of generality assume in the sequel that $\mu = 0$, i.e., $B \in \Psi^{0}_{1,\delta}(\Y;(\trb,-\gen),(\target,{\agen}))$.

Reduction \eqref{Reduct_2} is of little relevance for the proof and provides merely notational convenience. However, reduction \eqref{Reduct_3} is of primary importance. We need the following lemma to establish both.

\begin{lemma}\label{KplusKstar}
Suppose $A \in x^{-1}\Diff^1_e(\M;E,F)$ satisfies \eqref{Awelliptic}, \eqref{speceAssumption}, and \eqref{normalAssumption}. Then so does the formal adjoint $A^{\star} \in x^{-1}\Diff^1_e(\M;F,E)$. Let $\trb^{\star}$ be the trace bundle associated with $A^{\star}$ acting in $x^{-1/2}L^2_b$, and let $\kerb^{\star} \subset \pi_{\Y}^*\trb^{\star}$ be the subbundle obtained by passing from the kernel bundle $\KK^{\star}$ of the normal family $A_{\wedge}^{\star}(\pmb\eta) : \Dom_{\max}(A^{\star}_{\wedge}(\pmb\eta)) \to x^{-1/2}L^2_b$ to $\pi_{\Y}^*\trb^{\star}$ by fiberwise applying $\gamma_{A^{\star}_{\wedge}(\pmb\eta)}$, see Theorem~\ref{KernelBundle}. Both $\kerb$ and $\kerb^{\star}$ are restricted to $S^*\Y$.
Then
\begin{equation}\label{kerbkerbstar}
[\kerb] + [\kerb^{\star}] = [\pi_{\Y}^*\trb] \in K(S^*\Y).
\end{equation}
\end{lemma}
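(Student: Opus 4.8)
The plan is to realize \eqref{kerbkerbstar} as the class of a short exact sequence of vector bundles over $S^*\Y$ built from the Green form of the normal family. First I would record that, for $\pmb\eta\in T^*\Y\minus 0$ and $y=\pi_\Y\pmb\eta$, the sesquilinear Green form $[u,v]_{A_\wedge(\pmb\eta)}=(A_\wedge(\pmb\eta)u,v)_{x^{-1/2}L^2_b}-(u,A^\star_\wedge(\pmb\eta)v)_{x^{-1/2}L^2_b}$ vanishes whenever $u\in\Dom_{\min}(A_\wedge(\pmb\eta))$ or $v\in\Dom_{\min}(A^\star_\wedge(\pmb\eta))$ and descends to a nondegenerate pairing of the quotients $\Dom_{\max}(A_\wedge(\pmb\eta))/\Dom_{\min}(A_\wedge(\pmb\eta))$ and $\Dom_{\max}(A^\star_\wedge(\pmb\eta))/\Dom_{\min}(A^\star_\wedge(\pmb\eta))$, which via the isomorphisms induced by $\gamma_{A_\wedge(\pmb\eta)}$ and $\gamma_{A^\star_\wedge(\pmb\eta)}$ becomes a nondegenerate sesquilinear pairing $\trb_y\times\trb^\star_y\to\C$. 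Using cut-off extensions $\omega\mathfrak i$ and the fact that $A_\wedge(\pmb\eta)-\bA_y$ is a pointwise bundle homomorphism whose formal adjoint (Lemma~\ref{AdjointOfNormalFamily}) is $A^\star_\wedge(\pmb\eta)-\bA^\star_y$, one checks that the contribution of this difference to the Green form cancels, so the descended pairing is exactly the pairing $\beta$ of Proposition~\ref{BasicParing}; in particular it is smooth, independent of $\pmb\eta$, and identifies $\trb^\star$ with the anti-dual of $\trb$. Smoothness in $\pmb\eta$ of the bundle maps produced below also relies on Lemma~\ref{SmoothnessOfTraceWedge}.

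The two substantive ingredients are then a vanishing and a dimension count. If $u\in\KK_{\pmb\eta}=\ker(A_{\wedge,\max}(\pmb\eta))$ and $v\in\KK^\star_{\pmb\eta}=\ker(A^\star_{\wedge,\max}(\pmb\eta))$, then $A_\wedge(\pmb\eta)u=0$ and $A^\star_\wedge(\pmb\eta)v=0$, so $[u,v]_{A_\wedge(\pmb\eta)}=0$; since $\gamma_{A_\wedge(\pmb\eta)}$ carries $\KK_{\pmb\eta}$ isomorphically onto $\kerb_{\pmb\eta}$ and $\gamma_{A^\star_\wedge(\pmb\eta)}$ carries $\KK^\star_{\pmb\eta}$ isomorphically onto $\kerb^\star_{\pmb\eta}$ (Theorem~\ref{KernelBundle}, applied to $A$ and to $A^\star$), this shows $\beta(\kerb_{\pmb\eta},\kerb^\star_{\pmb\eta})=0$, i.e. $\kerb^\star_{\pmb\eta}$ is contained in the $\beta$-annihilator of $\kerb_{\pmb\eta}$ in $\trb^\star_y$. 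For the count, \eqref{normalAssumption} gives that $A_{\wedge,\min}(\pmb\eta)$ is injective and $A_{\wedge,\max}(\pmb\eta)$ surjective (and likewise for $A^\star$), and the Hilbert-space adjoint of $A_{\wedge,\min}(\pmb\eta)$ is $A^\star_{\wedge,\max}(\pmb\eta)$; hence $\dim\kerb_{\pmb\eta}=\Ind(A_{\wedge,\max}(\pmb\eta))$ and $\dim\kerb^\star_{\pmb\eta}=-\Ind(A_{\wedge,\min}(\pmb\eta))$, so with the identity $N=-\Ind(A_{\wedge,\min}(\pmb\eta))+\Ind(A_{\wedge,\max}(\pmb\eta))$ from Section~\ref{sec-KernelBundle} one gets $\dim\kerb_{\pmb\eta}+\dim\kerb^\star_{\pmb\eta}=N=\dim\trb_y=\dim\trb^\star_y$. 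Since the $\beta$-annihilator of $\kerb_{\pmb\eta}$ then has dimension $N-\dim\kerb_{\pmb\eta}=\dim\kerb^\star_{\pmb\eta}$, the inclusion above is an equality, and by reflexivity of annihilators for the perfect pairing $\beta$ one also obtains $\kerb_{\pmb\eta}=\set{u\in\trb_y:\beta(u,v)=0\ \text{for all}\ v\in\kerb^\star_{\pmb\eta}}$.

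Finally I would assemble the exact sequence. Define over $S^*\Y$ the bundle homomorphism $T:\pi_\Y^*\trb\to\overline{(\kerb^\star)^{*}}$ by $T(u)=\beta(u,\cdot)\big|_{\kerb^\star}$ (the conjugate appears because $\beta$ is conjugate-linear in its $\trb^\star$-slot); it is smooth since $\beta$ is smooth and $\kerb^\star$ is a smooth subbundle. By the previous paragraph $\ker T_{\pmb\eta}=\kerb_{\pmb\eta}$, and $T_{\pmb\eta}$ is surjective because nondegeneracy of $\beta$ makes $u\mapsto\beta(u,\cdot)$ an isomorphism of $\trb_y$ onto the anti-dual of $\trb^\star_y$, while restriction of functionals to the subbundle $\kerb^\star$ is surjective. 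Hence $T$ has constant rank and
\begin{equation*}
0\longrightarrow\kerb\longrightarrow\pi_\Y^*\trb\stackrel{T}{\longrightarrow}\overline{(\kerb^\star)^{*}}\longrightarrow 0
\end{equation*}
is a short exact sequence of vector bundles over $S^*\Y$, so $[\pi_\Y^*\trb]=[\kerb]+[\overline{(\kerb^\star)^{*}}]$ in $K(S^*\Y)$. The Hermitian structure on $\trb^\star$ restricts to one on $\kerb^\star$ and gives a complex-linear isomorphism $\overline{(\kerb^\star)^{*}}\cong\kerb^\star$, whence $[\overline{(\kerb^\star)^{*}}]=[\kerb^\star]$ and \eqref{kerbkerbstar} follows. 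I expect the main obstacle to be the first paragraph — verifying that the descended Green pairing really is $\beta$, so that the nondegeneracy and anti-duality statements of Proposition~\ref{BasicParing} become available — together with getting the index and dimension bookkeeping exactly right; the construction of $T$ and the passage to $K$-theory are then formal.
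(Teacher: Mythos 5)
Your argument is correct, but it takes a genuinely different route from the paper's. The paper proves \eqref{kerbkerbstar} by pure index bookkeeping in $K(S^*\Y)$: by \eqref{normalAssumption} one has $[\kerb]=\Ind_{S^*\Y}(A_{\wedge,\max})$ and $[\kerb^{\star}]=\Ind_{S^*\Y}(A^{\star}_{\wedge,\max})=-\Ind_{S^*\Y}(A_{\wedge,\min})$ (since $A^{\star}_{\wedge,\max}$ is the adjoint of $A_{\wedge,\min}$), and then the factorization $A_{\wedge,\min}=A_{\wedge,\max}\circ\iota$, additivity of the index, and $\Ind_{S^*\Y}(\iota)=-[\pi_{\Y}^*\trb]$ give the identity, with the index bundle for Fredholm morphisms of Banach bundles taken from Waterstraat. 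You instead prove a stronger, bundle-level statement: under the pairing $\beta$ of Proposition~\ref{BasicParing}, the subbundle $\kerb^{\star}$ is exactly the annihilator of $\kerb$, yielding the short exact sequence $0\to\kerb\to\pi_{\Y}^*\trb\to\overline{(\kerb^{\star})^{*}}\to0$ over $S^*\Y$ and hence the $K$-theory identity. Your identification of the descended Green form of $A_\wedge(\pmb\eta)$ with $\beta$ is sound precisely because $m=1$: $A_\wedge(\pmb\eta)-\bA_y$ is a bundle homomorphism whose formal adjoint is $A^{\star}_\wedge(\pmb\eta)-\bA^{\star}_y$ by Lemma~\ref{AdjointOfNormalFamily}, so its contribution to the Green form cancels identically and the pairing is $\pmb\eta$-independent; and your dimension count is exactly the paper's relation $N=-\Ind(A_{\wedge,\min}(\pmb\eta))+\Ind(A_{\wedge,\max}(\pmb\eta))$ from Section~\ref{sec-KernelBundle} combined with $\dim\ker(A^{\star}_{\wedge,\max}(\pmb\eta))=-\Ind(A_{\wedge,\min}(\pmb\eta))$. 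What your route buys is an explicit Calder{\'o}n-type complementarity --- a fiberwise identification of $\pi_{\Y}^*\trb/\kerb$ with the anti-dual of $\kerb^{\star}$, which is more information than the $K$-theory class and resonates with the paper's remark that $\sym(\Pi)$ generalizes the Calder{\'o}n--Seeley projection; what it costs is the extra verifications (cancellation of the zeroth-order term, smoothness of $\beta$, $\kerb$, $\kerb^{\star}$ and of your map $T$, the annihilator dimension count), which is precisely the overhead the paper's shorter index-additivity argument avoids, and the reliance on first order, whereas the paper's argument does not use the special structure of $A_\wedge(\pmb\eta)-\bA_y$ at all.
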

\begin{proof}[Proof of Lemma~\ref{KplusKstar}]
That $A^{\star}$ satisfies \eqref{Awelliptic}, \eqref{speceAssumption}, and \eqref{normalAssumption} is clear. We only have to prove \eqref{kerbkerbstar}. Consider the Fredholm functions $A_{\wedge,\max}(\pmb\eta) : \Dom_{\max}(A_{\wedge}(\pmb\eta)) \to x^{-1/2}L^2_b$ and $A_{\wedge,\max}^{\star}(\pmb\eta) : \Dom_{\max}(A^{\star}_{\wedge}(\pmb\eta)) \to x^{-1/2}L^2_b$ on $S^*\Y$. By assumption we have
\begin{align*}
[\kerb] &= \Ind_{S^*\Y}(A_{\wedge,\max}) \in K(S^*\Y), \\
[\kerb^{\star}] &= \Ind_{S^*\Y}(A^{\star}_{\wedge,\max}) \in K(S^*\Y).
\end{align*}
Since $A^{\star}_{\wedge,\max}(\pmb\eta) : \Dom_{\max}(A^{\star}_{\wedge}(\pmb\eta)) \subset  x^{-1/2}L^2_b \to x^{-1/2}L^2_b$ is the adjoint of the Fredholm function $A_{\wedge,\min}(\pmb\eta) : \Dom_{\min}(A_{\wedge}(\pmb\eta))\subset  x^{-1/2}L^2_b \to x^{-1/2}L^2_b$ we get that
\begin{equation*}
\Ind_{S^*\Y}(A^{\star}_{\wedge,\max}) = -\Ind_{S^*\Y}(A_{\wedge,\min}).
\end{equation*}
On the other hand, $A_{\wedge,\min} = A_{\wedge,\max} \circ \iota$, where $\iota(\pmb\eta) : \Dom_{\min}(A_{\wedge}(\pmb\eta)) \to \Dom_{\max}(A_{\wedge}(\pmb\eta))$ is the inclusion map. Additivity of the index gives
\begin{equation*}
\Ind_{S^*\Y}(A_{\wedge,\min}) = \Ind_{S^*\Y}(A_{\wedge,\max}) + \Ind_{S^*\Y}(\iota).
\end{equation*}
Because $\Ind_{S^*\Y}(\iota) = -[\pi_{\Y}^*\trb]$ we obtain
\begin{align*}
[\kerb] + [\kerb^{\star}] &= \Ind_{S^*\Y}(A_{\wedge,\max}) + \Ind_{S^*\Y}(A^{\star}_{\wedge,\max}) \\
&= \Ind_{S^*\Y}(A_{\wedge,\max}) - \Ind_{S^*\Y}(A_{\wedge,\min}) \\
&= \Ind_{S^*\Y}(A_{\wedge,\max}) - \big(\Ind_{S^*\Y}(A_{\wedge,\max}) + \Ind_{S^*\Y}(\iota)\big) \\
&= -\Ind_{S^*\Y}(\iota) = [\pi_{\Y}^*\trb]
\end{align*}
as claimed. See \cite{Waterstraat} for background on the index in $K$-theory for Fredholm morphisms between general Banach bundles.
\end{proof}

We continue the proof of Theorem~\ref{FredholmTheorem} by establishing reductions \eqref{Reduct_2} and \eqref{Reduct_3}. Consider the operator
\begin{equation*}
{\mathcal A} = \begin{bmatrix} 0 & A^{\star} \\ A & 0 \end{bmatrix} \in x^{-1}\Diff^1_e\biggl(\M;\begin{array}{c} E \\ \oplus \\ F \end{array},\begin{array}{c} E \\ \oplus \\ F \end{array}\biggr),
\end{equation*}
acting as an unbounded operator in $x^{-1/2}L^2_b\biggl(\M;\begin{array}{c} E \\ \oplus \\ F \end{array}\biggr)$. This operator is symmetric, and satisfies all assumptions \eqref{Awelliptic}, \eqref{speceAssumption}, and \eqref{normalAssumption}. The trace bundle $\trb_{{\mathcal A}}$ of ${\mathcal A}$ is the direct sum $\trb \oplus \trb^{\star}$ of the trace bundles of $A$ and $A^{\star}$. Moreover, the bundle $\kerb_{{\mathcal A}} \subset \pi_{\Y}^*\trb_{{\mathcal A}}$ built from the kernel bundle $\KK_{{\mathcal A}}$ of the family
\begin{equation*}
{\mathcal A}_{\wedge}(\pmb\eta) = \begin{bmatrix} 0 & A_{\wedge}^{\star}(\pmb\eta) \\ A_{\wedge}(\pmb\eta) & 0 \end{bmatrix} : \begin{array}{c} \Dom_{\max}(A_{\wedge}(\pmb\eta)) \\ \oplus \\ \Dom_{\max}(A_{\wedge}^{\star}(\pmb\eta)) \end{array} \to \begin{array}{c} x^{-1/2}L^2_b(\Z_y^{\wedge},E_{\Z_y^{\wedge}}) \\ \oplus \\ x^{-1/2}L^2_b(\Z_y^{\wedge},F_{\Z_y^{\wedge}}) \end{array}
\end{equation*}
by applying the fiberwise trace map $\gamma_{{\mathcal A}_{\wedge}(\pmb\eta)} = \gamma_{A_{\wedge}(\pmb\eta)} \oplus \gamma_{A_{\wedge}^{\star}(\pmb\eta)}$
is the direct sum $\kerb \oplus \kerb^{\star}$ of the corresponding bundles $\kerb \subset \pi_{\Y}^*\trb$ and $\kerb^{\star} \subset \pi_{\Y}^*\trb^{\star}$ associated with $A$ and $A^{\star}$, respectively. In particular $[\kerb_{{\mathcal A}}] = [\kerb] + [\kerb^{\star}] = [\pi_{\Y}^*\trb]$ as elements in $K(S^*\Y)$ by Lemma~\ref{KplusKstar}. Consequently, the operator ${\mathcal A}$ has vanishing Atiyah-Bott obstruction and satisfies \eqref{Reduct_3}.

Let $\Pi_{A^{\star}} = \Pi_{A^{\star}}^2 \in \Psi^0_{1,\delta}(\Y;(\trb^{\star},-\gen),(\trb^{\star},-\gen))$ have twisted homogeneous principal symbol $\sym(\Pi_{A^{\star}})$ that is a projection in $\pi_{\Y}^*\trb^{\star}$ onto the subbundle $\kerb^{\star}$ on $T^*\Y\minus 0$ (see Example~\ref{APSBoundaryValueProblem}). Define
\begin{align*}
{\mathcal B} = \begin{bmatrix} B & 0 \\ 0 & \Id \end{bmatrix} &\in
\Psi^0_{1,\delta}\biggl(\Y;\biggl(\begin{array}{c} \trb \\ \oplus \\ \trb^{\star} \end{array},\begin{bmatrix} -\gen & 0 \\ 0 & -\gen \end{bmatrix}\biggr),\biggl(\begin{array}{c} \target \\ \oplus \\ \trb^{\star} \end{array},\begin{bmatrix} {\agen} & 0 \\ 0 & -\gen \end{bmatrix}\biggr)\biggr), \\
\tilde{\Pi} = \begin{bmatrix} \Pi & 0 \\ 0 & \Pi_{A^{\star}} \end{bmatrix} &\in
\Psi^0_{1,\delta}\biggl(\Y;\biggl(\begin{array}{c} \target \\ \oplus \\ \trb^{\star} \end{array},\begin{bmatrix} {\agen} & 0 \\ 0 & -\gen \end{bmatrix}\biggr),\biggl(\begin{array}{c} \target \\ \oplus \\ \trb^{\star} \end{array},\begin{bmatrix} {\agen} & 0 \\ 0 & -\gen \end{bmatrix}\biggr)\biggr).
\end{align*}
The boundary value problem represented by the operator
\begin{equation}\label{ReducedBVP}
\begin{bmatrix} {\mathcal A} \\ \tilde{\Pi}{\mathcal B}\gamma_{{\mathcal A}} \end{bmatrix} :
H^1_{\trb_{{\mathcal A}}}\biggl(\M;\begin{array}{c} E \\ \oplus \\ F \end{array}\biggr)
\to
\begin{array}{c}
x^{-1/2}L^2_b\biggl(\M;\begin{array}{c} E \\ \oplus \\ F \end{array}\biggr)
\\ \oplus \\
\tilde{\Pi}H^{1+\tilde{{\agen}}}\biggl(\Y;\begin{array}{c} \target \\ \oplus \\ \trb^{\star} \end{array}\biggr)
\end{array},
\end{equation}
where $\tilde{{\agen}} = \begin{bmatrix} {\agen} & 0 \\ 0 & -\gen \end{bmatrix} \in \End\biggl(\begin{array}{c} \target \\ \oplus \\ \trb^{\star} \end{array}\biggr)$, satisfies all assumptions of Theorem~\ref{FredholmTheorem}, and in addition it satisfies the extra assumptions \eqref{Reduct_1}, \eqref{Reduct_2}, and \eqref{Reduct_3} stated at the beginning of the proof. Consequently, if Theorem~\ref{FredholmTheorem} is proved for boundary value problems satisfying these extra assumptions, we can apply the conclusion of the theorem and obtain that \eqref{ReducedBVP} is a Fredholm operator. By construction of the operator \eqref{ReducedBVP} this then implies that the original operator \eqref{ABVP} is Fredholm, thus proving the theorem in full generality.

\medskip

We now proceed with proving Theorem~\ref{FredholmTheorem} for the operator \eqref{ABVP} under the additional assumptions \eqref{Reduct_1}, \eqref{Reduct_2}, and \eqref{Reduct_3}.

To begin we make use of the isomorphism
\begin{equation*}
\begin{bmatrix} \iota & \Ext \end{bmatrix} :
\begin{array}{c} x^{1/2}H^1_e(\M;E) \\ \oplus \\ H^{1-\gen}(\Y;\trb) \end{array} \to
H^1_{\trb}(\M;E)
\end{equation*}
in Proposition~\ref{H1ARepresentation}. Recall that $\Dom_{\min}(A) = x^{1/2}H^1_e(\M;E)$ by Theorem~\ref{DminTheorem}.

Clearly, then, the operator \eqref{ABVP} is Fredholm if and only if the operator matrix
\begin{equation}\label{ABVPBlock}
\begin{bmatrix} A & A\Ext \\ 0 & \Pi B \end{bmatrix} =
\begin{bmatrix} A \\ \Pi B \gamma_A \end{bmatrix} \circ \begin{bmatrix} \iota & \Ext \end{bmatrix} :
\begin{array}{c} x^{1/2}H^1_e(\M;E) \\ \oplus \\ H^{1-\gen}(\Y;\trb) \end{array} \to
\begin{array}{c} x^{-1/2}L^2_b(\M;E) \\ \oplus \\ \Pi H^{1+\agen}(\Y;\target) \end{array}
\end{equation}
is Fredholm. Recall that $\gamma_A \equiv 0$ on $\Dom_{\min}(A)=x^{1/2}H^1_e(\M;E)$, and $\gamma_A\circ\Ext = \Id$ on $H^{1-\gen}(\Y;\trb)$, see Proposition~\ref{PoissonOperatorProperties}.

By assumption \eqref{BoundaryConditionKernelIso} and \eqref{Reduct_3} we have $[\target_{\Pi}] = [\kerb] = [\pi_{\Y}^*\target_0] \in K(S^*\Y)$ for some $\target_0 \in \Vect(\Y)$. In other words, there is a vector bundle isomorphism
\begin{equation}\label{csymbol}
c : \begin{array}{c} \target_{\Pi} \\ \oplus \\ \C^k \end{array} \overset{\cong}{\longrightarrow} \pi_{\Y}^*\biggl(\begin{array}{c} \target_0 \\ \oplus \\ \C^k \end{array}\biggr)
\end{equation}
on $S^*\Y$ for some $k \in \NN_0$. By replacing $c$ by $c\begin{bmatrix} \sym(\Pi) & 0 \\ 0 & \Id \end{bmatrix}$ (but keeping the notation $c$) we obtain
\begin{equation*}
c \in \Hom\biggl(\pi_{\Y}^*\biggl(\begin{array}{c} \target \\ \oplus \\ \C^k \end{array}\biggr), \pi_{\Y}^*\biggl(\begin{array}{c} \target_0 \\ \oplus \\ \C^k \end{array} \biggr)\biggr)
\end{equation*}
on $S^*\Y$, which we then extend by twisted homogeneity of degree one to $T^*\Y\minus 0$ with respect to the actions generated by the endomorphisms
\begin{equation*}
\begin{bmatrix} {\agen} & 0 \\ 0 & 0 \end{bmatrix} \in \End\biggl(\begin{array}{c} \target \\ \oplus \\ \C^k \end{array}\biggr) \text{ and }
\begin{bmatrix} 0 & 0 \\ 0 & 0 \end{bmatrix} \in \End\biggl(\begin{array}{c} \target_0 \\ \oplus \\ \C^k \end{array}\biggr).
\end{equation*}
Let
\begin{equation*}
C = \begin{bmatrix} C_{11} & C_{12} \\ C_{21} & C_{22} \end{bmatrix} \in \Psi_{1,\delta}^1\biggl(\Y;\biggl(\begin{array}{c} \target \\ \oplus \\ \C^k \end{array},\begin{bmatrix} {\agen} & 0 \\ 0 & 0 \end{bmatrix}\biggr),\biggl(\begin{array}{c} \target_0 \\ \oplus \\ \C^k \end{array},\begin{bmatrix} 0 & 0 \\ 0 & 0 \end{bmatrix}\biggr)\biggr)
\end{equation*}
with $\sym(C) = c$. Because \eqref{csymbol} is an isomorphism on $S^*\Y$ we obtain from \cite[Theorem~8.2]{KrMe12b} that the operator
\begin{equation*}
\begin{bmatrix} C_{11} & C_{12} \\ C_{21} & C_{22} \end{bmatrix}\begin{bmatrix} \Pi & 0 \\ 0 & \Id \end{bmatrix} : \begin{array}{c} \Pi H^{1+{\agen}}(\Y;\target) \\ \oplus \\ H^1(\Y;\C^k) \end{array} \to \begin{array}{c} L^2(\Y;\target_0) \\ \oplus \\ L^2(\Y;\C^k) \end{array}
\end{equation*}
is Fredholm. Consequently, the operator matrix \eqref{ABVPBlock} (and therefore \eqref{ABVP}) is Fredholm if and only if
\begin{equation}\label{ABVPBlock0}
\underbrace{\begin{bmatrix}
1 & 0 & 0 \\ 0 & C_{11}\Pi & C_{12} \\ 0 & C_{21}\Pi & C_{22}
\end{bmatrix}
\begin{bmatrix}
A & A \Ext & 0 \\ 0 & \Pi B & 0 \\ 0 & 0 & \Id
\end{bmatrix}}_{= \begin{bmatrix}
A & A \Ext & 0 \\ 0 & C_{11}\Pi B & C_{12} \\ 0 & C_{21}\Pi B  & C_{22}
\end{bmatrix}}
: \begin{array}{c} x^{1/2}H^1_e(\M;E) \\ \oplus \\ H^{1-\gen}(\Y;\trb) \\ \oplus \\ H^1(\Y;\C^k) \end{array} \to
\begin{array}{c} x^{-1/2}L^2_b(\M;E) \\ \oplus \\ L^2(\Y;\target_0) \\ \oplus \\ L^2(\Y;\C^k) \end{array}
\end{equation}
is Fredholm.

Choose an elliptic operator $K \in \Psi^{0}_{1,\delta}(\Y;(\trb,0),(\trb,-\gen))$ that has twisted homogeneous principal symbol and is invertible with $K^{-1} \in \Psi^{0}_{1,\delta}(\Y;(\trb,-\gen),(\trb,0))$. Such an operator exists by \cite[Theorem~6.9]{KrMe12b}. Then $K : H^1(\Y;\trb) \to H^{1-\gen}(\Y;\trb)$ is an isomorphism, and therefore \eqref{ABVPBlock0} is Fredholm if and only if
\begin{equation}\label{ABVPBlock1}
\begin{bmatrix}
A & A \Ext K & 0 \\ 0 & C_{11}\Pi B K & C_{12} \\ 0 & C_{21}\Pi B K & C_{22}
\end{bmatrix} :
\begin{array}{c} x^{1/2}H^1_e(\M;E) \\ \oplus \\ H^{1}(\Y;\trb) \\ \oplus \\ H^1(\Y;\C^k) \end{array} \to
\begin{array}{c} x^{-1/2}L^2_b(\M;E) \\ \oplus \\ L^2(\Y;\target_0) \\ \oplus \\ L^2(\Y;\C^k) \end{array}
\end{equation}
is Fredholm.

From the composition theorem \cite[Theorem~6.5]{KrMe12b} we get that
\begin{equation*}
\begin{bmatrix} C_{11}\Pi B K & C_{12} \\ C_{21}\Pi B K & C_{22}\end{bmatrix} =
\begin{bmatrix} C_{11} & C_{12} \\ C_{21} & C_{22} \end{bmatrix}\begin{bmatrix} \Pi B & 0 \\ 0 & \Id \end{bmatrix}\begin{bmatrix} K & 0 \\ 0 & \Id \end{bmatrix}
\end{equation*}
belongs to
\begin{equation*}
\Psi_{1,\delta}^1\biggl(\Y;\begin{array}{c} \trb \\ \oplus \\ \C^k \end{array},\begin{array}{c} \target_0 \\ \oplus \\ \C^k \end{array}\biggr),
\end{equation*}
the standard H{\"o}rmander class of pseudodifferential operators of type $(1,\delta)$, and this operator has principal symbol given by
\begin{equation}\label{prsymlbk}
c(\pmb\eta) \begin{bmatrix} \sym(\Pi B)(\pmb\eta) & 0 \\ 0 & 1 \end{bmatrix}\begin{bmatrix} \sym(K)(\pmb\eta) & 0 \\ 0 & \Id \end{bmatrix},
\end{equation}
a homogeneous section of degree one of the homomorphism bundle on $T^*\Y\minus 0$ in the ordinary (untwisted) sense. Now choose an operator
\begin{equation*}
\tilde{B} = \begin{bmatrix} \tilde{B}_{11} & \tilde{B}_{12} \\ \tilde{B}_{21} & \tilde{B}_{22} \end{bmatrix} \in \Psi_{\cl}^1\biggl(\Y;\begin{array}{c} \trb \\ \oplus \\ \C^k \end{array},\begin{array}{c} \target_0 \\ \oplus \\ \C^k \end{array}\biggr)
\end{equation*}
in the standard $(1,0)$-calculus of classical pseudodifferential operators with $\sym(\tilde{B})(\pmb\eta)$ given by \eqref{prsymlbk}. Then
\begin{equation*}
\begin{bmatrix} C_{11}\Pi B K & C_{12} \\ C_{21}\Pi B K & C_{22}\end{bmatrix} - \tilde{B} :
\begin{array}{c} H^{1}(\Y;\trb) \\ \oplus \\ H^1(\Y;\C^k) \end{array} \to
\begin{array}{c} L^2(\Y;\target_0) \\ \oplus \\ L^2(\Y;\C^k) \end{array}
\end{equation*}
is a compact operator.

Next choose a Green operator
\begin{equation*}
G_0 : C^{\infty}(\Y;\trb) \to C^{\infty}(\open\M;E)
\end{equation*}
of order one with respect to the weights $(\frac{1}{2},-\frac{1}{2})$ whose normal family is given by
\begin{equation}\label{Gwedgeprsym}
G_{0,\wedge}(\pmb\eta) = A_{\wedge}(\pmb\eta)\circ\Ext_{\wedge}(\pmb\eta)\circ\sym(K)(\pmb\eta)
\end{equation}
on $T^*\Y\minus 0$, see Remark~\ref{PerturbationOfGreen}. Then
\begin{equation*}
A \Ext K - G_0 : H^1(\Y;\trb) \to x^{-1/2}L^2_b(\M;E)
\end{equation*}
is compact by that same remark because it has the mapping property \eqref{GreenPerturb}. Consequently, the operator \eqref{ABVPBlock1} (and therefore \eqref{ABVP}) is Fredholm if and only if
\begin{equation}\label{ABVPBlock2}
\begin{bmatrix} A & G \\ 0 & \tilde{B} \end{bmatrix} :
\begin{array}{c} x^{1/2}H^1_e(\M;E) \\ \oplus \\ H^{1}\biggl(\Y;\begin{array}{c} \trb \\ \oplus \\ \C^k\end{array}\biggr) \end{array} \to
\begin{array}{c} x^{-1/2}L^2_b(\M;E) \\ \oplus \\ L^2\biggl(\Y;\begin{array}{c} \target_0 \\ \oplus \\ \C^k\end{array}\biggr)
\end{array}
\end{equation}
is Fredholm, where $G = \begin{bmatrix} G_0 & 0 \end{bmatrix}$. The operator matrix
\begin{equation*}
\begin{bmatrix} 0 & G \\ 0 & \tilde{B} \end{bmatrix} :
\begin{array}{c} x^{1/2}H^1_e(\M;E) \\ \oplus \\ H^{1}\biggl(\Y;\begin{array}{c} \trb \\ \oplus \\ \C^k\end{array}\biggr) \end{array} \to
\begin{array}{c} x^{-1/2}L^2_b(\M;E) \\ \oplus \\ L^2\biggl(\Y;\begin{array}{c} \target_0 \\ \oplus \\ \C^k\end{array}\biggr)
\end{array}
\end{equation*}
is a Green operator of order one with respect to the weights $(\frac{1}{2},-\frac{1}{2})$ as discussed in Appendix~\ref{GreenOperators}, and consequently the operator \eqref{ABVPBlock2} can be analyzed with the methods of Schulze's edge calculus \cite{SchuNH}. In view of our standing assumptions \eqref{Awelliptic} and \eqref{speceAssumption}, Schulze's theory implies that the operator \eqref{ABVPBlock2} is Fredholm provided that
\begin{equation}\label{SchulzeWedgeInv}
\begin{bmatrix} A_{\wedge}(\pmb\eta) & G_{\wedge}(\pmb\eta) \\ 0 & \sym(\tilde{B})(\pmb\eta) \end{bmatrix} :
\begin{array}{c}
\K^{1,\frac{1}{2}}_{\frac{1}{2}-\frac{1}{2}\dim\Z^{\wedge}_y}(\Z^{\wedge}_y;E_{\Z^{\wedge}_y}) \\ \oplus \\
\left[\begin{array}{c} \pi_{\Y}^*\trb_y \\ \oplus \\ \C^k \end{array}\right]
\end{array}
\to
\begin{array}{c}
x^{-1/2}L^2_b(\Z^{\wedge}_y;E_{\Z^{\wedge}_y}) \\ \oplus \\
\left[\begin{array}{c} \pi_{\Y}^*\target_{0,y}  \\ \oplus \\ \C^k \end{array}\right]
\end{array},
\end{equation}
is invertible for all $\pmb\eta \in T^*\Y\minus 0$, where $y = \pi_{\Y}\pmb\eta$ (see also Appendix~\ref{GreenOperators} for more details on ellipticity and the Fredholm property in Schulze's edge calculus).

Because $\sym(\tilde{B})(\pmb\eta)$ is given by \eqref{prsymlbk} and $G_{\wedge}(\pmb\eta) = \begin{bmatrix} G_{0,\wedge}(\pmb\eta) & 0 \end{bmatrix}$ with $G_{0,\wedge}(\pmb\eta)$ as in \eqref{Gwedgeprsym}, and both $c(\pmb\eta)$ in \eqref{csymbol} and $\sym(K)(\pmb\eta)$ are invertible, we readily obtain that \eqref{SchulzeWedgeInv} is invertible if and only if
\begin{equation}\label{APrWSym}
\begin{bmatrix} A_{\wedge}(\pmb\eta) \\ \sym(\Pi B)(\pmb\eta)\circ\gamma_{A_\wedge(\pmb\eta)} \end{bmatrix} : \Dom_{\max}(A_{\wedge}(\pmb\eta)) \to \begin{array}{c} x^{-1/2}L^2_b(\Z_y^{\wedge};E_{\Z_y^{\wedge}}) \\ \oplus \\ \target_{\Pi,\pmb\eta} \end{array}
\end{equation}
is invertible. This can be seen by applying the exact same reasoning that we utilized to convert problem \eqref{ABVP} to problem \eqref{ABVPBlock1} via \eqref{ABVPBlock} and \eqref{ABVPBlock0} on the level of the operators to the normal families. Recall that $\gamma_{A_{\wedge}(\pmb\eta)} \equiv 0$ on $\Dom_{\min}(A_{\wedge}(\pmb\eta)) = \K^{1,\frac{1}{2}}_{\frac{1}{2}-\frac{1}{2}\dim\Z^{\wedge}_y}(\Z^{\wedge}_y;E_{\Z^{\wedge}_y})$ (see \eqref{DomMinAWedge}), and $\gamma_{A_{\wedge}(\pmb\eta)}\circ\Ext_{\wedge}(\pmb\eta) = \Id$.

By \eqref{normalAssumption}, the assumed ellipticity condition \eqref{BoundaryConditionKernelIso} is equivalent to the invertibility of \eqref{APrWSym} on $T^*\Y\minus 0$. Consequently, the operator \eqref{ABVPBlock2} is Fredholm, and therefore \eqref{ABVP} is Fredholm. This finishes the proof of the theorem.

\begin{remark}
The idea to analyze a boundary value problem with nonvanishing Atiyah-Bott obstruction by adding on a complementary boundary value problem to obtain an enlarged system with vanishing Atiyah-Bott obstruction, and subsequently to take advantage of a suitable pseudodifferential calculus of operator matrices capable of analyzing the enlarged problem, was applied by Grubb and Seeley in \cite{GrubbSeeley} to analyze the Atiyah-Patodi-Singer boundary value problem. This was taken up and further developed in \cite{Schu2001} (for classical pseudodifferential boundary value problems based on Boutet de Monvel's calculus) and \cite{SchuSei2006} (for pseudodifferential edge operator matrices in Schulze's edge calculus).
\end{remark}

\appendix

\section{Pseudodifferential operators with operator-valued symbols that are twisted by group actions}\label{app-PseudosOpValued}

This appendix provides a summary of definitions and basic results on pseudodifferential operators with operator-valued symbols that are twisted by strongly continuous group actions. This calculus was introduced by Schulze to facilitate the description of the structure near the boundary of operators in his edge calculus and in Boutet de Monvel's calculus that are smoothing in the interior but not necessarily compact. The ideals of smoothing operators of such kind are generally referred to as Green operators, singular Green operators, or generalized singular Green operators in the literature. We will review the definition and some of the properties of Green operators in the edge calculus in Appendix~\ref{GreenOperators}.  A standard reference in this context is the monograph \cite{SchuNH}, see also \cite{EgorovSchulze,SchuWiley}. The results on boundedness and compactness of pseudodifferential operators with operator-valued symbols in abstract edge Sobolev spaces as presented here in Theorem~\ref{WSpaceContinuity} were obtained in \cite{SeilerCont}.

\begin{definition}
Let $H$ and $\tilde{H}$ be Hilbert spaces equipped with strongly continuous group actions $\kappa_{\varrho}$ on $H$ and $\tilde{\kappa}_{\varrho}$ on $\tilde{H}$ for $\varrho > 0$.
\begin{enumerate}[(a)]
\item For $\mu \in \R$ let $S^{\mu}(\R^q\times\R^q;H,\tilde{H})$ be the space of all $a(y,\eta) \in C^{\infty}(\R^q\times\R^q,\L(H,\tilde{H}))$ such that for all $\alpha,\beta \in \NN_0^q$ there exists a constant $C_{\alpha,\beta} > 0$ such that
\begin{equation}\label{symbolestimate}
\|\tilde{\kappa}_{\langle \eta \rangle}^{-1}\bigl(D_y^{\alpha}\partial_{\eta}^{\beta}a(y,\eta)\bigr)\kappa_{\langle \eta \rangle}\|_{\L(H,\tilde{H})} \leq C_{\alpha,\beta}\langle \eta \rangle^{\mu-|\beta|}
\end{equation}
for all $(y,\eta) \in \R^q\times\R^q$, where as usual $\langle \eta \rangle = (1 + |\eta|^2)^{1/2}$.
\item With every $a(y,\eta) \in S^{\mu}(\R^q\times\R^q;H,\tilde{H})$ we associate a pseudodifferential operator
\begin{equation*}
\op(a) : \Sch(\R^q,H) \to \Sch(\R^q,\tilde{H})
\end{equation*}
via
\begin{equation*}
\bigl[\op(a)u\bigr](y) = \frac{1}{(2\pi)^q}\iint e^{i(y-y')\eta}a(y,\eta)u(y')\,dy'\,d\eta.
\end{equation*}
\end{enumerate}
\end{definition}

The usual classes of operator-valued symbols are recovered in this definition when choosing the trivial group actions $\kappa_{\varrho} \equiv \Id_H$ and $\tilde{\kappa}_{\varrho} \equiv \Id_{\tilde{H}}$ on the Hilbert spaces.

In general, the strong continuity of the group action $\kappa_{\varrho}$ implies that there exist $c,M \geq 0$ such that
\begin{equation}\label{estimategroupaction}
\|\kappa_{\varrho}\|_{\L(H)} \leq c \max\{\varrho,\varrho^{-1}\}^M
\end{equation}
for all $\varrho > 0$. Consequently, a symbol $a(y,\eta)$ that satisfies the symbol estimates \eqref{symbolestimate} ``twisted'' by general group actions $\kappa_{\varrho}$ and $\tilde{\kappa}_{\varrho}$ of order $\mu$ also satisfies the standard ``untwisted'' symbol estimates of order $\mu + M + \tilde{M}$, where $M$ and $\tilde{M}$ are the exponents in the estimate \eqref{estimategroupaction} for the group actions $\kappa_{\varrho}$ and $\tilde{\kappa}_{\varrho}$, respectively, and vice versa. Thus both
\begin{align*}
S^{-\infty}(\R^q\times\R^q;H,\tilde{H}) &= \bigcap_{\mu \in \R}S^{\mu}(\R^q\times\R^q;H,\tilde{H}) \\
\intertext{and}
S^{\infty}(\R^q\times\R^q;H,\tilde{H}) &= \bigcup_{\mu \in \R}S^{\mu}(\R^q\times\R^q;H,\tilde{H})
\end{align*}
are independent of the group actions. In particular, 
\begin{equation*}
\display{320pt}{the standard elements of pseudodifferential calculus (asymptotic expansions, compositions, formal adjoints, etc.) are applicable to operators with symbols that satisfy the twisted estimates \eqref{symbolestimate},}
\end{equation*}
see below for the pertinent statements. In essence, twisting by nontrivial group actions just induces a filtration by order that is different from the standard filtration.

\begin{lemma}
Let $H$, $\tilde{H}$, and $H_j$, $j=1,2,3$, be equipped with strongly continuous group actions.
\begin{enumerate}[(a)]
\item $S^{\mu}(\R^q\times\R^q;H,\tilde{H})$ is a Fr{\'e}chet space with the topology induced by the seminorms given by the best constants in the estimates \eqref{symbolestimate} for all $\alpha,\beta \in \NN_0^q$.
\item Multiplication of operator functions induces a bilinear map
\begin{equation*}
S^{\mu_1}(\R^q\times\R^q;H_2,H_3) \times S^{\mu_2}(\R^q\times\R^q;H_1,H_2) \to
S^{\mu_1+\mu_2}(\R^q\times\R^q;H_1,H_3).
\end{equation*}
\item Differentiation induces a map
\begin{equation*}
D_y^{\alpha}\partial_{\eta}^{\beta} : S^{\mu}(\R^q\times\R^q;H,\tilde{H}) \to S^{\mu-|\beta|}(\R^q\times\R^q;H,\tilde{H})
\end{equation*}
for all $\alpha,\beta \in \NN_0^q$.
\item Let $a_j \in S^{\mu_j}(\R^q\times\R^q;H,\tilde{H})$ with $\mu_j \to -\infty$ as $j \to \infty$, and let $\overline{\mu} = \max\limits_{j}\mu_j$. Then there exists $a \in S^{\overline{\mu}}(\R^q\times\R^q;H,\tilde{H})$ such that
$a \sim \sum\limits_{j=0}^{\infty}a_j$.
\end{enumerate}
\end{lemma}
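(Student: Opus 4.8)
The final statement to prove is the asymptotic summation lemma (part (d) of the last lemma), together with the routine parts (a)-(c) describing the Fréchet structure, multiplicativity, and differentiation of the twisted symbol classes $S^\mu(\R^q\times\R^q;H,\tilde H)$. I will focus on part (d), the asymptotic summation, since (a)-(c) are immediate from the definitions. Here is the plan.

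\medskip

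\textbf{Reduction to the untwisted case via Borel-type summation.} The plan is to mimic the classical Borel argument, but carried out uniformly with respect to the group actions. First I would fix an excision function $\chi\in C^\infty(\R^q)$ with $\chi(\eta)=0$ for $|\eta|\le 1$ and $\chi(\eta)=1$ for $|\eta|\ge 2$, and set $a(y,\eta)=\sum_{j=0}^\infty \chi(\eta/t_j)\,a_j(y,\eta)$ for a sequence $t_j\to\infty$ to be chosen. The point is that $\chi(\eta/t_j)$ is supported in $|\eta|\ge t_j$, so on that region $\langle\eta\rangle\ge t_j$, and the estimate \eqref{symbolestimate} for $a_j$ of order $\mu_j$ combined with the elementary bound $\langle\eta\rangle^{\mu_j-\mu_{j-1}}\le t_j^{\mu_j-\mu_{j-1}}$ (valid since $\mu_j<\mu_{j-1}$ eventually and $\langle\eta\rangle\ge t_j$) lets me gain a prescribed negative power by choosing $t_j$ large. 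Concretely, I would choose $t_j$ so large that the $\L(H,\tilde H)$-seminorm (in the $j$-th symbol topology, i.e. after conjugating by $\tilde\kappa_{\langle\eta\rangle}^{-1}(\cdot)\kappa_{\langle\eta\rangle}$) of $\chi(\eta/t_j)a_j$ relative to order $\mu_{j-1}$ is bounded by $2^{-j}$ for all multi-indices $\alpha,\beta$ with $|\alpha|+|\beta|\le j$. The key technical observation making this work cleanly is that the conjugation $\tilde\kappa_{\langle\eta\rangle}^{-1}(D_y^\alpha\partial_\eta^\beta[\chi(\eta/t_j)a_j])\kappa_{\langle\eta\rangle}$ expands by Leibniz into terms $\tilde\kappa_{\langle\eta\rangle}^{-1}(\partial_\eta^{\beta'}\chi(\eta/t_j))(D_y^\alpha\partial_\eta^{\beta-\beta'}a_j)\kappa_{\langle\eta\rangle}$, and the scalar factor $\partial_\eta^{\beta'}\chi(\eta/t_j)=t_j^{-|\beta'|}(\partial^{\beta'}\chi)(\eta/t_j)$ passes through the conjugation unchanged; so the twisted seminorm of the product is controlled by the twisted seminorm of $a_j$ times the support-localization gain, exactly as in the scalar case.

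\medskip

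\textbf{Verifying the symbol estimate and the asymptotic property.} With $t_j$ so chosen, the series defining $a$ converges in each twisted seminorm: on any fixed region only finitely many terms are not yet in the ``$2^{-j}$ tail'' regime, and the tail sums geometrically. This shows $a\in S^{\overline\mu}(\R^q\times\R^q;H,\tilde H)$. For the asymptotic property $a\sim\sum_j a_j$, I must show $a-\sum_{j=0}^{N-1}a_j\in S^{\mu_N'}$ where $\mu_N'=\max_{j\ge N}\mu_j$. Write the difference as $\sum_{j=0}^{N-1}(\chi(\eta/t_j)-1)a_j+\sum_{j\ge N}\chi(\eta/t_j)a_j$. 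The first (finite) sum is compactly supported in $\eta$, hence in $S^{-\infty}$ (the group-action estimate \eqref{estimategroupaction} ensures that the conjugation by $\kappa$'s on a compact $\eta$-set contributes only a fixed polynomial factor, absorbed by arbitrary negative powers of $\langle\eta\rangle$ off a neighborhood of the support); the second sum lies in $S^{\mu_N'}$ by the tail estimate. This is where one uses that, although the $\mu_j$ need not be monotone, $\mu_j\to-\infty$, so $\mu_N'=\max_{j\ge N}\mu_j$ is finite and tends to $-\infty$ as $N\to\infty$.

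\medskip

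\textbf{The main obstacle.} I expect the only genuinely non-routine point to be the bookkeeping around the non-monotonicity of the orders $\mu_j$ and the correct placement of the exponents in the gain estimate: one must be careful that choosing $t_j$ large gains a power relative to $\max\{\mu_0,\dots,\mu_{j-1}\}$ (or relative to $\mu_{j-1}$ after first relabeling so that the sequence is eventually decreasing), not relative to $\mu_j$ itself, and that the excision happens on the region $\langle\eta\rangle\gtrsim t_j$ where the replacement $\langle\eta\rangle^{s}\le t_j^{s}$ for $s<0$ is valid. Apart from that, parts (a)-(c) follow directly: (a) because the seminorms are a countable family and completeness is inherited from $\L(H,\tilde H)$ together with the fact that a Cauchy sequence of symbols has a limit whose derivatives satisfy the estimates by passing to the limit; (b) by the submultiplicativity $\|\tilde{\tilde\kappa}_{\langle\eta\rangle}^{-1}(\cdots)\tilde\kappa_{\langle\eta\rangle}\,\tilde\kappa_{\langle\eta\rangle}^{-1}(\cdots)\kappa_{\langle\eta\rangle}\|\le\|\cdot\|\,\|\cdot\|$ after inserting $\tilde\kappa_{\langle\eta\rangle}\tilde\kappa_{\langle\eta\rangle}^{-1}=\Id$ between the factors and applying the Leibniz rule; and (c) is immediate since $\partial_\eta$ lowers the order by one in \eqref{symbolestimate}. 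I would present (a)-(c) in a single short paragraph and devote the bulk of the proof to the Borel construction in (d).
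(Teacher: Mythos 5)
Your argument is correct: parts (a)–(c) are indeed immediate from the definitions, and for (d) your Borel-type construction with scalar excision factors $\chi(\eta/t_j)$ — which commute with the conjugation by $\tilde\kappa_{\langle\eta\rangle}^{-1}(\cdot)\kappa_{\langle\eta\rangle}$, so all estimates reduce to the scalar bookkeeping — is exactly the standard proof in the twisted operator-valued setting, including the care points you flag (gaining relative to the previous orders after arranging eventual monotonicity, local finiteness of the sum, and the compactly supported remainder lying in the action-independent class $S^{-\infty}$). The paper itself gives no proof of this lemma: it is stated in the review appendix with reference to Schulze's calculus, where precisely this argument appears, so your proposal matches the intended proof.
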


\begin{proposition}\label{composition}
Let $a_1 \in S^{\mu_1}(\R^q\times\R^q;H_2,H_3)$ and $a_2 \in S^{\mu_2}(\R^q\times\R^q;H_1,H_2)$. Then
\begin{equation*}
\op(a_1)\circ\op(a_2) = \op(a_1 \leibniz a_2) : \Sch(\R^q,H_1) \to \Sch(\R^q,H_3)
\end{equation*}
with $a_1 \leibniz a_2 \in S^{\mu_1+\mu_2}(\R^q\times\R^q;H_1,H_3)$. We have
\begin{equation*}
\bigl(a_1 \leibniz a_2)(y,\eta) = \frac{1}{(2\pi)^q}\iint e^{-iy'\eta'}a_1(y,\eta+\eta')a_2(y+y',\eta)\,dy'\,d\eta'.
\end{equation*}
Moreover,
\begin{equation*}
a_1 \leibniz a_2 \sim \sum\limits_{\alpha \in \NN_0^q} \frac{1}{\alpha !} \bigl(\partial_{\eta}^{\alpha}a_1\bigr)\bigl(D_y^{\alpha}a_2\bigr),
\end{equation*}
where more precisely, for every $N \in \NN$,
\begin{equation*}
r_N = a_1 \leibniz a_2  - \sum\limits_{|\alpha| \leq N-1} \frac{1}{\alpha !} \bigl(\partial_{\eta}^{\alpha}a_1\bigr)\bigl(D_y^{\alpha}a_2\bigr)
\end{equation*}
is given by the oscillatory integral
{\small
\begin{equation*}
N \sum\limits_{|\alpha|=N}\int_0^1 \frac{(1-\theta)^{N-1}}{\alpha!}
\frac{1}{(2\pi)^q}\iint e^{-iy'\eta'}\bigl(\partial_\eta^{\alpha}a_1\bigr)(y,\eta+\theta\eta')\bigl(D_y^{\alpha}a_2\bigr)(y+y',\eta)\,dy'\,d\eta'\,d\theta.
\end{equation*}
}
\end{proposition}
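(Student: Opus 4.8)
The plan is to follow the classical proof of the composition theorem for pseudodifferential operators, the only additional ingredient being a uniform polynomial control of the mismatch between the group actions $\kappa_{\langle\eta\rangle}$ at nearby points of the frequency space. First I would establish the formula: for $u\in\Sch(\R^q,H_1)$, writing out $\op(a_1)\bigl(\op(a_2)u\bigr)(y)$, inserting the definition of $\op(a_2)u$, and carrying out the changes of variable $\eta\to\eta+\eta'$, $y'\to y+y'$ formally produces $[\op(b)u](y)$ with $b$ the stated double integral. To make this rigorous one reads $b(y,\eta)$ as an oscillatory integral: the integrand is not absolutely integrable, but $(1-\Delta_{\eta'})^N$ and $(1-\Delta_{y'})^N$ act on $e^{-iy'\eta'}$ as the identity, and integrating by parts converts them into the convergence factors $\langle y'\rangle^{-2N}$ and $\langle\eta'\rangle^{-2N}$ at the cost of replacing $a_1,a_2$ by finitely many of their $y$- and $\eta$-derivatives, which are again twisted symbols of the same or lower order. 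Once convergence is secured for $N$ large, Fubini and the standard regularization (inserting a cutoff $\chi(\varepsilon y',\varepsilon\eta')$ and letting $\varepsilon\to0$) justify the interchange of integrations and the change of variables, giving $\op(a_1)\circ\op(a_2)=\op(b)$ on $\Sch(\R^q,H_1)$.

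Next I would show $b\in S^{\mu_1+\mu_2}(\R^q\times\R^q;H_1,H_3)$; this is where the twisting enters. To estimate $\tilde\kappa^{(3)}_{\langle\eta\rangle}{}^{-1}\bigl(D_y^\alpha\partial_\eta^\beta b(y,\eta)\bigr)\kappa^{(1)}_{\langle\eta\rangle}$ in $\L(H_1,H_3)$ one differentiates under the integral sign and inserts $\kappa^{(2)}_{\langle\eta+\eta'\rangle}\,\kappa^{(2)}_{\langle\eta+\eta'\rangle}{}^{-1}$ between the factors coming from $a_1$ and from $a_2$. This exhibits the integrand, up to the mismatch operators $\tilde\kappa^{(3)}_{\langle\eta\rangle}{}^{-1}\kappa^{(3)}_{\langle\eta+\eta'\rangle}$ and $\kappa^{(2)}_{\langle\eta+\eta'\rangle}{}^{-1}\kappa^{(2)}_{\langle\eta\rangle}$, as a product of a twisted symbol estimate for $a_1$ at $\eta+\eta'$ and one for $a_2$ at $\eta$. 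The mismatch operators are bounded by $c\langle\eta'\rangle^{M}$ for a fixed $M$: this follows from the polynomial bound \eqref{estimategroupaction} on $\kappa_\varrho$ together with Peetre's inequality $\langle\eta\rangle/\langle\eta+\eta'\rangle\le C\langle\eta'\rangle$ (and the symmetric bound with $\eta$ and $\eta+\eta'$ interchanged). Combining this with $\langle\eta+\eta'\rangle^{\mu_1}\le C\langle\eta\rangle^{\mu_1}\langle\eta'\rangle^{|\mu_1|}$ and the convergence factors $\langle y'\rangle^{-2N}\langle\eta'\rangle^{-2N}$ produced by integrating by parts (which costs only $\eta'$-derivatives of $a_1$, hence lowers its order), for $N$ large the remaining $y'$- and $\eta'$-integration converges and yields the bound $C_{\alpha,\beta}\langle\eta\rangle^{\mu_1+\mu_2-|\beta|}$, which is exactly \eqref{symbolestimate}.

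Finally, for the asymptotic expansion I would Taylor-expand $a_1(y,\eta+\eta')$ in $\eta'$ about the origin with integral remainder to order $N$, and use the Fourier-inversion identity $\frac1{(2\pi)^q}\iint e^{-iy'\eta'}(\eta')^\alpha a_2(y+y',\eta)\,dy'\,d\eta'=(D_y^\alpha a_2)(y,\eta)$ to obtain $b=\sum_{|\alpha|\le N-1}\tfrac1{\alpha!}(\partial_\eta^\alpha a_1)(D_y^\alpha a_2)+r_N$ with $r_N$ the displayed oscillatory integral. Since $\partial_\eta^\alpha a_1\in S^{\mu_1-N}$ when $|\alpha|=N$, and the extra polynomial factor $(\eta')^\alpha$ is absorbed by a few more integrations by parts, the estimates of the previous step applied to $r_N$ show $r_N\in S^{\mu_1+\mu_2-N}(\R^q\times\R^q;H_1,H_3)$; as $N$ is arbitrary this is the asserted expansion, and completeness of the symbol classes (part (d) of the preceding Lemma, which is independent of the group actions) yields a genuine symbol with this expansion.

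The main obstacle is entirely in the second step: keeping the three group actions $\kappa^{(1)},\kappa^{(2)},\kappa^{(3)}$ straight and, for the remainder term, controlling the $\theta$-averaged actions $\kappa_{\langle\eta+\theta\eta'\rangle}$ for $\theta\in[0,1]$, verifying that every mismatch operator $\kappa_{\langle\eta\rangle}\kappa_{\langle\eta+\theta\eta'\rangle}^{-1}$ is bounded polynomially in $\langle\eta'\rangle$ uniformly in $\theta$, so that finitely many integrations by parts render all the oscillatory integrals absolutely convergent with the right order. Once this uniform polynomial control is in hand, the argument is the classical one verbatim, with ``symbol of order $\mu$'' replaced throughout by ``twisted symbol satisfying \eqref{symbolestimate}''.
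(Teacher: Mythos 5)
Your argument is correct, and it is the standard proof of the composition theorem in the group-twisted operator-valued calculus: the key points — regularizing the double integral by integration by parts, inserting $\kappa_{\langle\eta+\eta'\rangle}\kappa_{\langle\eta+\eta'\rangle}^{-1}$ between the two factors, and bounding every mismatch $\kappa_{\langle\eta\rangle}^{-1}\kappa_{\langle\eta+\theta\eta'\rangle}=\kappa_{\langle\eta+\theta\eta'\rangle/\langle\eta\rangle}$ by $c\langle\eta'\rangle^{M}$ uniformly in $\theta\in[0,1]$ via \eqref{estimategroupaction} and Peetre's inequality — are exactly what is needed, and your Taylor-expansion-with-integral-remainder step reproduces the stated formula for $r_N$. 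Note, however, that the paper does not prove this proposition at all: the appendix is a survey of Schulze's calculus and quotes the result from the literature, its only hint at an argument being the observation after \eqref{estimategroupaction} that a twisted symbol of order $\mu$ is an ordinary (untwisted) operator-valued symbol of order $\mu+M+\tilde{M}$ and vice versa, so that the existence of $a_1\leibniz a_2$, the oscillatory-integral formula, and the expansion with remainder can simply be imported from the classical untwisted calculus, after which only the twisted order of each term and of $r_N$ needs to be re-checked by the same Peetre-type estimate you use. Your route is self-contained and carries the twisting through the whole classical argument; the paper's implicit route is shorter because it outsources the oscillatory-integral bookkeeping to the standard theory and isolates the twisting in the final order count. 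Both are valid, and your identification of the uniform-in-$\theta$ mismatch bound as the only genuinely new ingredient is accurate.
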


The following corollary is particularly useful in the case that the embedding $H_4 \hookrightarrow H_3$ is compact in conjunction with the boundedness and compactness Theorem~\ref{WSpaceContinuity}.

\begin{corollary}\label{CorSmallerSpace}
Let $a_1 \in S^{\mu_1}(\R^q\times\R^q;H_2,H_3)$ and $a_2 \in S^{\mu_2}(\R^q\times\R^q;H_1,H_2)$. Let $H_4$ be a Hilbert space with $H_4 \hookrightarrow H_3$, and assume that the group action on $H_3$ restricts to a strongly continuous group action on $H_4$.

Suppose there exists an $N \in \NN$ such that
\begin{equation*}
D_y^{\alpha}\partial_{\eta}^{\beta}a_1 \in S^{\mu_1-|\beta|}(\R^q\times\R^q;H_2,H_4)
\end{equation*}
for all $\alpha \in \NN_0^q$ and all $|\beta| \geq N$.

Then
\begin{equation*}
a_1 \leibniz a_2  - \sum\limits_{|\alpha| \leq N-1} \frac{1}{\alpha !} \bigl(\partial_{\eta}^{\alpha}a_1\bigr)\bigl(D_y^{\alpha}a_2\bigr) \in S^{\mu_1+\mu_2-N}(\R^q\times\R^q;H_1,H_4).
\end{equation*}
\end{corollary}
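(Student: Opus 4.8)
The plan is to prove the remainder estimate for the Leibniz product in Corollary~\ref{CorSmallerSpace} by a direct analysis of the oscillatory integral representation of $r_N$ supplied by Proposition~\ref{composition}, combined with the extra regularity hypothesis on $a_1$. First I would recall from Proposition~\ref{composition} that
\begin{equation*}
r_N = a_1 \leibniz a_2  - \sum\limits_{|\alpha| \leq N-1} \frac{1}{\alpha !} \bigl(\partial_{\eta}^{\alpha}a_1\bigr)\bigl(D_y^{\alpha}a_2\bigr)
\end{equation*}
is given by the explicit oscillatory integral
\begin{equation*}
r_N(y,\eta) = N \sum\limits_{|\alpha|=N}\int_0^1 \frac{(1-\theta)^{N-1}}{\alpha!}
\frac{1}{(2\pi)^q}\iint e^{-iy'\eta'}\bigl(\partial_\eta^{\alpha}a_1\bigr)(y,\eta+\theta\eta')\bigl(D_y^{\alpha}a_2\bigr)(y+y',\eta)\,dy'\,d\eta'\,d\theta,
\end{equation*}
and to observe that, by hypothesis, for $|\alpha| = N$ each factor $\partial_\eta^{\alpha}a_1$ belongs to $S^{\mu_1 - N}(\R^q\times\R^q;H_2,H_4)$, not merely to $S^{\mu_1 - N}(\R^q\times\R^q;H_2,H_3)$; and $D_y^{\alpha}a_2 \in S^{\mu_2}(\R^q\times\R^q;H_1,H_2)$. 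So the integrand takes values in $\L(H_1,H_4)$, and one expects $r_N$ to be an $S^{\mu_1 + \mu_2 - N}$-symbol \emph{with values in $\L(H_1,H_4)$}.

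The key steps, in order, are as follows. (i) Reduce to showing that for each fixed $\alpha$ with $|\alpha| = N$ the individual term
\begin{equation*}
b_{\alpha,\theta}(y,\eta) = \frac{1}{(2\pi)^q}\iint e^{-iy'\eta'}\bigl(\partial_\eta^{\alpha}a_1\bigr)(y,\eta+\theta\eta')\bigl(D_y^{\alpha}a_2\bigr)(y+y',\eta)\,dy'\,d\eta'
\end{equation*}
lies in $S^{\mu_1 + \mu_2 - N}(\R^q\times\R^q;H_1,H_4)$ with seminorm estimates uniform in $\theta \in [0,1]$; then integrating in $\theta$ against the bounded weight $(1-\theta)^{N-1}$ and summing the finitely many $\alpha$ preserves the symbol class. (ii) For $b_{\alpha,\theta}$, this is exactly the Leibniz/oscillatory-integral estimate underlying Proposition~\ref{composition}, but now run with $H_4$ in place of $H_3$ as the target Hilbert space of the first factor: one conjugates by $\tilde{\kappa}_{\langle\eta\rangle}^{-1}$ (the group action on $H_4$, which by assumption is the restriction of that on $H_3$), inserts $\kappa_{\langle\eta+\theta\eta'\rangle}\kappa_{\langle\eta+\theta\eta'\rangle}^{-1}$ between the two factors, and uses Peetre's inequality $\langle\eta+\theta\eta'\rangle \le C\langle\eta\rangle\langle\eta'\rangle$ together with the polynomial bound \eqref{estimategroupaction} on $\kappa_{\varrho}$ to absorb the group-action mismatch into powers of $\langle\eta'\rangle$; the rapid decay in $y'$ coming from integration by parts in $\eta'$ (each integration by parts costs a $y'$-derivative of $D_y^{\alpha}a_2$, which stays in the same symbol class) then makes the $(y',\eta')$-integral absolutely convergent and yields the claimed order $\mu_1 + \mu_2 - N$. (iii) Differentiating $b_{\alpha,\theta}$ in $y$ and $\eta$ produces integrands of the same structural form with symbols of appropriately lowered order (differentiation in $\eta$ lowers the order of $\partial_\eta^{\alpha}a_1$ by the corresponding amount; differentiation in $y$ distributes onto both factors without changing orders), so the symbol estimates propagate to all derivatives by the identical argument.

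The main obstacle — though it is more bookkeeping than genuine difficulty — is step (ii): carefully tracking the interplay of the \emph{two different} group actions on the target side ($\tilde\kappa$ on $H_3$ and its restriction to $H_4$) with the shifted argument $\eta + \theta\eta'$ in the first factor, and verifying that the extra $\langle\eta'\rangle$ powers picked up from \eqref{estimategroupaction} and Peetre's inequality are genuinely controlled by the decay gained through integration by parts in $\eta'$. This is precisely the estimate already implicitly carried out in the proof of Proposition~\ref{composition} (and in \cite{SchuNH}), and the only new input here is that the hypothesis $\partial_\eta^{\alpha}a_1 \in S^{\mu_1 - |\beta|}(\R^q\times\R^q;H_2,H_4)$ for $|\beta| \ge N$ lets one read off that the relevant operator norms are taken in $\L(H_1,H_4)$ throughout; once that observation is in place, the proof of Proposition~\ref{composition} applies verbatim with $H_3$ replaced by $H_4$ and the sum truncated at $|\alpha| \le N-1$. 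Hence no genuinely new analytic estimate is required, and the corollary follows.
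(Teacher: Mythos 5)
Your proposal is correct and follows essentially the paper's intended route: the paper gives no separate argument for Corollary~\ref{CorSmallerSpace}, since it is meant to be read off directly from the explicit remainder formula in Proposition~\ref{composition}, rerunning the same twisted-symbol estimates with $H_4$ in place of $H_3$ for the factor $\partial_\eta^{\alpha}a_1$, $|\alpha|=N$ -- which is exactly what you do. One minor bookkeeping remark: integration by parts in $\eta'$ transfers derivatives onto $(\partial_\eta^{\alpha}a_1)(y,\eta+\theta\eta')$ and gains decay in $y'$, while integration by parts in $y'$ transfers them onto $(D_y^{\alpha}a_2)(y+y',\eta)$ and gains decay in $\eta'$ (you swapped these), but both ingredients are available -- the former precisely because the hypothesis with $|\beta|\geq N$ keeps all further $\eta$-derivatives of $a_1$ in the $H_4$-valued classes -- so the argument is unaffected.
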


\begin{definition}[Abstract edge Sobolev space]\label{WSpace}
For $s \in \R$ let ${\mathcal W}^s(\R^q,H)$ be the completion of $\Sch(\R^q,H)$ with respect to the norm
\begin{equation*}
\|u\|_{{\mathcal W}^s}^2 = \int_{\R^q}\langle \eta \rangle^{2s}\|\kappa^{-1}_{\langle \eta \rangle}\bigl({\mathscr F}u\bigr)(\eta)\|_H^2\,d\eta.
\end{equation*}
\end{definition}

\begin{theorem}\label{WSpaceContinuity}
Let $a \in S^{\mu}(\R^q\times\R^q;H,\tilde{H})$.
\begin{enumerate}[(a)]
\item The operator
\begin{equation*}
\op(a) : {\mathcal W}^s(\R^q,H) \to {\mathcal W}^{s-\mu}(\R^q,\tilde{H})
\end{equation*}
is continuous for every $s \in \R$.
\item If $a(y,\eta) : H \to \tilde{H}$ is compact for every $(y,\eta) \in \R^q$ and $a(y,\eta) \equiv 0$ for $|y| \geq R$ large enough, then
\begin{equation*}
\op(a) : {\mathcal W}^s(\R^q,H) \to {\mathcal W}^{s-\mu_1}(\R^q,\tilde{H})
\end{equation*}
is compact for all $s \in \R$ and $\mu_1 > \mu$.
\end{enumerate}
\end{theorem}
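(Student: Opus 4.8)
The plan is to peel off the orders by composing with the order--reducing isomorphisms $\op(\langle\eta\rangle^{t}\Id)$, to prove the two assertions in a normalized situation, and then to reinstate the orders. Throughout I use that, directly from Definition~\ref{WSpace}, $\op(\langle\eta\rangle^{t}\Id_{H})$ is an isometric isomorphism ${\mathcal W}^{s}(\R^{q},H)\to{\mathcal W}^{s-t}(\R^{q},H)$ for all $s,t\in\R$, and that by the asymptotic expansion in Proposition~\ref{composition} the Leibniz product of a symbol with a $y$--independent symbol placed on the right is just the pointwise product, so $a\leibniz(\langle\eta\rangle^{-r}\Id_{H})=\langle\eta\rangle^{-r}a$ exactly. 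Writing $\op(a)=\op(\langle\eta\rangle^{-r}a)\circ\op(\langle\eta\rangle^{r}\Id_{H})$ with $r=\mu$ in case (a) and $r=\mu_{1}$ in case (b), and noting that $\langle\eta\rangle^{-r}a$ inherits the $y$--support and the compactness of values of $a$, the theorem is reduced to the following: for $b\in S^{0}(\R^{q}\times\R^{q};H,\tilde{H})$ the operator $\op(b)$ is bounded ${\mathcal W}^{t}(\R^{q},H)\to{\mathcal W}^{t}(\R^{q},\tilde{H})$ for every $t\in\R$; and for $b\in S^{-\varepsilon}$ with some $\varepsilon>0$, with $b$ taking values in the compact operators $\mathcal{K}(H,\tilde{H})$ and $b(y,\eta)=0$ for $|y|\ge R$, the operator $\op(b):{\mathcal W}^{t}(\R^{q},H)\to{\mathcal W}^{t}(\R^{q},\tilde{H})$ is compact.

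Next I would pass to the Fourier side. The map $\Theta^{t}_{H}:u\mapsto\bigl[\eta\mapsto\langle\eta\rangle^{t}\kappa_{\langle\eta\rangle}^{-1}({\mathscr F}u)(\eta)\bigr]$ extends to an isometry of ${\mathcal W}^{t}(\R^{q},H)$ onto a closed subspace of $L^{2}(\R^{q},H)$, and $\Theta^{t}_{\tilde{H}}\op(b)(\Theta^{t}_{H})^{-1}$ is (the restriction of) the integral operator with $\L(H,\tilde{H})$--valued kernel $K(\xi,\eta)=\langle\xi\rangle^{t}\langle\eta\rangle^{-t}\,\tilde{\kappa}_{\langle\xi\rangle}^{-1}\hat{b}(\xi-\eta,\eta)\kappa_{\langle\eta\rangle}$, where $\hat{b}$ is the Fourier transform of $b$ in the first variable. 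Splitting the twist as $\tilde{\kappa}_{\langle\xi\rangle}^{-1}\hat{b}(\xi-\eta,\eta)\kappa_{\langle\eta\rangle}=(\tilde{\kappa}_{\langle\xi\rangle}^{-1}\tilde{\kappa}_{\langle\eta\rangle})(\tilde{\kappa}_{\langle\eta\rangle}^{-1}\hat{b}(\xi-\eta,\eta)\kappa_{\langle\eta\rangle})$, one controls the scalar weight $\langle\xi\rangle^{t}\langle\eta\rangle^{-t}$ and the first operator factor by powers of $\langle\xi-\eta\rangle$ using \eqref{estimategroupaction} together with the Peetre--type inequality $\max\{\langle\xi\rangle/\langle\eta\rangle,\langle\eta\rangle/\langle\xi\rangle\}\le\sqrt{2}\,\langle\xi-\eta\rangle$, while the remaining factor is estimated by integration by parts in $y$ out of the twisted symbol estimates \eqref{symbolestimate}.

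For the boundedness statement (a), since $b\in S^{0}$ is in general only bounded (not decaying) in $y$, I would carry out this last estimate exactly as in the operator--valued Calder\'on--Vaillancourt theorem: decompose $b=\sum_{k\in\mathbb{Z}^{q}}\chi(\,\cdot\,-k)\,b$ into a locally finite sum of symbols with uniformly bounded seminorms and uniformly bounded supports in $y$, apply the naive kernel and Schur estimate to each summand, and reassemble the pieces by a Cotlar--Stein almost--orthogonality argument, the off--diagonal decay $\langle k-k'\rangle^{-N}$ coming from the oscillation $e^{i(k-k')\cdot\eta}$. This yields $L^{2}$--boundedness of $\op(b)$ and hence (a).

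For the compactness statement (b), the symbol $b$ is already compactly supported in $y$, so $\hat{b}(\zeta,\eta)=\int e^{-iy\cdot\zeta}b(y,\eta)\,dy$ is a norm--convergent integral of compact operators and is therefore $\mathcal{K}(H,\tilde{H})$--valued, and the estimates above sharpen to $\|K(\xi,\eta)\|\le C_{N}\langle\xi-\eta\rangle^{-N}\langle\eta\rangle^{-\varepsilon}$ for every $N$. Setting $K_{R'}=\mathbf{1}_{\{|\xi|\le R'\}}\mathbf{1}_{\{|\eta|\le R'\}}K$, the operator with kernel $K_{R'}$ has a continuous $\mathcal{K}(H,\tilde{H})$--valued kernel supported in a bounded set, hence is an operator--norm limit of operators with finite--rank kernels and is compact; and the Schur test, now exploiting the gain $\langle\eta\rangle^{-\varepsilon}$, shows $\|K-K_{R'}\|_{L^{2}\to L^{2}}\to0$ as $R'\to\infty$. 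Thus $\op(b)$ is conjugate to the restriction of an operator that is a norm limit of compact operators, hence compact, which after undoing the order reductions gives (b). The one genuinely nontrivial point is the operator--valued Calder\'on--Vaillancourt estimate performed in the truly twisted setting: because the group actions are only polynomially bounded rather than unitary, the twists cannot be absorbed into a shift of order, and one must transport them through the estimates via the Peetre--type inequality for $\kappa_{\varrho}$; everything else is a routine transcription of the scalar pseudodifferential calculus.
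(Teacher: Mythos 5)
First, a point of reference: the paper does not prove Theorem~\ref{WSpaceContinuity} at all --- it is quoted from the literature with the attribution to \cite{SeilerCont} --- so your argument has to stand on its own. Much of it does. The order reduction is exact and correct: $\op(\langle\eta\rangle^{r}\Id_H)$ is an isometric isomorphism ${\mathcal W}^{s}\to{\mathcal W}^{s-r}$ straight from Definition~\ref{WSpace}, and the Leibniz product with a $y$-independent right factor is the pointwise product, so nothing is lost there. The conjugation to the Fourier side, the kernel $K(\xi,\eta)=\langle\xi\rangle^{t}\langle\eta\rangle^{-t}\tilde{\kappa}_{\langle\xi\rangle}^{-1}\hat{b}(\xi-\eta,\eta)\kappa_{\langle\eta\rangle}$, and the single-piece estimates combining integration by parts in $y$ with the Peetre inequality and \eqref{estimategroupaction} are sound. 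In particular your treatment of part (b) is essentially complete, since there the symbol \emph{is} compactly supported in $y$, so the Schur bound $\|K(\xi,\eta)\|\leq C_N\langle\xi-\eta\rangle^{-N}\langle\eta\rangle^{-\varepsilon}$ and the truncation argument need no reassembly step; you should only add a word on why the truncated kernel is \emph{norm}-continuous (use that $\hat b$ is norm-continuous with compact values, that the adjoint group $\kappa_\varrho^{*}$ is again strongly continuous, and that strong convergence is uniform on the precompact image of the unit ball under a compact operator).

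The genuine gap is in part (a), and it sits exactly where you flag ``the one genuinely nontrivial point''. For a symbol in $S^{0}$ with no decay in $y$, the Cotlar--Stein reassembly is asserted, not proved, and as described it fails: to get $\|T_kT_{k'}^{*}\|\lesssim\langle k-k'\rangle^{-N}$ from the oscillation $e^{i(k-k')\cdot\eta}$ you must integrate by parts in $\eta$ in the composed kernel $\int K_k(\xi,\eta)K_{k'}(\xi',\eta)^{*}\,d\eta$, whose integrand contains the factor $\kappa_{\langle\eta\rangle}\kappa_{\langle\eta\rangle}^{*}$ in the integration variable (and $\tilde{\kappa}_{\langle\xi\rangle}^{-*}\tilde{\kappa}_{\langle\xi\rangle}^{-1}$ for $T_k^{*}T_{k'}$). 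The group actions are only strongly continuous; in general $\varrho\mapsto\kappa_\varrho$ is not norm-differentiable --- for the dilation action \eqref{dilationkappa} it is not even norm-continuous --- so these factors cannot be differentiated, and the Peetre-type inequality controls their size but not their $\eta$-dependence. Moreover, appealing to ``the operator-valued Calder\'on--Vaillancourt theorem'' in the twisted setting is circular, since that statement is precisely part (a). The standard way around this, which avoids ever differentiating the twists, is to reduce (after localization in $y$) to $y$-independent symbols by a Fourier expansion $c(y,\eta)=\sum_m e^{imy}c_m(\eta)$: for $y$-independent $c_m$, boundedness ${\mathcal W}^{t}\to{\mathcal W}^{t}$ is immediate from Definition~\ref{WSpace} and the zeroth twisted estimate \eqref{symbolestimate}, the modulation $e^{im\cdot y}$ acts on ${\mathcal W}^{t}$ with norm $O(\langle m\rangle^{|t|+M+\tilde{M}})$ by Peetre and \eqref{estimategroupaction}, and the rapid decay of the seminorms of $c_m$ makes the series converge in operator norm. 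Even then, passing from compactly supported pieces to a merely uniformly bounded symbol on all of $\R^{q}$ requires a further summation/almost-orthogonality argument, which is the real content of \cite{SeilerCont}; as it stands, your sketch does not deliver part (a) in the stated generality.
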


Let $\iota : H \hookrightarrow \tilde{H}$ be compact, and assume that the group action on $H$ is induced by restricting the group action from $\tilde{H}$ to $H$. Let $\varphi \in C_c^{\infty}(\R^q)$, and consider $a(y,\eta) = \varphi(y)\iota : H \to \tilde{H}$ in Theorem~\ref{WSpaceContinuity}(b). The theorem then implies that the multiplication operator $\op(a) : u \mapsto \varphi u$ is compact in ${\mathcal W}^s(\R^q,H) \to {\mathcal W}^t(\R^q,\tilde{H})$ for $t < s$.

\bigskip

The calculus of pseudodifferential operators with operator-valued symbols that are twisted by group actions has a subclass of operators with classical symbols. The notion of classical symbol is built on a suitable notion of twisted homogeneity, which is the following.

\begin{definition}\label{twistedhomogeneous}
A function $a_{(\mu)}(y,\eta) \in C^{\infty}(\R^q\times\bigl(\R^q\minus\{0\}\bigr),\L(H,\tilde{H}))$ is twisted homogeneous (in the variable $\eta \in \R^q\minus\{0\}$) of degree $\mu \in \R$ if
\begin{equation*}
a_{(\mu)}(y,\varrho\eta) = \varrho^{\mu}\tilde{\kappa}_{\varrho}a_{(\mu)}(y,\eta)\kappa_{\varrho}^{-1}
\end{equation*}
holds for all $\varrho > 0$ and $(y,\eta) \in \R^q\times\bigl(\R^q\minus\{0\}\bigr)$.
\end{definition}

Let $\chi \in C^{\infty}(\R^q)$ be a function such that $\chi \equiv 0$ near zero and $\chi(\eta) \equiv 1$ for $|\eta|$ large enough. If $a_{(\mu)}(y,\eta)$ is as in Definition~\ref{twistedhomogeneous} and all derivatives $D_y^{\alpha}\partial_{\eta}^{\beta}a_{(\mu)}(y,\eta)$ are bounded for $y \in \R^q$ and $|\eta| = 1$, then clearly
\begin{equation*}
\chi(\eta)a_{(\mu)}(y,\eta) \in S^{\mu}(\R^q\times\R^q;H,\tilde{H}).
\end{equation*}
A classical symbol of order $\mu$ is a symbol $a(y,\eta) \in S^{\mu}(\R^q\times\R^q;H,\tilde{H})$ that has an asymptotic expansion
\begin{equation}\label{HomCompExp}
a(y,\eta) \sim \sum\limits_{j=0}^{\infty}\chi(\eta)a_{(\mu-j)}(y,\eta),
\end{equation}
where $a_{(\mu-j)}(y,\eta)$ is twisted homogeneous in $\eta \in \R^q\minus \{0\}$ of degree $\mu - j$, $j \in \NN_0$. Let $S^{\mu}_{\cl}(\R^q\times\R^q;H,\tilde{H})$ denote the space of classical symbols of order $\mu \in \R$.

Standard arguments show that the class of operators with classical symbols is closed with respect to the usual operations in pseudodifferential calculus.

\subsection*{Example: Boundary symbols of edge differential operators}

Let
\begin{equation*}
P = \sum\limits_{j+|\alpha| \leq m} \varphi_{j,\alpha}(y,x)(xD_y)^{\alpha}(xD_x)^j : C_c^{\infty}(\R^q\times\open\Z^{\wedge};E) \to C^{\infty}(\R^q\times\open\Z^{\wedge};F)
\end{equation*}
with $\varphi_{j,\alpha} \in C_c^{\infty}(\R^q\times\overline{\R}_+,\Diff^{m-j-|\alpha|}(\Z;E,F))$ be the representation of an edge differential operator on $\M$ in adapted coordinates and trivializations near the boundary. Here $E$ and $F$ are vector bundles on $\Z^{\wedge}$ that are pull-backs of corresponding bundles over $\Z$ with the induced Hermitian metrics. Formally,
\begin{equation*}
P = \op(p) : C_c^{\infty}(\R^q,C_c^{\infty}(\open\Z^{\wedge};E)) \to C^{\infty}(\R^q,C^{\infty}(\open\Z^{\wedge};F))
\end{equation*}
with an operator-valued symbol
\begin{equation*}
p(y,\eta) = \sum\limits_{j+|\alpha| \leq m}\varphi_{j,\alpha}(y,x)(x\eta)^{\alpha}(xD_x)^{j} : C_c^{\infty}(\open\Z^{\wedge};E) \to C^{\infty}(\open\Z^{\wedge};F).
\end{equation*}
The symbol $p(y,\eta)$ is referred to as the complete boundary or edge symbol of $P$ in the chosen coordinates and trivializations.
For purposes of constructing parametrices, it is important to understand the estimates that $p(y,\eta)$ satisfies, and to identify the leading term of $p(y,\eta)$. The calculus of operator-valued symbols twisted by group actions is a suitable tool for this and we proceed here to make this statement precise, see Proposition~\ref{EdgeOpsOpValued} below. More generally, similar statements hold for edge pseudodifferential operators near the boundary, see \cite{EgorovSchulze,SchuNH,SchuWiley} for further details on the general case.

Let $\omega \in C^{\infty}(\overline{\R}_+)$ be a cut-off function, and define the cone Sobolev space
\begin{equation}\label{Kegelspace}
\K^{s,\gamma}_t(\Z^\wedge;E) = \omega x^{\gamma}H^s_b(\Z^\wedge;E) + (1-\omega)x^{-t}H_{\cone}^s(\Z^\wedge;E)
\end{equation}
for $s,t,\gamma \in \R$. The space $H^s_\cone(\Z^\wedge;E)$ is near $x=\infty$ identical to Melrose's scattering Sobolev space $\scH^s(\Z\times\lbra0,1\rpar)$ near $x'=0$ resulting from compactifying $\open\Z^\wedge=\Z\times(0,\infty)$ at $\infty$ to $\Z\times\lbra 0,\infty\rpar$ via $x'=1/x$, cf. \cite{Mel95} and \cite{SchuNH}. We will abbreviate $\K^{s,\gamma}_t(\Z^\wedge;E)$ to $\K^{s,\gamma}_t$. Note that
\begin{equation*}
\K^{0,-\gamma}_{\gamma-\frac{1}{2}\dim\Z^\wedge} = x^{-\gamma}L^2_b(\Z^\wedge;E).
\end{equation*}
Moreover, $\K^{s_1,\gamma_1}_{t_1} \hookrightarrow \K^{s_2,\gamma_2}_{t_2}$ for all $s_1 \geq s_2$, $t_1 \geq t_2$, and $\gamma_1 \geq \gamma_2$. This embedding is compact if $s_1 > s_2$, $t_1 > t_2$, and $\gamma_1 > \gamma_2$.

We equip $\K^{s,\gamma}_t$ with the strongly continuous group action
\begin{equation}\label{dilationkappa}
\kappa_{\varrho}u(x,z) = \varrho^{1/2}u(\varrho x,z), \quad \varrho > 0,
\end{equation}
for all $s,t,\gamma \in \R$.

\begin{proposition}\label{EdgeOpsOpValued}
Let
\begin{equation*}
p(y,\eta) = \sum\limits_{j+|\alpha| \leq m}\varphi_{j,\alpha}(y,x)(x\eta)^{\alpha}(xD_x)^{j} : C_c^{\infty}(\open\Z^{\wedge};E) \to C^{\infty}(\open\Z^{\wedge};F)
\end{equation*}
with $\varphi_{j,\alpha} \in C^{\infty}(\R^q\times{\overline \R}_+,\Diff^{m-j-|\alpha|}(\Z;E,F))$ such that $\varphi_{j,\alpha}(y,1/x)$ is smooth up to $x = 0$ and $\varphi_{j,\alpha}(y,x) \equiv 0$ for $|y|$ large. Then
\begin{equation*}
p(y,\eta) \in S^0(\R^q\times\R^q;\K^{s,\gamma}_t,\K^{s-m,\gamma}_{t-m}).
\end{equation*}
If the coefficients $\varphi_{j,\alpha}(y,x)$ are independent of $x$, then $p(y,\eta)$ is twisted homogeneous of degree zero. For every $N \in \NN_0$ a Taylor expansion of the coefficients at $x = 0$ induces a finite expansion
\begin{equation*}
p(y,\eta) = \sum\limits_{j=0}^{N-1}x^jp_j(y,\eta) + x^N\tilde{p}(y,\eta)
\end{equation*}
with
\begin{equation*}
x^jp_j(y,\eta) \in S_{\cl}^{-j}(\R^q\times\R^q;\K^{s,\gamma}_t,\K^{s-m,\gamma+j}_{t-m-j})
\end{equation*}
twisted homogeneous of degree $-j$, and
\begin{equation*}
x^N\tilde{p}(y,\eta) \in S^{-N}(\R^q\times\R^q;\K^{s,\gamma}_t,\K^{s-m,\gamma+N}_{t-m-N}).
\end{equation*}
In this sense, we have $p(y,\eta) \sim \sum\limits_{j=0}^{\infty}x^jp_j(y,\eta)$.
\end{proposition}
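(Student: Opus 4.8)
The plan is to reduce the whole statement to a single boundedness fact on cone Sobolev spaces and then exploit the change of variables effected by the dilation group action \eqref{dilationkappa}. The fact I would isolate and prove first is: for any $\zeta \in \R^q$ with $|\zeta| \le 1$ and any $\Diff^{m-j-|\alpha|}(\Z;E,F)$-valued functions $\psi_{j,\alpha}$ of $x \in \overline{\R}_+$ with $\psi_{j,\alpha}(1/x)$ smooth up to $x = 0$, the operator $\sum_{j+|\alpha|\le m}\psi_{j,\alpha}(x)\,\zeta^{\alpha}x^{|\alpha|}(xD_x)^{j}$ is bounded $\K^{s,\gamma}_t(\Z^{\wedge};E) \to \K^{s-m,\gamma}_{t-m}(\Z^{\wedge};F)$, with norm controlled by $|\zeta|\le 1$ and finitely many seminorms of the $\psi_{j,\alpha}$ only. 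This combines the elementary mapping properties of multiplication by powers of $x$, of the $b$-type operators $(xD_x)^j$, and of operators on $\Z$ on the cone Sobolev scale, the constraint $j+|\alpha|\le m$, and the continuous embeddings $\K^{s_1,\gamma_1}_{t_1}\hookrightarrow\K^{s_2,\gamma_2}_{t_2}$ ($s_1\ge s_2$, $\gamma_1\ge\gamma_2$, $t_1\ge t_2$) recorded after \eqref{Kegelspace}; in particular the shift $t \mapsto t-m$ is the ``safe'' target weight regardless of the exact intermediate bookkeeping. Granting this, I would record the conjugation identity, which is immediate from the facts that $\kappa_{\varrho}$ commutes with $xD_x$ and with every differential operator on the fibres $\Z$, and that $\kappa_{\varrho}^{-1}$ turns multiplication by $\varphi(y,x)$ into multiplication by $\varphi(y,\varrho^{-1}x)$:
\[
\kappa_{\varrho}^{-1}\,p(y,\eta)\,\kappa_{\varrho}=\sum_{j+|\alpha|\le m}\varphi_{j,\alpha}(y,\varrho^{-1}x)\,(x\,\varrho^{-1}\eta)^{\alpha}(xD_x)^{j}.
\]

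Next I would take $\varrho = \langle\eta\rangle$. The rescaled argument $x/\langle\eta\rangle$ stays in $\overline{\R}_+$, and --- this is where the hypothesis that $\varphi_{j,\alpha}(y,1/x)$ is smooth up to $x = 0$ is used --- the family $(y,\eta)\mapsto\varphi_{j,\alpha}(y,x/\langle\eta\rangle)$ stays in a bounded set of admissible coefficients on $\Z^{\wedge}$, uniformly in $\eta$; moreover $\eta/\langle\eta\rangle$ is bounded. The boundedness fact then gives that $\kappa_{\langle\eta\rangle}^{-1}p(y,\eta)\kappa_{\langle\eta\rangle}$ is a uniformly bounded family $\K^{s,\gamma}_t\to\K^{s-m,\gamma}_{t-m}$, i.e.\ the symbol estimate \eqref{symbolestimate} with $\mu = 0$ and no $\eta$-derivatives. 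For the estimates with derivatives I would differentiate before conjugating and use the chain rule: each $\partial_{\eta_k}$ either lowers $\alpha$ (producing a scalar factor) or it hits $\varphi_{j,\alpha}(y,x/\langle\eta\rangle)$ or $(x\eta/\langle\eta\rangle)^{\alpha}$; since $\partial_{\eta_k}(1/\langle\eta\rangle) = O(\langle\eta\rangle^{-2})$ and $\partial_{\eta_k}(\eta_l/\langle\eta\rangle) = O(\langle\eta\rangle^{-1})$, and since $(x\partial_x\varphi_{j,\alpha})(y,\cdot)$ is again of admissible type (the class being stable under $x\partial_x$, precisely because $\varphi_{j,\alpha}(y,1/x)$ is smooth at $x = 0$), every $\eta$-derivative costs exactly one factor $\langle\eta\rangle^{-1}$, while $y$-derivatives are harmless because $\varphi_{j,\alpha}$ is smooth in $y$ with support in a fixed compact set. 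Bookkeeping the chain rule and invoking the boundedness fact gives $\|\kappa_{\langle\eta\rangle}^{-1}(D_y^{\alpha'}\partial_{\eta}^{\beta}p(y,\eta))\kappa_{\langle\eta\rangle}\| \le C_{\alpha',\beta}\langle\eta\rangle^{-|\beta|}$, that is, $p \in S^0(\R^q\times\R^q;\K^{s,\gamma}_t,\K^{s-m,\gamma}_{t-m})$.

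The twisted homogeneity when the $\varphi_{j,\alpha}$ are independent of $x$ is immediate from the conjugation identity, which then reads $\kappa_{\varrho}^{-1}p(y,\eta)\kappa_{\varrho} = p(y,\varrho^{-1}\eta)$, i.e.\ $p(y,\varrho\eta) = \varrho^{0}\kappa_{\varrho}p(y,\eta)\kappa_{\varrho}^{-1}$. For the expansion I would Taylor expand each coefficient at $x = 0$, $\varphi_{j,\alpha}(y,x) = \sum_{k=0}^{N-1}\frac{x^k}{k!}(\partial_x^k\varphi_{j,\alpha})(y,0) + x^N r_{j,\alpha,N}(y,x)$, and collect the $x^k$-terms, so $p(y,\eta) = \sum_{k=0}^{N-1}x^k p_k(y,\eta) + x^N\tilde p(y,\eta)$, where $p_k$ has the $x$-independent coefficients $\frac{1}{k!}(\partial_x^k\varphi_{j,\alpha})(y,0)$ and $\tilde p$ has the coefficients $r_{j,\alpha,N}$. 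By the homogeneity just established each $p_k$ is twisted homogeneous of degree $0$ and lies in $S^0_{\cl}$, and multiplication by $x^k$, which sends $\K^{s-m,\gamma}_{t-m}$ into $\K^{s-m,\gamma+k}_{t-m-k}$ and satisfies $\kappa_{\varrho}^{-1}x^k\kappa_{\varrho} = \varrho^{-k}x^k$, simultaneously shifts the target weights and lowers the order by $k$, so $x^k p_k \in S^{-k}_{\cl}(\R^q\times\R^q;\K^{s,\gamma}_t,\K^{s-m,\gamma+k}_{t-m-k})$, twisted homogeneous of degree $-k$, matching \eqref{HomCompExp}. One checks that $r_{j,\alpha,N}$ is again smooth in $(y,x)$ with $r_{j,\alpha,N}(y,1/x)$ smooth up to $x = 0$ (after division by $x^N$ the subtracted polynomial contributes only negative powers of $x$, i.e.\ a function regular at $x^{-1} = 0$), so the $S^0$-argument already established applies to $\tilde p$; multiplying its conjugated symbol by the extra $\langle\eta\rangle^{-N}x^N$ coming from $\kappa_{\langle\eta\rangle}^{-1}x^N\kappa_{\langle\eta\rangle}$ then yields $x^N\tilde p \in S^{-N}(\R^q\times\R^q;\K^{s,\gamma}_t,\K^{s-m,\gamma+N}_{t-m-N})$. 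Since this holds for every $N$, $p \sim \sum_j x^j p_j$ in the stated sense.

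The step I expect to be the main obstacle is the isolated boundedness fact, i.e.\ making the mapping $\K^{s,\gamma}_t \to \K^{s-m,\gamma}_{t-m}$ quantitative and uniform over a family of coefficients: one must verify that the operator norm is controlled by a fixed finite collection of seminorms of the $\Diff(\Z)$-valued coefficients, and --- the reason the $1/x$-smoothness hypothesis is indispensable --- that dilating the coefficient $x \mapsto x/\langle\eta\rangle$ and differentiating in $\eta$ keeps the coefficients in a bounded, $x\partial_x$-stable family, which in particular controls their behaviour as $x \to \infty$, where the $H^s_{\cone}$ (scattering) part of $\K^{s,\gamma}_t$ governs the mapping properties. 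Everything else is routine chain-rule and change-of-variables bookkeeping.
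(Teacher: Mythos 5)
The paper does not actually prove Proposition~\ref{EdgeOpsOpValued}: it is stated in Appendix~\ref{app-PseudosOpValued} as part of a review of Schulze's calculus, with the proof deferred to \cite{EgorovSchulze,SchuNH,SchuWiley}. Your argument is essentially the standard one found there, and its structure is sound: the conjugation identity $\kappa_{\varrho}^{-1}p(y,\eta)\kappa_{\varrho}=\sum\varphi_{j,\alpha}(y,\varrho^{-1}x)(x\varrho^{-1}\eta)^{\alpha}(xD_x)^{j}$ is correct for the action \eqref{dilationkappa}, the weight bookkeeping $t\mapsto t-m$ at $x=\infty$ (each $x$-power and each derivative together costing exactly $x^m$ in the scattering regime) is right, the homogeneity statement for $x$-independent coefficients, the Taylor-expansion step, and the verification that the remainder coefficients $r_{j,\alpha,N}$ are again admissible and that $x^k$ shifts $(\gamma,t)\mapsto(\gamma+k,t-k)$ while contributing $\kappa_{\varrho}^{-1}x^k\kappa_{\varrho}=\varrho^{-k}x^k$ are all correct.

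Two points need tightening in the write-up. First, the uniform boundedness lemma must be formulated with dilation-invariant seminorms: the dilated family $\varphi_{j,\alpha}(y,x/\langle\eta\rangle)$ is \emph{not} bounded in the $C^{\infty}$-seminorms of the compactified variable $1/x$ (already for $\varphi(x)=1/(1+x)$ the $\partial_{1/x}$-derivatives grow like powers of $\langle\eta\rangle$), so "finitely many seminorms of the $\psi_{j,\alpha}$" should mean sup-norms of $(x\partial_x)^kD_y^{\alpha}$-derivatives valued in $\Diff(\Z;E,F)$; you acknowledge this ($x\partial_x$-stability) in your last paragraph, but the lemma as first isolated, literally read, would not apply uniformly to the dilated family. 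Such b-conormally bounded coefficients do act uniformly on the spaces \eqref{Kegelspace} in both the $H^s_b$- and $\scH^s$-regimes, so the lemma is true once stated this way. Second, the estimate \eqref{symbolestimate} concerns $\kappa_{\langle\eta\rangle}^{-1}\bigl(D_y^{\alpha'}\partial_{\eta}^{\beta}p\bigr)\kappa_{\langle\eta\rangle}$, i.e.\ you must differentiate $p$ itself and then conjugate; differentiating the already conjugated family (which is what your chain-rule description of derivatives hitting $\varphi_{j,\alpha}(y,x/\langle\eta\rangle)$ suggests) is a different object, differing by commutators with the generator of $\kappa$. Done on the correct object the computation is in fact simpler: only $(x\eta)^{\alpha}$ depends on $\eta$, each $\partial_{\eta}$ trades one factor of $\eta$ for a plain factor of $x$, conjugation turns every $x$ into $x/\langle\eta\rangle$, and the same weight count at $x=\infty$ (total $x$-power plus differential order still at most $m$) closes the estimate with the gain $\langle\eta\rangle^{-|\beta|}$. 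With these two adjustments your proof is complete and coincides with the argument the paper implicitly relies on.
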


Now consider again an edge differential operator $P$ of order $m$ near the boundary in coordinates and trivializations as given by
\begin{equation*}
P = \op(p) : C_c^{\infty}(\R^q,C_c^{\infty}(\open\Z^{\wedge};E)) \to C^{\infty}(\R^q,C^{\infty}(\open\Z^{\wedge};F))
\end{equation*}
with edge symbol $p(y,\eta)$ as in Proposition~\ref{EdgeOpsOpValued}. The normal family of $P$ is
$P_{\wedge}(y,\eta) = p_0(y,\eta)$ with $p_0(y,\eta)$ as in Proposition~\ref{EdgeOpsOpValued}, and we have
\begin{equation*}
p(y,\eta) - P_{\wedge}(y,\eta) \in S^{-1}(\R^q\times\R^q;\K^{s,\gamma}_t,\K^{s-m,\gamma+1}_{t-m-1}).
\end{equation*}
In this sense, $P_{\wedge}(y,\eta)$ is the principal part of the complete edge symbol $p(y,\eta)$ of $P$.

Correspondingly, near the boundary of $\M$ a wedge differential operator of order $m$ is of the form $A = x^{-m}P$ with $P \in \Diff_{e}^m(\R^q\times\Z^{\wedge};E,F)$ as above. We have $A = \op(x^{-m}p)$ with the operator-valued symbol
\begin{equation*}
a(y,\eta) = x^{-m}p(y,\eta) \in S^{m}(\R^q\times\R^q;\K^{s,\gamma}_t,\K^{s-m,\gamma-m}_t),
\end{equation*}
and we have
\begin{equation*}
a(y,\eta) - A_{\wedge}(y,\eta) \in S^{m-1}(\R^q\times\R^q;\K^{s,\gamma}_t,\K^{s-m,\gamma-m+1}_{t-1});
\end{equation*}
$a(y,\eta)$ is the complete edge (or boundary) symbol of $A$ in the chosen coordinates and trivializations near the boundary, and the normal family is given by $A_{\wedge}(y,\eta) = x^{-m}P_{\wedge}(y,\eta)$. $A_{\wedge}(y,\eta)$ is twisted homogeneous of degree $m$.

\begin{remark}\label{WandeSpaces}
For $m \in \NN_0$ let
\begin{equation*}
H^m_e(\M;E) = \{u;\; Pu \in L^2_b(\M;E) \text{ for all } P \in \Diff^m_e(\M;E)\},
\end{equation*}
and define the spaces $H^s_e(\M;E)$ for $s \in \R$ by duality and interpolation, see \cite{Mazz91} (in Mazzeo's paper the definition of these spaces is based on $L^2(\M;E)$, but basing these spaces on $L^2_b(\M;E)$ is more convenient for our purposes).

The following relationship between these $e$-Sobolev spaces and the abstract edge Sobolev spaces of Definition~\ref{WSpace} near the boundary of the manifold $\M$ is useful:

Let ${\mathcal U} \subset \M$ be any open subset near the boundary that is diffeomorphic to $\Omega \times \Z \times \lbra0,\delta\rpar$ with $\delta > 0$, and assume that all bundles have adapted edge trivializations in ${\mathcal U}$ covering the diffeomorphism as described in detail in \cite[Sections~2 and 3]{GiKrMe10}. Let $u \in x^{\gamma}H_e^s(\M;E)$ have compact support in ${\mathcal U}$. Then the push-forward of $u$ to $\Omega\times\Z\times\lbra0,\delta\rpar \subset \R^q\times\Z^{\wedge}$ belongs to the space
\begin{equation}\label{HselocalWs}
{\mathcal W}^{\gamma+1/2}(\R^q,\K^{s,\gamma}_{s-\gamma-\frac{1}{2}\dim\Z^{\wedge}}(\Z^{\wedge};E)).
\end{equation}
Conversely, if $u$ belongs to the space \eqref{HselocalWs} and has compact support in $\Omega\times\Z\times\lbra0,\delta\rpar$, then the pull-back of $u$ with respect to the diffeomorphism and the adapted trivializations belongs to $x^{\gamma}H_e^s(\M;E)$. In short, the Sobolev space \eqref{HselocalWs} is a local model for $x^{\gamma}H^s_e(\M;E)$ near the boundary. The $\frac{1}{2}$-shift from $\gamma$ to $\gamma+\frac{1}{2}$ in the exponent of the ${\mathcal W}$-space in \eqref{HselocalWs} is due to our choice of normalization of the dilation group action in \eqref{dilationkappa}.
\end{remark}

\section{Green operators and ellipticity in the edge calculus}\label{GreenOperators}

In this appendix we review the definition and some of the properties of Green operators and the concept of ellipticity and the Fredholm property in Schulze's edge calculus \cite{EgorovSchulze,SchuNH,SchuWiley}. We will utilize some minor modifications from the presentation in Schulze's books and papers here that appear in our opinion to be better adapted for our purposes.

Green operators are matrix operators of the form
\begin{equation}\label{Greenmatrix}
G = \begin{bmatrix} G_{11} & G_{12} \\ G_{21} & G_{22} \end{bmatrix} : \begin{array}{c} C_c^{\infty}(\open\M;E) \\ \oplus \\ C^{\infty}(\Y;J_-) \end{array} \to
\begin{array}{c} C^{\infty}(\open\M;F) \\ \oplus \\ C^{\infty}(\Y;J_+) \end{array},
\end{equation}
where either of the bundles $E,F$ on $\M$ and $J_{\pm}$ on $\Y$ may be absent, in which case the matrix degenerates accordingly.

The lower-right corner operator $G_{22} : C^{\infty}(\Y;J_-) \to C^{\infty}(\Y;J_+)$ in \eqref{Greenmatrix} is merely a classical pseudodifferential operator of some order $\mu \in \R$. The remaining matrix entries map distributions on $\open\M$ to distributions on $\open\M$ ($G_{11}$), distributions on $\open\M$ to distributions on $\Y$ ($G_{21}$), or distributions on $\Y$ to distributions on $\open\M$ ($G_{12}$), and all these operators are smoothing in the sense that
\begin{equation*}
\begin{bmatrix} G_{11} & G_{12} \\ G_{21} & 0 \end{bmatrix} :
\begin{array}{c} C^{-\infty}_c(\open\M;E) \\ \oplus \\ C^{-\infty}(\Y;J_-) \end{array} \to
\begin{array}{c} C^{\infty}(\open\M;F) \\ \oplus \\ C^{\infty}(\Y;J_+) \end{array},
\end{equation*}
so they are represented by integral operators with $C^{\infty}$ kernels. The behavior of these kernels near $\N$ is crucial and is encoded in the mapping properties of $G$ that are part of the precise definition given below. This description refers to the choice of two weights $\gamma_1,\gamma_2 \in \R$, where $\gamma_1$ is associated with the input variables/function spaces, and $\gamma_2$ with the output variables/function spaces, and a suitable notion of order.

\begin{definition}
An operator $G$ of the form \eqref{Greenmatrix} is a residual Green operator (or a Green operator of order $-\infty$) with respect to the weights $(\gamma_1,\gamma_2) \in \R^2$ if there exists an $\varepsilon > 0$ such that
\begin{equation*}
G : \begin{array}{c} x^{\gamma_1}H_b^s(\M;E) \\ \oplus \\ H^s(\Y;J_-) \end{array} \to \begin{array}{c} x^{\gamma_2+\varepsilon}H_b^t(\M;F) \\ \oplus \\ H^t(\Y;J_+) \end{array}
\end{equation*}
is continuous for all $s,t \in \R$, or in short if
\begin{equation*}
G : \begin{array}{c} x^{\gamma_1}H_b^{-\infty}(\M;E) \\ \oplus \\ C^{-\infty}(\Y;J_-) \end{array} \to \begin{array}{c} x^{\gamma_2+\varepsilon}H_b^{\infty}(\M;F) \\ \oplus \\ C^{\infty}(\Y;J_+) \end{array}.
\end{equation*}
\end{definition}
 
Every residual Green operator is a compact operator in the spaces
\begin{equation*}
G : \begin{array}{c} x^{\gamma_1}H_e^s(\M;E) \\ \oplus \\ H^s(\Y;J_-) \end{array} \to \begin{array}{c} x^{\gamma_2}H_e^t(\M;F) \\ \oplus \\ H^t(\Y;J_+) \end{array}
\end{equation*}
for all $s,t \in \R$.

To describe the structure of Green operators of general orders $\mu \in \R$ the following notational convention will be useful: Recall from \eqref{Ring} that $\Ring$ is the ring of $C^{\infty}$ functions on $\M$ that on the boundary are constant on the fibers of the fibration $\wp$, i.e., $\varphi \in \Ring$ if and only if $\varphi|_{\N} = \wp^*\varphi_0$ with $\varphi_0 \in C^{\infty}(\Y)$. With every $\varphi \in \Ring$ we associate an operator
\begin{gather*}
\varphi = \begin{bmatrix} \varphi & 0 \\ 0 & \varphi_0 \end{bmatrix} :
\begin{array}{c} C^{-\infty}(\open\M;E) \\ \oplus \\ C^{-\infty}(\Y;J_-) \end{array} \to
\begin{array}{c} C^{-\infty}(\open\M;E) \\ \oplus \\ C^{-\infty}(\Y;J_-) \end{array}, \\
\varphi\begin{pmatrix} u \\ v \end{pmatrix} = \begin{bmatrix} \varphi & 0 \\ 0 & \varphi_0 \end{bmatrix}\begin{pmatrix} u \\ v \end{pmatrix} = \begin{pmatrix} \varphi u \\ \varphi_0 v \end{pmatrix},
\end{gather*}
which as indicated in the definition we will simply denote by $\varphi$ in the sequel.

\begin{definition}\label{GreenArbitraryOrder}
An operator $G$ of the form \eqref{Greenmatrix} is a Green operator of order $\mu \in \R$ with respect to the weights $(\gamma_1,\gamma_2) \in \R^2$ if there exists an $\varepsilon > 0$ such that the following holds:
\begin{enumerate}[(a)]
\item For all $\varphi,\psi \in \Ring$ such that $\supp(\varphi) \cap \supp(\psi) = \emptyset$, the operator
$\varphi G \psi$ is a residual Green operator with respect to the weights $(\gamma_1,\gamma_2)$.
\item For all $\varphi \in \dot{C}^{\infty}(\M)$ the operators $\varphi G$ and $G \varphi$ are residual Green operators with respect to the weights $(\gamma_1,\gamma_2)$.
\item Let ${\mathcal U}$ be any neighborhood of the boundary diffeomorphic to $\Omega\times\Z\times\lbra0,\delta\rpar$ for some $\delta > 0$ with $\Omega \subset \R^q$ open, and suppose that all bundles are trivial over ${\mathcal U}$. Let $\varphi,\psi \in \Ring$ be compactly supported in ${\mathcal U}$. Then, in coordinates and trivializations, the operator $\varphi G \psi$ acts locally as
\begin{equation*}
\varphi G \psi : \begin{array}{c} C_c^{\infty}(\R^q,C_c^{\infty}(\open\Z^{\wedge};E)) \\ \oplus \\ C_c^{\infty}(\R^q,\C^{N_-}) \end{array} \to \begin{array}{c} C_c^{\infty}(\R^q,C_c^{\infty}(\open\Z^{\wedge};F)) \\ \oplus \\ C_c^{\infty}(\R^q,\C^{N_+}) \end{array},
\end{equation*}
and as such is required to be of the form $\varphi G \psi = \op(g)$ with an operator-valued symbol
\begin{equation}\label{Greensymbol}
\begin{aligned}
g(y,\eta) &\in S^{\mu}_{\cl}\biggl(\R^q\times\R^q;
\begin{array}{c} \K^{-\infty,\gamma_1}_{-\infty} \\ \oplus \\ \C^{N_-} \end{array},
\begin{array}{c} \K^{\infty,\gamma_2+\varepsilon}_{\infty} \\ \oplus \\ \C^{N_+} \end{array}
\biggr) \\ &=
\bigcap_{s_i,t_i \in \R,\:i=1,2} S^{\mu}_{\cl}\biggl(\R^q\times\R^q;
\begin{array}{c} \K^{s_1,\gamma_1}_{t_1} \\ \oplus \\ \C^{N_-} \end{array},
\begin{array}{c} \K^{s_2,\gamma_2+\varepsilon}_{t_2} \\ \oplus \\ \C^{N_+} \end{array}\biggr).
\end{aligned}
\end{equation}
We use here the cone Sobolev spaces \eqref{Kegelspace} with the normalized dilation group action \eqref{dilationkappa} on them, and the finite-dimensional spaces $\C^{N_{\pm}}$ stemming from trivializing the bundles $J_{\pm}$ are equipped with the trivial group action $\tilde{\kappa}_{\varrho} \equiv \Id_{\C^{N_{\pm}}}$ for all $\varrho > 0$.
\end{enumerate}
\end{definition}

Because the local operator-valued symbol $g(y,\eta)$ in \eqref{Greensymbol} is a classical symbol, there exists an expansion
\begin{equation*}
g(y,\eta) \sim \sum\limits_{j=0}^{\infty}\chi(\eta)g_{(\mu-j)}(y,\eta)
\end{equation*}
like in \eqref{HomCompExp}. The principal symbol of $g(y,\eta)$ is the leading term $g_{(\mu)}(y,\eta)$ in this expansion. Patching these principal symbols shows that there exists an invariantly defined normal family
\begin{equation*}
T^*\Y\minus 0 \ni \pmb\eta \to G_{\wedge}(\pmb\eta) : \begin{array}{c} C_c^{\infty}(\Z^{\wedge}_y;E_{\Z^{\wedge}_y}) \\ \oplus \\ \pi_{\Y}^*J_{-,y} \end{array} \to \begin{array}{c} C^{\infty}(\Z^{\wedge}_y;F_{\Z^{\wedge}_y}) \\ \oplus \\ \pi_{\Y}^*J_{+,y} \end{array}, \quad y = \pi_{\Y}\pmb\eta,
\end{equation*}
associated with $G$. The normal family is twisted homogeneous of degree $\mu$ in the sense that
\begin{equation*}
G_{\wedge}(\varrho\, \pmb\eta) = \varrho^{\mu} \begin{bmatrix} \kappa_{\varrho} & 0 \\ 0 & \Id \end{bmatrix} G_{\wedge}(\pmb\eta) \begin{bmatrix} \kappa^{-1}_{\varrho} & 0 \\ 0 & \Id \end{bmatrix}
\end{equation*}
for $\varrho > 0$.

Now let $G : C_c^{\infty}(\open\M;E) \to C^{\infty}(\open\M;F)$ be a Green operator of order $\mu$ associated with the weights $(\gamma_1,\gamma_2) \in \R^2$, and let $P$ be any edge differential operator acting in sections of bundles such that the compositions $P\circ G$ or $G \circ P$ make sense. Definition~\ref{GreenArbitraryOrder}, Proposition~\ref{EdgeOpsOpValued}, and Proposition~\ref{composition} then imply that these compositions are again Green operators of the same order $\mu$ associated with the same weights, and
\begin{equation*}
\bigl(P\circ G\bigr)_{\wedge}(\pmb\eta) = P_{\wedge}(\pmb\eta)\circ G_{\wedge}(\pmb\eta) \text{ or }
\bigl(G\circ P\bigr)_{\wedge}(\pmb\eta) = G_{\wedge}(\pmb\eta)\circ P_{\wedge}(\pmb\eta),
\end{equation*} 
respectively. For the same reason, the analogous statement holds for compositions of edge differential operators and Green operators $G : C^{\infty}(\Y;J_-) \to C^{\infty}(\open\M;F)$ or $G : C_c^{\infty}(\open\M;E) \to C^{\infty}(\Y;J_+)$. Similarly, the composition $x^{-m}P \circ G$ of a wedge differential operator $x^{-m}P$ of order $m$ with $G$ is a Green operator of order $\mu+m$ associated with the weights $(\gamma_1,\gamma_2-m)$, and $G \circ x^{-m}P$ is a Green operator of order $\mu+m$ associated with the weights $(\gamma_1+m,\gamma_2)$.

More generally, analogous results hold for compositions of Green operators and edge pseudodifferential operators as follows from the calculus developed in \cite{SchuNH}.

\bigskip

Let
\begin{equation*}
G = \begin{bmatrix} G_{11} & G_{12} \\ G_{21} & G_{22} \end{bmatrix} : \begin{array}{c} C_c^{\infty}(\open\M;E) \\ \oplus \\ C^{\infty}(\Y;J_-) \end{array} \to
\begin{array}{c} C^{\infty}(\open\M;F) \\ \oplus \\ C^{\infty}(\Y;J_+) \end{array}
\end{equation*}
be a Green operator with respect to the weights $(\gamma_1,\gamma_2) \in \R^2$, and suppose that the order $\mu$ of $G$ satisfies $\mu \leq \gamma_1 - \gamma_2$. Theorem~\ref{WSpaceContinuity} and Remark~\ref{WandeSpaces} then imply that $G$ extends to a continuous operator
\begin{equation}\label{GineSpaces}
G : \begin{array}{c} x^{\gamma_1}H^{s}_e(\M;E) \\ \oplus \\ H^{\gamma_1+1/2}(\Y;J_-) \end{array} \to \begin{array}{c} x^{\gamma_2}H^{t}_e(\M;F) \\ \oplus \\ H^{\gamma_2+1/2}(\Y;J_+) \end{array}
\end{equation}
for all $s,t \in \R$. If $\mu < \gamma_1-\gamma_2$, then there exists $\varepsilon > 0$ such that
\begin{equation*}
G : \begin{array}{c} x^{\gamma_1}H^{s}_e(\M;E) \\ \oplus \\ H^{\gamma_1+1/2}(\Y;J_-) \end{array} \to \begin{array}{c} x^{\gamma_2+\varepsilon}H^{t}_e(\M;F) \\ \oplus \\ H^{\gamma_2+1/2+\varepsilon}(\Y;J_+) \end{array}.
\end{equation*}
In particular, $G$ acting in the spaces \eqref{GineSpaces} is then compact. This shows that $G_{\wedge}(\pmb\eta)$ determines the action of $G$ in \eqref{GineSpaces} modulo compact operators.

\subsection*{Ellipticity and the Fredholm property in the edge calculus}

Let $A \in x^{-m}\Diff^m_e(\M;E,F)$ be $w$-elliptic, i.e.,
\begin{equation}\label{Awelliptic-app}
\wsym(A) \text{ is invertible everywhere on } \wT^*\M \minus 0,
\end{equation}
and assume that
\begin{equation}\label{speceAssumption-app}
\spec_{e}(A) \cap \bigl(\Y\times\{\sigma \in \C;\; \Im(\sigma) = -m/2\}\bigr) = \emptyset.
\end{equation}
Let
\begin{equation*}
\begin{bmatrix} G_{11} & G_{12} \\ G_{21} & G_{22} \end{bmatrix} : \begin{array}{c} x^{m/2}H^{s}_e(\M;E) \\ \oplus \\ H^{m/2+1/2}(\Y;J_-) \end{array} \to \begin{array}{c} x^{-m/2}H^{t}_e(\M;F) \\ \oplus \\ H^{-m/2+1/2}(\Y;J_+) \end{array}
\end{equation*}
be a Green operator of order $m$ with respect to the weights $(m/2,-m/2)$. We assume that there exist $s_0,t_0 \in \R$ such that
\begin{equation}\label{normalAssumption-app}
\begin{bmatrix}
A_{\wedge}(\pmb\eta) + G_{11,\wedge}(\pmb\eta) & G_{12,\wedge}(\pmb\eta) \\ G_{21,\wedge}(\pmb\eta) & G_{22,\wedge}(\pmb\eta) \end{bmatrix} :
\begin{array}{c} \K^{s_0,m/2}_{t_0}(\Z_y^\wedge;E_{\Z_y^\wedge}) \\ \oplus \\ \pi_{\Y}^*J_{-,y} \end{array}
\to
\begin{array}{c} \K^{s_0-m,-m/2}_{t_0}(\Z_y^\wedge;F_{\Z_y^\wedge}) \\ \oplus \\ \pi_{\Y}^*J_{+,y} \end{array}
\end{equation}
is invertible for all $\pmb\eta \in T^*\Y\minus 0$, where $y = \pi_{\Y}\pmb\eta$.

The following theorem is a consequence of the edge pseudodifferential calculus developed by Schulze, see \cite[Section 3.3.5]{SchuNH}, \cite[Section 9.3.4]{EgorovSchulze}, or \cite[Section 3.5]{SchuWiley}:

\begin{theorem}\label{EdgeMatrixFredholm}
Under the ellipticity assumptions \eqref{Awelliptic-app}, \eqref{speceAssumption-app}, and \eqref{normalAssumption-app}, the operator
\begin{equation}\label{matrix-app}
\begin{bmatrix} A + G_{11} & G_{12} \\ G_{21} & G_{22} \end{bmatrix} : \begin{array}{c} x^{m/2}H^{s}_e(\M;E) \\ \oplus \\ H^{m/2+1/2}(\Y;J_-) \end{array} \to \begin{array}{c} x^{-m/2}H^{s-m}_e(\M;F) \\ \oplus \\ H^{-m/2+1/2}(\Y;J_+) \end{array}
\end{equation}
is a Fredholm operator for every $s \in \R$.
\end{theorem}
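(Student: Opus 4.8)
The theorem is, as indicated, a direct consequence of Schulze's edge pseudodifferential calculus, so the plan is to exhibit \eqref{matrix-app} as an elliptic element of that calculus and to build a parametrix for it modulo residual Green operators. First I would record the structure of the operator near $\N$. There $A = x^{-m}P$ with $P \in \Diff^m_e(\M;E,F)$ is a wedge differential operator whose complete edge symbol is described by Proposition~\ref{EdgeOpsOpValued}, while the entries $G_{11},G_{12},G_{21},G_{22}$ assemble into a Green operator matrix of order $m$ for the weights $(m/2,-m/2)$, with local edge symbols in the classes \eqref{Greensymbol}; by the composition statements of Appendix~\ref{app-PseudosOpValued} (in particular Proposition~\ref{composition}) the sum \eqref{matrix-app} is again an operator of the edge calculus of order $m$ with respect to these weights. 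To such an operator the calculus attaches three levels of principal symbols: the interior (homogeneous principal) symbol, the conormal (indicial) symbol governed by $\spec_e(A)$, and the edge symbol, namely the normal family $A_\wedge(\pmb\eta)$ together with the normal families $G_{ij,\wedge}(\pmb\eta)$ assembled into the operator matrix in \eqref{normalAssumption-app}.

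The second step is to check invertibility at all three symbolic levels. Invertibility of the interior symbol is precisely the $w$-ellipticity \eqref{Awelliptic-app}, the Green terms having no effect on it. For the conormal level, the hypothesis \eqref{speceAssumption-app} is exactly the statement that the conormal symbol of \eqref{matrix-app} along the weight line determined by $-m/2$ is invertible for every $y \in \Y$ (equivalently, that $\bPhat_y(\sigma)$ is invertible for all $\sigma$ with $\Im\sigma = -m/2$); uniformity in $y$ follows from compactness of $\Y$. Invertibility of the edge symbol is the standing assumption \eqref{normalAssumption-app}; by twisted homogeneity of the edge symbol in $\pmb\eta$ and the spectral invariance of the cone calculus on $\Z^\wedge_y$, invertibility for a single admissible pair $(s_0,t_0)$ propagates to all admissible Sobolev indices and to all $\pmb\eta \in T^*\Y\minus 0$.

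With the three symbol levels invertible, Schulze's calculus provides a parametrix $\mathcal P$ for \eqref{matrix-app}, an edge operator matrix of order $-m$ with respect to the weights $(-m/2,m/2)$, such that both compositions of $\mathcal P$ with \eqref{matrix-app}, in either order, differ from the identity by residual Green operators (with respect to the weights $(m/2,m/2)$ and $(-m/2,-m/2)$ respectively); see \cite[Section 3.3.5]{SchuNH}, \cite[Section 9.3.4]{EgorovSchulze}, \cite[Section 3.5]{SchuWiley}. By Theorem~\ref{WSpaceContinuity}, Remark~\ref{WandeSpaces}, and the compactness of residual Green operators between the edge Sobolev spaces in question (Appendix~\ref{GreenOperators}), $\mathcal P$ is continuous between the spaces occurring in \eqref{matrix-app} with source and target interchanged, while the two remainders act compactly on those spaces. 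Therefore \eqref{matrix-app} is invertible modulo compact operators, hence Fredholm, for every $s \in \R$.

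The step I expect to be the main obstacle is not any single estimate but the reconciliation of conventions: one must match the threefold ellipticity conditions \eqref{Awelliptic-app}--\eqref{normalAssumption-app} and the normalizations adopted here (the shift by $1/2$ built into the dilation group action \eqref{dilationkappa} and the identification of $x^\gamma H^s_e$ with the abstract ${\mathcal W}$-spaces in Remark~\ref{WandeSpaces}) with the precise hypotheses under which the parametrix construction is carried out in the cited references, and one must verify that the lower-order terms admitted by that calculus include Green operator matrices of exactly the order and weight type of the $G_{ij}$. Once this bookkeeping is in place, the Fredholm conclusion is immediate from the existence of the parametrix.
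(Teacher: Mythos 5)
Your proposal is correct and follows essentially the same route as the paper, which proves Theorem~\ref{EdgeMatrixFredholm} precisely by noting that the three symbolic levels (interior $w$-symbol, conormal symbol via \eqref{speceAssumption-app}, and edge symbol via \eqref{normalAssumption-app}) are invertible and then invoking Schulze's edge calculus to construct a parametrix inverting \eqref{matrix-app} modulo residual Green operators, which are compact. Your elaboration of the bookkeeping (weights, the $1/2$-shift, and the identification of $x^\gamma H^s_e$ with the ${\mathcal W}$-spaces) only fills in details the paper leaves to the cited references.
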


Theorem~\ref{EdgeMatrixFredholm} is proved in the edge calculus by constructing a parametrix that inverts the operator \eqref{matrix-app} modulo residual Green operators.


\end{document}